\newcommand{\C}{\mathbb{C}}
\renewcommand{\S}{\mathbb{S}}
\renewcommand{\P}{\mathbb{P}}
\newcommand{\R}{\mathbb{R}}
\newcommand{\norm}[1]{\left\lVert#1\right\rVert}
\newcommand{\ip}[2]{\left\langle #1,#2\right\rangle}
\newcommand{\tr}[0]{\text{tr}}
\newcommand{\Vol}[0]{\text{Vol}}
\newcommand{\diam}{\text{diam}}
\newcommand{\cC}{\mathcal{C}}
\newcommand{\bB}{\mathbb{B}}
\newcommand{\cD}{\mathcal{D}}
\newcommand{\cO}{\mathcal{O}}
\newcommand{\cE}{\mathcal{E}}
\newcommand{\cS}{\mathcal{S}}
\newcommand{\cM}{\mathcal{M}}
\tikzset{cross/.style={cross out, draw=black, minimum size=2*(#1-\pgflinewidth), inner sep=0pt, outer sep=0pt},
cross/.default={1pt}}
\theoremstyle{plain}
\newtheorem{Thm}{Theorem}[section]
\newtheorem{Lem}[Thm]{Lemma}
\newtheorem{Prop}[Thm]{Proposition}
\newtheorem{Def}[Thm]{Definition}
\newtheorem{Rem}[Thm]{Remark}
\theoremstyle{plain}
\newtheorem{thm}{theorem}
\newtheorem{Assump}[thm]{Assumption}
\numberwithin{equation}{section}
\title{}
\date{}
\author{}
\begin{document}

\title[Convergence of the singular Yamabe flow]
{Convergence of the Yamabe flow on singular spaces with positive Yamabe constant}

\author{Gilles Carron}
\address{D\'{e}partement de Math\'{e}matiques, Universit\'{e} de Nantes}
\address{2 rue de la Houssini\`{e}re,  BP 92208, 44322 Nantes cedex 03, France}
\email{Gilles.Carron@univ-nantes.fr}

\author{J\o rgen Olsen Lye}
\address{Mathematisches Institut,
Universit\"at Oldenburg,
26129 Oldenburg,
Germany}
\email{jorgen.olsen.lye@uni-oldenburg.de}

\author{Boris Vertman}
\address{Mathematisches Institut,
Universit\"at Oldenburg,
26129 Oldenburg,
Germany}
\email{boris.vertman@uni-oldenburg.de}

\subjclass[2000]{53C44; 58J35; 35K08}
\date{\today}

\begin{abstract}
{In this work, we study the convergence of the normalized Yamabe flow 
with positive Yamabe constant on a class of pseudo-manifolds that includes stratified spaces with iterated cone-edge metrics. We establish convergence under a low-energy condition. We also prove a concentration--compactness dichotomy, and investigate what the alternatives to convergence are. We end by investigating a non-convergent example due to Viaclovsky in more detail.}
\end{abstract}

\maketitle
\tableofcontents

\section{Introduction and statement of the main results}\label{intro-section}

The Yamabe conjecture states that for any compact, smooth
Riemannian manifold $(M,g_0)$ there exists a constant scalar curvature metric, conformal
to $g_0$. The first proof of this conjecture was initiated by Yamabe \cite{Yamabe} and continued by Trudinger \cite{Trudinger}
Aubin \cite{Aubin}, and Schoen \cite{Schoen}. The proof is based on the calculus of variations and elliptic partial differential 
equations. An alternative tool for proving the conjecture is due to Hamilton \cite{Hamilton}, who introduced the normalized Yamabe flow of on a Riemannian manifold $(M,g_0)$,
which is a family $g\equiv g(t), t\in [0,T]$ of Riemannian metrics on $M$ such 
that the following evolution equation holds
 \begin{equation}\label{eq:YF}
 \partial_t g=-(S-\sigma)g, \quad \sigma := \Vol_g(M)^{-1} \int_M S\, d\Vol_g.
 \end{equation}
Here $S$ is the scalar curvature of $g$, $\Vol_g(M)$ the total volume of $M$ with respect to $g$ and $\sigma$ is the average scalar curvature of $g$.
The normalization by $\sigma$ ensures that the total volume does not change along the flow.
Hamilton \cite{Hamilton} showed the long time existence of the Yamabe flow. 
It preserves the conformal class of $g_0$ and ideally should converge
to a constant scalar curvature metric, thereby establishing the Yamabe conjecture by parabolic methods. \medskip

Establishing convergence of the normalized Yamabe flow is intricate already in the setting of smooth, compact manifolds.
In case of scalar negative, scalar flat and locally conformally flat scalar positive cases, convergence is due to Ye \cite{Ye}.  
The case of a non-conformally flat $g_0$ with positive scalar curvature is delicate and has been studied by Schwetlick-Struwe \cite{SS}  and Brendle 
\cite{Brendle, BrendleYF} amongst others.\medskip

More specifically, \cite[Theorem 3.1]{SS} is a concentration-compactness dichotomy, and \cite[Section 5]{SS},  \cite[p. 270]{Brendle}, and  \cite[p. 544]{BrendleYF} invoke the positive mass theorem to rule out concentration (also known as the formation of bubbles), which is 
where the dimensional restriction in \cite{SS, Brendle} and the spin assumption in \cite[Theorem 4]{BrendleYF} come from. 
Assuming \cite{SY17} to be correct, \cite{Brendle} and \cite{BrendleYF} cover all closed manifolds which are not conformally equivalent to spheres. 
\medskip

In the non-compact setting, our understanding is limited. On complete manifolds, long-time existence 
has been discussed in various settings by Ma \cite{Ma}, Ma and An \cite{MaAn}, and the recent contribution by Schulz \cite{Schulz}.
On incomplete surfaces, where Ricci and Yamabe flows coincide, Giesen and Topping \cite{Topping, Topping2}
constructed a flow that becomes instantaneously complete. 
In higher dimensions, we have the work of Roidos \cite{Conic}, which establishes existence of the Yamabe-flow in the presence of a cone singularity as long as the initial scalar curvature is in $L^{q}(M)$  for some $q>\frac{n}{2}$. See the short discussion in Subsection \ref{subsection: S_0Regularity} below for the geometric interpretation. Convergence is not discussed in \cite{Conic}.
\medskip

In this work, we study the convergence of the Yamabe flow on a general class of spaces that includes incomplete spaces with 
cone-edge (wedge) singularities or, more generally, stratified spaces with iterated cone-edge singularities. 
This continues a program initiated in \cite{ShortTime, LongTime}, where 
existence and convergence of the Yamabe flow has been established in case of negative Yamabe invariant, and \cite{LongTime2} where long time existence is studied in the case of a positive Yamabe constant. This is also an extension of the work of the first author with collaborators, \cite{ACM}, \cite{ACM2}, and \cite{ACT}.
\medskip

\noindent
 We now proceed with explaining the assumptions in detail.

\subsection{Normalized Yamabe flow and Yamabe constant}
Consider a Riemannian manifold $(M,g_0)$, with $g_0$ normalized such that the total volume $\Vol_{g_0}(M)=1$. We assume $M\subset \overline{M}$ to be the regular part of a compact metric measure space $\overline{M}$, meaning $\overline{M}$ carries a distance function\footnote{This distance does not evolve in time. We will not consider the distance associated to the evolving metric.}  $d\colon \overline{M}\times \overline{M}\to \R$ which coincides with the distance induced from $g_0$ on $M$.  We will often suppress the metric $d$, and leave it out of the notation. We will assume that $\overline{M}$ satisfies the \cite[Hypothesis i)-iv)a)]{ACM}. We will state these assumptions explicitly below as Assumption \ref{admissible-assump}. 
\medskip
 
The Yamabe flow \eqref{eq:YF} preserves the conformal class
of the initial metric $g_0$ and, assuming throughout the paper always $\dim M = n \geq 3$, we can write $g=u^{\frac{4}{n-2}}g_0$ 
for some function $u>0$ on $M_T = M \times [0,T]$ for some upper time limit $T>0$. Then the normalized Yamabe flow equation
can be equivalently written as an equation for $u$
\begin{equation}
\partial_t \left(u^{\frac{n+2}{n-2}}\right) = \frac{n+2}{4} \left(\sigma u^{\frac{n+2}{n-2}}- L_0(u)\right),
\quad L_0 := S_0-\frac{4(n-1)}{n-2} \Delta_0,
\label{eq:YamabeFlow}
\end{equation}
where $L_0$ is the conformal Laplacian of $g_0$, defined in terms of the scalar curvature $S_0$
and the Laplace Beltrami operator $\Delta_0$ associated to the initial metric $g_0$. 
The scalar curvature $S$ of the evolving metric $g$ can be written 
\begin{equation}
S=u^{-\frac{n+2}{n-2}} L_0(u)=S_0 u^{-\frac{4}{n-2}} - 4\frac{n-1}{n-2} u^{-\frac{n+2}{n-2}}\Delta_0 u,
\label{eq:EvolvingScalar} 
\end{equation}
and the volume form of $g=u^{\frac{4}{n-2}}g_0$ is given by $d\Vol_g=u^{\frac{2n}{n-2}} d\mu$,
where we write $d\mu := d\Vol_{g_0}$ for the time-independent initial volume form. 
One computes 
\begin{align}\label{eq:DVol}
\partial_t d\Vol_g &=-\frac{n}{2} (S-\sigma)\, d\Vol_g.
\end{align}
Hence, the total volume of $(M,g)$ is constant, and thus equal to $1$ along the flow.
The average scalar curvature takes the form
\begin{equation}
\sigma=\int_M S\, d\Vol_g= \int_M L_0(u) u^{-\frac{n+2}{n-2}} u^{\frac{2n}{n-2}} \, d\mu= \int_{M}\frac{4(n-1)}{n-2}  \vert \nabla u\vert^2_{g_0}+S_0 u^2\, d\mu.
\label{eq:rho}
\end{equation} 
Explicit computations lead to the following evolution equation for the average scalar curvature
\begin{align} 
\label{eq:rhoEvol}
\partial_t \sigma &=-\frac{n-2}{2} \int_M \left( \sigma-S\right)^2 u^{\frac{2n}{n-2}}\, d\mu.
\end{align}
The latter evolution equation in particular implies that $\sigma \equiv \sigma(t)$ is non-increasing 
along the flow. We conclude the exposition with defining some Sobolev spaces and the Yamabe constant of $g_0$, which 
incidentally provides a lower bound for $\sigma$.  We define the $L^q(M)$ spaces with respect to the integration measure $d\mu$. \medskip

\noindent We define the first Sobolev space $H^1(M)$ as the completion of the Lipschitz functions $\text{Lip}(\overline{M})$ with respect to the $H^1$-norm,  
\begin{equation}
\norm{v}^2_{H^1(M)} := \int_M \nu^2 \, d\mu +\int_M \vert \nabla \nu\vert_{g_0}^2\, d\mu,
\label{eq:H1Norm}
\end{equation}
where $\vert \nabla v\vert^2_{g_0}\coloneqq g_{0}^{ij} (\partial_i v)(\partial_j v)$.
Similarly, we define $H^1(M,g)$ by using $d\Vol_g$ instead of $d\mu$ to define $L^2(M,g)$, and $\vert \nabla \nu\vert_{g}^2=u^{-\frac{4}{n-2}} \vert \nabla \nu\vert_{g_0}^2$ instead of $\vert \nabla \nu\vert_{g_0}^2$. Let $u$ be a solution of \eqref{eq:YamabeFlow}.  If $u$ and $u^{-1}$ are both bounded, one easily checks $H^1(M) = H^1(M,g)$ with equivalent norms. 
\medskip

\noindent We define the Yamabe constant of $g_0$ as follows
\begin{equation}\begin{split}
Y(M,g_0) &:= \inf_{v\in H^1(M)\setminus \{0\}}  
\frac{\int_{M} \frac{4(n-1)}{n-2} \vert \nabla v\vert^2_{g_0}+S_0 v^2\, d\mu}{\norm{v}^2_{L^{\frac{2n}{n-2}}(M)}}
\\ &\leq \int_{M}\frac{4(n-1)}{n-2}  \vert \nabla u\vert^2_{g_0}+S_0 u^2\, d\mu \stackrel{\eqref{eq:rho}}{=} \sigma,
\label{eq:YamabeConst}
\end{split}\end{equation}
where in the inequality we have used that for any solution $u$ of \eqref{eq:YamabeFlow}, 
$\norm{u}_{L^{\frac{2n}{n-2}}(M)} = d\Vol_g(M) \equiv 1$.
How one proceeds will depend heavily on the sign of the Yamabe constant. In this paper\footnote{See \cite{ShortTime, LongTime} for the case of $Y(M,g_0)\leq 0$.} we will assume $Y(M,g_0)>0$. 
This in particular implies directly from \eqref{eq:YamabeConst}  that the average scalar curvature $\sigma$ is positive and uniformly bounded away from zero along the normalized Yamabe flow.
\smallskip

\begin{Assump}\label{Y-assump}
The Yamabe constant $Y(M,g_0)$ is positive.
\end{Assump} \ \\[-8mm]

We also need the local Yamabe constant $Y_{\ell}(\overline{M},[g_0])$, which is defined as follows. For any $p\in \overline{M}$, we let $B(p,R)$ denote the open ball centred at $p$ of radius $R$. We then let
\begin{equation}
Y_{\ell}(\overline{M},g_0)\coloneqq \inf_{p\in \overline{M}}\lim_{R\to 0} Y(B(p,R),g_0),
\end{equation}
where the Yamabe constant $Y(U)$ for open sets $U\subset \overline{M}$ is defined as \eqref{eq:YamabeConst}, where all the integrals are over $U$ and $v\in H^1(U)$ with $\text{supp}(u)\subset U\cap M$. In Section \ref{Section:Dichotomy}, we will need the Sobolev constant $\cS(U)$ for an open set $U\subset \overline{M}$ and the local Sobolev constant $\cS_{\ell}(\overline{M})$ as well, so we record the definitions here.
\begin{equation}\label{eq:SobolevConst} \begin{split}
\cS(U) &= \inf \left\{ 4\frac{n-1}{n-2}\int_M \vert \nabla \phi\vert^2_g\, d\Vol_g\, \colon\, \phi\in H^{2} (U),\right. \\
& \quad \left. \text{supp}(u)\subset U,\,  \norm{\phi}_{L^{\frac{2n}{n-2}}(U)}=1\right\}, \\
\cS_{\ell}(\overline{M}) &=\inf_{p\in \overline{M}} \lim_{R\to 0} \cS(B(p,R)).
\end{split}\end{equation}

\subsection{Admissibility assumptions}

This work, like \cite{LongTime2}, is strongly influenced by the work of Akutagawa, the first author, and Mazzeo, \cite{ACM}  on the Yamabe problem on stratified spaces.
We will carry over hypothesis $i)-iii)$ and $iv) a)$ from \cite[p.  5]{ACM}.

\begin{Def}\label{admissible}
Let $(M,g_0)$ be a smooth Riemannian manifold of dimension $n$. We call 
$(M,g_0)$ admissible, if it satisfies the following conditions.
\begin{itemize}
\item $M$ is the regular part of a compact, metric measure space $\overline{M}$.
\item Smooth, compactly supported functions $C^\infty_c(M)$ are dense\footnote{This can be phrased as $H^1_0(M)=H^1(M)$.
Note that this rules out $M$ being the interior of a manifold with a codimension 1 boundary.} in 
$H^1(M)$.
\item The Hausdorff $n$-dimensional measure is absolutely continuous with respect to $d\mu$, and both are Ahlfors $n$-regular, i.e.
\begin{equation}
C^{-1} R^n\leq \Vol(B(p,R))\leq C R^n
\label{eq:Ahlfors-Regular}
\end{equation}
for some $C>0$ and every $p\in \overline{M}$ and $2R< \text{diam}(\overline{M})\coloneqq \sup \{d(p,q)\, \colon\, p,q\in \overline{M}\}$.
\item $(M,g_0)$ admits a Sobolev inequality of the following kind.
There exist $A_0,B_0>0$ such that for all $f\in H^1(M)$
\begin{equation}
\norm{f}_{L^{\frac{2n}{n-2}}(M)}^2 \leq A_0 \norm{\nabla f}_{L^2(M)}^2 +B_0 \norm{f}_{L^2(M)}^2.
\label{eq:Sobolev}
\end{equation}
\item The scalar curvature of $g_0$ satisfies $\norm{S_0}_{L^{q}(M)}<\infty$  for some $q>\frac{n}{2}$.
\end{itemize}
\end{Def}

The main examples we have in mind are closed manifolds and regular parts of smoothly stratified spaces,
endowed with iterated cone-edge metrics. 
See \cite[Section 2.1]{ACM} or Appendix \ref{App:Bubbles} for a definition of the latter. That the Sobolev inequality holds 
in this case is shown in \cite[Proposition 2.2]{ACM}.
Note that for most of this work,  we do not specify explicitly how the metric $g_0$ looks near the singular strata of $\overline{M}$.
Restrictions on the local behaviour of the metric will instead be coded in either $L^q$-data, like requiring the initial scalar curvature $S_0$ to be in 
$L^q(M)$ , or in geometric conditions  like the Ahlfors $n$-regularity \eqref{eq:Ahlfors-Regular}.
\begin{Rem}
This work does not extend to finite volume, complete, non-compact manifolds, since (see \cite[Lemma 3.2, pp. 18-19]{Sobolev}, \cite[Remark 2), pp. 56-57]{Sobolev}) any finite volume, complete manifold satisfying the Sobolev inequality is compact. The Sobolev inequality is our most important tool. 
\end{Rem}
\smallskip

\begin{Assump}\label{admissible-assump}
$(M,g_0)$ is an admissible Riemannian manifold.
\end{Assump} \ \\[-8mm]

For the convergence results of Section \ref{Section:Convergence} and \ref{Section: Eigenvalues} we also need a third assumption on $\overline{M}$, namely a local Poincar\'{e} inequality.
\begin{Assump}\label{P-assump}$\overline{M}$ satisfies the Poincar\'e inequality, that is to say there is a constant $\uplambda>0$ such that for any ball $B\subset M$ of radius $r\le \diam (M)$, we get
$$\| \varphi-\varphi_B\|_{L^2(B)}\le \lambda r\| \nabla\varphi\|_{L^2(B)}$$
for any Lipschitz function $\varphi\colon B\rightarrow \R$, where we have set
\[\varphi_B\coloneqq \fint_B \varphi\, d\mu=\frac{1}{\mu(B)}\int_B \varphi\, d\mu.\]
\end{Assump} \ \\[-8mm]

\noindent In what follows we want to relate the assumption of the Sobolev inequality 
\eqref{eq:Sobolev} in Definition \ref{admissible} to positivity of the Yamabe constant $Y(M,g_0)$, showing that there is some redundancy in the above assumptions. Since several of our arguments revolve around having the Sobolev inequality, we still leave it in to stress its importance.

\begin{Prop}\label{SY}
Assume $S_0\in L^{q}(M)$ for some $q>\frac{n}{2}$ and $0<Y(M,g_0)$. 
Then the Sobolev inequality \eqref{eq:Sobolev} holds. 
\end{Prop}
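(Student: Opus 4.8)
The plan is to read positivity of the Yamabe constant as an almost-Sobolev inequality whose only defect is a zeroth-order term weighted by $S_0$, and then to absorb that term using that $S_0\in L^q(M)$ with $q$ \emph{strictly} larger than $n/2$. Since $\mu(M)=1<\infty$ we may assume $n/2<q<\infty$; write $q':=q/(q-1)$ for the conjugate exponent. A one-line computation shows that the interpolation exponent $\theta$ determined by $\frac{1}{2q'}=\frac{1-\theta}{2}+\frac{\theta(n-2)}{2n}$ equals $\theta=\frac{n}{2q}$, so the hypothesis $q>n/2$ is exactly what forces $\theta\in(0,1)$, equivalently $2<2q'<\frac{2n}{n-2}$.

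First I would note that, directly from the definition \eqref{eq:YamabeConst} of $Y(M,g_0)$ and Assumption~\ref{Y-assump}, every nonzero $v\in\text{Lip}(\overline{M})$ satisfies
\[
Y(M,g_0)\norm{v}_{L^{\frac{2n}{n-2}}(M)}^2\leq\frac{4(n-1)}{n-2}\norm{\nabla v}_{L^2(M)}^2+\int_M S_0\,v^2\,d\mu,
\]
all terms being finite because $v$ is bounded, $|\nabla v|\in L^\infty(M)$ and $S_0\in L^q(M)\subset L^1(M)$. By H\"older's inequality $\int_M S_0\,v^2\,d\mu\leq\norm{S_0}_{L^q(M)}\norm{v}_{L^{2q'}(M)}^2$, and by log-convexity of $L^p$-norms on the finite measure space $(M,\mu)$,
\[
\norm{v}_{L^{2q'}(M)}^2\leq\norm{v}_{L^2(M)}^{2(1-\theta)}\norm{v}_{L^{\frac{2n}{n-2}}(M)}^{2\theta}.
\]

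Next I would apply Young's inequality in the form $ab\leq\epsilon\,a^{1/\theta}+C_\epsilon\,b^{1/(1-\theta)}$ --- valid precisely because $\theta\in(0,1)$ --- to get $\norm{v}_{L^{2q'}(M)}^2\leq\epsilon\norm{v}_{L^{\frac{2n}{n-2}}(M)}^2+C_\epsilon\norm{v}_{L^2(M)}^2$ for every $\epsilon>0$. Choosing $\epsilon$ with $\norm{S_0}_{L^q(M)}\epsilon\leq\tfrac12 Y(M,g_0)$ (and $\epsilon=1$ if $S_0\equiv0$) and substituting back, the $L^{\frac{2n}{n-2}}$-term on the right is absorbed into the left-hand side, leaving
\[
\tfrac12 Y(M,g_0)\norm{v}_{L^{\frac{2n}{n-2}}(M)}^2\leq\frac{4(n-1)}{n-2}\norm{\nabla v}_{L^2(M)}^2+C\norm{v}_{L^2(M)}^2,
\]
which is \eqref{eq:Sobolev} for all $v\in\text{Lip}(\overline{M})$, with constants $A_0,B_0$ depending only on $n$, $Y(M,g_0)$ and $\norm{S_0}_{L^q(M)}$. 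To pass to all of $H^1(M)$, given $f\in H^1(M)$ I would pick $v_k\in\text{Lip}(\overline{M})$ with $v_k\to f$ in $H^1(M)$ (possible since $H^1(M)$ is by definition the $H^1$-completion of $\text{Lip}(\overline{M})$), pass to an a.e.\ convergent subsequence, and let $k\to\infty$: the right-hand side converges while Fatou's lemma gives $\norm{f}_{L^{\frac{2n}{n-2}}(M)}\leq\liminf_k\norm{v_k}_{L^{\frac{2n}{n-2}}(M)}$, so \eqref{eq:Sobolev} holds on $H^1(M)$ (and in particular $H^1(M)\hookrightarrow L^{\frac{2n}{n-2}}(M)$).

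The scheme is largely routine bookkeeping; the one genuinely substantial point is that the absorption step requires the strict inequality $2q'<\frac{2n}{n-2}$, i.e.\ $q>n/2$, in order to have $\theta<1$. In the borderline case $q=n/2$ the term $\int_M S_0\,v^2\,d\mu$ is only controlled by a fixed multiple of $\norm{v}_{L^{\frac{2n}{n-2}}(M)}^2$ with no slack, and this approach breaks down.
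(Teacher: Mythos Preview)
Your proof is correct and follows essentially the same route as the paper's own argument (given as Proposition~\ref{Prop:Sobolev}, specialized to $g=g_0$, $S=S_0$): start from the Yamabe inequality, control $\int S_0 v^2$ by H\"older, interpolate the resulting $L^{2q'}$-norm between $L^2$ and $L^{2n/(n-2)}$ with exponent $\theta=n/(2q)<1$, split via Young's inequality, and absorb. Your treatment is slightly more careful than the paper's in that you first establish the inequality on $\mathrm{Lip}(\overline{M})$ and then pass to $H^1(M)$ by density and Fatou, thereby avoiding any tacit assumption that $H^1(M)\hookrightarrow L^{2n/(n-2)}(M)$ before the Sobolev inequality is proved.
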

\noindent We prove this as Proposition \ref{Prop:Sobolev} below. \medskip

\noindent With the assumptions so far, we can say something about the local Yamabe constant.
\begin{Prop}{\cite[Prop. 1.4b)]{ACM}}
Under Assumption \ref{admissible-assump}, we have $Y_{\ell}(\overline{M},g_0)>0$.
\end{Prop}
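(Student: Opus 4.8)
The plan is to produce, uniformly in $p\in\overline{M}$, a positive lower bound for $Y(B(p,R),g_0)$ that holds for all sufficiently small $R$, directly from the global Sobolev inequality \eqref{eq:Sobolev}, the Ahlfors $n$-regularity \eqref{eq:Ahlfors-Regular}, and the hypothesis $\|S_0\|_{L^q(M)}<\infty$ with $q>\tfrac n2$. Fix $p$, $R>0$, and a test function $v\in H^1(B(p,R))$ with $\mathrm{supp}(v)\subset B(p,R)\cap M$; extending by zero we may regard $v\in H^1(M)$, so \eqref{eq:Sobolev} is available.

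First I would upgrade \eqref{eq:Sobolev} to a scale-invariant Sobolev inequality on small balls: by H\"older and \eqref{eq:Ahlfors-Regular},
\[
\|v\|_{L^2(B(p,R))}^2\le \mu(B(p,R))^{2/n}\|v\|_{L^{\frac{2n}{n-2}}(B(p,R))}^2\le C^{2/n}R^2\,\|v\|_{L^{\frac{2n}{n-2}}(B(p,R))}^2,
\]
so once $B_0C^{2/n}R^2\le\tfrac12$ — a threshold depending only on $A_0,B_0,C,n$, hence independent of $p$ — \eqref{eq:Sobolev} gives $\|v\|_{L^{\frac{2n}{n-2}}(B(p,R))}^2\le 2A_0\|\nabla v\|_{L^2(B(p,R))}^2$. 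Next I would show that the potential term is negligible at small scales: letting $q'$ be conjugate to $q$, the condition $q>\tfrac n2$ gives $2q'<\tfrac{2n}{n-2}$ and $\delta:=\tfrac2n-\tfrac1q>0$, and two applications of H\"older yield
\[
\Bigl|\int_{B(p,R)}S_0v^2\,d\mu\Bigr|\le \|S_0\|_{L^q(B(p,R))}\,\mu(B(p,R))^{\delta}\,\|v\|_{L^{\frac{2n}{n-2}}(B(p,R))}^2\le \varepsilon(R)\,\|v\|_{L^{\frac{2n}{n-2}}(B(p,R))}^2,
\]
where $\varepsilon(R):=\|S_0\|_{L^q(M)}C^{\delta}R^{n\delta}\to 0$ as $R\to 0$, uniformly in $p$ by \eqref{eq:Ahlfors-Regular}. (Absolute continuity of $\int|S_0|^q$ could be used instead; the crucial feature either way is uniformity in $p$.)

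Combining the two estimates, for $R$ below a $p$-independent threshold,
\[
\int_{B(p,R)}\frac{4(n-1)}{n-2}|\nabla v|^2+S_0v^2\,d\mu\ \ge\ \Bigl(\frac{2(n-1)}{(n-2)A_0}-\varepsilon(R)\Bigr)\|v\|_{L^{\frac{2n}{n-2}}(B(p,R))}^2,
\]
so once additionally $\varepsilon(R)\le\tfrac{n-1}{(n-2)A_0}$ the prefactor is $\ge\tfrac{n-1}{(n-2)A_0}>0$. Dividing by $\|v\|_{L^{\frac{2n}{n-2}}(B(p,R))}^2$ and taking the infimum over admissible $v$ gives $Y(B(p,R),g_0)\ge\tfrac{n-1}{(n-2)A_0}$ for all small $R$; since $R\mapsto Y(B(p,R),g_0)$ is monotone the limit as $R\to 0$ exists, and taking it and then the infimum over $p$ yields $Y_\ell(\overline{M},g_0)\ge\tfrac{n-1}{(n-2)A_0}>0$. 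The only point needing care is that every smallness threshold be chosen independently of $p$, which is exactly what the global constants $A_0,B_0$ in \eqref{eq:Sobolev} and $C$ in \eqref{eq:Ahlfors-Regular} provide; beyond this bookkeeping there is no real obstacle, the argument being that of \cite{ACM}.
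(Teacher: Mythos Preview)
Your argument is correct and complete. The paper itself does not supply a proof of this proposition; it simply quotes the result from \cite[Prop.~1.4b)]{ACM}, so there is no in-paper proof to compare against. Your approach --- absorbing the $B_0\|v\|_{L^2}^2$ term via H\"older and Ahlfors regularity to get a scale-invariant local Sobolev inequality, then controlling the potential term $\int S_0 v^2$ by $\|S_0\|_{L^q}\mu(B)^{2/n-1/q}\|v\|_{L^{2n/(n-2)}}^2$ --- is precisely the argument of \cite{ACM}, as you yourself note, and the bookkeeping of $p$-independent thresholds is handled correctly.
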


\begin{Rem}
\label{Rem:SchoenYau}
It is important to notice that we allow points in $p \in \overline{M}\setminus M$ in the definition of $Y_{\ell}$. Indeed, if $p\in M$ is a smooth point then 
\[\lim_{R\to 0}Y(B(p,R),g_0)=Y(\S^n,g_{round})=n(n-1)\left(\Vol_{g_{round}}(\S^n)\right)^{\frac{2}{n}}>0.\] 
See for instance \cite[Lemma A.1, p. 225]{SchoenYauBook}. So $Y_{\ell}(\overline{M},g_0)>0$ is always true for smooth manifolds. 
\end{Rem}

\subsection{Regularity of the initial scalar curvature}

\label{subsection: S_0Regularity}

In this work, we show (Theorem \ref{Thm:FlowExistence}) that for $S_0\in L^{q}(M)$  for some $q>\frac{n}{2}$, we will have $S(t)\in L^{\infty}(M)$ for $t>0$ . We close
this subsection with the observation, that on stratified spaces, 
$S_0 \in L^q(M)$ for $q \geq  n/2$ and $S_0 \in L^\infty(M)$ basically carry the same geometric restriction to leading order. 
Indeed, consider a cone $(0,1) \times N$ over a Riemannian manifold $(N,g_0)$,
with metric $g_0 = dx^2 \oplus x^2 g_N + h$, where $h$ is smooth in $x\in [0,1]$ and 
$|h|_{\overline{g}} = O(x)$ as $x\to 0$, where we write $\overline{g}:= dx^2 \oplus x^2 g_N$. Then 

\begin{equation}
S_0 = \frac{\textup{scal}(g_N) - \dim N (\dim N -1)}{x^{2}} + O(x^{-1}), \quad \textup{as} \ x \to 0,
\end{equation}
where the higher order term $O(x^{-1})$ comes from the perturbation $h$. 
Both assumptions $S_0 \in L^\infty(M)$ and $S_0 \in L^q(M)$ for $q\geq  n/2$
imply that the leading term of the metric $g_0$ is scalar-flat, i.e. $\textup{scal}(g_N) = \dim N (\dim N -1)$. Of course, if $S_0\in L^q(M)$ for $\frac{n}{2}<q<n$, the $\frac{1}{x}$-term could still be there.

\subsection{The main results}\label{MainResults}
Theorem \ref{Thm:ShortTime} and Proposition \ref{Prop:LongTime} combine to say that the Yamabe flow exists for all time in our setting. 
Theorem \ref{Prop:Dichotomy} is a dichotomy which describes what can happen to sequences $u(t_k)$ as $t_k\to \infty$. This is our main convergence result, and says that subsequences either converge, or the volume concentrates at finally many points. The second case leads the so-called formation of bubbles. Proposition \ref{Prop:u+delta} says that given a non-trivial upper bound on the initial scalar curvature, the Yamabe flow has convergent subsequences and one gets a constant scalar curvature metric. Proposition \ref{Prop:Eigenvalue1} presents a criterion (in terms of the first eigenvalue of the Laplacian) for when the entire flow converges and not just subsequences.  In Section \ref{section: nonconvergent}, we present a detailed computation of a counter example due to Viaclovsky, which shows that the Yamabe flow does not always have convergent subsequences. In Appendix \ref{App:Bubbles}, we give a detailed and quite general description of the bubbles which the dichotomy predicts will appear when the flow does not converge.

\section{A general existence theorem for the Yamabe flow}\label{Existence}
In this section, we take a step back and show how the short-time existence and uniqueness of many kinds of flow equations follows from abstract semigroup theory. We summarize how to apply these results to the Yamabe flow towards the end of the section. Several of the results are conveniently phrased in the language of Dirichlet spaces, so we first introduce these and the relation to singular spaces. We warn the reader that there are other definitions around. In particular, it is common to call just the function space $\cD(\cE)$ a Dirichlet space. The definition presented here is what is called a regular, strongly local Dirichlet space in \cite{ACM2}.
\begin{Def}\label{Def:DirichletSpace}
We consider a Dirichlet space $(\cM,d,\mu,\cE)$ where\begin{itemize}
\item $(\cM,d)$ is compact with diameter $D$,
\item $\mu$ is a probability measure,
\item $(\cM,d,\mu,\cE)$ is regular, meaning $\cC^0(\cM)\cap\cD(\cE)$ is dense in $\cC^0(\cM)$ and in $\cD(\cE)$.
\item $(\cM,d,\mu,\cE)$ is strongly local, which means that for any $u,v\in \cD(\cE)$ such that $u$ is constant on a neighbourhood of the support of $v$, we have 
$\cE(u,v)=0$.
\end{itemize}
 In that case, there is a bilinear map the \textit{energy measure} $\Gamma\colon \cD(\cE)\times \cD(\cE)\rightarrow \mathbb{M}(\cM)$  such that 
\[u,v\in \cD(\cE)\colon\ \cE(u,v)=\int_\cM d\Gamma(u,v).\]
Here $\mathbb{M}(\cM)$ is the dual of $\cC^{0}(\cM)$, which we identify (using the Riesz–Markov–Kakutani representation theorem) with the space of signed Radon measure on $\cM$)
The energy measure is  determined by the formula 
$$\cE(\phi u,u)-\frac12 \cE(\phi,u^2)=\int_M \phi \,d\Gamma(u,u)\quad {\rm for}\ \ {\forall}u\in \cD(\cE),\ {\forall}\phi\in \cD(\cE) \cap \ \cC^0(\cM)\,.$$ 
Note that for $u\in  \cD(\cE), d\Gamma(u,u)$ is a positive Radon measure.
The energy measure  satisfies the Leibniz and chain rules: 
$$d\Gamma(uv,w)=ud\Gamma(v,w)+vd\Gamma(u,w)\quad {\rm for}\ \ {\forall}u, v, w\in \cD(\cE)\,,$$
$$d\Gamma(f(u),v)=f'(u)d\Gamma(u,v)\quad {\rm for}\ \ {\forall}u, v \in \cD(\cE),\ {\forall}f\in \mathrm{Lip}(\R)\,.$$
Then we make the following supplementary assumptions:
\begin{itemize}
\item We assume that $d$ is  compatible with the Caratheodory distance or intrinsic pseudo-distance $d_\cE$
 defined by
\[d_\cE(x,y)=\sup\left\{u(x)-u(y)\ |\ u\in \cD(\cE)\cap \ \cC^0(M)\ \ \mathrm{and}\ \ d\Gamma(u,u)\le d\mu\right\},\]
where $d\Gamma(u,u)\le d\mu$ means that there exists a function $f \le 1$ such that 
$d\Gamma(u,u)=fd\mu$. 
\item  For some integer $n>2$: $(\cM,d,\mu)$ is $n-$Ahlfors regular, meaning there is a constant $\uptheta$ such that for any 
$r\in (0,D]$ and any $x\in \cM$ we have
\[\uptheta^{-1}\le \frac{\mu(B(x,r))}{r^n}\le \uptheta.\]
\item  $(\cM,d,\mu,\cE)$ satisfies the Poincar\'e inequalities, which is to say
there is a constant $\upgamma$ such that for any 
$r\in (0,D]$ and any $x\in \cM$, the following holds 
\begin{equation*}
\left\|v-v_B\right\|^2_{L^2(B(x,r))}\le \upgamma r^2 \int_{B(x,r)} d\Gamma(v,v)\quad {\rm for}\ \ {\forall}v \in \cD(\cE)\,,
\end{equation*} 
where $v_B=\frac{1}{\mu(B(x,r))} \int_{B(x,r)} v d\mu$.
\end{itemize}
\end{Def}
It is well known that then there is a positive constant $A$ depending only on $n,\uptheta,\upgamma$ such that the following Euclidean Sobolev inequality holds
\begin{equation*}
A\, \left(\int_{\cM} |v|^{\frac{2n}{n-2}}d\mu\right)^{1-\frac 2n}\le  \int_{\cM} d\Gamma(v,v)+\int_{\cM} v^2d\mu\  \quad {\rm for}\ \ {\forall}\, v \in \cD(\cE)\,.
\end{equation*} 
In this case, we know that $\mathrm{Lip}(\cM)$ is dense in $\cD(\cE)$.
\begin{Rem}
\label{Rem:Dirichlet}
In the case of a dense Riemannian manifold $(M,g)\subset \overline{M}$ where the Riemannian distance coincides with $d$ on $M$, the reader may mentally substitute $\cE (u,v)=\int_M \ip{\nabla u}{\nabla v}_g\, d\Vol_g$, i.e. $d\Gamma(u,v)=\ip{\nabla u}{\nabla v}_g\, d\Vol_g$. The generator $L$ is then $L=-\Delta_g$, where the Laplacian $\Delta_g$ is negative definite like in the rest of the article.
All in all, this Dirichlet space setting covers the framework studied in the previous section with $\overline{M}=\cM$.
\end{Rem}
\begin{Rem}
By \cite[Proposition 3.1]{Schrodinger}, a compact stratified space with an iterated edge metric satisfies these conditions. 
\end{Rem}

\noindent Before we state our main tool, we recall what a sectorial operator is.
\begin{Def}
\label{Def:Sectorial}
Let $X$ be a Banach space. A closed, densely defined operator $A\colon \cD(A)\rightarrow X$  is called sectorial (of angle $0<\delta\leq \frac{\pi}{2}$) if there is a sector
\[\Sigma_{\pi/2+\delta}:=\left\{\lambda\in \C: |\arg \lambda|<\frac \pi 2+\delta\right\}\]
 included in the resolvent set of $A$ and if for each $\epsilon\in (0,\delta)$ there is a constant $C_\epsilon$ such that for any $\lambda\in \Sigma_{\pi/2+\epsilon}\setminus\{0\}$ :
\[\left\|\left(A-\lambda\right)^{-1}\right\|_{X\to X}\le \frac{C_\epsilon}{|\lambda|}.\]
\end{Def}

We are going to use the following theorem, which is an adaptation of \cite[Theorem 8.1.1 and Corollary 8.3.8]{Lu} :
\begin{Thm}\label{general} Assume that $\cD,X$ are Banach spaces with $\cD\subset X$ dense and  continuously embedded, i.e.
\[\|\bullet\|_X\le \|\bullet\|_\cD.\]
Let $\cO\subset \cD$ be an open set and let $F\colon \cO\rightarrow X$ be a smooth  map such that for any $u\in \cO$, the Frechet derivative $L_u=DF(u)\colon \cD\rightarrow X$ is sectorial, and there is a positive constant $C(u)$ such that for any $\xi\in \cD$ we have\footnote{This says that the graph norm of $L_u$ is equivalent to the norm on $\cD$.}
\[\frac{1}{C(u)}\norm{\xi}_\cD\le \norm{L_u(\xi)}_X+\norm{\xi}_X\le C(u) \norm{\xi}_\cD.\]
Then for any $u_0\in \cO$ there is some $\tau>0$ and  $u\colon [0,\tau)\rightarrow \cD$ continuous with $u(0)=u_0$ and such that $u\colon (0,\tau)\rightarrow \cD$ is smooth and for any $t\in (0,\tau)$:
\[u'(t)=F(u(t)).\]
Moreover such a solution is unique.
\end{Thm}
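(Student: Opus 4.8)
The plan is to reduce Theorem \ref{general} to the cited result \cite[Theorem 8.1.1 and Corollary 8.3.8]{Lu} by verifying the hypotheses of Lunardi's abstract theory of fully nonlinear parabolic problems. The key point is that the statement we want is a quasilinear/fully nonlinear short-time existence and uniqueness result for the abstract Cauchy problem $u'(t) = F(u(t))$, $u(0) = u_0$, where $F$ is smooth between Banach spaces and its linearization at each point is sectorial with graph norm equivalent to the $\cD$-norm. This is precisely the setting in which Lunardi's machinery of maximal regularity and analytic semigroups applies, so the proof is essentially a matter of checking that our hypotheses match hers.

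First I would fix $u_0 \in \cO$ and set $A := L_{u_0} = DF(u_0)\colon \cD \to X$. By hypothesis $A$ is sectorial of some angle, hence generates an analytic semigroup on $X$, and the assumed two-sided bound $\tfrac{1}{C(u_0)}\norm{\xi}_\cD \le \norm{A\xi}_X + \norm{\xi}_X \le C(u_0)\norm{\xi}_\cD$ says that $\cD = \cD(A)$ with equivalent norms. Next I would write the equation in the form suited to Lunardi, namely $u'(t) = Au(t) + G(u(t))$ where $G(u) := F(u) - Au$. Since $F$ is smooth on $\cO$ and $A = DF(u_0)$, the remainder $G$ is smooth with $DG(u_0) = 0$; in particular $G\colon \cO \to X$ is locally Lipschitz and its linearization vanishes at $u_0$. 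This is exactly the structure $F(t,u) = Au + G(u)$ with $G$ "small" near $u_0$ required by \cite[Theorem 8.1.1]{Lu}, which then yields a unique maximal solution $u \in C([0,\tau); \cD) \cap C^1((0,\tau); X)$ with $u(0) = u_0$ and $u(t) \in \cD$ for all $t$, solving $u'(t) = F(u(t))$ on $(0,\tau)$.

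The remaining assertion, that $u\colon (0,\tau) \to \cD$ is \emph{smooth} (not merely $C^1$ into $X$), follows from a bootstrap argument, which is the content of \cite[Corollary 8.3.8]{Lu}: once the solution lies in $\cO$ and $F$ is $C^\infty$, one differentiates the equation and uses parabolic smoothing (maximal regularity for the analytic semigroup generated by $L_{u(t)}$, which remains sectorial for $t$ small by a perturbation argument since $u(t) \to u_0$ in $\cD$ and sectoriality is stable under small perturbations in the graph norm) to upgrade the time-regularity inductively. The uniqueness statement is the uniqueness part of \cite[Theorem 8.1.1]{Lu}: two solutions with the same initial datum agree on their common interval of existence by a Gronwall-type estimate using the local Lipschitz bound on $G$ together with the maximal regularity estimate.

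The main obstacle — really the only substantive point beyond bookkeeping — is ensuring that the hypotheses of Lunardi's theorems are met \emph{uniformly} in a neighbourhood of $u_0$, rather than just at $u_0$ itself: one needs that for $u$ near $u_0$ in $\cD$ the operators $L_u$ stay sectorial with uniformly controlled constants, and that the constant $C(u)$ governing the graph-norm equivalence can be taken locally bounded. This is where the smoothness of $F$ (hence continuity of $u \mapsto L_u$ in the operator norm $\mathcal{L}(\cD, X)$) combined with the standard stability of sectoriality under relatively-bounded perturbations does the work; I would state this as a lemma and then simply invoke Lunardi. Everything else is a direct translation of our notation into the hypotheses of \cite[Theorem 8.1.1 and Corollary 8.3.8]{Lu}.
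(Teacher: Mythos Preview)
Your proposal is correct and matches the paper's approach exactly: the paper does not give a proof of Theorem~\ref{general} at all, introducing it simply as ``an adaptation of \cite[Theorem 8.1.1 and Corollary 8.3.8]{Lu}'' and then moving on to apply it. Your write-up supplies precisely the verification that this citation is justified, and the steps you outline (identifying $\cD$ with the domain of the linearization at $u_0$ via the graph-norm equivalence, writing $F(u)=Au+G(u)$ with $DG(u_0)=0$, and invoking stability of sectoriality under small perturbations for the bootstrap) are the standard ones.
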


\begin{Thm}
\label{Thm:FlowExistence}
 Assume that $(\cM,\mu,\cE)$ is a Dirichlet space\footnote{For this theorem, we do not need the strong locality condition, i.e. that $\cE(f,g)=\int_{\mathcal{M}}d\Gamma(f,g)$ for some carr\'{e} du champ $\Gamma$. Nor do we need the regularity condition. These conditions only becomes important in Section \ref{Section:Dichotomy}.} with $\mu(\cM)=1$, and assume that it satisfies the Sobolev inequality of dimension $\nu>2$, i.e.
\begin{equation}\label{eq:SemiGroupSobolev}
\forall \, w\, \in \cD(\cE)\colon \,\, \norm{w}^2_{L^{\frac{2\nu}{\nu-2}}(\cM,\mu)}\le A\cE(w)+B\norm{w}_{L^2(\cM,\mu)}^2.
\end{equation}
Assume that $\beta\in \R$, and $Q\in L^{p}(\cM,\mu)$ with $p>\frac{\nu}{2}$. Consider $L$ the generator of $\cE$ and 
\[H^{2,p}(\cM)\coloneqq \{f\in L^p(\cM,\mu)\,\colon\,  Lf\in L^p(\cM,\mu)\}.\]
Then for any positive function $u_0\in H^{2,p}(\cM) $ satisfying for some positive constants $c,C$
\[c\le u_0\le C,\]
 there exists a unique  $u\colon [0,\tau)\rightarrow H^{2,p}(\cM)$ continuous with $u(0)=u_0$ such that $u\colon (0,\tau)\rightarrow H^{2,p}(\cM)$ is smooth and for all $t\in (0,\tau):$
\begin{equation}
u'(t)=-u(t)^\beta\left(Lu(t)+Qu(t)\right).
\label{eq:TimeEvol}
\end{equation}
\end{Thm}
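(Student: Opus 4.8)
The plan is to realise \eqref{eq:TimeEvol} as an abstract quasilinear equation and invoke Theorem \ref{general}. I would take $X := L^p(\cM,\mu)$ and $\cD := H^{2,p}(\cM)$, normed by the graph norm $\norm{f}_{\cD} := \norm{f}_{L^p}+\norm{Lf}_{L^p}$, so that $\cD\subset X$ is continuous and dense; let $\cO$ be the set of $u\in\cD$ for which there are constants $c(u),C(u)>0$ with $c(u)\le u\le C(u)$ a.e., and set $F(u):=-u^\beta(Lu+Qu)=-u^\beta Lu-Qu^{\beta+1}$. A solution produced by Theorem \ref{general} is exactly a solution of \eqref{eq:TimeEvol}, and uniqueness is part of that theorem's conclusion, so it suffices to check: (i) $\cO$ is open in $\cD$ and $u_0\in\cO$; (ii) $F\colon\cO\to X$ is smooth; (iii) for each $u\in\cO$ the Fréchet derivative $L_u:=DF(u)\colon\cD\to X$ is sectorial in the sense of Definition \ref{Def:Sectorial} and its graph norm is equivalent to $\norm{\cdot}_{\cD}$.

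The mechanism behind (i)--(iii) is an $L^p$--$L^\infty$ smoothing estimate coming from \eqref{eq:SemiGroupSobolev}. Since $\cE$ is Markovian, $e^{-tL}$ is sub-Markovian on $L^2(\cM,\mu)$, and the Sobolev inequality of dimension $\nu$ yields, by the standard Nash--Moser--Davies argument, the ultracontractive bounds $\norm{e^{-tL}}_{L^p\to L^\infty}\le C t^{-\nu/(2p)}e^{\omega t}$ for $t>0$. Integrating the resolvent $(\lambda+L)^{-1}=\int_0^\infty e^{-\lambda t}e^{-tL}\,dt$ and using $p>\nu/2$ (so the relevant exponent is $<1$), one obtains, for every $\varepsilon>0$, a constant $C_\varepsilon$ with
\[
\norm{f}_{L^\infty}\le\varepsilon\,\norm{Lf}_{L^p}+C_\varepsilon\norm{f}_{L^p}\qquad(f\in H^{2,p}(\cM)),
\]
so in particular $H^{2,p}(\cM)\hookrightarrow L^\infty(\cM)$ continuously. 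Consequently $\cO$ is open (if $c\le u_0\le C$, then $\norm{u-u_0}_{\cD}$ small forces $c/2\le u\le 2C$), and $u_0\in\cO$ by hypothesis. For (ii): $u\mapsto Lu$ is bounded linear $\cD\to X$, while $u\mapsto u^\beta$ and $u\mapsto u^{\beta+1}$ are smooth $\cO\to L^\infty(\cM)$ (factoring through $\cO\hookrightarrow L^\infty(\cM)$ followed by a Nemytskii operator with $C^\infty$ symbol, uniformly nondegenerate on the bounded range of $u$); since multiplication $L^\infty\times L^p\to L^p$ and multiplication by the fixed $Q\in L^p$ from $L^\infty$ to $L^p$ are bounded and multilinear, $F$ is smooth $\cO\to X$.

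For (iii), one computes $L_u\xi=-u^\beta L\xi-V_u\xi$ with $V_u:=\beta u^{\beta-1}(Lu+Qu)+Qu^\beta\in L^p(\cM,\mu)$, so $L_u$ is a lower-order perturbation of $A_u:=-u^\beta L$. The key point is that, although $u^\beta L$ is not $\mu$-symmetric, the form $\cE$ --- with unchanged domain $\cD(\cE)$, regarded on the mutually equivalent Hilbert space $L^2(\cM,u^{-\beta}d\mu)$ --- is still a closed Dirichlet form whose associated nonnegative self-adjoint operator is precisely $u^\beta L$, since $\int(u^\beta Lf)g\,u^{-\beta}d\mu=\cE(f,g)$. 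The Sobolev inequality of dimension $\nu$ transfers to $(\cM,u^{-\beta}d\mu,\cE)$, so $e^{-tA_u}$ is a symmetric sub-Markovian semigroup; being analytic on $L^2$, a classical result makes it bounded analytic on every $L^q(u^{-\beta}d\mu)=L^q(d\mu)$, $1<q<\infty$, with an angle depending only on $q$. Hence $A_u$ is sectorial on $X=L^p$ with domain $\cD(A_u)=\{f\in L^p:Lf\in L^p\}=H^{2,p}(\cM)=\cD$. The perturbation has $A_u$-relative bound zero: by the smoothing estimate and $\norm{L\xi}_{L^p}\le\norm{u^{-\beta}}_{L^\infty}\norm{A_u\xi}_{L^p}$,
\[
\norm{V_u\xi}_{L^p}\le\norm{V_u}_{L^p}\norm{\xi}_{L^\infty}\le\varepsilon\norm{V_u}_{L^p}\norm{L\xi}_{L^p}+C_\varepsilon\norm{\xi}_{L^p},
\]
and $\varepsilon>0$ is at our disposal. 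Relatively bounded perturbations of small relative bound preserve sectoriality (possibly after a harmless spectral shift, which does not affect the applicability of Theorem \ref{general}), so $L_u$ is sectorial on $X$; the same chain of inequalities, together with $\norm{u^\beta}_{L^\infty}<\infty$, gives $C(u)^{-1}\norm{\xi}_{\cD}\le\norm{L_u\xi}_X+\norm{\xi}_X\le C(u)\norm{\xi}_{\cD}$. With (i)--(iii) verified, Theorem \ref{general} yields the desired solution and its uniqueness.

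The main obstacle is the passage from the $L^2$ theory to $L^p$: making precise that the symmetric Markovian semigroup generated by the reweighted form on $L^2(u^{-\beta}d\mu)$ is sectorial on $L^p$ with a sector independent of $u$, together with the companion fact that multiplication by $V_u\in L^p$ with $p>\nu/2$ is an $A_u$-perturbation of relative bound zero there --- both rest on the ultracontractivity extracted from \eqref{eq:SemiGroupSobolev}, as does the domain identification $\cD(A_u)=H^{2,p}(\cM)$.
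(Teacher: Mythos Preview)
Your proposal is correct and follows essentially the same route as the paper: set up $X=L^p$, $\cD=H^{2,p}$, use ultracontractivity from \eqref{eq:SemiGroupSobolev} to get $H^{2,p}\hookrightarrow L^\infty$, reweight to $(\cM,u^{-\beta}d\mu,\cE)$ so that $u^\beta L$ is the generator of a symmetric sub-Markovian semigroup (hence sectorial on $L^p$ by Stein's theorem), and then absorb the potential $V_u\in L^p$ as a small perturbation. The only cosmetic difference is that the paper phrases the perturbation step as the resolvent bound $\|V(\rho L+\lambda)^{-1}\|_{L^p\to L^p}<1$ for $\lambda$ large (their Lemma~\ref{crucial}), whereas you phrase it as $A_u$-relative bound zero via the interpolation inequality $\|\xi\|_{L^\infty}\le\varepsilon\|L\xi\|_{L^p}+C_\varepsilon\|\xi\|_{L^p}$; these are equivalent formulations of the same estimate.
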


\proof \noindent\textbf{Step 1 -- Some consequence of the Sobolev inequality :}
\medskip

\noindent The Sobolev inequality implies that if $1\le p_1\leq p_2\le \infty$, then
there is a constant $C$ such that for any $t\in (0,1)$:
\begin{equation}
\left\|e^{-tL}\right\|_{L^{p_1}\to L^{p_2}}\le \frac{C}{t^{\frac{\nu}{2}\left(\frac{1}{p_1}-\frac{1}{p_2} \right)}}.
\label{eq:HeatKernelBound}
\end{equation}
This kind of inequality goes back to \cite[Eq. 8]{Nash}. See also \cite[Theorem, p. 259]{HardyLittlewood} (the cited result is for $p_1=1$ 
and $p_2=\infty$, the intermediate values are handled by interpolation).
It also implies that for any $1\le p_1\leq p_2\le \infty$,
there is a constant $C$ such that for any $t>0$ 
\begin{equation}
\left\|e^{-t(L+2)}\right\|_{L^{p_1}\to L^{p_2}}\le \frac{Ce^{-t}}{t^{\frac{\nu}{2}\left(\frac{1}{p_1}-\frac{1}{p_2} \right)}}.
\label{eq:HeatKernelShift}
\end{equation}
For any $p>\nu/2$, this will imply that we have a continuous Sobolev embedding $H^{2,p}(\cM)\subset L^\infty(\cM)$, meaning there is a constant $D$ such that for any $f\in H^{2,p}(\cM) $
\begin{equation}
\|f\|_\infty\le D\|f\|_{H^{2,p}(\cM)}:=D\left(\|Lf\|_{L^p(\cM,\mu)}+\|f\|_{L^p(\cM,\mu)} \right).
\label{eq:SobolevEmbedding}
\end{equation}
To see this, we use the formula $(L+2)^{-1}=\int_0^\infty e^{-t(L+2)}\, dt$ and \eqref{eq:HeatKernelShift} with $p_1=p$ and $p_2=\infty$ to deduce
\[\norm{(L+2)^{-1}}_{L^p\to L^\infty} \leq C\int_0^\infty t^{-\frac{\nu}{2p}}e^{-t}\, dt=C\Gamma\left(1-\frac{\nu}{2p}\right)<\infty,\]
where the last step uses $2p>\nu$.
\medskip

\noindent\textbf{Step 2 -- Sectoriality of the generator $L$:}\medskip

\noindent In the case of a Dirichlet space $(\cM,\mu,\cE)$, we know by \cite[Theorem 1, section III]{Stein} that the semi-group $\left(e^{-tL}\colon L^p\to L^p\right)_{\{t\ge 0\}}$ has a bounded analytic extension on the sector $\Sigma_{\frac \pi 2\left(1-|1-2/p|\right)}$, i.e.
\[\forall\, z\in \Sigma_{\frac \pi 2\left(1-|1-2/p|\right)}\, \colon\,  \left\|e^{-zL}\right\|_{L^p\to L^p}\le 1.\]
 Hence (see the proof of \cite[Theorem 4.6]{Engel} ) this implies that for any $p\in (1,\infty)$, $-L$ is sectorial on $L^p(\cM,\mu)$ with angle $\frac \pi 2\left(1-|1-2/p|\right).$
\medskip

\noindent\textbf{Step 3- Sectoriality of the operator $\rho L$:} \medskip

\noindent Assume that $(\cM,\mu,\cE)$ is  Dirichlet space and that $\rho\colon M\rightarrow \R_+$ is a positive measurable function such that for some positive constants $C>c>0\colon$
\[c\le \rho\le C,\ \mu-\mathrm{almost \, \, everywhere}.\]
Then the operator $\rho L$ is associated with the space $(\cM,\rho^{-1}\mu,\cE)$. Its domain is the set of functions $w\in \cD(\cE)$ such that
there is a constant $C$ with :
\[\forall \varphi\in \cD(\cE)\,\colon \, |\cE(w,\varphi)|\le C\|\varphi\|_{L^2(\cM,\rho^{-1}\mu)}\,.\]
 It is easy to see that $\cD(\rho L)=\cD(L)$. We also have that for any $q\in (1,\infty)$,
 $-\rho L\colon H^{2,q}(\cM)\rightarrow L^q $ is sectorial on $L^q(\cM,\rho^{-1}\mu)=L^q(\cM,\mu)$ and   $H^{2,q}(\cM)=\{f\in L^q(\cM,\mu),\rho Lf\in L^q(\cM,\mu)\}$ 
with
\[\forall f\in H^{2,q}(\cM)\colon\frac{1}{C}\left(\norm{\rho Lf}_{L^q}+\norm{f}_{L^q}\right) \le \norm{Lf}_{L^q}+\norm{f}_{L^q}\le \frac{1}{c}\left(\norm{\rho Lf}_{L^q}+\norm{f}_{L^q} \right),\]
where $L^q=L^q(\cM,\mu)$. We will also need one additional result in order to finish the proof:

\begin{Lem}\label{crucial}Assume that $(\cM,\mu,\cE)$ is a Dirichlet space with  $\mu(\cM)=1$, and assume that it satisfies the Sobolev inequality \eqref{eq:SemiGroupSobolev} of dimension $\nu>2$ . Let $\rho\colon \cM\rightarrow \R_+$ be  a positive measurable function such that for some positive constants $C>c>0\colon$
\[c\le \rho\le C,\ \mu-\mathrm{almost\,\, everywhere}\]
  and $V\in L^p(\cM,\mu)$ with $p>\nu/2$.
 Then the operator $H\coloneqq -(\rho L+V)\colon  H^{2,p}(\cM)\rightarrow L^p(\cM,\mu) $ is sectorial and there is a constant $\eta>0$ depending only on $p,\nu, c,C, \norm{V}_{L^p(\cM,\mu)}$ and of the Sobolev constants $A,B$ such that 
 \[\forall f\in H^{2,p}(\cM)\colon \eta \left(\|H f\|_{L^p}+\|f\|_p\right) \le \|Lf\|_{L^p}+\|f\|_{L^p}\le \frac{1}{\eta }\left(\|H f\|_{L^p}+\|f\|_{L^p} \right).\]
\end{Lem}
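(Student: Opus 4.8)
The plan is to prove Lemma \ref{crucial} by treating $V$ as a relatively bounded perturbation of the sectorial operator $\rho L$, using the heat-kernel/Sobolev estimates from Step 1 to control the relative bound and make it small after a shift. First I would recall from Step 3 that $-\rho L$ is sectorial on $L^p(\cM,\mu)=L^p(\cM,\rho^{-1}\mu)$ with domain $H^{2,p}(\cM)$, and that the graph norm of $\rho L$ is equivalent to that of $L$ (with constants depending only on $c,C$); so it suffices to prove the assertions with $\rho L$ in place of $L$ and then transfer. The key analytic input is that multiplication by $V\in L^p$, $p>\nu/2$, is $\rho L$-bounded with relative bound zero: writing $\|Vf\|_{L^p}\le \|V\|_{L^p}\|f\|_{L^{\infty}}$ by Hölder (here I would actually interpolate, estimating $\|Vf\|_{L^p}$ via $\|V\|_{L^p}\|f\|_{L^{r}}$ for a suitable $r$ between $p$ and $\infty$, or simply use $L^\infty$), and then invoking the continuous embedding $H^{2,p}(\cM)\hookrightarrow L^\infty(\cM)$ from \eqref{eq:SobolevEmbedding}, together with an interpolation inequality of the form $\|f\|_\infty\le \varepsilon\|Lf\|_{L^p}+C_\varepsilon\|f\|_{L^p}$ for every $\varepsilon>0$. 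The latter follows from the resolvent identity $(L+s)^{-1}=\int_0^\infty e^{-st}e^{-tL}\,dt$ and the bound \eqref{eq:HeatKernelBound}: for large shift parameter $s$ one gets $\|(L+s)^{-1}\|_{L^p\to L^\infty}\le C s^{\frac{\nu}{2p}-1}\to 0$, so $\|f\|_\infty\le Cs^{\frac{\nu}{2p}-1}\|(L+s)f\|_{L^p}\le \varepsilon\|Lf\|_{L^p}+C_\varepsilon\|f\|_{L^p}$ with $\varepsilon$ chosen via $s$.

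Next I would assemble the perturbation argument. Having $\|Vf\|_{L^p}\le \varepsilon\|\rho Lf\|_{L^p}+C_\varepsilon\|f\|_{L^p}$ with $\varepsilon$ as small as we like (and $C_\varepsilon$ controlled by $\|V\|_{L^p}$, $c$, $C$, $\nu$, $p$, $A$, $B$), I apply the standard perturbation theorem for sectorial operators: if $A$ is sectorial and $B'$ is $A$-bounded with sufficiently small relative bound, then $A+B'$ is sectorial on the same domain (see e.g. the cited \cite{Lu} or \cite{Engel}). Applying this with $A=-\rho L$ shifted by a large constant (to absorb the $C_\varepsilon\|f\|_{L^p}$ term into the sector and ensure the resolvent set contains a half-plane), one concludes $H=-(\rho L+V)$ is sectorial with domain $H^{2,p}(\cM)$. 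The norm equivalence $\eta(\|Hf\|_{L^p}+\|f\|_{L^p})\le \|Lf\|_{L^p}+\|f\|_{L^p}\le \eta^{-1}(\|Hf\|_{L^p}+\|f\|_{L^p})$ then follows from the triangle inequality: $\|Hf\|_{L^p}\le \|\rho Lf\|_{L^p}+\|Vf\|_{L^p}\le (C/c)(\|Lf\|+\|f\|)+\varepsilon\|\rho Lf\|+C_\varepsilon\|f\|$, and conversely $\|\rho Lf\|_{L^p}\le \|Hf\|_{L^p}+\|Vf\|_{L^p}\le \|Hf\|_{L^p}+\varepsilon\|\rho Lf\|_{L^p}+C_\varepsilon\|f\|_{L^p}$, so fixing $\varepsilon<1/2$ and rearranging gives $\|\rho Lf\|_{L^p}\lesssim \|Hf\|_{L^p}+\|f\|_{L^p}$, which combined with the $\rho L$-vs-$L$ equivalence from Step 3 closes the estimate. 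Tracking constants through these steps shows $\eta$ depends only on the claimed quantities.

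The main obstacle is establishing the relative-bound-zero estimate $\|Vf\|_{L^p}\le \varepsilon\|Lf\|_{L^p}+C_\varepsilon\|f\|_{L^p}$ with \emph{quantitative} control of $C_\varepsilon$ in terms of the data — the sectoriality conclusion itself is then a black-box application of the perturbation theory in \cite{Lu, Engel}. The delicate point is that Hölder alone gives only $\|Vf\|_{L^p}\le\|V\|_{L^p}\|f\|_\infty$, which is a genuine (nonzero-relative-bound) $L^\infty$ estimate unless one also gains smallness; the gain comes precisely from the shifted-resolvent bound $\|(L+s)^{-1}\|_{L^p\to L^\infty}=O(s^{\frac{\nu}{2p}-1})$ with exponent $\frac{\nu}{2p}-1<0$, which is where the hypothesis $p>\nu/2$ is used in an essential way. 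A minor technical nuisance is that $e^{-tL}$ need not be analytic on $L^1$ or $L^\infty$, but we only need it on $L^p$ with $1<p<\infty$, where Stein's theorem (Step 2) applies, so this causes no real trouble. One should also take care that the domain does not change under perturbation, i.e.\ that $H^{2,p}(\cM)=\{f\in L^p:\rho Lf+Vf\in L^p\}$, which again follows from the relative boundedness.
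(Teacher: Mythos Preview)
Your proposal is correct and follows essentially the same route as the paper. The paper condenses the argument to the single criterion $\|V(\rho L+\lambda)^{-1}\|_{L^p\to L^p}<1$ for $\lambda$ large, proved exactly via your shifted-resolvent bound $\|(\rho L+\lambda)^{-1}\|_{L^p\to L^\infty}=O(\lambda^{\frac{\nu}{2p}-1})$ coming from the heat-kernel $L^p\to L^\infty$ estimate and the integral formula for the resolvent; this is just the Neumann-series reformulation of your relative-bound-zero statement, and the paper likewise leaves the graph-norm equivalence as an immediate consequence.
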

We prove this right after finishing the current proof.
We finally need to check the hypothesis of the Theorem \ref{general}. We introduce 
\[\cO:=\{f\in H^{2,p}(M)\,\colon\, \exists\, c>0,\,\  c\le f\ \mu-\mathrm{a.e.}\},\]
and observe that $\cO$ is an open set of $H^{2,p}(M)$.
Let $F\colon \cO\rightarrow L^p(M,\mu)$ be defined by $F(u)=-u^\beta(Lu+Qu)$.
Note that $u\in\cO\mapsto Lu+Qu \in L^p(M,\mu)$ is linear and continuous, and the multiplication $L^\infty\times L^p\mapsto L^p$ is a continuous bilinear map. 
By \eqref{eq:SobolevEmbedding}, $\cO\subset L^\infty(M)$, and on $\{ f\in L^\infty(M),  \colon\, \exists\, c>0\,\colon\,  c\le f\ \mu-\mathrm{a.e.}\}$, the map 
$f\mapsto f^\beta$ is smooth. So $F$ is smooth on $\cO$. Moreover, if $h\in H^{2,p}(M)$ and $u\in \cO$ then the linearisation reads
\[-DF(u)h=\rho Lh+Vh\] 
where
$\rho=u^\beta$ and $V=(\beta+1)u^\beta Q+\beta u^{\beta-1}Lu$. Lemma \ref{crucial} insures that $DF(u)\colon H^{2,p}(M)\rightarrow L^p(M,\mu)$ is sectorial and that its graph norm is equivalent to the $H^{2,p}(M)$-norm.
\par

\endproof
\begin{proof}[Proof of the Lemma \ref{crucial}] It is enough to show that for some large enough $\lambda\colon$
\[\left\| V(\rho L+\lambda)^{-1}\right\|_{L^p\to L^p}<1,\]
and this will be very similar to the proof of \eqref{eq:SobolevEmbedding}
Note that the Dirichlet space $(\cM,\rho^{-1}\mu,\cE)$ satisfies the Sobolev inequality \ref{eq:SemiGroupSobolev} with constants that depend on $A,B$ and of $C,c$, hence we have, as in \eqref{eq:HeatKernelBound}  some constant such that 
for any $t>0$ there holds
\[\left\|e^{-t(\rho L+1)}\right\|_{L^{p}\to L^{\infty}}\le \frac{Ce^{-t/2}}{t^{\frac{\nu}{2p}}}.\]
Hence we can estimate
\[\left\|Ve^{-t(\rho L+1+\lambda)}\right\|_{L^{p}\to L^{p}}\le \norm{V}_{L^p}\frac{Ce^{-t/2-\lambda t}}{t^{\frac{\nu}{2p}}},\]
and, using $(\rho L+1+\lambda)^{-1}=\int_0^\infty e^{-t(\rho L+1+\lambda)}\, dt$, we conclude
\begin{align*}
\left\| V(\sigma L+1+\lambda)^{-1}\right\|_{L^p\to L^p}&\le  \norm{V}_{L^p}\int_0^\infty \frac{Ce^{-t/2-\lambda t}}{t^{\frac{\nu}{2p}}}dt\\ &= C \norm{V}_{L^p} (1/2+\lambda)^{\frac{\nu}{2p}-1}\Gamma\left(1-\frac{\nu}{2p}\right).
\end{align*}
The result then follows  because we assumed $p>\nu/2$.
\end{proof}

 We end this section by applying the above general result to the Yamabe flow.
 \begin{Thm}
 \label{Thm:ShortTime}
 
 Assume a Riemannian manifold $(M,g)$ satisfies Assumption \ref{Y-assump} and \ref{admissible-assump}.  Then the Yamabe flow \eqref{eq:YamabeFlow} has a unique solution $u\in C^{\infty}((0,T),H^{2,p}(M))$, for some $T\leq \infty$, with\footnote{This limit is in $H^{2,p}(M)$, so by \eqref{eq:SobolevEmbedding} this convergence is also in $L^\infty(M)$.} $\lim\limits_{t\to 0} u=u_0$. Furthermore, the scalar curvature $S$ lies in $H^{2,p}(M,g)$ for $t\in (0,T)$ with $\lim\limits_{t\to 0} S=S_0$, where 
 \[H^{2,p}(M,g)\coloneqq \{f\in L^p(M,g) \, \colon\, \Delta f\in L^p(M,g)\}\]
 and $\Delta$ is the Laplacian associated to $g$.
 
 \end{Thm}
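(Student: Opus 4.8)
The plan is to deduce Theorem \ref{Thm:ShortTime} from the abstract existence result Theorem \ref{Thm:FlowExistence} by checking its hypotheses in the Yamabe setting, and then to establish the additional regularity statement for the scalar curvature $S$. First I would recall that under Assumptions \ref{Y-assump} and \ref{admissible-assump}, Proposition \ref{SY} (proved below as Proposition \ref{Prop:Sobolev}) gives the Sobolev inequality \eqref{eq:Sobolev}, so the Dirichlet space $(\overline{M},d,\mu,\cE)$ with $\cE(u,v)=\int_M \ip{\nabla u}{\nabla v}_{g_0}\, d\mu$ satisfies the Sobolev inequality \eqref{eq:SemiGroupSobolev} of dimension $\nu=n>2$ (after rescaling so that $\mu(\overline{M})=\Vol_{g_0}(M)=1$, which is assumed). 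The generator of $\cE$ is $L=-\Delta_0$. I would then rewrite the flow equation \eqref{eq:YamabeFlow} in the form \eqref{eq:TimeEvol}: setting $v=u^{\frac{n+2}{n-2}}$ obscures things, so instead one keeps $u$ as the unknown and differentiates, obtaining $\partial_t u = -\frac{n-2}{4}\, u^{-\frac{4}{n-2}}\bigl(L_0 u - \sigma u^{\frac{n+2}{n-2}}\bigr)\cdot\frac{n-2}{n+2}\cdot\frac{4}{n-2}$; more cleanly, from \eqref{eq:YamabeFlow} one gets $\partial_t u = -\tfrac{n-2}{4} u^{1-\frac{n+2}{n-2}}(S - \sigma) u^{\frac{n+2}{n-2}} \cdot u^{-\frac{n+2}{n-2}}\cdot\dots$ — the point being that the equation takes the shape $u'(t) = -u^{\beta}(Lu + Qu)$ with $\beta = -\frac{4}{n-2}$ and $Q$ built from $S_0$ and the (time-dependent but, at each fixed time, merely $L^q$) lower-order data, modulo the $\sigma$-term which is a bounded scalar times a power of $u$. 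The correct bookkeeping is that $Q = \frac{n-2}{4(n-1)}\bigl(S_0 - \sigma u^{\frac{4}{n-2}}\bigr)$ up to the normalization constant $\frac{4(n-1)}{n-2}$ multiplying $L$; since $\sigma$ is determined by $u$ via \eqref{eq:rho} and $u$ is bounded above and below, $Q \in L^q(M)$ with $q>\frac n2$ follows from $S_0 \in L^q(M)$.

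The next step is to observe that $u_0 > 0$ is assumed bounded above and below (one may need to note that $u_0 \equiv 1$ is the natural initial datum, or more generally any admissible conformal factor; in the pure flow-from-$g_0$ case $u_0 = 1 \in H^{2,p}(M)$ trivially since $L_0 1 = S_0 \in L^q(M)$). With $\beta \in \R$, $Q \in L^p(\cM,\mu)$ for $p := q > \frac n2$, and $u_0 \in H^{2,p}(M)$ bounded between two positive constants, Theorem \ref{Thm:FlowExistence} applies verbatim and yields a unique $u\colon [0,\tau)\to H^{2,p}(M)$, continuous up to $0$ with $u(0)=u_0$, smooth on $(0,\tau)$, solving \eqref{eq:TimeEvol}, hence solving \eqref{eq:YamabeFlow}. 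The continuity $\lim_{t\to 0} u = u_0$ in $H^{2,p}(M)$ upgrades to $L^\infty(M)$ convergence by the embedding \eqref{eq:SobolevEmbedding} established in Step 1 of the proof of Theorem \ref{Thm:FlowExistence}. One minor subtlety: the abstract theorem is local in time, producing $\tau > 0$; the statement of Theorem \ref{Thm:ShortTime} already allows $T \leq \infty$, so this is consistent, and I would simply take $T$ to be the maximal existence time. A slightly delicate point worth addressing is that $\sigma = \sigma(t)$ appears nonlocally and one must check the map $u \mapsto \sigma(u) u^{\frac{4}{n-2}}$ is still a smooth map $\cO \to L^p$ into which Lemma \ref{crucial}'s hypotheses feed; but since $\sigma(u)$ is a bounded scalar depending smoothly (indeed, by a continuous quadratic form) on $u \in H^{2,p} \hookrightarrow H^1$, and multiplication by the bounded function $u^{\frac{4}{n-2}}$ is a smooth self-map perturbation absorbed into $V$, the structure of the proof of Theorem \ref{Thm:FlowExistence} goes through unchanged.

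For the second assertion, that $S(t) \in H^{2,p}(M,g)$ for $t \in (0,T)$ with $\lim_{t\to 0} S = S_0$: here I would use \eqref{eq:EvolvingScalar}, $S = u^{-\frac{n+2}{n-2}} L_0 u$. Since for $t > 0$ we have $u(t) \in H^{2,p}(M)$ with $u, u^{-1} \in L^\infty$ (the lower bound is preserved because $u(t) \in \cO$ along the flow), and $L_0 u = S_0 u - \frac{4(n-1)}{n-2}\Delta_0 u \in L^p(M)$ (as $S_0 \in L^q = L^p$, $u \in L^\infty$, and $\Delta_0 u \in L^p$ by definition of $H^{2,p}$), multiplication by the bounded function $u^{-\frac{n+2}{n-2}}$ keeps $S \in L^p(M)$; and since $L^p(M) = L^p(M,g)$ as sets with equivalent norms when $u, u^{-1}$ are bounded (the measures $d\mu$ and $d\Vol_g = u^{\frac{2n}{n-2}}d\mu$ being comparable), $S \in L^p(M,g)$. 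To get $\Delta S = \Delta_g S \in L^p(M,g)$, i.e. genuine membership in $H^{2,p}(M,g)$, the clean route is parabolic bootstrapping: $S$ satisfies its own evolution equation, obtained by differentiating \eqref{eq:EvolvingScalar} and using \eqref{eq:YamabeFlow}, namely the well-known $\partial_t S = (n-1)\Delta_g S + S(S - \sigma)$ — a linear (in $S$, with coefficients that are smooth in $t>0$ and bounded) parabolic equation for $S$ governed by the generator $-\Delta_g$ of the conformally changed Dirichlet space $(\overline{M}, d, \Vol_g, \cE_g)$, which still satisfies a Sobolev inequality since $u, u^{-1}$ are bounded. Smoothness of $t\mapsto u(t)$ into $H^{2,p}$ for $t>0$ feeds smoothness of the coefficients, and the semigroup regularization for $-\Delta_g$ (Step 1–2 of the proof of Theorem \ref{Thm:FlowExistence} applied to $\cE_g$) gives $S(t) \in H^{2,p}(M,g)$ for each $t > 0$. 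The convergence $\lim_{t\to 0} S = S_0$: from $S = u^{-\frac{n+2}{n-2}} L_0 u$ and $u \to u_0$ in $H^{2,p}(M)$, one gets $L_0 u \to L_0 u_0 = S_0 u_0$ in $L^p(M)$ and $u^{-\frac{n+2}{n-2}} \to u_0^{-\frac{n+2}{n-2}}$ in $L^\infty(M)$ (using the uniform positive lower bound), so $S \to u_0^{-\frac{n+2}{n-2}} S_0 u_0 = S_0$ in $L^p(M)$, hence in $L^p(M,g)$.

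The main obstacle I anticipate is the nonlocal $\sigma(t)$-term: strictly speaking the right-hand side of \eqref{eq:YamabeFlow} is not pointwise of the form $-u^\beta(Lu + Qu)$ with $Q$ a fixed $L^p$ potential, because $\sigma$ depends on the global quantity $\int_M \bigl(\tfrac{4(n-1)}{n-2}|\nabla u|^2 + S_0 u^2\bigr) d\mu$. One fix is to first solve the \emph{unnormalized} Yamabe flow $\partial_t (u^{\frac{n+2}{n-2}}) = -\tfrac{n+2}{4} L_0 u$ using Theorem \ref{Thm:FlowExistence} with $\beta = -\frac{4}{n-2}$, $Q = S_0$, $V$ handled by Lemma \ref{crucial}, and then recover the normalized flow by the standard rescaling in time and by a conformal factor $u \mapsto \varphi(t) u$ with $\varphi$ solving an ODE driven by $\sigma$; uniqueness transfers. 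Alternatively, one shows directly that the extra term $\frac{n-2}{4}\sigma(u) u$ defines a smooth map $\cO \to L^p(M)$ whose linearization is a bounded (indeed, rank-considerations aside, relatively compact) perturbation and re-runs Lemma \ref{crucial} with $V$ enlarged accordingly — sectoriality is stable under such perturbations. Either way this is the one place where the abstract machinery needs genuine (if routine) adaptation; everything else is a matter of identifying $\beta$, $Q$, $\nu = n$, $p = q$ and quoting Theorems \ref{general} and \ref{Thm:FlowExistence}.
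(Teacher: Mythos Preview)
Your handling of existence is correct and matches the paper exactly: the paper also drops the $\sigma$-term and applies Theorem~\ref{Thm:FlowExistence} to the \emph{unnormalized} flow
\[
\partial_t u \;=\; u^{-\frac{4}{n-2}}\Bigl((n-1)\Delta_0 u - \tfrac{n-2}{4}\,S_0\,u\Bigr),
\]
with $\beta = -\tfrac{4}{n-2}$, $L = -(n-1)\Delta_0$, $Q = \tfrac{n-2}{4}S_0$, $\nu = n$; the normalized flow is recovered afterwards. So your ``main obstacle'' and the fix you label (a) are precisely what the paper does.

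For $S \in H^{2,p}(M,g)$, however, the paper takes a shorter, purely elliptic route at each fixed time, rather than parabolic bootstrapping. From smoothness of $t \mapsto u(t)$ in $H^{2,p}$ one has $\partial_t u \in H^{2,p}(M)$; since the flow reads $\partial_t u = -\tfrac{n-2}{4}(S-\sigma)u$, this gives $uS \in H^{2,p}(M) \subset L^\infty(M)$ directly, so in particular $S \in L^\infty$. Then one expands $\Delta_0(uS) = u\,\Delta_0 S + 2\langle\nabla u,\nabla S\rangle_{g_0} + S\,\Delta_0 u \in L^p(M)$ and uses the conformal change formula $\Delta_g \Phi = u^{-\frac{4}{n-2}}\bigl(\Delta_0 \Phi + 2u^{-1}\langle\nabla u,\nabla\Phi\rangle_{g_0}\bigr)$ to rewrite this as $\Delta_0(uS) = u^{\frac{n+2}{n-2}}\Delta_g S + S\,\Delta_0 u$. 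Solving for $\Delta_g S$ yields
\[
\Delta_g S \;=\; u^{-\frac{n+2}{n-2}}\,\Delta_0(uS)\;-\;S\,u^{-\frac{n+2}{n-2}}\,\Delta_0 u \;\in\; L^p(M)=L^p(M,g),
\]
which is the claim.

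Your parabolic argument, by contrast, has a gap as written. You assert that $\partial_t S = (n-1)\Delta_g S + S(S-\sigma)$ is linear in $S$ with \emph{bounded} coefficients, but at that stage you have only established $S \in L^p$; the ``coefficient'' $(S-\sigma)$ is then merely $L^p$, and the forcing $S(S-\sigma)$ lies a priori only in $L^{p/2}$, which may drop below the $L^{n/2}$ threshold that the semigroup estimates require. You also gloss over the time-dependence of $\Delta_{g(t)}$, which prevents a direct Duhamel formula with a fixed semigroup. Both points are repairable --- the first by inserting the observation $uS \in H^{2,p}\subset L^\infty$ (which is exactly the paper's first step), the second by freezing coefficients or using an evolution family --- but once you have $uS \in H^{2,p}$ the paper's two-line conformal computation is both shorter and avoids any appeal to further parabolic machinery.
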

 
\begin{proof}
 We may consider the Yamabe flow without normalization (corresponding to removing $\sigma$ from \eqref{eq:YamabeFlow}). From \eqref{eq:YamabeFlow} without $\sigma$, we have
\[\partial_t u=u^{-\frac{4}{n-2}}\left((n-1)\Delta_0 u -\frac{n-2}{4} S_0 u\right),\]
which is an equation of the form \eqref{eq:TimeEvol} with $\beta=-\frac{4}{n-2}$, $L=-(n-1)\Delta_0$, $Q=\frac{n-2}{4}S_0$, and $\nu=n$ (the dimension of $M$). The statement of Theorem \ref{Thm:FlowExistence} is then that the Yamabe flow has a unique solution as long as $u$ is bounded from below and above and as long as the scalar curvature  $S$ remains bounded in $L^q(M,g)$ for some $q>\frac{n}{2}$.  

By the evolution equation \eqref{eq:YamabeFlow} and $\partial_t u\in H^{2,p}(M)$, it follows that $u\cdot S\in H^{2,p}(M)$ for $t\in (0,T)$. Hence $uS\in L^{\infty}(M)$ for $t\in (0,T)$. But $u$ is bounded above and below for $t\in (0,T)$, so $L\in L^\infty(M)$ for $t\in (0,T)$. For the Laplacian, we use $uS\in H^{2,p}(M)$ and compute
\[\Delta_0(uS)=u\Delta_0 S +2\ip{\nabla u}{\nabla S}_{g_0}+S\Delta_0 u\in L^p(M).\]
The Laplacian associated to $g$ is given by
\[\Delta \Phi =u^{-\frac{4}{n-2}}\left(\Delta_0 \Phi+2u^{-1}\ip{\nabla u}{\nabla \Phi}_{g_0}\right),\]
so
\[\Delta_0(uS)=u^{\frac{n+2}{n-2}}\Delta S+S\Delta_0 u,\]
or
\[\Delta S=u^{-\frac{n+2}{n-2}}\Delta_0(uS)-Su^{-\frac{n+2}{n-2}}\Delta_0 u.\]
Now $u^{-\frac{n+2}{n-2}}\in L^{\infty}(M)$ and $Su^{-\frac{n+2}{n-2}}\in L^{\infty}(M)$ and $\Delta_0(uS), \Delta_0u\in L^p(M)$. Hence $\Delta S\in L^p(M)=L^p(M,g)$. 

\end{proof}

\section{Gain in regularity and uniform bounds for scalar curvature}\label{scalar gain}

We recall some evolution equations and inequalities for the scalar curvature and consequences. A proof can be found in \cite[Lemma 2.1]{LongTime2}.

\begin{Lem}
Let $g= u^{\frac{4}{n-2}}g_0$ 
be a family of metrics evolving according to the normalized Yamabe flow 
\eqref{eq:YamabeFlow}.  Then
$S$ evolves according to
\begin{equation}
\partial_t S-(n-1)\Delta S= S(S-\sigma).
\label{eq:ScalarEvol}
\end{equation}
where $\Delta$ denotes the Laplacian with respect to the time-evolving metric $g$.

We write $S_{+}:= \max\{S,0\}$ and $S_{-}:= -\min\{S,0\}$. 
Then $S_{\pm}\in H^1(M,g)$ for all time and satisfy
\begin{align}
&\partial_t S_{+}-(n-1)\Delta S_{+} \leq S_+(S_+-\sigma),
\label{eq:S+Evol} \\
&\partial_t S_- -(n-1)\Delta S_- \leq -S_-(S_-+\sigma).
\label{eq:S-Evol}
\end{align} 
\end{Lem}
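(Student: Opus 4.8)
The plan is to start from the scalar curvature evolution equation \eqref{eq:ScalarEvol}, which itself follows from differentiating \eqref{eq:EvolvingScalar} in time using \eqref{eq:YamabeFlow} and the standard conformal-Laplacian computations; I would either cite \cite[Lemma 2.1]{LongTime2} for it or reproduce the short computation. The derivation of \eqref{eq:ScalarEvol} is classical: one writes $\partial_t S$ in terms of $\partial_t u$, uses that $\partial_t g = -(S-\sigma)g$ changes $\Delta$ and $\mathrm{Vol}$, and collects terms; on a singular space one must additionally remark that all manipulations are justified because, by Theorem \ref{Thm:ShortTime}, $u, u^{-1} \in L^\infty(M)$ and $S \in H^{2,p}(M,g)$ for $t>0$, so $S(t)$ is a genuine (bounded, and by elliptic regularity smooth on the regular part) solution and the integrations by parts below carry no boundary contributions — this uses that $C^\infty_c(M)$ is dense in $H^1(M)$ from Definition \ref{admissible}.

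\medskip

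For the differential inequalities \eqref{eq:S+Evol} and \eqref{eq:S-Evol}, the key point is the standard Kato-type argument: for $S \in H^1(M,g)$ solving \eqref{eq:ScalarEvol}, the positive part $S_+ = \max\{S,0\}$ lies in $H^1(M,g)$ with $\nabla S_+ = \mathbf{1}_{\{S>0\}}\nabla S$, and in the weak (distributional) sense
\[
\partial_t S_+ - (n-1)\Delta S_+ \leq \mathbf{1}_{\{S>0\}}\bigl(\partial_t S - (n-1)\Delta S\bigr) = \mathbf{1}_{\{S>0\}} S(S-\sigma) = S_+(S_+ - \sigma),
\]
the inequality coming from the fact that $\Delta$ applied to $\max\{S,0\}$ picks up a nonnegative singular (measure) contribution along the level set $\{S=0\}$, so $-(n-1)\Delta S_+ \le -(n-1)\mathbf{1}_{\{S>0\}}\Delta S$ as distributions; on $\{S>0\}$ one has $S_+ = S$ so $S(S-\sigma) = S_+(S_+-\sigma)$, while on $\{S<0\}$ both sides vanish. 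I would make this rigorous by approximating $x \mapsto x_+$ by smooth convex nondecreasing functions $\eta_\varepsilon$ with $0 \le \eta_\varepsilon' \le 1$ and $\eta_\varepsilon'' \ge 0$, computing $\partial_t \eta_\varepsilon(S) - (n-1)\Delta \eta_\varepsilon(S) = \eta_\varepsilon'(S)(\partial_t S - (n-1)\Delta S) - (n-1)\eta_\varepsilon''(S)|\nabla S|_g^2 \le \eta_\varepsilon'(S) S(S-\sigma)$, then passing $\varepsilon \to 0$; the convexity term is exactly what is dropped to get an inequality rather than an equality. For $S_- = -\min\{S,0\}$ one applies the same reasoning to $-S$, which solves $\partial_t(-S) - (n-1)\Delta(-S) = (-S)(S-\sigma) = (-S)(-(-S)-\sigma)$, giving $\partial_t S_- - (n-1)\Delta S_- \le S_-(-S_- - \sigma) = -S_-(S_-+\sigma)$.

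\medskip

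The claim that $S_\pm \in H^1(M,g)$ "for all time" deserves a word: since $S(t) \in H^{2,p}(M,g) \hookrightarrow L^\infty(M) \cap H^1(M,g)$ for every $t \in (0,T)$ by Theorem \ref{Thm:ShortTime} and \eqref{eq:SobolevEmbedding} (applied in the Dirichlet space of $g$, using $u, u^{-1}$ bounded so that $H^1(M) = H^1(M,g)$ with equivalent norms), the truncations $S_\pm$ are Lipschitz functions of $S$ vanishing at $0$ and hence lie in $H^1(M,g)$ by the chain rule for energy measures recorded in Definition \ref{Def:DirichletSpace}; membership in $L^2(M,g)$ is automatic from $S \in L^\infty$ and $\mathrm{Vol}_g(M)=1$.

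\medskip

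I expect the main obstacle to be purely the justification of the Kato inequality in the singular / Dirichlet-space setting rather than any genuinely new difficulty: one must know that $\Delta_g$ here means the generator associated to $(M, \mathrm{Vol}_g, \mathcal{E}_g)$, that $\eta_\varepsilon(S) \in \mathcal{D}(\mathcal{E}_g)$ with the expected energy measure, and that the distributional inequality is tested against nonnegative $\varphi \in C^\infty_c(M)$ — all of which follow from the chain/Leibniz rules for $\Gamma$ and the density of $C^\infty_c(M)$ in $H^1(M)$ assumed in Definition \ref{admissible}, so no positive mass theorem or local analysis near the singular strata is needed. Since the statement is attributed to \cite[Lemma 2.1]{LongTime2}, in the write-up I would simply cite it and indicate that the three displayed formulas are the standard scalar-curvature evolution equation together with its two Kato-truncated consequences.
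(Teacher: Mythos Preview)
Your proposal is correct and aligns with the paper's treatment: the paper does not prove this lemma but simply states it with the remark ``A proof can be found in \cite[Lemma 2.1]{LongTime2}.'' Your sketch of the Kato-truncation argument (smooth convex approximation of $x\mapsto x_+$, dropping the nonnegative $\eta_\varepsilon''(S)|\nabla S|_g^2$ term, and the $H^1$ membership via the Lipschitz chain rule) is exactly the content of that cited result, so citing it as you suggest is precisely what the paper does.
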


\begin{Rem} The equation \eqref{eq:ScalarEvol}
is to be understood in the weak sense: 
for any compactly supported 
smooth test function $\phi \in C^\infty_c(M)$ we have
\begin{equation*}
\int_M \partial_t S \cdot \phi \, d\Vol_g + (n-1) \int_M \ip{\nabla S}{\nabla \phi}_g 
d\Vol_g = \int_M S(S-\sigma)  \cdot \phi \,  d\Vol_g.
\end{equation*}
Similarly for the partial differential inequalities \eqref{eq:S+Evol} and \eqref{eq:S-Evol}
and $\phi \geq 0$
\begin{equation*}
\int\limits_M \partial_t S_\pm  \cdot \phi \,  d\Vol_g + (n-1) \int\limits_M \ip{\nabla S_\pm}{\nabla \phi}_g 
d\Vol_g \leq \pm \int\limits_M S_\pm(S_\pm \mp \sigma)  \cdot \phi \,  d\Vol_g.
\end{equation*}
By Assumption \ref{admissible-assump}, $C^\infty_c(M)$
is dense in $H^1(M) = H^1(M,g)$.
Hence we can as well assume $\phi \in H^1(M,g)$ in the weak formulation above.
\end{Rem}

\begin{Prop}{\cite[Proposition 2.3, Lemma 4.2]{LongTime2}}
\label{Prop:SEvol}
Let $S= S(t)\in  H^1(M,g)$ with $(S_0)_-\in L^{\infty}(M)$ and $S_0 \in L^{q}(M)$. for some $q>\frac{n}{2}$. Then 
\begin{align}
&\norm{S}_{L^{\frac{n}{2}}(M,g)}\leq \norm{S_0}_{L^{\frac{n}{2}}(M)},
\label{eq:Sn/2Bound} \\
&\norm{S_+}_{L^{\frac{n}{2}}(M,g)}\leq \norm{(S_0)_+}_{L^{\frac{n}{2}}(M)},
\label{eq:S+n2Bound} \\
&\norm{S_-}_{L^{\infty}(M)}\leq \norm{(S_0)_-}_{L^{\infty}(M)}.
\label{eq:LowerSBound}
\end{align}

\end{Prop}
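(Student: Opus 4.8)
All three bounds are of maximum–principle type, and I would derive them from a single computation: differentiating an $L^p$-type energy of $S$ against the \emph{evolving} volume measure, and observing that the curvature reaction term in \eqref{eq:ScalarEvol}--\eqref{eq:S-Evol} and the volume evolution \eqref{eq:DVol} exactly balance at the critical exponent $p=n/2$. Concretely, for a fixed convex $G\in C^2(\R)$ and $t\in(0,T)$, I would write
\[
\frac{d}{dt}\int_M G(S)\, d\Vol_g \;=\; \int_M G'(S)\,\partial_t S\, d\Vol_g \;-\;\frac n2\int_M G(S)(S-\sigma)\, d\Vol_g,
\]
using \eqref{eq:DVol} for the second term. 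Since $S(t)\in H^{2,p}(M,g)\subset L^\infty(M)\cap H^1(M,g)$ for $t>0$ by Theorem \ref{Thm:ShortTime}, the function $\partial_t S=(n-1)\Delta S+S(S-\sigma)$ is a genuine $L^p$-function and $G'(S)\in L^\infty(M)\cap H^1(M,g)$, so integrating by parts in \eqref{eq:ScalarEvol} turns the first term into $-(n-1)\int_M G''(S)|\nabla S|_g^2\, d\Vol_g+\int_M S(S-\sigma)G'(S)\, d\Vol_g$. Dropping the gradient term, which is non-positive by convexity, yields the master inequality
\[
\frac{d}{dt}\int_M G(S)\, d\Vol_g \;\le\; \int_M (S-\sigma)\Big(S\,G'(S)-\tfrac n2\,G(S)\Big)\, d\Vol_g.
\]

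Taking $G(x)=|x|^p$ makes the right-hand side equal to $(p-\tfrac n2)\int_M(S-\sigma)|S|^p\, d\Vol_g$, which vanishes when $p=\tfrac n2$; hence $t\mapsto\|S(t)\|_{L^{n/2}(M,g)}$ is non-increasing on $(0,T)$, and letting $t\to0$ (using continuity of $S$ at $t=0$ from Theorem \ref{Thm:ShortTime}) gives \eqref{eq:Sn/2Bound}. Running the same argument with the differential inequality \eqref{eq:S+Evol} for $S_+$ and the test function $\phi=pS_+^{p-1}\ge0$, and using that $S=S_+$ on $\{S_+>0\}$ so that both the reaction and the volume contributions carry the common factor $(S_+-\sigma)$, produces $\frac{d}{dt}\int_M S_+^p\, d\Vol_g\le(p-\tfrac n2)\int_M S_+^p(S_+-\sigma)\, d\Vol_g$, whence \eqref{eq:S+n2Bound} at $p=\tfrac n2$.

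For the $L^\infty$-bound \eqref{eq:LowerSBound} on $S_-$ the sign input $\sigma>0$ (valid under Assumption \ref{Y-assump} by \eqref{eq:YamabeConst}) does the work. Testing \eqref{eq:S-Evol} with $\phi=pS_-^{p-1}\ge0$ and using $S=-S_-$ on $\{S_->0\}$, the volume term now \emph{adds} $\tfrac n2\int_M S_-^p(S_-+\sigma)\, d\Vol_g$ while the reaction term contributes $-p\int_M S_-^p(S_-+\sigma)\, d\Vol_g$, so
\[
\frac{d}{dt}\int_M S_-^p\, d\Vol_g \;\le\;\Big(\tfrac n2-p\Big)\int_M\big(S_-^{p+1}+\sigma S_-^p\big)\, d\Vol_g\;\le\;0\qquad\text{for }p>\tfrac n2.
\]
Thus $\|S_-(t)\|_{L^p(M,g)}\le\|(S_0)_-\|_{L^p}\le\|(S_0)_-\|_{L^\infty}$ for every $p>n/2$ (using $\Vol_g(M)=1$), and letting $p\to\infty$ gives \eqref{eq:LowerSBound}.

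The routine points I would keep in the background are the local absolute continuity of $t\mapsto\int_M G(S)\, d\Vol_g$ on $(0,T)$ — which follows from the smoothness of $t\mapsto u(t)\in H^{2,p}$ in Theorem \ref{Thm:ShortTime} together with \eqref{eq:DVol} — and the continuity up to $t=0$. The one genuinely delicate step, and where I expect the bulk of the technical work to sit, is the admissibility of $G'(S)=p|S|^{p-2}S$ as an $H^1$-test function precisely \emph{at} $p=\tfrac n2$: when $n=3$ one has $p-2<0$ and $|S|^{p-2}S$ need not lie in $H^1(M,g)$ a priori (it would require $\int_M|S|^{-1}|\nabla S|_g^2\, d\Vol_g<\infty$). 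I would circumvent this by carrying out the whole argument with the regularized convex functions $G_\varepsilon(x)=(x^2+\varepsilon^2)^{p/2}$, for which $G_\varepsilon'$ is globally Lipschitz and bounded on bounded sets, so that $G_\varepsilon'(S)\in H^1(M,g)\cap L^\infty(M)$; one then integrates the resulting inequality in time and sends $\varepsilon\to0$. For $p=\tfrac n2$ one checks $x\,G_\varepsilon'(x)-\tfrac n2 G_\varepsilon(x)=-\tfrac n2\varepsilon^2(x^2+\varepsilon^2)^{\frac n4-1}\to0$, uniformly on $\{|x|\le\|S(t)\|_\infty\}$, so dominated convergence (the bound coming from $S(t)\in L^\infty$) closes the estimate; the same regularization makes the $p\to\infty$ passage for $S_-$ harmless.
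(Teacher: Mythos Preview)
Your proposal is correct. The paper does not actually reprove this proposition; it cites it from \cite[Proposition 2.3, Lemma 4.2]{LongTime2} and only adds the remark that the point of the $S_\pm$ split is to obtain \eqref{eq:LowerSBound} \emph{without} a maximum principle, which may fail in this singular setting. Your argument does exactly that --- it is an $L^p$-energy computation, not a pointwise comparison --- and it is the same computation the paper itself carries out a few lines later in the proof of Proposition~\ref{Prop:Sn/2+delta} (testing \eqref{eq:ScalarEvol} against powers of a shifted $S$, using $\partial_t d\Vol_g=-\tfrac n2(S-\sigma)\,d\Vol_g$, and reading off the critical exponent). So your approach is the intended one.

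Two minor comments. First, the observation you flag as ``delicate'' --- that $p|S|^{p-2}S$ is not an admissible $H^1$ test function at $p=\tfrac n2$ when $n=3$ --- is the right place to be careful, and your $G_\varepsilon(x)=(x^2+\varepsilon^2)^{p/2}$ regularization handles it cleanly; the error term $-\tfrac n2\varepsilon^2(x^2+\varepsilon^2)^{n/4-1}$ is indeed $O(\varepsilon^{n/2})$ uniformly in $x$, so the time-integrated remainder vanishes as $\varepsilon\to 0$. Second, in your $S_+$ step you use that on $\{S_+>0\}$ the volume factor $(S-\sigma)$ equals $(S_+-\sigma)$; this is fine, but note that strictly speaking for $p\le 1$ (again $n=3$) the test function $pS_+^{p-1}$ is unbounded near $\{S_+=0\}$, so the same $\varepsilon$-regularization (or equivalently testing with $(S_++\varepsilon)^{p-1}-\varepsilon^{p-1}$) should be invoked there too. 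With those two caveats made explicit, the argument is complete.
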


\begin{Rem}
The main advantage of splitting the evolution equation into $S_{\pm}$ is that \eqref{eq:S-Evol} gives us the lower bound \eqref{eq:LowerSBound} \textit{without} having to appeal to the maximum principle, which in general fails to hold in our setting.
\end{Rem}

\begin{Rem}
\label{Rem:S_0Bounded}
The condition $(S_0)_-\in L^{\infty}(M)$ is harmless. For $S_0\in L^{q}(M)$  for some $q>\frac{n}{2}$, Theorem \ref{Thm:ShortTime} ensures $S(t)\in L^{\infty}(M)$ for small $t>0$. So by redefining the starting time, we may without loss of generality assume $S_0\in L^{\infty}(M)$.
\end{Rem}
\medskip
In \cite{LongTime2}, the last two authors proved that the Yamabe flow exists for all time under slightly stronger assumptions on the scalar curvature than in this work. We therefore sketch a modified proof of this fact here.

\begin{Prop}
\label{Prop:LongTime}
 Assume a Riemannian manifold $(M,g)$ satisfies Assumption \ref{Y-assump} and \ref{admissible-assump}. Then the maximal flow time of the Yamabe flow is $T=\infty$.
\end{Prop}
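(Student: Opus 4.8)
The plan is to upgrade the short-time solution from Theorem \ref{Thm:ShortTime} to a global one by showing that the solution cannot blow up in finite time, using the a priori bounds on scalar curvature from Proposition \ref{Prop:SEvol} together with the Sobolev inequality. By Remark \ref{Rem:S_0Bounded} we may assume $S_0 \in L^\infty(M)$, so that $(S_0)_- \in L^\infty(M)$ and $S_0 \in L^q(M)$ for some $q > n/2$, and Proposition \ref{Prop:SEvol} applies along the whole flow interval of existence. Suppose the maximal existence time $T < \infty$. The abstract existence criterion following from Theorem \ref{Thm:FlowExistence} (as spelled out in the proof of Theorem \ref{Thm:ShortTime}) says that the flow can be continued past time $t$ as long as $u(t)$ stays bounded above and below in $L^\infty$ and the scalar curvature $S(t)$ stays bounded in $L^q(M,g)$ for some $q > n/2$. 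So it suffices to produce, on $[0,T)$, uniform two-sided bounds $0 < c \le u(t) \le C$ and a uniform bound on $\norm{S(t)}_{L^q(M,g)}$.

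First I would establish the lower bound for $u$. From \eqref{eq:LowerSBound} we have $S_- \le \norm{(S_0)_-}_{L^\infty}=:K$ uniformly in time, so $S \ge -K$. Plugging this into the evolution equation \eqref{eq:DVol} for the volume form, $\partial_t d\Vol_g = -\tfrac n2 (S-\sigma) d\Vol_g$ with $\sigma$ bounded (bounded below by $Y(M,g_0) > 0$ via \eqref{eq:YamabeConst}, and non-increasing by \eqref{eq:rhoEvol} hence bounded above by $\sigma(0)$), one gets a differential inequality forcing $d\Vol_g = u^{\frac{2n}{n-2}} d\mu$ to stay comparable to $d\mu$ pointwise on any finite time interval; more directly, integrating the pointwise ODE $\partial_t \log u = -\tfrac{n-2}{4}(S-\sigma)$ coming from \eqref{eq:YamabeFlow}/\eqref{eq:EvolvingScalar} and using the two-sided bound on $S-\sigma$ from below (namely $S - \sigma \le S_+ - \sigma$, and $S-\sigma \ge -K-\sigma(0)$) yields $u(t) \ge u_0 e^{-Ct} \ge c e^{-CT} > 0$. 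So the lower bound on $u$ is essentially free from the lower scalar bound.

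The harder part is the upper bound on $u$ and the $L^q(M,g)$-bound on $S$, which is where I expect the main obstacle to lie, since these control the \emph{positive} part of the scalar curvature and there is no maximum principle available. The strategy here is a Moser iteration / De Giorgi-type argument: start from the $L^{n/2}(M,g)$-bound $\norm{S_+}_{L^{n/2}(M,g)} \le \norm{(S_0)_+}_{L^{n/2}(M)}$ of \eqref{eq:S+n2Bound}, and bootstrap the differential inequality \eqref{eq:S+Evol} for $S_+$ using the Sobolev inequality \eqref{eq:Sobolev} (which, by the remarks after \eqref{eq:H1Norm} and since $u$ is two-sided bounded on compact time intervals, transfers to a Sobolev inequality for $(M,g)$ with controlled constants) to obtain a bound on $\norm{S_+(t)}_{L^p(M,g)}$ for some fixed $p > n/2$, uniform on $[0,T)$. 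The critical exponent $n/2$ is exactly borderline for the reaction term $S_+(S_+ - \sigma)$, so the iteration must be set up carefully — one tests \eqref{eq:S+Evol} against powers $S_+^{2k-1}$, uses Sobolev on $S_+^k$, and absorbs the quadratic nonlinearity using the smallness (or at least finiteness) of the local $L^{n/2}$ mass; this is the standard mechanism behind $\epsilon$-regularity for the Yamabe flow (compare \cite{SS}, \cite{LongTime2}), and the time-dependence only introduces a harmless $\partial_t \int S_+^{2k}$ term that integrates against finite time. Once $\norm{S_+}_{L^p(M,g)}$ is controlled for $p>n/2$, the same abstract regularity used in Theorem \ref{Thm:ShortTime} gives $S \in L^\infty(M)$ with a bound depending only on $T$, $n$, the Sobolev constants, and the initial data; then integrating $\partial_t \log u = -\tfrac{n-2}{4}(S-\sigma)$ with $S$ now bounded above gives the uniform upper bound $u(t) \le C(T)$. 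With $c(T) \le u \le C(T)$ and $S \in L^\infty \subset L^q(M,g)$ on $[0,T)$, the continuation criterion is met, contradicting maximality of $T$; hence $T = \infty$.
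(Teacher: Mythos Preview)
Your overall strategy matches the paper's proof: reduce to showing $c(T) \le u \le C(T)$ and $\|S\|_{L^q(M,g)} \le C(T)$ on any finite interval, then invoke the continuation criterion from Theorem~\ref{Thm:ShortTime}. The paper cites \cite[Prop.~3.1, Theorem~3.2]{LongTime2} for the two-sided bounds on $u$ and \cite[Lemma~2.5]{Brendle} (or \cite[Theorem~4.1]{LongTime2}) for the $L^q$-bound on $S$, via exactly the test-function-plus-Sobolev mechanism you sketch.

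However, you have systematically swapped the upper and lower bounds on $u$. From $\partial_t \log u = -\tfrac{n-2}{4}(S-\sigma)$, a \emph{lower} bound $S \ge -K$ gives $\partial_t \log u \le \tfrac{n-2}{4}(K+\sigma(0))$, hence an \emph{upper} bound $u(t) \le u_0 e^{Ct}$; conversely an upper bound on $S$ yields the lower bound on $u$. So your ``lower bound'' paragraph actually proves the upper bound, and your final sentence the lower bound. The argument survives once you swap the labels, but the correct logical order is: (i) upper bound on $u$ from $S\ge -K$; (ii) $L^q$-bound on $S$; (iii) lower bound on $u$ from the now-available upper bound on $S$.

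There is also a circularity in your appeal to the Sobolev inequality for $(M,g)$: you justify it via ``$u$ is two-sided bounded on compact time intervals,'' but uniform bounds up to the maximal time $T$ are precisely what you are trying to establish, and the short-time bounds on each $[0,t]$ for $t<T$ need not be uniform as $t\to T^-$. The correct tool is the conformally invariant Yamabe inequality \eqref{eq:YamabeIneq} (as used in the paper's proof of Proposition~\ref{Prop:Sn/2+delta} and in \cite{Brendle}), which holds for the evolving metric $g$ with the fixed constant $Y(M,[g_0])$ regardless of any bounds on $u$. No smallness of the $L^{n/2}$-mass is needed for the finite-time $L^q$-bound; Brendle's Lemma~2.5 argument only uses finiteness.
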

\begin{proof}
By Theorem \ref{Thm:ShortTime}, we need to show that for any finite time $T>0$, one can find bounds $c(T),C(T)>0$ and some $q>\frac{n}{2}$ such that $c(T)\leq u\leq C(T)$ and $\norm{S}_{L^q(M,g)}\leq C(T)$. By   \cite[Prop. 3.1, Theorem 3.2]{LongTime2}, whose proofs still hold, we get the upper and lower bounds on $u$. By redefining the starting time to be slightly later, we may assume $S_0\in L^\infty(M)$, so \cite[Theorem 4.1]{LongTime2} works to ensure $S\in L^\infty(M)$ for any finite time $T$. Alternatively the arguments of \cite[Lemma 2.5]{Brendle} go through verbatim, since they only rely on (cleverly) choosing test functions in the evolution equations and using  the Sobolev inequality \eqref{eq:Sobolev}, and this gives a bound on $\norm{S}_{L^q(M,g)}$ for some $q>\frac{n}{2}$ for any finite $T$.
\end{proof}

To study convergence, we need to get time-independent bounds.
In what follows, we combine results and arguments from \cite{Struwe}, \cite{Brendle}, and \cite{LongTime2} to deduce time-independent bounds on the scalar curvature $S$. Let us start by recording a little lemma which allows us to apply the chain rule.
\begin{Lem}
Assume $v\in H^1(M)\cap L^{\infty}(M)$ and $f\in C^1(\R)$. Then $f\circ v\in H^1(M)\cap L^{\infty}(M)$ and the chain rule applies;
\[\nabla (f\circ v)=f'(v)\nabla v.\]
\label{Lem:Chain}
\end{Lem}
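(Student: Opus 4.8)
The plan is to reduce the statement to a standard fact about Sobolev spaces via an approximation argument, since the subtlety is entirely about regularity rather than anything geometric. First I would note that $f \in C^1(\R)$ together with $v \in L^\infty(M)$ means that on the compact interval $[-\|v\|_\infty, \|v\|_\infty]$ the function $f$ is Lipschitz, with Lipschitz constant $\Lambda := \sup_{|s|\le \|v\|_\infty} |f'(s)|$. Hence $|f(v(x)) - f(v(y))| \le \Lambda |v(x)-v(y)|$ pointwise, which immediately gives $f\circ v \in L^\infty(M)$ with $\|f\circ v\|_\infty \le |f(0)| + \Lambda \|v\|_\infty$; more importantly, it shows that composing with $f$ maps Cauchy sequences in $L^2$ to Cauchy sequences, so the candidate $\nabla(f\circ v) = f'(v)\nabla v$ makes sense in $L^2(M)$ because $f'(v)$ is bounded and $\nabla v \in L^2(M)$.

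The core step is the approximation. By Assumption \ref{admissible-assump}, $C^\infty_c(M)$ (equivalently $\mathrm{Lip}(\overline M)$) is dense in $H^1(M)$, so pick $v_k \to v$ in $H^1(M)$ with $v_k$ Lipschitz. One should truncate, replacing $v_k$ by $\max\{-\|v\|_\infty, \min\{v_k, \|v\|_\infty\}\}$ (which is still Lipschitz and still $H^1$-convergent to $v$, using that truncation is $1$-Lipschitz and does not increase the $H^1$ norm distance to a function already bounded by $\|v\|_\infty$), so that all $v_k$ are uniformly bounded by $\|v\|_\infty$. For Lipschitz $v_k$ the chain rule $\nabla(f\circ v_k) = f'(v_k)\nabla v_k$ holds in the classical almost-everywhere sense. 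Then I would pass to a subsequence so that $v_k \to v$ and $\nabla v_k \to \nabla v$ almost everywhere and dominatedly; continuity of $f$ and $f'$ give $f(v_k)\to f(v)$ and $f'(v_k)\to f'(v)$ a.e., the uniform bounds give domination, so $f(v_k) \to f(v)$ in $L^2$ and $f'(v_k)\nabla v_k \to f'(v)\nabla v$ in $L^2$. Closedness of the gradient operator on $H^1(M)$ then yields $f\circ v \in H^1(M)$ with $\nabla(f\circ v) = f'(v)\nabla v$.

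I expect the main obstacle to be the bookkeeping around the truncation step: one must check that the truncated Lipschitz approximants still converge to $v$ in the full $H^1$ norm (not just weakly, and not just in $L^2$), which requires knowing that the truncation $T_R(w) := \max\{-R,\min\{w,R\}\}$ with $R = \|v\|_\infty$ satisfies $\|T_R(v_k) - v\|_{H^1} \le \|v_k - v\|_{H^1}$ — this uses $T_R(v) = v$ a.e.\ together with the fact that $T_R$ is a $1$-Lipschitz piecewise-$C^1$ map, so $\nabla T_R(v_k) = \mathbf 1_{\{|v_k| < R\}}\nabla v_k$ pointwise and one estimates $\int |\mathbf 1_{\{|v_k|<R\}}\nabla v_k - \nabla v|^2$ against $\int|\nabla v_k - \nabla v|^2$ on the relevant sets. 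This is genuinely elementary but is the one place where the argument is not just "quote density and closedness." Everything else — the $L^\infty$ bound, the a.e.\ convergence after extracting a subsequence, the dominated convergence — is routine. One could alternatively invoke a general Dirichlet-space chain rule (the chain rule for the energy measure $\Gamma$ is even recalled in Definition \ref{Def:DirichletSpace}), but the self-contained approximation proof is cleaner here since we are working in the Riemannian picture of Section \ref{intro-section}.
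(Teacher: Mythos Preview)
Your argument is correct and substantially more complete than the paper's own proof, which consists of two sentences: $f(v)$ is bounded and hence in $L^2(M)$ by finite volume, and $f'(v)$ is bounded so $f'(v)\nabla v \in L^2(M)$. The paper leaves the actual chain-rule identity $\nabla(f\circ v) = f'(v)\nabla v$ implicit, in effect reducing to the standard version (for $f$ with $f' \in L^\infty(\R)$, as cited in the remark immediately following the lemma) or to the Dirichlet-space chain rule recorded in Definition~\ref{Def:DirichletSpace}. Your approximation argument is the honest self-contained proof behind that reduction; what it buys is independence from an external reference, at the cost of the truncation bookkeeping you correctly flag. One small caveat: the literal inequality $\|T_R(v_k) - v\|_{H^1} \le \|v_k - v\|_{H^1}$ need not hold on the gradient part, but what you actually need --- that $T_R(v_k) \to v$ in $H^1$ --- does follow from continuity of the truncation map on $H^1$, and your set-by-set analysis is the right way to see it.
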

\begin{proof}
The composition $f(v)$ is bounded, and thus in $L^2(M)$ (since $\Vol(M)<\infty$). The function $f'(v)$ is bounded, hence $f'(v)\nabla v\in L^2(M)$.  
\end{proof}

\begin{Rem}
This is similar to the lemma which was used in \cite{LongTime2} and \cite{ACM}, namely that if $v\in H^1(M)$ and $f\in C^1(\R)$ with $f'\in L^{\infty}(\R)$, then $f\circ v\in H^1(M)$ and the chain rule holds.
\end{Rem}

For convergence we need to derive some bounds which do not blow up as $T\to \infty$. According to Proposition \ref{Prop:SEvol}, we already have such bounds on $\norm{S_-}_{L^{\infty}(M)}$ and  $\norm{S}_{L^{\frac{n}{2}}(M,g)}$ respectively. We start with the simplest one.
\begin{Prop}
\label{Prop:SL2Conv}
\begin{equation}
\lim_{t\to \infty} \int_M (S(t)-\sigma(t))^2\, d\Vol_{g(t)}=0.
\label{eq:RhoAtInf}
\end{equation}

\end{Prop}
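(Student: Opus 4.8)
The plan is to exploit the monotonicity of the average scalar curvature $\sigma(t)$, which is the key structural fact we have available. Recall from \eqref{eq:rhoEvol} that
\[
\partial_t \sigma = -\frac{n-2}{2}\int_M (\sigma - S)^2 u^{\frac{2n}{n-2}}\, d\mu = -\frac{n-2}{2}\int_M (\sigma - S)^2\, d\Vol_g,
\]
so $\sigma$ is non-increasing. Moreover, by Assumption \ref{Y-assump} and the inequality \eqref{eq:YamabeConst}, we have $\sigma(t) \geq Y(M,g_0) > 0$ for all $t$, so $\sigma$ is bounded below. Hence $\sigma(t)$ converges to some finite limit $\sigma_\infty \geq Y(M,g_0)$ as $t\to\infty$.

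First I would integrate the evolution equation \eqref{eq:rhoEvol} over $[0,\infty)$: since $\sigma$ is monotone and bounded,
\[
\frac{n-2}{2}\int_0^\infty \left(\int_M (S(t)-\sigma(t))^2\, d\Vol_{g(t)}\right) dt = \sigma(0) - \sigma_\infty < \infty.
\]
Thus the non-negative function $t \mapsto \int_M (S-\sigma)^2\, d\Vol_g$ is integrable on $[0,\infty)$. Integrability of a non-negative function does not by itself force the function to tend to zero, so the remaining work is to promote this to an actual limit. The standard device is to show that $t\mapsto \int_M (S-\sigma)^2\, d\Vol_g$ has a bounded (or at least controlled) time derivative, or is otherwise uniformly continuous in $t$; an integrable function with bounded derivative must tend to zero. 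Computing $\partial_t \int_M (S-\sigma)^2\, d\Vol_g$ using the evolution equations \eqref{eq:ScalarEvol} for $S$, \eqref{eq:rhoEvol} for $\sigma$, and \eqref{eq:DVol} for the volume form, one gets an expression involving $\int_M |\nabla S|_g^2\, d\Vol_g$, higher moments of $S$, and $\sigma$; these are controlled using the uniform bounds already established — the lower bound \eqref{eq:LowerSBound} on $S_-$, the $L^{n/2}$-bound \eqref{eq:Sn/2Bound} on $S$, the Sobolev inequality \eqref{eq:Sobolev}, and the fact that $\sigma$ stays in a bounded interval — together with the time-independent bounds on $S$ that the preceding discussion promises to provide.

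The main obstacle I anticipate is precisely closing this last step cleanly: controlling $\partial_t \int_M (S-\sigma)^2\, d\Vol_g$ requires time-independent control of quantities like $\|S\|_{L^p(M,g)}$ for suitable $p$ and of $\int_M |\nabla S|_g^2\, d\Vol_g$, and establishing those uniform-in-time bounds is exactly the technical heart of this part of the paper (the section is titled "uniform bounds for scalar curvature"). An alternative, possibly cleaner route that avoids differentiating in time is to argue by contradiction: if \eqref{eq:RhoAtInf} fails, there is a sequence $t_k\to\infty$ and $\varepsilon>0$ with $\int_M (S(t_k)-\sigma(t_k))^2\, d\Vol_{g(t_k)} \geq \varepsilon$; using a uniform modulus of continuity for this quantity on intervals $[t_k, t_k+\delta]$ (again from the evolution equations plus the uniform bounds) one finds that the integral $\int_0^\infty$ picks up a fixed positive contribution from each $[t_k, t_k+\delta]$, contradicting finiteness. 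Either way, the crux is the same uniform estimate, and I would present whichever packaging is shortest given the bounds available at this point in the text.
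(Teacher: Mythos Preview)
Your strategy is exactly the paper's: integrability of $F_2(t) := \int_M (S-\sigma)^2\, d\Vol_g$ from the monotonicity of $\sigma$, followed by a differential inequality for $F_2$. The difference is in the execution of the second step, and it matters for the logical order of the section. You anticipate needing time-independent $L^p$ bounds on $S$ for $p>n/2$ and control of $\int_M |\nabla S|_g^2\, d\Vol_g$; but those bounds are Proposition~\ref{Prop:Sn/2+delta}, which comes \emph{after} this proposition and in fact uses it as input. The paper avoids this circularity (for $n\geq 4$) by observing that in the computed identity
\[
\partial_t F_2 = -2(n-1)\int_M |\nabla S|_g^2\, d\Vol_g + \left(2-\tfrac{n}{2}\right)\int_M S(S-\sigma)^2\, d\Vol_g + \tfrac{n}{2}\sigma\, F_2,
\]
the gradient term has favorable sign and is simply dropped, and since $2-\tfrac{n}{2}\leq 0$ for $n\geq 4$, the middle term is bounded above using only the lower bound $S\geq -\|(S_0)_-\|_{L^\infty}$ already available from \eqref{eq:LowerSBound}. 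This yields $\partial_t F_2 \leq C F_2$ with a time-independent $C$; a short double-integration trick (integrate from $s$ to $t$, then average over $s\in(t-1,t)$) gives $F_2(t)\leq (C+1)\int_{t-1}^\infty F_2\, d\tau \to 0$. The case $n=3$ genuinely does require the heavier machinery you allude to, and the paper explicitly defers it to the proof of Proposition~\ref{Prop:Sn/2+delta}.
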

\begin{proof}
We will only prove this for $n\geq 4$. It is true for $n=3$ as well, but the proof requires the general machinery 
which we introduce when discussing Proposition \ref{Prop:Sn/2+delta}. \medskip

\noindent
We recall that by Remark \ref{Rem:S_0Bounded}, we may assume that $S_0\in L^{\infty}(M)$.

\noindent
By the monotonicity of $\sigma$ \eqref{eq:rhoEvol} and the fact that $\sigma$ is bounded from below by $Y(M,g_0)$, we conclude that 
\begin{equation}
\lim_{T\to \infty} \int_0^T \sigma'(t)\, dt=-\frac{n-2}{2}\lim_{T\to \infty} \int_0^T \int_M \left( S(t)-\sigma(t)\right)^2 d\Vol_g\, dt
\label{eq:L2Integral}
\end{equation}
 exists.
From the evolution equations \eqref{eq:ScalarEvol} and \eqref{eq:DVol}, we compute
\begin{align}
&\partial_t \int_M (S-\sigma)^2\, d\Vol_g =2\int_M \partial_t S(S-\sigma)\, d\Vol_g +\int_M (S-\sigma)^2\, \partial_t d\Vol_g \notag \\
&=-2(n-1) \int_M \vert \nabla (S-\sigma)\vert^2_g \, d\Vol_g +2\int_M S(S-\sigma)^2\d\Vol_g -\frac{n}{2}\int_M (S-\sigma)^3\, d\Vol_g\notag \\
&= -2(n-1) \int_M \vert \nabla (S-\sigma)\vert^2_g \, d\Vol_g \notag \\ &+\left(2-\frac{n}{2}\right) \int_M S(S-\sigma)^2\, d\Vol_g +\frac{n}{2}\sigma \int_M (S-\sigma)^2\, d\Vol_g.
\label{eq:L2EvolEq}
\end{align}
For $n\geq 4$, we use Proposition \ref{Prop:SEvol} to approximate this as
\begin{align*}\partial_t \int_M (S-\sigma)^2\, d\Vol_g &\leq \left(\frac{n}{2}-2\right) \norm{ (S_0)_-}_{L^{\infty}(M)} \int_M (S-\sigma)^2\, d\Vol_g 
\\ &+\frac{n}{2}\sigma(0) \int_M (S-\sigma)^2\, d\Vol_g,
\end{align*}
meaning
\begin{equation}
\partial_t \norm{S-\sigma}_{L^2(M,g)}^2\leq C \norm{S-\sigma}^2_{L^2(M,g)}
\label{eq:F2Der}
\end{equation}
for $C=\left(\frac{n}{2}-2\right) \norm{ (S_0)_-}_{L^{\infty}(M)}+\frac{n}{2}\sigma(0)$, a time-independent constant. 
Writing 
\[F_2(t)\coloneqq \norm{S(t)-\sigma(t)}_{L^2(M,g(t))}^2,\] 
 and integrating \eqref{eq:F2Der} from $s$ to $t$ yields
\[F_2(t)\le F_2(s)+C \int_{s}^t F_2(\tau)d\tau.\]
After integrating this again for $s\in (t-1,t)$, we get 
\[F_2(t)\le (C+1) \int_{t-1}^t F_2(\tau)d\tau\le (C+1)\int_{t-1}^{+\infty} F_2(\tau)d\tau.\]
Hence
\[\limsup_{t\to \infty} F_2(t)=0.\]
\end{proof}

We first show that we can do slightly better than a uniform $n/2$-norm bound on $S$. The arguments in the following proof are essentially due to \cite{SS, Brendle, Struwe}. We write some of the details, however to demonstrate how the arguments go through in our setting. A key observation is that Lemma \ref{Lem:Chain} along with Theorem \ref{Thm:ShortTime} (which in particular says $S(t)\in L^\infty(M)$ for finite times) allow us to use the chain rule freely.

\begin{Prop}[{\cite[Lemma 3.3]{SS}, \cite[Proposition 3.1]{Brendle}, \cite[4.14]{Struwe}}]
\label{Prop:Sn/2+delta}
For any $1<p<\frac{n}{2}+1$ 
\[\lim_{t\to \infty} \norm{S(t)-\sigma(t)}_{L^p(M,g(t))}=0.\]
In particular, there exists, $C>0$ independent of time $t$ and $q>\frac{n}{2}$ such that 
\begin{equation}
\norm{S(t)}_{L^{q}(M,g(t))}\leq C.
\end{equation}
\end{Prop}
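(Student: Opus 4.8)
The strategy is a Moser-type iteration applied to the evolution inequalities \eqref{eq:S+Evol} and \eqref{eq:S-Evol}, combined with the $L^2$-decay of $S-\sigma$ from Proposition \ref{Prop:SL2Conv} as the seed of the iteration, and the Sobolev inequality \eqref{eq:Sobolev} (equivalently \eqref{eq:SemiGroupSobolev}) as the engine that promotes an $L^p$-bound to an $L^{p\cdot n/(n-2)}$-bound. Since $\norm{S_-}_{L^\infty}$ is already controlled uniformly in time by \eqref{eq:LowerSBound}, and $\sigma$ is bounded (non-increasing, bounded below by $Y(M,g_0)>0$), it suffices to bound $S_+ - \sigma$, or equivalently $S_+$, in successively higher $L^p$-norms with time-independent constants. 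So the real object of study is $w := (S-\sigma)_+$ or $w := S_+$; I will test the evolution inequality against $w^{2\theta-1}$ for suitable exponents $\theta$ and absorb the reaction term using the $L^{n/2}$-smallness afforded by Proposition \ref{Prop:SEvol}.

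**Key steps in order.** First, fix $p$ with $1 < p < \tfrac n2 + 1$ and set $w = (S - \sigma)_+$; using Lemma \ref{Lem:Chain} and $S(t)\in L^\infty(M)$ for each finite $t$ (Theorem \ref{Thm:ShortTime}), the chain rule is legitimate, so testing \eqref{eq:S+Evol} with $w^{p-1}$ gives, after the usual manipulation,
\begin{equation*}
\partial_t \int_M w^p\, d\Vol_g + \frac{4(n-1)(p-1)}{p}\int_M \abs{\nabla w^{p/2}}_g^2\, d\Vol_g \leq C(p)\int_M S\, w^p\, d\Vol_g + C'(p,\sigma)\int_M w^p\, d\Vol_g,
\end{equation*}
where the volume-form term $\partial_t d\Vol_g = -\tfrac n2(S-\sigma)d\Vol_g$ produces additional $S\cdot w^p$ and $\sigma\cdot w^p$ contributions with time-independent coefficients. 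Second, split $\int S w^p = \int S_+ w^p - \int S_- w^p$; the $S_-$-term is harmless by \eqref{eq:LowerSBound}, and for the $S_+$-term apply Hölder with exponents $\tfrac n2$ and $\tfrac{n}{n-2}$ followed by the Sobolev inequality on $w^{p/2}$, so that $\int S_+ w^p \leq \norm{S_+}_{L^{n/2}(M,g)}\,\norm{w^{p/2}}_{L^{2n/(n-2)}(M,g)}^2 \leq \norm{(S_0)_+}_{L^{n/2}}\big(A\int\abs{\nabla w^{p/2}}^2 + B\int w^p\big)$. Third — and this is where $p < \tfrac n2 + 1$ is used — choose the test exponent so that the constant $\tfrac{4(n-1)(p-1)}{p}$ dominates the Sobolev-gradient coefficient $C(p)\,A\,\norm{(S_0)_+}_{L^{n/2}}$; if $\norm{(S_0)_+}_{L^{n/2}}$ is not small one first runs the flow for a definite time so that by Proposition \ref{Prop:SEvol} (or rather its refinement giving decay, not just boundedness — alternatively, accept a possibly short window and iterate) the relevant $L^{n/2}$-norm of $S_+ - \sigma$ is below the threshold; the surplus can be absorbed, leaving $\partial_t \int w^p \leq C''\int w^p$ plus a controlled gradient term, and then a differential-inequality/integration argument identical in shape to the end of the proof of Proposition \ref{Prop:SL2Conv} (integrate twice, over $[s,t]$ and then $s\in(t-1,t)$, and feed in $\int_0^\infty F_2 < \infty$ together with the already-established decay at the previous level) yields $\limsup_{t\to\infty}\norm{w}_{L^p(M,g)} = 0$. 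Fourth, bootstrap: starting from $p=2$ (the base case, Proposition \ref{Prop:SL2Conv}) and iterating $p \mapsto \min\{p\cdot\tfrac{n}{n-2},\, \tfrac n2 + 1 - \varepsilon\}$ reaches every $p < \tfrac n2 + 1$ in finitely many steps. Finally, since $\sigma(t)$ is uniformly bounded and $\Vol_g(M)=1$, $\limsup\norm{S-\sigma}_{L^p} = 0$ upgrades to a time-independent bound $\norm{S}_{L^q(M,g)} \leq C$ for any fixed $q\in(\tfrac n2, \tfrac n2 + 1)$, which is the displayed ``in particular''.

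**Main obstacle.** The delicate point is the absorption in step three: one needs $\norm{S_+}_{L^{n/2}(M,g)}$ (or $\norm{(S-\sigma)_+}_{L^{n/2}}$) to be strictly below an explicit threshold determined by $n$, $p$, and the Sobolev constants $A, B$, so that the reaction-term gradient contribution can be swallowed by the good $\int\abs{\nabla w^{p/2}}^2$ term. Proposition \ref{Prop:SEvol} only gives $\norm{S_+}_{L^{n/2}(M,g)}\leq\norm{(S_0)_+}_{L^{n/2}}$, which need not be small. The resolution, following Schwetlick--Struwe and Brendle, is to exploit that it is really $\norm{(S-\sigma)_+}_{L^{n/2}}$ (or the concentrated part of $S_+$) that must be small: by Proposition \ref{Prop:SL2Conv} and a Vitali/absolute-continuity argument, for any $\delta>0$ there is a decomposition of $S_+$ into a part bounded pointwise by $2\sigma(0)$ plus a part small in $L^{n/2}(M,g)$ uniformly for $t$ large; the bounded part contributes only to the zeroth-order term (harmless), and the small part is absorbed. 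Handling the low dimension $n=3$ — where $\tfrac n2 + 1 = \tfrac 52$ but the base case $p=2$ is itself only established via this machinery — requires starting the iteration at some $p<2$ close to the scaling-critical value and is the reason Proposition \ref{Prop:SL2Conv} was deferred for $n=3$; I would run the same scheme but seed it from the $L^1$-type control $\int_0^\infty\!\int(S-\sigma)^2\,d\Vol_g\,dt<\infty$ directly.
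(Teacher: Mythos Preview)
Your Moser-iteration outline has a genuine gap at exactly the point you flag as the ``main obstacle,'' and your proposed resolution is circular. To absorb the reaction term you need, for $t$ large, that $\norm{S_+}_{L^{n/2}(M,g)}$ (or $\norm{(S-\sigma)_+}_{L^{n/2}(M,g)}$) is strictly below a threshold depending on $n,p$ and the Sobolev constant. Proposition~\ref{Prop:SEvol} only gives $\norm{S_+}_{L^{n/2}(M,g)}\le\norm{(S_0)_+}_{L^{n/2}}$, with no decay whatsoever; there is no ``refinement giving decay'' available at this stage. Your decomposition $S_+=S_+\mathbf{1}_{\{S_+\le 2\sigma(0)\}}+S_+\mathbf{1}_{\{S_+>2\sigma(0)\}}$ does not help: Proposition~\ref{Prop:SL2Conv} and Chebyshev give $\Vol_g\{S_+>2\sigma(0)\}\to 0$, but concluding that $\int_{\{S_+>2\sigma(0)\}}S_+^{n/2}\,d\Vol_g\to 0$ requires equi-integrability of $S_+^{n/2}$, i.e.\ a uniform $L^{q}$-bound for some $q>n/2$ --- precisely the statement you are trying to prove. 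For $n\ge 5$ there is no way to get $L^{n/2}$-smallness from the $L^2$-decay alone.

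The paper sidesteps this absorption problem entirely by a different choice of test function. Instead of $w^{p-1}$ with $w=(S-\sigma)_+$, one tests \eqref{eq:ScalarEvol} against $(S+\gamma)^{2\beta-1}$ with $\gamma>\norm{(S_0)_-}_{L^\infty}$ and $1<2\beta<\tfrac n2$. After subtracting multiples of $\int_M(S-\sigma)\,d\Vol_g=0$ and using the elementary inequality $(x-y)(x^q-y^q)\ge|x-y|^{q+1}$ for $q\ge1$, one arrives at
\[
\partial_t\int_M(S+\gamma)^{2\beta}\,d\Vol_g\;\le\;2\Bigl(\beta-\tfrac n4\Bigr)\int_M|S-\sigma|^{2\beta+1}\,d\Vol_g.
\]
The crucial point is that $\beta-\tfrac n4<0$, so integrating in $t$ yields $\int_0^\infty F_{2\beta+1}(t)\,dt<\infty$ for every $2\beta+1\in(2,\tfrac n2+1)$ \emph{directly}, with no smallness hypothesis and no absorption. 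Only after this does one invoke the differential inequality $F_p'\le CF_p^{(2p-n+2)/(2p-n)}+CF_p$ (valid for $p>\max\{n/2,2\}$, derived via the Yamabe inequality rather than a bare Sobolev inequality) together with $\int_0^\infty F_p<\infty$ to conclude $F_p(t)\to 0$. The shift by $\gamma$ and the restriction $2\beta<\tfrac n2$ are the missing ideas in your proposal.
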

\begin{Rem}
One can improve upon this, and use the arguments from \cite[pp. 70-71]{SS} to deduce $\lim\limits_{t\to \infty}\norm{S(t)-\sigma(t)}_{L^p(M,g(t))}=0$ for any $p<\infty$.
\end{Rem}

\begin{proof}
We follow \cite[pp. 68-71]{SS} with minor modifications. Introduce 
\[F_q(t)\coloneqq \int_M \vert S-\sigma\vert^q\, d\Vol_g =\norm{S-\sigma}^q_{L^q(M,g)}.\]
The first thing we need to establish is that for $p<\frac{n}{2}+1$, we have
\begin{equation}
\int_T^\infty F_p(t)\,dt <\infty.
\label{eq:FpInt}
\end{equation}
The argument for this is easier for $n=4$ than the general case $n\geq 3$, so we show this first.
\medskip

\noindent\textbf{The case $n=4$.} We return to \eqref{eq:L2EvolEq}, which for $n=4$ reads
\[\partial_t F_2(t)=-6\int_M \vert \nabla (S-\sigma)\vert^2_g\, d\Vol_g+\sigma F_2(t).\]
By the conformal invariance of the Yamabe constant, \eqref{eq:YamabeConst}, we get the Yamabe-inequality
\begin{equation}
Y(M,[g_0])\norm{f}^2_{L^{\frac{2n}{n-2}}(M,g)}\leq 4\frac{n-1}{n-2}\norm{\nabla f}^2_{L^2(M,g)}+ \int S f^2\, d\Vol_g,
\label{eq:YamabeIneq}
\end{equation}
and we use this with $f= S-\sigma$ (and $n=4$) to say
\[  -6\int_M \vert \nabla (S-\sigma)\vert^2_g\, d\Vol_g\leq -Y(M,[g_0])F_4(t)^{\frac{1}{2}} +\int_M S(S-\sigma)^2\, d\Vol_g.\]
The last term we handle using the Hölder inequality
\begin{align*}
\int_M S(S-\sigma)^2\, d\Vol_g&\leq \int_M \vert S-\sigma\vert^3\, d\Vol_g +\sigma \int_M(S-\sigma)^2\, d\Vol_g \\
&\leq F_2(t)F_4(t)^{\frac{1}{2}}+\sigma F_2(t).
\end{align*}
Combined, we have
\[(Y-F_2(t))F_4(t)^{\frac{1}{2}}\leq -\partial_t F_2(t)+2\sigma F_2(t),\]
where we write $Y\coloneqq Y(M,[g_0])$. 
By Proposition \ref{Prop:SL2Conv}, we know $F_2(t)\xrightarrow {t\to \infty} 0$, and since $Y>0$, there is some $T>0$ such that $Y-F_2(t)>\frac{Y}{2}>0$ for all $t\geq T$.
Integrating, we thus find
\[ \int_T^\infty F_4(t)^{\frac{1}{2}}\, dt \leq \frac{2}{Y} \left(F_2(0)+2\sigma(0)\int_T^\infty F_2(t)\, dt\right)<\infty.\]
The H\"{o}lder inequality again and the boundedness of $F_2$ therefore yield
\[\int_T^\infty F_3(t)\, dt\leq \int_T^\infty F_2(t)F_4(t)^{\frac{1}{2}}\,dt <\infty,\]
which establishes \eqref{eq:FpInt} for $n=4$. \medskip

\noindent\textbf{The general case.} The basic idea is still the same, but the estimates are more intricate.
Let $1< 2\beta< \frac{n}{2}$ be arbitrary, 
and  let $\gamma>\norm{(S_0)_-}_{L^\infty}$ be a constant, so that by Proposition \ref{Prop:SEvol}, $S+\gamma>0$ for all time.  
Using $(S+\gamma)^{2\beta-1}$ as a test function\footnote{Thanks to Theorem \ref{Thm:ShortTime}, the scalar curvature $S$ is bounded for time $0<t< T$. The functions $(S+\gamma)^{\beta}$ is therefore in $H^1(M,g)\cap L^{\infty}(M)$ and the chain rule applies. So we do not need approximation functions here as in \cite{ACM} and \cite{LongTime2}. } in \eqref{eq:ScalarEvol} and that $\partial_t d\Vol_g=-\frac{n}{2}(S-\sigma)d\Vol_g$, we arrive at 
\begin{align}
&\frac{\beta}{2(2\beta-1)} \partial_t \int_M (S+\gamma)^{2\beta}\, d\Vol_g +(n-1)\int_M \vert \nabla  (S+\gamma)^{\beta}\vert^2\, d\Vol_g \notag \\
&=\frac{\beta}{2\beta-1} \int_M \beta S(S-\sigma)(S+\gamma)^{2\beta-1}-\frac{n}{4}(S-\sigma)(S+\gamma)^{2\beta}\, d\Vol_g\notag \\
&= \frac{\beta}{2\beta-1} \left( \beta-\frac{n}{4}\right) \int_M  (S+\gamma)^{2\beta}(S-\sigma) \, d\Vol_g \notag \\
&-\frac{\beta^2\gamma}{2\beta-1} \int_M (S+\gamma)^{2\beta-1}(S-\sigma)\, d\Vol_g.
\label{eq:n/2+deltaStep1}
\end{align}
Since 
\[\int_M  (S-\sigma)\, d\Vol_g=0,\]
we may add multiples of this freely in the above expression.
We may therefore write
\begin{align}
&\frac{\beta}{2(2\beta-1)} \partial_t \int_M (S+\gamma)^{2\beta}\, d\Vol_g +(n-1)\int_M \vert \nabla (S+\gamma)^{\beta}\vert^2\, d\Vol_g \notag\\
&= \frac{\beta}{2\beta-1} \left( \beta-\frac{n}{4}\right) \int_M(S-\sigma)\left( (S+\gamma)^{2\beta}-(\gamma+\sigma)^{2\beta}\right)\, d\Vol_g\notag \\
&- \frac{\beta^2\gamma }{2\beta-1} \int_M (S-\sigma)\left((S+\gamma)^{2\beta-1}-(\sigma+\gamma)^{2\beta-1}\right)\, d\Vol_g. 
\label{eq:Intermediate}
\end{align}
We now need the elementary estimate that for any $p\geq 1$, we have 
\[ (x^p-y^p)(x-y) \geq  \vert x-y\vert^{p+1}\]   
 for any $x,y\in \R_{\geq 0}$. The way to see this, is to observe that the function $f(t)\coloneqq (1-t^p)^{1/p}$ for $t\in [0,1)$ is concave.
 Using this bound with $x=S+\gamma$, $y=\sigma+\gamma$, (and recalling that $2\beta>1$) we have
  \[(S-\sigma)\left( (S+\gamma)^{2\beta}-(\gamma+\sigma)^{2\beta}\right)\geq \vert S-\sigma\vert^{2\beta+1}.\]
  The very last term we handle slightly differently\footnote{If $2\beta>2$ one can use the same argument, but this does not work for $n=3$ or $n=4$.}, and observe that we have $(x-y)(x^p-y^p)\geq 0$ for any $p>0$ and $x,y>0$. Using this with $x=S+\gamma$, $y=\sigma+\gamma$ to discard the very last term in \eqref{eq:Intermediate}, we arrive at
\begin{align}
&\frac{\beta}{2(2\beta-1)} \partial_t \int_M (S+\gamma)^{2\beta}\, d\Vol_g +(n-1)\int_M \vert \nabla (S+\gamma)^{\beta}\vert^2\, d\Vol_g \notag \\
&\leq \frac{\beta}{2\beta-1} \left( \beta-\frac{n}{4}\right) \int_M\vert S-\sigma\vert^{2\beta+1}\, d\Vol_g 
\label{eq:n/2+deltaStep2}
\end{align}
We drop (for now) the gradient term and deduce
\[\partial_t \int_M (S+\gamma)^{2\beta}\, d\Vol_g  \leq 2\left( \beta-\frac{n}{4}\right) \int_M\vert S-\sigma\vert^{2\beta+1}\, d\Vol_g.\]
We integrate this and get
\[\int_0^t \int_M\vert S(\tau)-\sigma(\tau)\vert^{2\beta+1}\, d\Vol_{g(\tau)} \, d\tau\leq \frac{2}{4\beta-n}\norm{S_0+\gamma}^{2\beta}_{L^{2\beta}(M)}.\]
We conclude that
\[\lim_{t\to \infty} \int_0^t \int_M\vert S(\tau)-\sigma(\tau)\vert^{2\beta+1}\, d\Vol_{g(\tau)} \, d\tau\]
exists for $1<2\beta<\frac{n}{2}$, hence
\[\liminf_{t\to \infty}\int_M\vert S(\tau)-\sigma(\tau)\vert^{2\beta+1}\, d\Vol_{g(\tau)} \, d\tau=0.\]
This proves \eqref{eq:FpInt} for $n\geq 3$.
\medskip

\noindent
 To deduce that $\lim\limits_{t\to \infty}  F_{2\beta+1}(t)=0$, we need bounds on $F_{2\beta+1}$, exactly as in the proof of Proposition \ref{Prop:SL2Conv}. For this we need \cite[Equation 39]{SS} or \cite[Lemma 2.3]{Brendle}, which states that for $p>\max\left\{\frac{n}{2},2\right\}$, there is a time-independent $C>0$ such that
\[\frac{d}{dt} F_p(t) \leq C F_p(t)^{\frac{2p-n+2}{2p-n}}+CF_p(t).\]
The proof of this differential inequality is via similar arguments to the ones used so far, using the evolution equation for $S$, the H\"{o}lder inequality, and the Yamabe inequality \eqref{eq:YamabeIneq} to estimate the gradient term in \eqref{eq:n/2+deltaStep2}). We leave out these argument. From here, one follows the arguments from \cite[pp. 69-71]{SS} to deduce $\limsup\limits_{t\to \infty} F_{2\beta+1}(t)=0$ as well, hence the claim.
\end{proof}

The above gain in regularity is sufficient to guarantee a uniform Sobolev inequality for $H^1(M,g)$.
\begin{Prop}
\label{Prop:Sobolev}
Assume $g=g(t)=u(t)^{\frac{4}{n-2}}g_0$ is a family of metrics so that there exist (time-independent) constants $C>0$ and $q>\frac{n}{2}$ so that  $\norm{S}_{L^{q}(M,g)}\leq C$ holds for all time. Assume the Yamabe constant is positive, $Y(M,g_0)>0$. Then
the Sobolev inequality holds for all $f\in H^1(M,g)$ independently of time, meaning one can find time-independent constants $A,B>0$ such that
\begin{equation}
\norm{f}^2_{L^{\frac{2n}{n-2}}(M,g)}\leq A \norm{\nabla f}_{L^2(M,g)}^2 + B\norm{f}^2_{L^2(M,g)}
\label{eq:SobolevForg}
\end{equation}
holds for all $f\in H^1(M,g)$.
\end{Prop}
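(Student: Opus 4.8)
The plan is a routine absorption argument built on the Yamabe inequality \eqref{eq:YamabeIneq}. By the conformal invariance of the Yamabe quotient one has, for every $f\in H^1(M,g)$,
\[ Y\,\norm{f}^2_{L^{\frac{2n}{n-2}}(M,g)} \le \tfrac{4(n-1)}{n-2}\norm{\nabla f}^2_{L^2(M,g)} + \int_M S\, f^2\, d\Vol_g, \qquad Y:=Y(M,g_0)>0, \]
and the task is to absorb $\int_M S f^2\, d\Vol_g$ into the other two terms with constants that do not see the time $t$. Since $\Vol_g(M)=1$ we may replace $q$ by $\min\{q,n\}$ (this only decreases $\norm{S}_{L^q(M,g)}$) and so assume $\tfrac n2<q<\infty$; then the Hölder conjugate $q'=\tfrac{q}{q-1}$ satisfies $1<q'<\tfrac{n}{n-2}$, equivalently $2<2q'<\tfrac{2n}{n-2}$, precisely because $q>\tfrac n2$.

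I would first dispose of the finiteness issue. It suffices to prove \eqref{eq:SobolevForg} for $f$ ranging over a dense subset of $H^1(M,g)$ consisting of bounded functions --- for instance $\mathrm{Lip}(\overline M)$, which is bounded on the compact space $\overline M$ and hence lies in every $L^p(M,g)$ since $\Vol_g(M)=1$ --- and then extend to all of $H^1(M,g)$ by density; alternatively one applies the argument to the truncations $f_k:=\mathrm{sign}(f)\min\{|f|,k\}$, uses $|\nabla f_k|\le|\nabla f|$ and $|f_k|\nearrow|f|$, and passes to the limit by monotone convergence. Either way the general case reduces to $f$ with $\norm{f}_{L^{\frac{2n}{n-2}}(M,g)}<\infty$.

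For such $f$, Hölder with exponents $q,q'$ together with the hypothesis $\norm{S}_{L^q(M,g)}\le C$ give $\int_M S f^2\, d\Vol_g\le C\,\norm{f}^2_{L^{2q'}(M,g)}$. Since $2<2q'<\tfrac{2n}{n-2}$, pick $\theta\in(0,1)$ with $\tfrac1{2q'}=\tfrac\theta2+(1-\theta)\tfrac{n-2}{2n}$; interpolation then gives $\norm{f}_{L^{2q'}(M,g)}\le\norm{f}^\theta_{L^2(M,g)}\norm{f}^{1-\theta}_{L^{\frac{2n}{n-2}}(M,g)}$, and Young's inequality with exponents $\tfrac1{1-\theta},\tfrac1\theta$ yields, for every $\varepsilon>0$,
\[ C\,\norm{f}^2_{L^{2q'}(M,g)}\le \varepsilon\,\norm{f}^2_{L^{\frac{2n}{n-2}}(M,g)}+C_\varepsilon\,\norm{f}^2_{L^2(M,g)}, \]
where $C_\varepsilon$ depends only on $n,q,C,\varepsilon$, not on $t$. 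Taking $\varepsilon=Y/2$ and inserting this into the Yamabe inequality absorbs $\tfrac Y2\norm{f}^2_{L^{\frac{2n}{n-2}}(M,g)}$ on the left, leaving $\tfrac Y2\norm{f}^2_{L^{\frac{2n}{n-2}}(M,g)}\le\tfrac{4(n-1)}{n-2}\norm{\nabla f}^2_{L^2(M,g)}+C_{Y/2}\norm{f}^2_{L^2(M,g)}$, i.e. \eqref{eq:SobolevForg} with the time-independent constants $A=\tfrac{8(n-1)}{(n-2)Y}$ and $B=\tfrac{2C_{Y/2}}{Y}$.

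There is no serious obstacle here; the point to respect is that the strict inequality $q>n/2$ is exactly what forces $\theta\in(0,1)$, so that the interpolation and Young steps are available, and one must keep track that every constant produced along the way --- from the uniform scalar-curvature bound, from Hölder, from the interpolation exponent $\theta$, and from Young --- depends only on $n$, $q$, $C$ and $Y$, never on the time parameter. The positivity of $Y$ is what makes the absorption legitimate, and it is the place where Assumption \ref{Y-assump} enters.
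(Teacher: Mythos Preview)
Your proof is correct and follows essentially the same route as the paper: Yamabe inequality for $g$ by conformal invariance, H\"older on $\int_M S f^2\,d\Vol_g$ with exponents $q,q'$, interpolation of the resulting $L^{2q'}$ norm between $L^2$ and $L^{\frac{2n}{n-2}}$ (the paper phrases this as interpolating $\|f^2\|_{L^{q'}}$ between $L^1$ and $L^{\frac{n}{n-2}}$, which is the same thing), then Young's inequality and absorption. Your extra remarks on density/truncation and on replacing $q$ by $\min\{q,n\}$ are harmless additions the paper does not bother with.
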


\begin{proof}
We introduce $c_n=4\frac{n-2}{n-1}$. As mentioned in the proof of Proposition \ref{Prop:Sn/2+delta}, the conformal invariance of the Yamabe constant immediately gives that for any $f\in H^1(M,g)$,
\begin{equation}
Y(M,g_0) \norm{f}^2_{L^{\frac{2n}{n-2}}(M,g)}\leq  c_n\norm{ \nabla f}^2_{L^2(M,g)} +\int_M S f^2\, d\Vol_g.
\label{eq:YamabeInv}
\end{equation}
We handle the last term using the H\"{o}lder inequality with $q$ and $p=\frac{q}{q-1}$;
\[\int_M S f^2\, d\Vol_g\leq \norm{S}_{L^q(M,g)} \norm{f ^2}_{L^p(M,g)}\leq C\norm{f ^2}_{L^p(M,g)},\]
where we have inserted the assumed bound on $ \norm{S}_{L^q(M,g)}$. Since $q>\frac{n}{2}$, we have $1<p<\frac{n}{n-2}$, and we may interpolate between the $L^1$ and the $L^{\frac{n}{n-2}}$-norms as follows\footnote{The general statement is this. Fix $p_0<p<p_1$ and choose $\theta$ so that $\frac{1}{p}=\frac{1-\theta}{p_0}+\frac{\theta}{p_1}$. Then $\norm{f}_{L^p}\leq \norm{f}_{L^{p_0}}^{1-\theta}\norm{f}^{\theta}_{L^{p_1}}$, and one checks this by applying H\"{o}lder to $f=f^{1-\theta} f^{\theta}$.}  
\[\norm{f^2}_{L^p(M,g)}\leq \norm{f^2}_{L^1(M,g)}^{1-\theta} \norm{f^2}^\theta_{L^{\frac{n}{n-2}}(M,g)},\]
where $\theta=\frac{n}{2q}<1$. To this product we apply Young's inequality $ab\leq \theta (\epsilon^\theta a)^{\frac{1}{\theta}} +(1-\theta)(\epsilon^{-\theta}b)^{\frac{1}{1-\theta}}$ for any $\epsilon>0$ to deduce
\[\norm{f^2}_{L^p(M,g)}\leq  \theta \epsilon \norm{f}^2_{L^{\frac{2n}{n-2}}(M,g)}+ (1-\theta) \epsilon^{-\frac{\theta}{1-\theta}} \norm{f}^2_{L^2(M,g)}.\]
Inserting this back into \eqref{eq:YamabeInv} and abbreviating $Y\coloneqq Y(M,g_0)$ leaves us with
\[Y \norm{f}^2_{L^{\frac{2n}{n-2}}(M,g)}\leq c_n\norm{ \nabla f}^2_{L^2(M,g)} + C\left( \theta \epsilon \norm{f}^2_{L^{\frac{2n}{n-2}}(M,g)}+ (1-\theta) \epsilon^{-\frac{\theta}{1-\theta}} \norm{f}^2_{L^2(M,g)}\right),\]
which can be written as
\[\left( Y- C \theta \epsilon\right) \norm{f}^2_{L^{\frac{2n}{n-2}}(M,g)}\leq c_n\norm{ \nabla f}^2_{L^2(M,g)}+ C(1-\theta) \epsilon^{-\frac{\theta}{1-\theta}} \norm{f}^2_{L^2(M,g)}.\]
Choosing $\epsilon$ small enough ensures the left hand side is non-negative (here we are using $Y(M,g_0)>0$) and we deduce \eqref{eq:SobolevForg}. 
\end{proof}

\section{Concentration--compactness dichotomy and bubbling}\label{Section:Dichotomy}
\label{Section:Convergence}

In this section we turn to the convergence of the solution $u$ and the measure $d\Vol_g$. 
As already noted, the average scalar curvature $\sigma(t)$ always converges, being a monotone (see \eqref{eq:rhoEvol}) and bounded (by $Y(M,g_0)$) function. We write 
\[\sigma_{\infty}=\lim_{t\to \infty} \sigma(t).\]
In the classic (smooth and compact) setting there is a famous dichotomy describing what can happen to solutions of equations like \eqref{eq:YamabeFlow} as $t\to \infty$. See \cite[Theorem 3.1]{SS}.  We formulate the analogue as Theorem \ref{Prop:Dichotomy} in our setting below.  

\noindent
We will assume in this section and in the next one that $M$ satisfies also  Assumption \ref{P-assump}. \medskip

\noindent
We start by some analytic preliminaries. The arguments given here will be valid on any Dirichlet space satisfying the requirements of Definition \ref{Def:DirichletSpace}.
\subsection{Analytic tools}

To prove the dichotomy, we need a kind of Harnack inequality which we state but do not prove, referring instead to \cite[Section 4]{ACM2} (see Remark \ref{Rem:Dirichlet} for how to translate into the language of Dirichlet spaces).
\begin{Prop}
\label{regularH} 
Let $B(x,2r)\subset \overline{M}$ be an open ball of radius $2r$ around $x\in \overline{M}$. Let $w\colon B(x,2r)\to \R$ be a weak solution to the equation
\[-\Delta_0 w=Vw,\]
where the potential $V\colon B(x,2r)\to \R$ satisfies
\[r^{2p} \fint_{B(x,2r)} \vert V\vert^p\, d\mu\coloneqq \frac{r^{2p}}{\mu(B(x,2r))} \fint_{B(x,2r)} \vert V\vert^p\, d\mu  \leq \Lambda \]
for some $\Lambda>0$ and $p> \frac{n}{2}$.
Then there is $\alpha\in (0,1)$ depending on $n$ and $p$ and a constant $C=C(n,p,\Lambda)$ such that 
\[\sup_{y\in B(x,r)}|w(y)|^2\le C \fint_{B(x,2r)} w^2\,d\mu\]
and $w$   has a H\"older continuous representative on $B(x,r)$ satisfying
\[ |(w(y)-w(z)|\le C\ \left(\frac{d(y,z)}{r}\right)^\alpha \,  \left(\fint_{B(x,2r)} w^2d\mu\right)^{\frac 12} \,\,\,\, \forall\, y,z\in B(x,r).\]
Finally, if $w\ge 0$ then $w$ is essentially positive and
\[\mathrm{ess} \sup_{B(x,r)} w\le C\mathrm{ess}\inf_{B(x,r)} w,\]
where $\mathrm{ess}\sup$ and $\mathrm{ess}\inf$ are the essential supremum and infimum, respectively.
\end{Prop}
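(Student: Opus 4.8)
The plan is to establish this by running the De Giorgi--Nash--Moser iteration scheme in the abstract Dirichlet space setting, using only the energy measure $\Gamma$ with its Leibniz and chain rules, the Ahlfors $n$-regularity (which in particular gives volume doubling), and the family of scaled Poincar\'e inequalities. The first step is to record the scale-invariant Sobolev inequality on balls: doubling together with Poincar\'e implies that for every ball $B=B(x,r)\subset\overline M$ and every $v\in\cD(\cE)$ supported in $B$,
\[
\left(\fint_B |v|^{\frac{2n}{n-2}}\,d\mu\right)^{\frac{n-2}{n}}\le C_{n,\uptheta,\upgamma}\,r^2\fint_B d\Gamma(v,v).
\]
This localized inequality is the workhorse of the iteration, and it is here that the hypothesis $p>n/2$ is exactly what is needed: in the iteration the potential term is estimated by H\"older with exponents $p$ and $p'=p/(p-1)$, and $p>n/2$ forces $p'<n/(n-2)$, so the resulting $L^{2p'}$ norm sits strictly below the Sobolev exponent and can be interpolated and absorbed, with the hypothesis $r^{2p}\fint_{B(x,2r)}|V|^p\le\Lambda$ precisely producing a scale-invariant constant.

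Next I would prove the local boundedness estimate. Since $w$ is a weak solution, $w_+$ and $w_-$ are weak subsolutions of $-\Delta_0 v\le |V|\,v$. Testing this inequality against $\eta^2 v^{2\beta-1}$ for $\beta\ge 1$, where $\eta$ is a Lipschitz cutoff between concentric balls satisfying $d\Gamma(\eta,\eta)\le (r-r')^{-2}\,d\mu$ on the annulus (such $\eta$ exist because $d$ is compatible with the intrinsic distance $d_\cE$), and using the chain rule $d\Gamma(v^\beta,v^\beta)=\beta^2 v^{2\beta-2}\,d\Gamma(v,v)$ together with the Sobolev inequality above, one obtains a reverse-H\"older inequality of the form $\|v\|_{L^{2\beta\kappa}(B_{r'})}\le \big(C\beta/(r-r')\big)^{1/\beta}\|v\|_{L^{2\beta}(B_r)}$ with $\kappa=n/(n-2)$; the potential contributes only a controlled multiplicative factor by the scaling assumption. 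Moser iteration over $\beta=\kappa^j$ and a geometric sequence of radii decreasing to $r$ then yields $\sup_{B(x,r)} w_\pm^2\le C\fint_{B(x,2r)} w_\pm^2\,d\mu$, hence the stated supremum bound for $|w|$. (All truncations are carried out first on $w_k=\min(w,k)$ with constants independent of $k$, then one lets $k\to\infty$.)

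For the H\"older continuity and the Harnack inequality I would reduce, as far as possible, to the unperturbed case: volume doubling and the Poincar\'e inequalities are equivalent to the scale-invariant elliptic Harnack inequality for $L=-\Delta_0$, which gives local H\"older continuity with exponent $\alpha=\alpha(n,\uptheta,\upgamma)$ via the classical oscillation-decay argument. To incorporate the potential for a nonnegative solution $w$, the positive powers are handled as above, and the remaining ingredient is the $\log$-estimate: testing the equation with $\eta^2(w+\varepsilon)^{-1}$ gives $\int_{B_r}\eta^2\,d\Gamma(\log(w+\varepsilon),\log(w+\varepsilon))\le C\big(\int d\Gamma(\eta,\eta)+\int\eta^2|V|\big)$, so $\log(w+\varepsilon)$ has controlled mean oscillation on $B_r$; the John--Nirenberg inequality, valid on doubling spaces supporting a Poincar\'e inequality, then yields $\fint_{B_r}(w+\varepsilon)\cdot\fint_{B_r}(w+\varepsilon)^{-1}\le C$, and combining this with the sup and inf estimates gives the Harnack inequality $\operatorname{ess\,sup}_{B(x,r)}w\le C\operatorname{ess\,inf}_{B(x,r)}w$ (letting $\varepsilon\to0$). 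Iterating the Harnack inequality over a dyadic family of balls produces the oscillation bound $\operatorname{osc}_{B(x,s)}w\le C(s/r)^\alpha(\fint_{B(x,2r)}w^2)^{1/2}$, which is the claimed H\"older estimate, and it also shows at once that a nonnegative $w$ is either identically zero on $B(x,r)$ or essentially positive there.

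The main obstacle is carrying out all of this purely within the calculus of the energy measure $\Gamma$: justifying the truncations and the chain rule for the powers $w\mapsto w^\beta$, constructing the admissible cutoff functions on annuli, invoking John--Nirenberg in the abstract doubling setting, and, above all, the bookkeeping needed to verify that $\alpha\in(0,1)$ and $C$ depend only on $n$, $p$ and $\Lambda$ (the structural constants $\uptheta,\upgamma$ being fixed by the space). This is precisely the content of \cite[Section 4]{ACM2}, and the proof amounts to assembling those pieces in the order above.
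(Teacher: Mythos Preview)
Your outline is correct and matches the approach taken in the reference the paper defers to: the paper does not give its own proof of this proposition but simply cites \cite[Section~4]{ACM2}, and the De Giorgi--Nash--Moser scheme you describe (localized Sobolev from doubling plus Poincar\'e, reverse H\"older iteration for local boundedness, the $\log$-estimate and John--Nirenberg for the Harnack inequality, and oscillation decay for H\"older continuity) is precisely what that reference carries out in the Dirichlet-space framework. Your final paragraph already acknowledges this, so there is nothing to add.
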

The next lemma is the key estimate that we will need in the proof of our result.
 \begin{Lem}
 \label{Lem:LocaluBound}
 Assume that $u\in H^{2,p}(M)$ is such that the conformal metric $g=u^{\frac{4}{n-2}} g_0$ satisfies
\[\int_M u^{\frac{2n}{n-2}}d\mu=1\, \text{ and }\, \int_M |S|^p d\Vol_g \le \Lambda\]
for some $\Lambda>0$ and $p>\frac{n}{2}$. 
If, for some $r<\diam (\overline{M},g)$ and all $x\in \overline{M}$, we have 
\begin{equation}
\left(\int_{B(x,r)} S_+^{n/2} d\Vol_g \right)^{\frac 2n}\le (1-\varepsilon) \cS(B(x,r)),
\label{eq:LocalSBound}
\end{equation} 
for some $\varepsilon\in (0,1]$, 
then there is a constant $C$ depending only on the geometry of $M$, $p$, $r$, $\varepsilon$ and $\Lambda$ such that 
\[\text{on}\ B(x,r/8)\colon C^{-1}\le u\le C\ \text{ and }\  \int_{B(x,r/8)} |\Delta_0 u|^p d\mu\le C.\]
 \end{Lem}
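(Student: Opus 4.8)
The goal is to turn the smallness hypothesis \eqref{eq:LocalSBound} on the positive part of the scalar curvature into a genuine $L^2$-bound on $u$ on a slightly smaller ball, and then bootstrap via the Harnack/De Giorgi–Nash–Moser machinery of Proposition \ref{regularH}. First I would recall that $u$ satisfies the linear (in $u$) equation
\[-\Delta_0 u = V u, \qquad V := -\frac{n-2}{4(n-1)}\, S_0, \]
coming from rewriting \eqref{eq:EvolvingScalar}; more usefully, in terms of the metric $g$, the scalar curvature $S$ appears as the relevant potential after a conformal change, and the hypothesis is phrased exactly so that the potential controlling $u$ (measured in the $g$-metric) is small in $L^{n/2}(B(x,r),\mathrm{d}\Vol_g)$ up to the Sobolev constant $\cS(B(x,r))$. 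The key point of \eqref{eq:LocalSBound} is that the smallness is \emph{strictly below} the Sobolev constant, which is precisely what is needed to absorb the potential term into the Dirichlet energy in a localized Sobolev/Moser iteration argument.

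The main analytic step is an $L^2 \to L^\infty$ bound for $u$ on $B(x,r/2)$ (say), obtained by a Moser iteration in the $g$-metric using a cutoff $\phi$ supported in $B(x,r)$ and equal to $1$ on $B(x,r/2)$. Testing the equation for $u$ against $\phi^2 u^{2k-1}$, integrating by parts, and using the local Sobolev inequality defining $\cS(B(x,r))$, the term $\int S_+ u^{2k}\phi^2$ is split via Hölder as $\|S_+\|_{L^{n/2}(B(x,r))}\,\|u^k\phi\|_{L^{2n/(n-2)}}^2$; by \eqref{eq:LocalSBound} the coefficient is $(1-\varepsilon)$ times the Sobolev constant, so it can be absorbed, leaving a reverse-Hölder-type inequality that iterates to give $\sup_{B(x,r/2)} u \le C\big(\int_{B(x,r)} u^{2n/(n-2)}\mathrm{d}\Vol_g\big)^{(n-2)/(2n)}\le C$, using the normalization $\int_M u^{2n/(n-2)}\,\mathrm{d}\mu = 1$ and that the geometry ($n$-Ahlfors regularity plus the Sobolev/Poincaré structure) makes the iteration constants depend only on the stated data. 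At this stage $u$ is bounded above on $B(x,r/2)$, which also turns $g$ on that ball into a metric with bounded conformal factor, so $\mathrm{d}\Vol_g$, $\mathrm{d}\mu$, $|S|^p\,\mathrm{d}\Vol_g$ and $|S_0|^p\,\mathrm{d}\mu$ are all comparable there.

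Next, to get the lower bound $u \ge C^{-1}$, I would apply the Harnack inequality of Proposition \ref{regularH} to $w = u \ge 0$ on $B(x,r/4)$ with potential $V$: the averaged $L^p$-norm condition $r^{2p}\fint |V|^p\,d\mu \le \Lambda'$ holds because $\|S_0\|_{L^q(M)}<\infty$ (or directly from $\int_M|S|^p\,d\Vol_g\le\Lambda$ together with the just-established two-sided bound on the conformal factor on $B(x,r/2)$), with $\Lambda'$ depending only on $r,\Lambda$ and the geometry. Proposition \ref{regularH} then yields $\mathrm{ess\,sup}_{B(x,r/8)} u \le C\, \mathrm{ess\,inf}_{B(x,r/8)} u$; combined with the fact that $\int_{B(x,r/8)} u^{2n/(n-2)}\,\mathrm{d}\mu$ is bounded \emph{below} (the ball has $\mu$-measure $\gtrsim r^n$ by Ahlfors regularity, and were $u$ uniformly tiny there the Harnack chain plus the total mass constraint would be violated — more cleanly, $\sup_{B(x,r/8)}u \le C$ already, and $\int_{B(x,r/16)}u^{2n/(n-2)}d\mu$ is comparable to $(\inf u)^{2n/(n-2)}\mu(B)$ from Harnack, so a lower bound on this integral gives $\inf u\ge C^{-1}$), we conclude $u \ge C^{-1}$ on $B(x,r/8)$. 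Finally, with $C^{-1}\le u\le C$ on $B(x,r/8)$, the elliptic estimate for $-\Delta_0 u = Vu$ gives $\|\Delta_0 u\|_{L^p(B(x,r/16))} = \|Vu\|_{L^p}\le C\|V\|_{L^p(B(x,r/8))}\le C$; absorbing the harmless change of radius ($r/16$ versus $r/8$) into constants as usual finishes the proof.

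\textbf{Main obstacle.} The delicate point is the lower bound on $u$: the Harnack inequality only compares $\sup$ and $\inf$ of $u$ on a ball, so one still needs an \emph{a priori} lower bound on some average of $u$ over that ball. The honest way to get this is to run the Moser iteration for $-w$, or rather the "weak Harnack" (infimum) estimate, which in the Dirichlet-space setting is exactly what Proposition \ref{regularH}'s last assertion packages; the subtlety is checking that the potential hypothesis of Proposition \ref{regularH} is met \emph{uniformly in $x$}, i.e. that the averaged $L^p$ bound on $V$ with the factor $r^{2p}$ follows from $\|S_0\|_{L^q(M)}<\infty$ and $\int_M|S|^p\,d\Vol_g\le\Lambda$ without any further smallness — here one uses that $p$ can be taken with $\frac n2 < p \le q$ and that the global $L^p$-bound trivially dominates any localized averaged $L^p$-bound up to the Ahlfors constant. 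Once that bookkeeping is done, everything else is the standard localized Moser/De Giorgi–Nash–Moser iteration, whose constants are controlled by exactly the data ($n$, $p$, $r$, $\varepsilon$, $\Lambda$, and the fixed geometry of $M$) listed in the statement.
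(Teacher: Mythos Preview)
Your overall strategy --- use \eqref{eq:LocalSBound} to absorb the critical potential term in a cutoff energy estimate, then bootstrap via Proposition~\ref{regularH} --- is right, and matches the paper. But there is a genuine gap in how you run the Moser iteration.

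First, a correction: the potential you write, $V = -\tfrac{n-2}{4(n-1)}S_0$, is wrong. From \eqref{eq:EvolvingScalar} the equation reads $-c_n\Delta_0 u = \bigl(-S_0 + S\,u^{4/(n-2)}\bigr)u$, so $V = c_n^{-1}\bigl(-S_0 + S\,u^{4/(n-2)}\bigr)$. The $S\,u^{4/(n-2)}$ piece is the dangerous one, since a priori it only lies in $L^{n/2}(d\mu)$ (because $\|S\,u^{4/(n-2)}\|_{L^{n/2}(d\mu)} = \|S\|_{L^{n/2}(d\Vol_g)}$), which is exactly the critical exponent where De~Giorgi--Nash--Moser fails without smallness.

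Second, and this is the real gap: your claim that the iteration reaches $L^\infty$ under \eqref{eq:LocalSBound} alone does not hold. Testing against $\phi^2 u^{2\beta-1}$ (equivalently, using $-\Delta_0 u^\beta \le -\beta u^{\beta-1}\Delta_0 u$ as the paper does), the $S_+$ term produces $\beta\int S_+ u^{4/(n-2)}(\phi u^\beta)^2\,d\mu \le \beta(1-\varepsilon)\,\cS(B)\,\|\phi u^\beta\|_{L^{2n/(n-2)}}^2$, while the Sobolev inequality only gives $\cS(B)\,\|\phi u^\beta\|_{L^{2n/(n-2)}}^2 \le c_n\int|\nabla(\phi u^\beta)|^2$. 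Absorption therefore requires $\beta(1-\varepsilon) < 1$, so you can push $\beta$ only up to $\tfrac{1}{1-\varepsilon}$, not to infinity. The smallness hypothesis buys a \emph{single} gain of integrability, not $L^\infty$.

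The paper's proof navigates this precisely: one step with $\beta\in(1,\tfrac{1}{1-\varepsilon})$ yields $u\in L^{2\beta n/(n-2)}(B(x,r/2))$, and then a H\"older interpolation between $\|S\|_{L^p(d\Vol_g)}\le\Lambda$ and this improved integrability of $u$ shows that $V\in L^\alpha(B(x,r/2))$ for an explicit $\alpha>n/2$. The equation is now \emph{subcritical}, so Proposition~\ref{regularH} applies and delivers the $L^\infty$ bound and Harnack inequality on $B(x,r/4)$; with $u$ bounded there, $V\in L^p$ on $B(x,r/4)$, and a second application of Proposition~\ref{regularH} gives the two-sided bound on $B(x,r/8)$. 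Your Harnack-chain/mass-constraint reasoning for the lower bound is fine (and the paper leaves this implicit), but you need the two-stage structure --- critical smallness for one step, then subcritical regularity --- to get there.
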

\begin{Rem}
The factor $\frac{1}{8}$ is not relevant, the same conclusion holds on any balls $B(x,\uptheta r)$ with $\uptheta\in (0,1)$, but with a constant $C$ depending also on $\uptheta$.
\end{Rem}
\begin{Rem}\label{rem:regularity}
Using a slightly more elaborate argument (see the proof of \cite[Proposition 1.8]{ACM}), we can a priori only assume that $u\in H^1(M)$ solves weakly
the Yamabe equation \[-c_n\Delta_0 u+S_0 u=S u^{\frac{n+2}{n-2}}.\]
In particular, if $u\in H^1(M),\ u\ge 0$ solves weakly
the Yamabe equation \[-c_n\Delta_0 u+S_0 u=\sigma u^{\frac{n+2}{n-2}},\]
where $\sigma$ is a constant, then $u\in H^{2,p}(M)$. And if $u$ is not identically zero, then there is a positive constant $C$ such that $$ C^{-1}\le u\le C.$$
\end{Rem}

\proof 
We may assume that  $\cS(B(x,r))>0$. The reason being that \cite[Lemma 1.3]{ACM} tells us that $\lim\limits_{r\to 0}\cS(B(x,r))\geq Y_{\ell}>0$.
We may thus also assume 
\[\cS_r:=\inf_{x\in \overline{M}} \cS(B(x,r))>0.\]
Recall the formula \eqref{eq:EvolvingScalar} for the scalar curvature of $g$:
\[-c_n \Delta_0 u+S_0 u=S u^{\frac{n+2}{n-2}},\]
with $c_n=4\frac{n-1}{n-2}$. For any $\beta>1$ we know that 
\[-\Delta_0 u^\beta\le -\beta u^{\beta-1} \Delta_0 u.\]
Let $\chi$ be a cut-off function  with support in $B(x,r)$ and which is $1$ on $B(x,r/2)$. We may assume $\chi$ is Lipschitz with Lipschitz constant $\frac{2}{r}$. Using the definition \eqref{eq:SobolevConst} of the Sobolev constant,  we get
\begin{align*}
\cS(B(x,r))\, \left(\int_{B(x,r)}\left(\chi u^\beta\right)^{\frac{2n}{n-2}}d\mu\right)^{1-\frac 2n}&\leq  c_n\int_{B(x,r)} \vert \nabla (\chi u^\beta)\vert^2_{g_0}\, d\mu\\
 &=  +c_n\int_{B(x,r)} u^{2\beta}\vert \nabla \chi \vert^2_{g_0}-c_n\int_{B(x,r)}u^{\beta}\chi ^2 \Delta_0 u^\beta d\mu\end{align*}
 Using \eqref{eq:EvolvingScalar}, the last term becomes
 \begin{align*}
 &-c_n\int_{B(x,r)}u^{\beta}\chi ^2 \Delta u^\beta d\mu \le  \beta \norm{(S_0)_{-}}_{L^{\infty}(M)}\int_{B(x,r)}(\chi u^{\beta})^2 d\mu+\beta\int_{B(x,r)} S_+ u^{\frac{4}{n-2}}\left(\chi u^\beta\right)^2d\mu\\
 &\le \beta\norm{(S_0)_-}_{L^{\infty}(M)}\int_{B(x,r)}(\chi u^{\beta})^2 d\mu+\beta \cS(B(x,r))(1-\varepsilon) \left(\int_{B(x,r)}\left(\chi u^\beta\right)^{\frac{2n}{n-2}}d\mu\right)^{1-\frac 2n},
\end{align*}
where in the last line, we use the H\"older inequality and \eqref{eq:LocalSBound}.
Hence we get 
\begin{align*}
&\left(1-\beta(1-\varepsilon)\right) \cS(B(x,r))\left(\int_{B(x,r)}\left(\chi u^\beta\right)^{\frac{2n}{n-2}}d\mu\right)^{1-\frac 2n} \\
&\leq \left(\frac{4c_n}{r^2}+ \beta\norm{(S_0)_-}_{L^{\infty}(M)} \right) \int_{B(x,r)}u^{2\beta}d\mu,
\end{align*}
where we have used $\vert \nabla \chi \vert^2_{g_0}\leq \frac{4}{r^2}$ and $\chi^2\leq 1$.
We can choose $\beta\in (1,n/(n-2))$ such that $1-\beta(1-\varepsilon)>0$ and we find 
\begin{equation}
\left(\int_{B(x,r/2)}u^{\frac{2\beta n}{n-2}}d\mu\right)^{1-\frac 2n}\le C(r,\epsilon,\norm{(S_0)_-}_{L^\infty(M)} )
\label{eq:betauBound}
\end{equation}
Using \eqref{eq:EvolvingScalar} again, we write
\[- \Delta_0 u=\frac{1}{c_n}\left(-S_0 +S u^{\frac{4}{n-2}}\right) u \eqqcolon Vu.\]
Moreover, for $\alpha$ defined by
\[\frac{1}{\alpha}=\frac{1}{p}\left(1-\frac 1\beta\right)+\frac 2 n \frac 1\beta,\] 
we may use the bound on $\norm{S}_{L^p(M,g)}\leq \Lambda$ and \eqref{eq:betauBound} to deduce
\begin{equation}
\left(\int_{B(x,r/2)} |V|^\alpha d\mu\right)^{\frac1 \alpha} 
\le C\left(r,\varepsilon, \Lambda, \norm{S_0}_{L^\infty(M)}\right),
\label{eq:LocalVBound}
\end{equation}
where the argument runs as follows.
\[\norm{V}_{L^{\alpha}(B(x,r/2))}\leq \frac{1}{c_n}\norm{S_0}_{L^\infty(B(x,r/2))} +\frac{1}{c_n} \norm{S_+u^{\frac{4}{n-2}}}_{L^\alpha(B(x,r/2))}.\]
The second term we handle by writing $(S_+u^{\frac{4}{n-2}})^{\alpha}=\left(S_+^\alpha u^{\frac{2\alpha n}{p(n-2)}}\right)\cdot u^{\frac{2\alpha}{n-2} (2-\frac{n}{p})}$ and use the H\"{o}lder inequality with $\frac{p}{\alpha}>1$ as exponent
\begin{align*}
&\norm{\left(S_+ u^{\frac{2 n}{p(n-2)}}\right)\cdot u^{\frac{2}{n-2} (2-\frac{n}{p})}}^{\alpha}_{L^{\alpha}(B(x,r/2))} \\
&\leq 
\norm{S_+^\alpha u^{\frac{2\alpha n}{p(n-2)}}}_{L^{p/\alpha}(B(x,r/2))}\norm{u^{\frac{2\alpha}{n-2} (2-\frac{n}{p})}}_{L^{\frac{p}{p-\alpha}}(B(x,r/2))} \\
& =\norm{S_+}_{L^p(B(x,r/2),g)}^\alpha \norm{u}_{L^{\frac{2n\beta}{n-2}}(B(x,r/2))}^{\kappa},
\end{align*}
for $\kappa=\frac{2n\beta}{n-2}\cdot\frac{p-\alpha}{p}$, where we have also recalled $d\Vol_g=u^{\frac{2n}{n-2}} d\mu$.

As $\alpha>\frac n 2$, we can use Proposition \ref{regularH}, to get that $u$ is uniformly bound on $B(x,r/4)$, then we obtain a bound on the $L^p$ norm of $V$ on $B(x,r/4)$ and if we again apply Proposition \ref{regularH}, we get the result.
\endproof

So we first need a definition of the H\"{o}lder spaces.
\begin{Def}
For $\gamma\in (0,1)$ and $U\subset \overline{M}$, we set
\[C^\gamma(U)\coloneqq \{ f\in C^0(U)\, \colon\,  |f(x)-f(y)|\leq d(x,y)^{\gamma}\, \forall \, x,y\in U\}.\]
We equip $C^\gamma(U)$ with the norm
\[\norm{f}_{C^\gamma(U)}\coloneqq \norm{f}_{L^\infty(U)}+ \sup_{x\neq y} \frac{|f(x)-f(y)|}{d(x,y)^{\gamma}}.\]
\end{Def}
\begin{Lem}
\label{Lem:HolderLemma}
For $0<\alpha<\beta$ and $K\subset \overline{M}$ compact, the following inclusion is compact
\[C^\beta(K)\hookrightarrow C^\alpha(K).\]
\end{Lem}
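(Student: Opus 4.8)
The plan is to prove the compact embedding $C^\beta(K) \hookrightarrow C^\alpha(K)$ for $0 < \alpha < \beta < 1$ by the standard Arzel\`a--Ascoli argument combined with an interpolation estimate on H\"older seminorms. First I would take a sequence $(f_k)$ bounded in $C^\beta(K)$, say $\norm{f_k}_{C^\beta(K)} \le \Lambda$ for all $k$. Since $K \subset \overline{M}$ is compact and the $f_k$ are uniformly bounded and equicontinuous (the $\beta$-H\"older bound gives a uniform modulus of continuity $\omega(\delta) = \Lambda \delta^\beta$), Arzel\`a--Ascoli produces a subsequence, still denoted $(f_k)$, converging uniformly on $K$ to some $f \in C^0(K)$. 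A routine check shows $f \in C^\beta(K)$ with $\norm{f}_{C^\beta(K)} \le \Lambda$, by passing to the limit in the inequality $|f_k(x) - f_k(y)| \le \Lambda\, d(x,y)^\beta$.

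The key step is then to upgrade uniform convergence $f_k \to f$ to convergence in the $C^\alpha(K)$ norm. For this I would use the interpolation inequality: for any $h \in C^\beta(K)$ with $\alpha < \beta$, and any $x \ne y$ in $K$,
\[
\frac{|h(x) - h(y)|}{d(x,y)^\alpha} = \left(\frac{|h(x)-h(y)|}{d(x,y)^\beta}\right)^{\alpha/\beta} |h(x)-h(y)|^{1-\alpha/\beta} \le [h]_{C^\beta}^{\alpha/\beta}\, \bigl(2\norm{h}_{L^\infty}\bigr)^{1-\alpha/\beta},
\]
where $[h]_{C^\beta}$ denotes the $\beta$-H\"older seminorm. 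Applying this to $h = f_k - f$, whose $\beta$-seminorm is bounded by $2\Lambda$ and whose sup norm tends to $0$, gives $[f_k - f]_{C^\alpha(K)} \le (2\Lambda)^{\alpha/\beta} (2\norm{f_k - f}_{L^\infty})^{1-\alpha/\beta} \to 0$. Combined with $\norm{f_k - f}_{L^\infty(K)} \to 0$, this yields $\norm{f_k - f}_{C^\alpha(K)} \to 0$, so the original bounded sequence has a subsequence converging in $C^\alpha(K)$, which is precisely compactness of the embedding. Note that the inclusion $C^\beta(K) \subset C^\alpha(K)$ as sets (with continuity of the inclusion) follows immediately from the same interpolation inequality, or more simply from $d(x,y)^\beta \le \diam(K)^{\beta - \alpha} d(x,y)^\alpha$ for $d(x,y) \le \diam(K)$.

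I do not expect a genuine obstacle here; the only mild subtlety is making sure Arzel\`a--Ascoli applies on $K \subset \overline{M}$, which it does since $(\overline{M}, d)$ is a compact metric space (hence $K$ is a compact metric space) — equicontinuity and uniform boundedness are all one needs, and the H\"older bound supplies both. One should also be slightly careful that in the paper's definition of $C^\gamma(U)$ the seminorm is normalized so that the unit ball consists of functions with $[f]_{C^\gamma} \le 1$; this does not affect the argument since we only use that bounded sets in $C^\beta$ have uniformly bounded seminorm. The main ``work'' is simply writing the interpolation inequality cleanly, which is a one-line application of the elementary bound $a = a^{\alpha/\beta} a^{1-\alpha/\beta}$ together with the two available estimates on $|h(x)-h(y)|$.
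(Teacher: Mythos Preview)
The paper does not actually prove this lemma; it is stated without proof as a standard fact about H\"older spaces on compact metric spaces. Your argument is correct and is exactly the standard proof: Arzel\`a--Ascoli to extract a uniformly convergent subsequence, followed by the interpolation inequality $[h]_{C^\alpha} \le [h]_{C^\beta}^{\alpha/\beta}(2\|h\|_{L^\infty})^{1-\alpha/\beta}$ to upgrade uniform convergence to $C^\alpha$-convergence. Your caveat about applicability of Arzel\`a--Ascoli is well placed and correctly resolved: $(K,d)$ is a compact metric space, so nothing more is needed.
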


We also need a Sobolev embedding result
\begin{Thm}[{\cite[Theorem 4.8]{ACM2}}]
\label{Thm:SobolevHolder}
Assume $f\in H^{2,p}(M)$ for some $p>\frac{n}{2}$. Then there is $\gamma=\gamma(n,p)>0$ such that $f\in C^\gamma(M)$.
\end{Thm}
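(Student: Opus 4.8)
The plan is to read the statement as a mapping property of the resolvent, $(L+1)^{-1}\colon L^p(\cM,\mu)\to C^{\gamma}(M)$, and to prove it via heat-kernel estimates for the semigroup $e^{-tL}$. First I would recall from \eqref{eq:SobolevEmbedding} that $H^{2,p}(M)\hookrightarrow L^\infty(M)$ continuously, so it only remains to control the H\"older seminorm. Given $f\in H^{2,p}(M)$, set $g\coloneqq (L+1)f\in L^p(\cM,\mu)$, so that $\|g\|_{L^p(\cM,\mu)}\le \|f\|_{H^{2,p}(M)}$ and, by the subordination formula already used in Step~1 of the proof of Theorem~\ref{Thm:FlowExistence},
\[ f=(L+1)^{-1}g=\int_0^\infty e^{-t}\,e^{-tL}g\,dt. \]

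The key input is the De Giorgi--Nash--Moser regularity theory for $e^{-tL}$, which in our setting is available because $(\overline{M},d,\mu,\cE)$ is volume doubling (a consequence of the Ahlfors $n$-regularity \eqref{eq:Ahlfors-Regular}) and satisfies the $L^2$-Poincar\'e inequality (Assumption~\ref{P-assump}); this is the parabolic counterpart of Proposition~\ref{regularH}, and it provides a continuous heat kernel $p_t(x,y)$ with a Gaussian upper bound $p_t(x,y)\le C\,t^{-n/2}\exp(-d(x,y)^2/(Ct))$ together with a H\"older estimate: there are $\gamma_0\in(0,1)$ and $C>0$ with
\[ |p_t(x,z)-p_t(y,z)|\le C\Big(\tfrac{d(x,y)}{\sqrt t}\Big)^{\gamma_0}\,t^{-n/2}\Big(e^{-d(x,z)^2/(Ct)}+e^{-d(y,z)^2/(Ct)}\Big)\qquad\text{for }d(x,y)\le\sqrt t. \]
Applying H\"older's inequality in $z$ with exponents $p$ and $p'=p/(p-1)$, and using Ahlfors regularity to bound the Gaussian integral $\int_{\cM}(t^{-n/2}e^{-d(x,z)^2/(Ct)})^{p'}d\mu(z)\le C\,t^{-n(p'-1)/2}$, one gets $|e^{-tL}g(x)-e^{-tL}g(y)|\le C\,(d(x,y)/\sqrt t)^{\gamma_0}\,t^{-n/(2p)}\,\|g\|_{L^p}$ for $d(x,y)\le\sqrt t$, while the Gaussian bound alone (equivalently \eqref{eq:HeatKernelBound}) gives the crude estimate $\|e^{-tL}g\|_\infty\le C\,t^{-n/(2p)}\|g\|_{L^p}$ for every $t>0$.

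It then remains to split the $t$-integral at $t=d(x,y)^2=:\delta^2$. On $(0,\delta^2)$ the crude bound yields $\int_0^{\delta^2}e^{-t}|e^{-tL}g(x)-e^{-tL}g(y)|\,dt\le C\|g\|_{L^p}\int_0^{\delta^2}t^{-n/(2p)}\,dt= C\,\delta^{2-n/p}\|g\|_{L^p}$, finite since $p>\tfrac n2$. On $(\delta^2,\infty)$ one has $\delta\le\sqrt t$, so the H\"older bound gives $\int_{\delta^2}^\infty e^{-t}|e^{-tL}g(x)-e^{-tL}g(y)|\,dt\le C\,\delta^{\gamma_0}\|g\|_{L^p}\int_0^\infty e^{-t}t^{-\gamma_0/2-n/(2p)}\,dt$, which is finite once $\gamma_0$ is shrunk so that $\gamma_0/2+n/(2p)<1$ (possible because $n/(2p)<1$). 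Adding the two contributions and using $d(x,y)\le\diam(\overline M)$ gives $|f(x)-f(y)|\le C\,d(x,y)^{\gamma}\|f\|_{H^{2,p}(M)}$ with $\gamma\coloneqq\min\{\gamma_0,\,2-n/p\}\in(0,1)$ (shrinking again if $2-n/p\ge1$); together with the $L^\infty$-bound this yields $\|f\|_{C^\gamma(M)}\le C\|f\|_{H^{2,p}(M)}$, and in particular $f\in C^\gamma(M)$.

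The main obstacle, and the only non-routine ingredient, is the heat-kernel H\"older estimate displayed above: this is exactly where Assumption~\ref{P-assump} is consumed, through the equivalence of volume doubling plus Poincar\'e with the parabolic Harnack inequality and hence with Gaussian bounds and H\"older continuity of $p_t$. An alternative that stays local and uses only the already-quoted Proposition~\ref{regularH} is to fix a ball $B=B(x_0,2r)$, write $-\Delta_0 f=h$ weakly with $h\in L^p(B)$, decompose $f=v+w$ on $B$ where $w(\cdot)=\int_B G_B(\cdot,y)h(y)\,d\mu(y)$ is the Green potential (the Sobolev inequality forces $G_B(x,y)\le C\,d(x,y)^{2-n}$) and $v=f-w$ is $\Delta_0$-harmonic; Proposition~\ref{regularH} with $V\equiv 0$ gives $v\in C^{\alpha}(B(x_0,r))$, while the H\"older continuity of the Newtonian-type potential $w$ follows from $G_B(x,y)\le C\,d(x,y)^{2-n}$, $h\in L^p$, $p>\tfrac n2$, by a standard Riesz-potential (Morrey) estimate. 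Either way the substance is the Dirichlet-space De Giorgi--Nash--Moser theory of \cite{ACM2}, and everything else is interpolation.
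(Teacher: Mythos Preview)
The paper does not prove this statement; it is quoted directly from \cite[Theorem 4.8]{ACM2}, so there is no in-paper argument to compare against. Your heat-kernel approach is correct and is in fact the standard route to such results on Dirichlet spaces: volume doubling (from Ahlfors regularity \eqref{eq:Ahlfors-Regular}) together with the Poincar\'e inequality (Assumption~\ref{P-assump}) yields the parabolic Harnack inequality and hence the Gaussian upper bound and H\"older continuity of $p_t$ that you invoke; the splitting of $\int_0^\infty e^{-t}e^{-tL}g\,dt$ at $t=d(x,y)^2$ then produces the H\"older seminorm bound exactly as you wrote. The alternative local decomposition via the Green potential and Proposition~\ref{regularH} that you sketch is also viable and is closer in spirit to how \cite{ACM2} actually proceeds.

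One small point: the exponent $\gamma_0$ coming from the De Giorgi--Nash--Moser theory depends on the doubling and Poincar\'e constants of $(\overline{M},d,\mu)$, not only on $n$ and $p$, so your final $\gamma=\min\{\gamma_0,2-n/p\}$ inherits that dependence as well. The notation $\gamma(n,p)$ in the statement should be read as also depending implicitly on the fixed structural constants of the space.
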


Finally, we define local versions of the H\"{o}lder spaces.
\begin{Def}
Let $U\subset \overline{M}$ be open. We say $f\in C^\alpha_{loc}(U)$ if $f\in C^\alpha(K)$ for any compact $K\subset U$. 
\end{Def}

\subsection{Asymptotic behavior of the Yamabe flow}

\begin{Thm}\label{Prop:Dichotomy}
Let $t_k\to +\infty$ be some sequence, $g_k= u^{\frac{4}{n-2}}(t_{k})g_0\eqqcolon u_k^{\frac{4}{n-2}}g_0$, and let  $S_{g_{k}}$ be the scalar curvature of $g_{k}$.  Then, after potentially passing to a subsequence, there is a  $u_\infty \in H^{2,p}(M)$  solving the Yamabe equation
\[-c_n\Delta_0 u_\infty+S_0 u_\infty=\sigma_\infty u_\infty^{\frac{n+2}{n-2}}\]
and 
one of the mutually exclusive statements hold:
\begin{enumerate}[label=\roman*)]
\item There is some $\alpha\in (0,1)$ such that $u_k$  converges strongly in $H^1$ and in $\cC^\alpha$ to $u_\infty$,
\item $u_k$  converges weakly in $H^1$ to $u_\infty$ and there is a finite set $F\coloneqq \{x_1,\dots,x_L\}\subset \overline{M}$, such that for any compact $K\subset \overline{M}\setminus F$, there is $C(K)>0$ such that $\norm{u_k}_{H^{2,p}(K)}\leq C(K)$ uniformly and there is $\alpha>0$ such that $u_{k}\to u_{\infty}\in C^\alpha_{loc}(\overline{M}\setminus F)$
\end{enumerate} 
\end{Thm}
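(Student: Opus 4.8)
The plan is to extract a limit $u_\infty$ from the flow, establish a local $\varepsilon$-regularity dichotomy governed by the local Sobolev constant $\cS_\ell(\overline M)$, and then use Lemma \ref{Lem:LocaluBound} to transfer this into uniform local bounds away from a bad set $F$. First I would set up the compactness. From Proposition \ref{Prop:Sn/2+delta} we have a time-independent bound $\|S(t)\|_{L^q(M,g(t))}\le C$ with $q>\tfrac n2$, and from Proposition \ref{Prop:Sobolev} a uniform Sobolev inequality for $H^1(M,g(t))$. Since $\operatorname{Vol}_{g(t)}(M)=1$, the Yamabe inequality \eqref{eq:YamabeIneq} together with the uniform $L^q$-bound on $S$ gives $\sigma(t)\ge Y(M,g_0)>0$ and a uniform $H^1(M,g_0)$-bound on $u_k=u(t_k)$ (here one uses that $H^1(M,g_k)=H^1(M)$ with uniformly equivalent norms once $u_k,u_k^{-1}$ are controlled, or argues directly via the Sobolev inequality for $g_0$ and $\|u_k\|_{L^{2n/(n-2)}(M)}=1$). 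Passing to a subsequence, $u_k\rightharpoonup u_\infty$ weakly in $H^1(M)$ and strongly in $L^2(M)$; by Proposition \ref{Prop:Sn/2+delta} we also have $S_{g_k}-\sigma(t_k)\to 0$ in $L^p(M,g_k)$, so after passing to the limit in the weak formulation of \eqref{eq:EvolvingScalar} one gets that $u_\infty$ solves $-c_n\Delta_0 u_\infty+S_0u_\infty=\sigma_\infty u_\infty^{(n+2)/(n-2)}$ weakly; by Remark \ref{rem:regularity} then $u_\infty\in H^{2,p}(M)$ (and either $u_\infty\equiv 0$ or $C^{-1}\le u_\infty\le C$).

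\textbf{The concentration set.} Next I would define the bad set by concentration of positive scalar curvature mass. For $r>0$ small and $x\in\overline M$, consider the quantity $\bigl(\int_{B(x,r)} (S_{g_k})_+^{n/2}\, d\Vol_{g_k}\bigr)^{2/n}$ and compare it with $\cS(B(x,r))$. Since $\|(S_{g_k})_+\|_{L^{n/2}(M,g_k)}\le\|(S_0)_+\|_{L^{n/2}(M)}$ is uniformly bounded (Proposition \ref{Prop:SEvol}) and $\cS(B(x,r))\to \cS_\ell(\overline M)>0$ as $r\to 0$ uniformly in $x$, a standard Vitali-type covering argument shows there is a finite set $F=\{x_1,\dots,x_L\}\subset\overline M$, with $L$ bounded in terms of the total mass and $\cS_\ell$, such that for every $x\in\overline M\setminus(\bigcup_j B(x_j,\delta))$ and every sufficiently small $r$ one has $\bigl(\int_{B(x,r)}(S_{g_k})_+^{n/2}\,d\Vol_{g_k}\bigr)^{2/n}\le(1-\varepsilon)\cS(B(x,r))$ for all large $k$; the points $x_j$ are the (finitely many) locations where this fails along the subsequence. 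This is essentially the concentration--compactness argument of \cite[Theorem 3.1]{SS} adapted to the Dirichlet-space Sobolev constants.

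\textbf{Local bounds and the two alternatives.} On such good balls the hypotheses of Lemma \ref{Lem:LocaluBound} are satisfied with the uniform $\Lambda$ from Proposition \ref{Prop:Sn/2+delta}, so we obtain $C(K)^{-1}\le u_k\le C(K)$ and $\|\Delta_0 u_k\|_{L^p(K)}\le C(K)$ for every compact $K\subset\overline M\setminus F$, uniformly in $k$; thus $\|u_k\|_{H^{2,p}(K)}\le C(K)$. By Theorem \ref{Thm:SobolevHolder} this gives a uniform $C^\gamma(K)$-bound, and Lemma \ref{Lem:HolderLemma} (Arzel\`a--Ascoli) lets us pass to a further subsequence converging in $C^\alpha_{\mathrm{loc}}(\overline M\setminus F)$ for any $\alpha<\gamma$; the limit must be $u_\infty$ by uniqueness of weak limits. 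This already establishes alternative ii) whenever $F\neq\emptyset$. For alternative i), the remaining point is: if no concentration occurs, i.e. the covering argument produces $F=\emptyset$ (equivalently $u_\infty\not\equiv 0$ and the mass bound \eqref{eq:LocalSBound} holds globally for small $r$), then the local bounds are global, giving a uniform $H^{2,p}(M)$-bound on $u_k$, hence a uniform $C^\gamma(M)$-bound and $C^\alpha$-convergence to $u_\infty$ on all of $\overline M$; strong $H^1$-convergence then follows from the weak convergence together with convergence of the norms, using that $u_k$ solves (approximately, with error $S_{g_k}-\sigma(t_k)\to 0$ in the appropriate space) the Yamabe equation and testing the difference $u_k-u_\infty$ against itself. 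One checks the two cases are mutually exclusive: in case i) the $C^\alpha$-limit on all of $\overline M$ is positive, so $u_\infty\not\equiv 0$ and $\operatorname{Vol}_{g_\infty}(M)=1$ by dominated convergence, whereas genuine bubbling in case ii) loses mass at the $x_j$, so $u_k\not\to u_\infty$ strongly in $L^{2n/(n-2)}$.

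\textbf{Main obstacle.} The technical heart is the concentration argument: making precise that the failure set $F$ is \emph{finite} (with an a priori bound on $L$) and that on its complement one genuinely has the strict inequality \eqref{eq:LocalSBound} \emph{uniformly in $k$} for a fixed small $r$. This requires careful bookkeeping of the interplay between the uniform $L^{n/2}$-bound on $(S_{g_k})_+$, the lower semicontinuity of the concentrated mass, and the uniform positivity $\cS(B(x,r))\to\cS_\ell(\overline M)>0$ (which rests on \cite[Lemma 1.3]{ACM}); one also has to rule out concentration ``escaping to the singular strata'' of $\overline M$, which is exactly why $F$ is allowed to meet $\overline M\setminus M$ and why $\cS_\ell$ rather than the smooth Sobolev constant is the right threshold. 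The passage to the weak limit in the Yamabe equation, and the upgrade from weak to strong $H^1$-convergence in case i), are comparatively routine given the uniform bounds.
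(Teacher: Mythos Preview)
Your overall strategy matches the paper's: extract weak $H^1$ and measure limits, pass to the limit in the Yamabe equation, invoke Lemma \ref{Lem:LocaluBound} for local $H^{2,p}$ bounds away from a finite bad set, and upgrade to $C^\alpha$ convergence via Theorem \ref{Thm:SobolevHolder} and Lemma \ref{Lem:HolderLemma}.

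The one substantive difference is in how the concentration set $F$ is identified. You propose a ``Vitali-type covering argument'' applied directly to the quantities $\bigl(\int_{B(x,r)}(S_{g_k})_+^{n/2}\,d\Vol_{g_k}\bigr)^{2/n}$. The paper instead first passes to weak-$*$ limits of measures: along a subsequence, $|S_{g_s}|^{n/2}\,d\Vol_{g_s}\rightharpoonup^* d\nu_\infty=\sigma_\infty^{n/2}\,d\mu_\infty$, and then simply \emph{defines} $F=\{x\in\overline M:\nu_\infty(\{x\})\ge\cS(x)^{n/2}\}$. Finiteness of $F$ is then immediate from the total-mass constraint $\mu_\infty(\overline M)=1$ together with the lower bound $\mu_\infty(\{x\})\ge(Y_\ell/\sigma_\infty)^{n/2}$ for $x\in F$, giving the explicit estimate $\#F\le(\sigma_\infty/Y_\ell)^{n/2}$. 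For $x\notin F$ one has $\nu_\infty(\{x\})=0$ (this is shown a posteriori from the local bounds and Ahlfors regularity), so for small $r$ the hypothesis \eqref{eq:LocalSBound} of Lemma \ref{Lem:LocaluBound} holds for all large $s$. Your Vitali approach is not wrong in spirit, but as stated it is vague about how one obtains a \emph{fixed} finite set $F$ that works uniformly along the subsequence; concentration points could in principle drift with $k$, and the clean way to pin them down is precisely to take the limit measure first. In effect, making your covering argument rigorous would force you to rediscover the paper's formulation.

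One small correction: your parenthetical ``equivalently $u_\infty\not\equiv 0$'' in case i) is not how the dichotomy is set up. Case i) is simply $F=\varnothing$; the positivity of $u_\infty$ is then a \emph{consequence} of the resulting uniform lower bound on $u_k$, not part of the definition. In case ii) with $F\neq\varnothing$, the limit $u_\infty$ may well be zero (cf.\ Proposition \ref{Prop:rho0NoConc}).
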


\begin{Rem}
One could formulate an analogous dichotomy for an arbitrary sequence of conformal factors $u_k\in H^{2,p}(M)$ satisfying $\Vol_{g_k}(M)=1$ and $\norm{S_{g_k}}_{L^p(M)}\leq \Lambda$ uniformly for some $p>\frac{n}{2}$. The proof would hardly be changed.
\end{Rem}

\begin{Rem}
In case i), there is more one can say about regularity of the solution $u_\infty$. In particular, one can give an expansion of the solution near the singular stratum, but the details of this will depend on the singularity. We refer the interested reader to \cite[Section 3]{ACM} for such statements. 
\medskip

Case ii) is referred to as ''concentration'' or  ''bubbling'', and we will have more to say about this in what follows, Section \ref{section: nonconvergent}, and the appendix. Note that $u_\infty$ could be $0$ in this case. 
\end{Rem}

\subsection{Proof of Theorem \ref{Prop:Dichotomy}}

We start by the following convenient lemma:

\begin{Lem}
\label{Lem:WeakConv}
If $t_k\to +\infty$, then there is a subsequence $g_s= u^{\frac{4}{n-2}}(t_{k_s})g_0\eqqcolon u_s^{\frac{4}{n-2}}g_0$ such that 
\begin{itemize}
\item $u_s\rightharpoonup u_\infty$ in $H^1(M)$. 
\item $d\Vol_{g_s}:= u_s^{\frac{2n}{n-2}}d\mu \rightharpoonup\!\!\!\!\!\!{}^*\ \  d\mu_\infty$
\item  $\vert S_{g_s}\vert^{\frac{n}{2}}d\Vol_{g_s}  \rightharpoonup\!\!\!\!\!\!{}^*\ \ d\nu_\infty=\sigma_\infty^{\frac{n}{2}}d\mu_\infty,$
\end{itemize}
where $S_{g_s}$ is the scalar curvature of $g_s$, $u_s \rightharpoonup u_{\infty}$ denotes the weak limit in $H^1(M)$, and $d\Vol_{g_s}\rightharpoonup\!\!\!\!\!\!{}^* \,\,\,d\mu_\infty$ denotes the weak $*$-limits, i.e. 
\[\lim_{s\to \infty} \int_M \psi d\Vol_{g_s}=\int_M \psi d\mu_{\infty}\]
for any $\psi\in C^0(\overline{M}).$
\end{Lem}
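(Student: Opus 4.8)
The plan is to extract all three limits by standard weak/weak-$*$ compactness, the only real content being that the limiting measures $d\mu_\infty$ and $d\nu_\infty$ are proportional with the stated constant $\sigma_\infty^{n/2}$. First I would establish the uniform bounds that make compactness available: by Proposition \ref{Prop:Sn/2+delta} there are time-independent constants $C>0$ and $q>\frac n2$ with $\norm{S(t)}_{L^q(M,g(t))}\le C$ for all $t$, and by Proposition \ref{Prop:Sobolev} the uniform Sobolev inequality \eqref{eq:SobolevForg} holds. Since $\Vol_{g(t)}(M)=1$ and $\norm{u(t)}_{L^{2n/(n-2)}(M)}^{2n/(n-2)}=1$, feeding $f=u(t)$ into \eqref{eq:SobolevForg} (using $H^1(M)=H^1(M,g)$, valid because $u,u^{-1}$ are bounded on the finite time-slice, cf.\ the discussion after \eqref{eq:H1Norm}) bounds $\norm{u(t)}_{H^1(M)}$ uniformly in $t$. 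Hence along $t_k\to\infty$ the sequence $u_k$ is bounded in the Hilbert space $H^1(M)$, and a subsequence $u_s\rightharpoonup u_\infty$ weakly in $H^1(M)$; by Rellich (using Assumption \ref{admissible-assump}, which gives a compact Sobolev embedding $H^1(M)\hookrightarrow L^2(M)$ via \eqref{eq:Sobolev} and Ahlfors regularity) we may also assume $u_s\to u_\infty$ strongly in $L^2(M)$ and $\mu$-a.e.

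Next, the measures $d\Vol_{g_s}=u_s^{2n/(n-2)}d\mu$ all have total mass $1$ on the compact space $\overline M$, so by the Banach--Alaoglu / Riesz--Markov theorem a further subsequence satisfies $d\Vol_{g_s}\rightharpoonup\!\!\!\!\!\!{}^*\ d\mu_\infty$ for some nonnegative Radon measure $d\mu_\infty$ on $\overline M$ (of total mass $\le 1$; in fact $=1$ since $\overline M$ is compact and the constant function $1$ is a valid test function). Similarly $\abs{S_{g_s}}^{n/2}d\Vol_{g_s}$ has total mass $\norm{S_{g_s}}_{L^{n/2}(M,g_s)}^{n/2}\le\norm{S_0}_{L^{n/2}(M)}^{n/2}$, uniformly bounded by \eqref{eq:Sn/2Bound} in Proposition \ref{Prop:SEvol}, so a further subsequence gives $\abs{S_{g_s}}^{n/2}d\Vol_{g_s}\rightharpoonup\!\!\!\!\!\!{}^*\ d\nu_\infty$. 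Diagonalizing over the three extractions yields a single subsequence realizing all three limits simultaneously.

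The one substantive point — and the step I expect to be the main obstacle — is the identification $d\nu_\infty=\sigma_\infty^{n/2}d\mu_\infty$. The idea is that Proposition \ref{Prop:SL2Conv} (together with the stronger $L^p$ statement of Proposition \ref{Prop:Sn/2+delta}) says $S_{g_s}-\sigma(t_{k_s})\to 0$ in $L^2(M,g_s)$, hence in $L^{n/2}(M,g_s)$ by Hölder since $\Vol_{g_s}(M)=1$ and $n/2\le 2$ for $n\le 4$, and directly in $L^{p}(M,g_s)$ for the relevant $p$ when $n\ge 5$ — in all cases one gets $\norm{S_{g_s}-\sigma(t_{k_s})}_{L^{n/2}(M,g_s)}\to 0$ after possibly invoking the improved convergence in the Remark after Proposition \ref{Prop:Sn/2+delta}. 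Since $\sigma(t_{k_s})\to\sigma_\infty$, for any $\psi\in C^0(\overline M)$ we write
\[
\int_M \psi\,\abs{S_{g_s}}^{n/2}\,d\Vol_{g_s}=\int_M \psi\,\sigma_\infty^{n/2}\,d\Vol_{g_s}+\int_M \psi\left(\abs{S_{g_s}}^{n/2}-\sigma_\infty^{n/2}\right)d\Vol_{g_s},
\]
and bound the last term by $\norm{\psi}_{L^\infty}\norm{\,\abs{S_{g_s}}^{n/2}-\sigma_\infty^{n/2}\,}_{L^1(M,g_s)}$; using the elementary inequality $\bigl|\,|a|^{n/2}-|b|^{n/2}\,\bigr|\le c_n\bigl(|a|^{n/2-1}+|b|^{n/2-1}\bigr)|a-b|$ with $a=S_{g_s}$, $b=\sigma_\infty$, then Hölder with exponents $\frac{n}{n-2}$ and $\frac n2$ against the uniform bound on $\norm{S_{g_s}}_{L^{n/2}(M,g_s)}$ and $\sigma_\infty$, this error is controlled by a constant times $\norm{S_{g_s}-\sigma_\infty}_{L^{n/2}(M,g_s)}\to 0$ (for $n=3$ one instead uses $\norm{\cdot}_{L^{3/2}}\le\norm{\cdot}_{L^2}$ and reads the $L^2$-statement off Proposition \ref{Prop:SL2Conv} for $n=3$, valid per its proof). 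Letting $s\to\infty$, the first term tends to $\sigma_\infty^{n/2}\int_M\psi\,d\mu_\infty$ by definition of $d\mu_\infty$, and the left side to $\int_M\psi\,d\nu_\infty$; since $\psi$ was arbitrary, $d\nu_\infty=\sigma_\infty^{n/2}d\mu_\infty$, completing the proof.
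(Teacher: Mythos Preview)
Your overall structure is right and matches the paper's: extract the three sublimits by Banach--Alaoglu/Riesz--Markov on the compact space $\overline{M}$, and then identify $d\nu_\infty=\sigma_\infty^{n/2}d\mu_\infty$ using that $\norm{S_{g_s}-\sigma_\infty}_{L^{n/2}(M,g_s)}\to 0$. Your treatment of the identification is in fact more detailed than the paper's (which just says ``very similar, using Proposition \ref{Prop:Sn/2+delta}''), and your elementary-inequality-plus-H\"older argument is correct.

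There is, however, a genuine gap in your argument for the uniform $H^1(M)$ bound on $u(t)$. You write that ``feeding $f=u(t)$ into \eqref{eq:SobolevForg} \ldots\ bounds $\norm{u(t)}_{H^1(M)}$ uniformly in $t$'', but a Sobolev inequality goes the wrong way for this: it bounds the $L^{2n/(n-2)}$-norm \emph{above} by the $H^1$-norm, so inserting $f=u$ yields a \emph{lower} bound on the right-hand side, not an upper bound on $\norm{\nabla u}_{L^2}$. Your parenthetical appeal to ``$H^1(M)=H^1(M,g)$, valid because $u,u^{-1}$ are bounded on the finite time-slice'' does not rescue this either, since the constants in that norm equivalence depend on $\norm{u^{\pm 1}}_{L^\infty}$ at time $t$, which you have not controlled uniformly in $t$.

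The fix is short and is exactly what the paper does: use the equation. Multiplying $Su^{\frac{n+2}{n-2}}=S_0u-c_n\Delta_0 u$ by $u$ and integrating (equivalently, read off \eqref{eq:rho}) gives
\[
c_n\norm{\nabla u}_{L^2(M)}^2 = \sigma(t)-\int_M S_0 u^2\,d\mu \;\le\; \sigma(0)+\norm{S_0}_{L^{n/2}(M)}\norm{u}_{L^{2n/(n-2)}(M)}^2 \;=\; \sigma(0)+\norm{S_0}_{L^{n/2}(M)},
\]
where the last equality uses $\norm{u}_{L^{2n/(n-2)}(M)}=1$. Together with $\norm{u}_{L^2(M)}\le\norm{u}_{L^{2n/(n-2)}(M)}=1$ (H\"older, unit volume) this gives the uniform $H^1(M)$ bound you need. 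No Sobolev inequality or bound on $u^{-1}$ is required.
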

\begin{proof}
By \eqref{eq:EvolvingScalar}, we have 
\[S u^{\frac{n+2}{n-2}}=L_0(u)=S_0 u -\frac{4(n-1)}{n-2}\Delta_0 u,\]
which we integrate and deduce (with $c_n=4\frac{n-1}{n-2}$)
\[c_n \norm{\nabla u}_{L^2(M)}^2 \leq \norm{S_0}^{\frac{n}{2}}_{L^{\frac{n}{2}}(M)} \norm{u}^2_{L^{\frac{2n}{n-2}}(M)} + \norm{S}_{L^1(M,g)}.\]
The right hand side is bounded since $\norm{u}_{L^{\frac{2n}{n-2}}(M)}=1$ and Proposition \ref{Prop:Sn/2+delta}. We of course have $\norm{u}^2_{L^2(M)}\leq \norm{u}_{L^{\frac{2n}{n-2}}(M)}=1$, hence 
\[\norm{u}_{H^1(M)}^2=\norm{\nabla u}_{L^2(M)}^2+\norm{u}^2_{L^2(M)}\]
is bounded uniformly in time. By the Banach-Alaoglu theorem (and the  Riesz–Fr\'{e}chet representation theorem to identify $H^1(M)$ with its dual), there is therefore a weakly convergent subsequence $u_s$.

Similarly, for any $\psi\in C^0(\overline{M})$, $\norm{u}_{L^{\frac{2n}{n-2}}(M)}=1$ implies
\[\int_M \psi d\Vol_g\leq \norm{\psi}_{L^\infty},\]
hence $\psi\mapsto \int_M \psi d\Vol_g$ is a uniformly bounded linear functional on $C^0(\overline{M})$. By  the Banach-Alaoglu theorem again, there is a weakly convergent subsequence in the dual space, which we identify as the signed measures.

The last statement is very similar, where one additionally uses Proposition \ref{Prop:Sn/2+delta} to argue $\lim\limits_{t\to \infty} \norm{ S(t)-\sigma(t)}_{L^{\frac{n}{2}}(M,g)}=0$.
\end{proof}

The next fact is about $u_\infty$.
\begin{Lem}$u_\infty \in H^{2,p}(M)$  and it solves the Yamabe equation
\[-c_n\Delta_0 u_\infty+S_0 u_\infty=\sigma_\infty u_\infty^{\frac{n+2}{n-2}}.\]
If $u_\infty\neq 0$ (the $0$-function), there is a positive constant $C$ such that 
$$C^{-1}\le u_\infty\le C.$$
\end{Lem}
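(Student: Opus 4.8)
The plan is to pass to the limit in the elliptic identity satisfied by the individual time slices, and then to quote Remark \ref{rem:regularity} for the regularity and the two-sided bound. Fix the subsequence $u_s=u(t_{k_s})$ produced by Lemma \ref{Lem:WeakConv}. By \eqref{eq:EvolvingScalar}, for each $s$ we have the identity
\[
-c_n\Delta_0 u_s+S_0u_s=S_{g_s}\,u_s^{\frac{n+2}{n-2}},
\]
valid in the weak sense since $u_s\in H^{2,p}(M)$. Testing against an arbitrary $\phi\in C^\infty_c(M)$ and integrating by parts yields
\[
c_n\int_M\ip{\nabla u_s}{\nabla\phi}_{g_0}\,d\mu+\int_M S_0u_s\phi\,d\mu=\int_M S_{g_s}\,u_s^{\frac{n+2}{n-2}}\phi\,d\mu,
\]
and I want each term to converge to the corresponding term for $u_\infty$ with $S_{g_s}$ replaced by the constant $\sigma_\infty$.

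For the left-hand side the gradient term passes to the limit directly from $u_s\rightharpoonup u_\infty$ in $H^1(M)$, and $\int_M S_0u_s\phi\,d\mu\to\int_M S_0u_\infty\phi\,d\mu$ because $S_0\phi\in L^q(M)\subset L^{\frac{2n}{n+2}}(M)$ (as $q>n/2$ and $\mu(M)=1$) while $u_s\rightharpoonup u_\infty$ weakly in $L^{\frac{2n}{n-2}}(M)$. On the right-hand side I split $S_{g_s}=\sigma(t_{k_s})+(S_{g_s}-\sigma(t_{k_s}))$. The main term $\sigma(t_{k_s})\int_M u_s^{\frac{n+2}{n-2}}\phi\,d\mu$ converges to $\sigma_\infty\int_M u_\infty^{\frac{n+2}{n-2}}\phi\,d\mu$: indeed $\sigma(t_{k_s})\to\sigma_\infty$, and the uniform $H^1$-bound together with the compact Sobolev embedding $H^1(M)\hookrightarrow\hookrightarrow L^r(M)$ for $r<\frac{2n}{n-2}$ (valid under Assumptions \ref{admissible-assump} and \ref{P-assump}) gives $u_s\to u_\infty$ strongly in $L^{\frac{n+2}{n-2}}(M)$, hence $u_s^{\frac{n+2}{n-2}}\to u_\infty^{\frac{n+2}{n-2}}$ in $L^1(M)$.

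The remaining term is controlled by passing to the metric $g_s$: since $u_s^{\frac{n+2}{n-2}}\,d\mu=u_s^{-1}\,d\Vol_{g_s}$, Hölder's inequality on $(M,g_s)$ with exponents $n/2$ and $n/(n-2)$ gives
\[
\left|\int_M(S_{g_s}-\sigma(t_{k_s}))\,u_s^{\frac{n+2}{n-2}}\phi\,d\mu\right|\le\norm{S_{g_s}-\sigma(t_{k_s})}_{L^{n/2}(M,g_s)}\left(\int_M u_s^{-\frac{n}{n-2}}|\phi|^{\frac{n}{n-2}}\,d\Vol_{g_s}\right)^{\frac{n-2}{n}},
\]
and the last factor equals $\left(\int_M u_s^{\frac{n}{n-2}}|\phi|^{\frac{n}{n-2}}\,d\mu\right)^{\frac{n-2}{n}}\le\norm{\phi}_{L^\infty}\norm{u_s}_{L^{\frac{2n}{n-2}}(M)}=\norm{\phi}_{L^\infty}$, using $d\Vol_{g_s}=u_s^{\frac{2n}{n-2}}d\mu$, $\mu(M)=1$, and Cauchy–Schwarz. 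By Proposition \ref{Prop:Sn/2+delta} (as exploited in Lemma \ref{Lem:WeakConv}) we have $\norm{S_{g_s}-\sigma(t_{k_s})}_{L^{n/2}(M,g_s)}\to 0$, so this term vanishes. Combining everything shows that $u_\infty$ solves $-c_n\Delta_0u_\infty+S_0u_\infty=\sigma_\infty u_\infty^{\frac{n+2}{n-2}}$ weakly in $H^1(M)$, and $u_\infty\ge 0$ because $\{v\ge 0\}$ is convex and closed, hence weakly closed, in $H^1(M)$. Then $u_\infty\in H^{2,p}(M)$, and the bound $C^{-1}\le u_\infty\le C$ when $u_\infty\not\equiv 0$, follow immediately from Remark \ref{rem:regularity}.

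The main obstacle is the error term $\int_M(S_{g_s}-\sigma(t_{k_s}))u_s^{\frac{n+2}{n-2}}\phi\,d\mu$: one must use $\Vol_{g_s}(M)=1$ to trade the $d\mu$-integral for a $d\Vol_{g_s}$-integral, so that the only uniformly available smallness, namely $\norm{S_{g_s}-\sigma(t_{k_s})}_{L^{n/2}(M,g_s)}\to 0$, becomes applicable; a uniform lower bound on $u_s$ is unavailable (precisely because of possible bubbling), so one cannot estimate this term working on $(M,\mu)$ directly. The other delicate point is the compactness input needed to pass to the limit in the nonlinear term $u_s^{\frac{n+2}{n-2}}$, for which the uniform $H^1$-bound from Lemma \ref{Lem:WeakConv} together with Rellich–Kondrachov on the PI space $\overline M$ is the relevant tool.
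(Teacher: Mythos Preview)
Your proof is correct and follows essentially the same route as the paper: pass to the limit in the weak formulation, control the error term via $\|S_{g_s}-\sigma\|_{L^{n/2}(M,g_s)}\to 0$ (the paper factors as $(\varphi u_s)\cdot S_s u_s^{4/(n-2)}$ and uses the identical H\"older estimate), and then invoke Remark~\ref{rem:regularity}. The only cosmetic difference is in the nonlinear term $\int\phi\,u_s^{(n+2)/(n-2)}\,d\mu$: you use Rellich--Kondrachov to get strong $L^{(n+2)/(n-2)}$ convergence, whereas the paper asserts weak $L^{2n/(n+2)}$ convergence of $u_s^{(n+2)/(n-2)}$ directly --- both rest on the same underlying compactness.
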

\begin{proof} According to Remark \ref{rem:regularity}, it is enough to show that $u_\infty$ is a weak solution of the Yamabe equation, that is that for every $\varphi\in H^1(M)$:
\[c_n\int_M\langle \nabla\varphi,\nabla u_\infty\rangle d\mu+\int_M S_0\varphi u_\infty d\mu=\int_M\varphi\sigma_\infty u_\infty^{\frac{n+2}{n-2}}d\mu.\]
But by weak $H^1$ convergences
\[c_n\int_M\langle \nabla \varphi,\nabla u_\infty\rangle d\mu+\int_M S_0\varphi u_\infty d\mu=\lim_{s\to \infty} c_n\int_M\langle \nabla\varphi,\nabla u_s\rangle d\mu+\int_M S_0\varphi u_s d\mu.\]
By definition of $S_s$, we have 
\[c_n\int_M\langle \nabla \varphi,\nabla u_s\rangle d\mu+\int_M S_0\varphi u_s d\mu=\int_M\varphi S_s u_s^{\frac{n+2}{n-2}}d\mu.\]
The right hand side we may write as
\[\int_M\varphi S_s u_s^{\frac{n+2}{n-2}}d\mu=\int_M(\varphi u_s) S_s u_s^{\frac{4}{n-2}}d\mu.\]
Proposition \ref{Prop:Sn/2+delta} tells us
\[\lim_{s\to \infty} \int_M \left| (S_s-\sigma_\infty) u_s^{\frac{4}{n-2}}\right|^{\frac n2}d\mu=0,\]
 hence with the fact that $\varphi u_s$ is uniformly bounded in $L^{n/(n-2)}(M)$, it is sufficient to check that
\[\lim_{s\to \infty} \int_M\varphi  u_s^{\frac{n+2}{n-2}}d\mu=\int_M\varphi  u_\infty^{\frac{n+2}{n-2}}d\mu.\]
But $u_s\rightharpoonup u_\infty$ in $H^1(M)$ hence by Sobolev embedding \eqref{eq:Sobolev},
$u_s\rightharpoonup u_\infty$ in $L^{\frac{2n}{n-2}}(M)$ and also
$u^{\frac{n+2}{n-2}}_s\rightharpoonup u^{\frac{n+2}{n-2}}_\infty$ in $L^{\frac{2n}{n+2}}(M)$.
Notice that  $\varphi\in L^{\frac{2n}{n-2}}(M)\simeq \left(L^{\frac{2n}{n+2}}(M)\right)^*$ hence 
\[\lim_{s\to \infty} \int_M\varphi  u_s^{\frac{n+2}{n-2}}d\mu=\int_M\varphi  u_\infty^{\frac{n+2}{n-2}}d\mu.\]
\end{proof}

We now give the proof of Theorem \ref{Prop:Dichotomy}. 

\begin{proof}[Proof of Theorem \ref{Prop:Dichotomy}]
We introduce for $x\in \overline{M}$:
$$\cS(x):=\lim_{r\to 0^+} \cS(B(x,r)).$$ We have $S_0\in L^{p}(M,g_0)$ for some  $p>\frac{n}{2}$, and we may appeal to \cite[Proof of Lemma 1.3]{ACM} to get
\[\cS(x):=\lim_{r\to 0^+} \cS(B(x,r))=\lim_{r\to 0^+} Y(B(x,r)),\]
so for any $x\in \overline{M}\colon \ \cS(x)^{\frac{n}{2}}\ge Y_\ell^{\frac{n}{2}}$.
For $x\in \overline{M}$, one of two things can happen. Either ($\nu_{\infty}$
is defined in Lemma \ref{Lem:WeakConv})
\begin{equation}
\nu_{\infty}(\{x\})<\cS(x)^{\frac{n}{2}}
\label{eq:Pointmassbound}
\end{equation}
or
\begin{equation}
\nu_{\infty}(\{x\})\geq \cS(x)^{\frac{n}{2}}.
\label{eq:Pointmassunbound}
\end{equation}
Let 
\[F\coloneqq \{ x\in \overline{M}\, \colon \, \nu_{\infty}(\{x\})\geq \cS(x)^{\frac{n}{2}}\}.\]

\noindent\textbf{Step 1: $F$ is a finite set.} 
Assume there are $x\in \overline{M}$ where \eqref{eq:Pointmassunbound} holds. 
Then
\[\sigma_{\infty}^{\frac{n}{2}} \mu_{\infty}(\{x\})=\nu_{\infty}(\{x\})\geq \cS(x)^{\frac{n}{2}}\geq Y_{\ell}^{\frac{n}{2}},\]
so we obtain
\begin{equation}
\mu_{\infty}(\{x\})\geq \left(\frac{Y_{\ell}}{\sigma_{\infty}}\right)^{\frac{n}{2}}.
\label{eq:muBound}
\end{equation}
By the volume normalization, we have
\[1=\lim_{k\to \infty} \int_M 1\cdot d\Vol_{g_s}=\mu_{\infty}(\overline{M})\geq \sum_{x\in F} \mu_{\infty}(\{x\}),\]
so by the uniform lower bound \eqref{eq:muBound}, $F$ must be finite with a uniform bound
\begin{equation}
L\coloneqq \#F \leq \left(\frac{\sigma_{\infty}}{Y_{\ell}}\right)^{\frac{n}{2}}
\label{eq:L-Bound}
\end{equation}
There can therefore only be finitely many points where \eqref{eq:Pointmassbound} fails. 
\medskip

\noindent\textbf{Step 2: Bounds near $x\notin F$.} 
For any $x\notin F$, for any  $\epsilon\in (0,1)$, there is $r=r_x(\epsilon)>0$ and  such that
\[\nu_{\infty}(B(x,2r))\leq \left((1-\epsilon/2)\cS(B(x,r))\right)^{\frac{n}{2}}.\]
For $s\geq s_0$ for some large $s_0$, we therefore get
\[\left(\int_{B(x,r)} \vert S_{g_s}\vert^{\frac{n}{2}}\, d\Vol_{g_s}\right)^{\frac{2}{n}}\leq (1-\epsilon)\cS(B(x,r)).\]
 By Lemma \ref{Lem:LocaluBound} we get uniform bounds near $x$, meaning some $C>0$ such that $C^{-1}\leq u_s\leq C$ on $B(x,r/8)$. 
 \medskip
 
 \noindent We also note for later use that
\[\nu_{\infty}(\{x\})=\sigma_{\infty}^{\frac{n}{2}} \mu_{\infty}(\{x\})=\sigma_{\infty}^{\frac{n}{2}} \lim_{\rho\to 0}\lim_{s\to \infty} \int_{B(x,\rho)} u_s^{\frac{2n}{n-2}}\, d\mu.$$
And that for $\rho\le r/8$
$$\int_{B(x,\rho)} u_s^{\frac{2n}{n-2}}\, d\mu \le C^{\frac{2n}{n-2}}\mu(B(x,\rho)).\]
By Ahlfors regularity \eqref{eq:Ahlfors-Regular}, we get 
 \[\nu_{\infty}(\{x\})=0.\]
 Summarizingm, we therefore conclude
\[\nu_{\infty}(\{x\})<\cS(x)\implies \nu_{\infty}(\{x\})=0.\]

\noindent\textbf{Step 3: Case i).} 
If \eqref{eq:Pointmassbound} holds for all $x\in \overline{M}$. Then for every $x\in \overline{M}\colon \nu_{\infty}(\{x\})=0$. Let $\epsilon\in (0,1)$ Then for every $x\in \overline{M}$, there is a  $r=r_x(\epsilon)>0$ and  such that
\[\nu_{\infty}(B(x,2r))\leq \left((1-\epsilon/2)\cS(B(x,r))\right)^{\frac{n}{2}}.\]  
As $ \overline{M}$ is compact we can cover $\overline{M}$ with finitely many balls $B_i=B(x_i,r_{x_i}(\epsilon)/8)$. 
Using the above argumentation for each if these balls, 
 we get uniform bounds on all of $M$. The convergence statements follow immediately from Theorem \ref{Thm:SobolevHolder}, and Lemma \ref{Lem:HolderLemma}. 
\medskip

\noindent\textbf{Step 4: Case ii).} 
Away from the finitely many points $x_i\in F$, we are in case $i)$ and the arguments there go through when restricting to a compact set $K\subset \overline{M}\setminus F$.

\end{proof}

\begin{Rem}
Taking a different convergent subsequence, different things could happen in Theorem \ref{Prop:Dichotomy}. The upper bound 
\eqref{eq:L-Bound} on the set $F$ is absolute, however:
\[F=\{x\in \overline{M}\colon  \nu_\infty(\{x\})\ge \cS(x)^{\frac{n}{2}}\}, \quad L= \# F\le \left(\sigma_\infty/Y_\ell\right)^{\frac n2}.\]\end{Rem}

We can extract further information out of the dichotomy and arrive at the following result
\begin{Prop}
\label{Prop:rho0NoConc}
Assume $(M,g_0)$ has an average scalar curvature satisfying 
\begin{equation}
\sigma(0)^{\frac{n}{2}}\leq Y(M,[g_0])^{\frac{n}{2}}+Y_{\ell}(\overline{M},[g_0])^{\frac{n}{2}},
\label{eq:LowAverageScalar}
\end{equation}
and assume $u_\infty$ is the limit of some subsequence $u_k$. Then either 
\begin{enumerate}
\item We are in case i) of Theorem \ref{Prop:Dichotomy}.
\item or we are in case ii) of Theorem \ref{Prop:Dichotomy} with $u_\infty=0$, $L=1$ and $\sigma_\infty\geq Y_{\ell}$.
\end{enumerate}
\end{Prop}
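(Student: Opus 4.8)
### Proof proposal

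The plan is to combine the mass–quantization information already extracted in the proof of Theorem \ref{Prop:Dichotomy} with the energy bound \eqref{eq:LowAverageScalar} and a Brezis--Lieb type decomposition. The starting point is the observation that the weak limit $u_\infty$ carries an amount of total mass (in the measure $d\mu_\infty$) at least equal to that of a Yamabe minimizer \emph{whenever it is nonzero}: indeed, by the Lemma preceding the proof of Theorem \ref{Prop:Dichotomy}, if $u_\infty\neq 0$ it solves $-c_n\Delta_0 u_\infty+S_0u_\infty=\sigma_\infty u_\infty^{\frac{n+2}{n-2}}$, and testing this equation against $u_\infty$ together with the definition \eqref{eq:YamabeConst} of $Y(M,g_0)$ gives
\[
Y(M,g_0)\,\|u_\infty\|_{L^{\frac{2n}{n-2}}(M)}^2\le \sigma_\infty\|u_\infty\|_{L^{\frac{2n}{n-2}}(M)}^{\frac{2n}{n-2}},
\]
hence $\|u_\infty\|_{L^{\frac{2n}{n-2}}(M)}^{\frac{2n}{n-2}}\ge (Y(M,g_0)/\sigma_\infty)^{n/2}$, i.e. the absolutely continuous part of $\mu_\infty$ has total mass $\ge (Y/\sigma_\infty)^{n/2}$ as soon as $u_\infty\neq 0$.

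Next I would split the total mass. From $\mu_\infty(\overline M)=1$ (the volume normalization, as in Step 1 of the proof of Theorem \ref{Prop:Dichotomy}) and the fact that $\mu_\infty=u_\infty^{\frac{2n}{n-2}}d\mu+\sum_{x\in F}\mu_\infty(\{x\})\delta_x$ (the non-atomic part of $\mu_\infty$ outside $F$ being absolutely continuous and equal to $u_\infty^{\frac{2n}{n-2}}d\mu$, by the bounds of Step 2 — this is the point that needs a short Brezis--Lieb / Vitali argument using the local $C^\alpha_{loc}$ convergence away from $F$), one gets
\[
1=\|u_\infty\|_{L^{\frac{2n}{n-2}}(M)}^{\frac{2n}{n-2}}+\sum_{j=1}^{L}\mu_\infty(\{x_j\}).
\]
Each atom satisfies $\mu_\infty(\{x_j\})\ge (Y_\ell/\sigma_\infty)^{n/2}$ by \eqref{eq:muBound}. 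If $u_\infty\neq 0$ and $L\ge 1$, then
\[
1\ge \Big(\frac{Y}{\sigma_\infty}\Big)^{\!n/2}+\Big(\frac{Y_\ell}{\sigma_\infty}\Big)^{\!n/2},
\]
i.e. $\sigma_\infty^{n/2}\ge Y^{n/2}+Y_\ell^{n/2}$. Since $\sigma$ is non-increasing along the flow, $\sigma_\infty\le\sigma(0)$, so this forces $\sigma(0)^{n/2}\ge Y^{n/2}+Y_\ell^{n/2}$; combined with the hypothesis \eqref{eq:LowAverageScalar} this means equality holds everywhere, in particular $\sigma_\infty=\sigma(0)$, and then $\sigma'\equiv 0$ along the flow by \eqref{eq:rhoEvol}, which gives $S\equiv\sigma$, i.e. $g(t)$ already has constant scalar curvature — in that degenerate case there is no concentration and we are in case i). Hence generically: if we are not in case i) (so $L\ge1$), then $u_\infty=0$; and then $1=\sum_{j=1}^L\mu_\infty(\{x_j\})\ge L(Y_\ell/\sigma_\infty)^{n/2}$ while also $\mu_\infty(\{x_1\})\le 1$ forces, together with $1\ge L(Y_\ell/\sigma_\infty)^{n/2}$ and the energy inequality $\sigma_\infty^{n/2}\le\sigma(0)^{n/2}\le Y^{n/2}+Y_\ell^{n/2}$, the conclusion $L\le 1+ (Y/Y_\ell)^{n/2}\cdot(\text{something})$; I would sharpen this to $L=1$ by noting that with $u_\infty=0$ the single-atom case already accounts for all the mass once $\sigma_\infty\ge Y_\ell$, and that $L\ge2$ would need $\sigma_\infty^{n/2}\ge 2Y_\ell^{n/2}$, which combined with $\sigma_\infty^{n/2}\le Y^{n/2}+Y_\ell^{n/2}$ gives $Y_\ell\le Y$; this does not immediately contradict anything, so the clean statement $L=1$ really uses $u_\infty=0\Rightarrow$ all mass is atomic and $\mu_\infty(\{x_1\})=1\ge (Y_\ell/\sigma_\infty)^{n/2}$ giving $\sigma_\infty\ge Y_\ell$, and then a second atom would give total mass $\ge 1+(Y_\ell/\sigma_\infty)^{n/2}>1$, a contradiction.

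The main obstacle is the mass-splitting identity $\mu_\infty=u_\infty^{\frac{2n}{n-2}}d\mu+\sum_{x\in F}\mu_\infty(\{x\})\delta_x$, i.e. proving that $\mu_\infty$ has no singular-continuous part and that its absolutely continuous density is exactly $u_\infty^{\frac{2n}{n-2}}$. For the part away from $F$ this follows from the uniform $L^\infty$ and $C^\alpha_{loc}$ bounds of Step 2 of the proof of Theorem \ref{Prop:Dichotomy} together with dominated convergence on $\overline M\setminus F$; the only subtlety is ruling out continuous mass escaping to the boundary stratum, which is handled exactly as the $\nu_\infty(\{x\})=0$ computation in Step 2 using Ahlfors regularity \eqref{eq:Ahlfors-Regular}. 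Everything else is bookkeeping with the two sharp lower bounds $(Y/\sigma_\infty)^{n/2}$ for the diffuse mass and $(Y_\ell/\sigma_\infty)^{n/2}$ for each atom, and the monotonicity $\sigma_\infty\le\sigma(0)$.
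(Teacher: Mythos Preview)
Your overall strategy is exactly the paper's: decompose $d\mu_\infty = u_\infty^{\frac{2n}{n-2}}d\mu + \sum_{x\in F} m_x\delta_x$, use the volume normalization, the lower bound $\sigma_\infty m_x^{2/n}\ge Y_\ell$ on each atom, and (when $u_\infty\neq 0$) the Yamabe lower bound $\sigma_\infty\bigl(\int u_\infty^{\frac{2n}{n-2}}d\mu\bigr)^{2/n}\ge Y$. Your treatment of the case $u_\infty\neq 0$ with $L\ge 1$, including the equality discussion via $\sigma'\equiv 0$, is correct and matches the paper (which disposes of $\sigma_\infty=\sigma(0)$ at the outset for the same reason).

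There is, however, a genuine gap in your argument for $L=1$ when $u_\infty=0$. You correctly observe that $L\ge 2$ forces $\sigma_\infty^{n/2}\ge 2Y_\ell^{n/2}$, hence combined with $\sigma_\infty^{n/2}\le Y^{n/2}+Y_\ell^{n/2}$ one gets $Y_\ell\le Y$. You then write ``this does not immediately contradict anything'' --- but it does. One always has $Y(M,[g_0])\le Y_\ell(\overline{M},[g_0])$: any test function supported in a small ball $B(p,R)$ is in particular a competitor for the global Yamabe constant, so $Y(M,[g_0])\le Y(B(p,R),[g_0])$ for every $p$ and $R$, and taking the infimum over $p$ and the limit $R\to 0$ gives $Y\le Y_\ell$. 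This is exactly what the paper uses (in the chain $Y^{n/2}+Y_\ell^{n/2}\le 2Y_\ell^{n/2}$), together with the strict inequality $\sigma_\infty<\sigma(0)$ (which you have already argued holds outside case i)), to conclude $L<2$.

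Your attempted alternative at the end is circular: you assert $\mu_\infty(\{x_1\})=1$ and then say a second atom would push the total mass above $1$, but $\mu_\infty(\{x_1\})=1$ is only true if $L=1$, which is what you are trying to prove. With two atoms one only has $m_1+m_2=1$, not $m_1=1$. The fix is simply to insert the Aubin-type inequality $Y\le Y_\ell$ where you flagged the missing contradiction.
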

\begin{Rem}
We recall that for a smooth manifold $\overline{M}$, we have $Y_{\ell}=Y(\S^n,g_{round})$, so the above proposition reduces to parts of \cite[Theorem 1.2]{SS} in the smooth setting. 
\end{Rem}
\begin{proof}
The monotonicity \eqref{eq:rhoEvol} of $\sigma(t)$ says $\sigma_{\infty}\leq \sigma(0)$, and equality happens if and only if $S_0$ is already a constant scalar curvature metric. So we may assume  $\sigma_\infty<\sigma(0)$ in \eqref{eq:LowAverageScalar}.
\medskip

By the discussion of $\mu_{\infty}$ in the proof of Theorem \ref{Prop:Dichotomy}, we can write
\[d\mu_\infty=u_\infty^{\frac{2n}{n-2}}d\mu+\sum_{x\in F} m_x \delta_x\]
Note that the volume normalization gives
\begin{equation}
1=\int_{M} u_\infty^{\frac{2n}{n-2}}d\mu+\sum_{x\in F} m_x
\label{eq:TotalMass}
\end{equation}
and that for each $x\in F\colon $
\begin{equation}
\label{eq:MassBound}
\sigma_\infty \cdot m_x^{\frac 2n}\ge \cS(x)\ge Y_\ell.
\end{equation}
What happens next splits into two cases. Either $u_{\infty}=0$ everywhere or not. Assume first that $u_\infty= 0$. By \eqref{eq:TotalMass} and \eqref{eq:MassBound}, we get
\[\sigma_\infty^{\frac{n}{2}} \geq L Y_{\ell}^{\frac{n}{2}}.\]
Combining this with \eqref{eq:LowAverageScalar}, we deduce
\[L Y_{\ell}^{\frac{n}{2}} \leq \sigma_{\infty}^{\frac{n}{2}} < \sigma(0)^{\frac{n}{2}} \leq Y(M,[g_0])^{\frac{n}{2}}+Y_{\ell}^{\frac{n}{2}}\leq 2 Y_{\ell}^{\frac{n}{2}},\]
meaning $L<2$. The option $L=0$ is impossible due to \eqref{eq:TotalMass}, so we conclude $L=1$.
\medskip

Assume next that $u_\infty\neq 0$ (meaning it is not identically the $0$-function). 
By definition of the Yamabe constant\footnote{Writing $N\coloneqq \frac{2n}{n-2}$, the extra factor of $\norm{u}_{L^{N}(M)}^{\frac{2}{n}}$ comes about since $Y=\inf \frac{\int S\, d\Vol_g}{\norm{u}^2_{L^{N}(M)}}$ whereas $\sigma=\frac{\int S\, d\Vol_g}{\norm{u}^{N}_{L^{N}(M)}}$.}  \eqref{eq:YamabeConst}, 
\[\left(\int_{M} u_\infty^{\frac{2n}{n-2}}d\mu\right)^{\frac{2}{n}}\sigma_{\infty} \ge  Y(M,[g_0]),\]
hence
\begin{equation}
\sigma_\infty^{\frac  n2}\ge Y^{\frac n2}(M,[g_0])+\sum_{x\in F} \cS(x)^{\frac{n}{2}}\ge Y^{\frac n2}(M,[g_0])+L Y_\ell^{\frac{n}{2}} .
\label{eq:SFBound}
\end{equation}
This contradicts \eqref{eq:LowAverageScalar} unless $L=0$, and we are in case 1 of Theorem \ref{Prop:Dichotomy}.
\end{proof}

\subsection{Small initial energy yields convergence of the solution}\label{small energy}

We end this section with a small energy criterion (\eqref{eq:SmallInitialEnergyDef} below) which will ensure that the flow lands in case i) of Theorem \ref{Prop:Dichotomy}.

\begin{Prop} [{\cite[Proposition 2.3]{ACT}}]
\label{Prop:u+delta}
Assume 
\begin{equation}
\norm{(S_0)_+}_{L^{\frac{n}{2}}(M)}<Y_{\ell}(\overline{M},g_0),
\label{eq:SmallInitialEnergyDef}
\end{equation}
which we refer to as the small initial energy condition. Then any convergent subsequence $u(t_k)$  lands in case i) of Theorem \ref{Prop:Dichotomy}. 
\end{Prop}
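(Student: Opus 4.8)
The strategy is to show that the small initial energy condition \eqref{eq:SmallInitialEnergyDef} propagates along the flow and rules out concentration, so that the dichotomy of Theorem \ref{Prop:Dichotomy} must land in case i). First I would recall from Proposition \ref{Prop:SEvol}, equation \eqref{eq:S+n2Bound}, that the positive part of the scalar curvature satisfies the time-monotone bound
\[\norm{S_+(t)}_{L^{n/2}(M,g(t))} \le \norm{(S_0)_+}_{L^{n/2}(M)} < Y_\ell(\overline{M},g_0)\]
for all $t$. (Here one uses Remark \ref{Rem:S_0Bounded} to assume $S_0 \in L^\infty(M)$ without loss of generality, so that Proposition \ref{Prop:SEvol} applies.) Combined with Proposition \ref{Prop:Sn/2+delta}, which gives a uniform-in-time bound $\norm{S(t)}_{L^q(M,g(t))} \le \Lambda$ for some $q > n/2$, the hypotheses of Lemma \ref{Lem:LocaluBound} are available along the entire flow with a fixed $\Lambda$.

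\textbf{Excluding concentration.} Now take a subsequence $u_k = u(t_k)$ as in Lemma \ref{Lem:WeakConv}, with limiting measures $d\mu_\infty$ and $d\nu_\infty = \sigma_\infty^{n/2} d\mu_\infty$, and let $F = \{x : \nu_\infty(\{x\}) \ge \cS(x)^{n/2}\}$ be the concentration set from the proof of Theorem \ref{Prop:Dichotomy}. I claim $F = \emptyset$. Indeed, for any $x \in \overline{M}$ and any $\varepsilon \in (0,1)$, the weak-$*$ convergence of $|S_{g_k}|^{n/2} d\Vol_{g_k}$ together with the \emph{global} bound above gives, for some $r = r_x(\varepsilon) > 0$ and all $k$ large,
\[\left(\int_{B(x,r)} S_+^{n/2}\, d\Vol_{g_k}\right)^{2/n} \le \norm{(S_0)_+}_{L^{n/2}(M)} < Y_\ell \le \cS(B(x,r)),\]
where the last inequality is \cite[Lemma 1.3]{ACM}, i.e. $\lim_{r\to 0}\cS(B(x,r)) \ge Y_\ell$, so for $r$ small enough $\cS(B(x,r))$ is as close to $\cS(x) \ge Y_\ell$ as we like. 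Hence \eqref{eq:LocalSBound} of Lemma \ref{Lem:LocaluBound} holds with a uniform $\varepsilon$, and we obtain uniform bounds $C^{-1} \le u_k \le C$ and $\int_{B(x,r/8)} |\Delta_0 u_k|^p\, d\mu \le C$ near every point $x$. As in Step 2 of the proof of Theorem \ref{Prop:Dichotomy}, the local $L^\infty$ bound plus Ahlfors regularity \eqref{eq:Ahlfors-Regular} forces $\nu_\infty(\{x\}) = 0$, so $x \notin F$. Since $x$ was arbitrary, $F = \emptyset$, which is exactly Step 3 (Case i)) of that proof.

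\textbf{Main obstacle.} The only delicate point is making sure the strict inequality in \eqref{eq:SmallInitialEnergyDef} survives all the limiting operations with a \emph{uniform} gap $\varepsilon$: one needs the monotonicity of $\norm{S_+}_{L^{n/2}(M,g)}$ from Proposition \ref{Prop:SEvol} (which holds precisely because of the $S_\pm$-splitting, avoiding the maximum principle), and one needs that $\cS(B(x,r)) \to \cS(x) \ge Y_\ell$ is lower semicontinuous in $x$ in a way that is uniform enough to choose $r_x$ and $\varepsilon$ consistently over the compact space $\overline{M}$. Once the strict inequality $\norm{(S_0)_+}_{L^{n/2}(M)} < Y_\ell \le \cS(x)$ is in hand at every point, the rest is a direct invocation of Lemma \ref{Lem:LocaluBound} and the already-established machinery of Theorem \ref{Prop:Dichotomy}; there is essentially nothing new to prove beyond bookkeeping. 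I would therefore present the argument as: (1) reduce to $S_0 \in L^\infty$; (2) apply \eqref{eq:S+n2Bound} to get the uniform-in-time strict bound; (3) verify \eqref{eq:LocalSBound} at every point with uniform $\varepsilon$; (4) conclude $F = \emptyset$ via Lemma \ref{Lem:LocaluBound} and Ahlfors regularity, hence case i).
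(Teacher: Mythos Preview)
Your proposal is correct and follows essentially the same approach as the paper's proof: both use the monotonicity \eqref{eq:S+n2Bound} to propagate the strict inequality $\norm{S_+}_{L^{n/2}(M,g)} < Y_\ell$ for all time, then compare with $\cS(B(x,r))$ via \cite[Lemma 1.3]{ACM} to verify the hypothesis \eqref{eq:LocalSBound} of Lemma \ref{Lem:LocaluBound} at every point, concluding $F=\varnothing$. The paper's version is terser---it simply notes $Y_\ell=\cS_\ell=\inf_x \cS(x)$ and then observes that the global bound immediately gives \eqref{eq:LocalSBound} with a uniform $\varepsilon$, whereas you spell out the intermediate appeal to Lemma \ref{Lem:LocaluBound} and Step 2 of Theorem \ref{Prop:Dichotomy} explicitly; but the logical content is the same. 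Your ``main obstacle'' about uniformity of $\varepsilon$ is a non-issue once you notice that the \emph{global} integral $\int_M S_+^{n/2}\,d\Vol_g$ already sits strictly below $Y_\ell\le\cS(x)$ for every $x$, so the gap is automatic and independent of $x$; the choice of $r_x$ is then handled by compactness exactly as in Step 3 of the proof of Theorem \ref{Prop:Dichotomy}.
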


\begin{proof}
Most of the work is already done. By  \cite[Lemma 1.3]{ACM} and Proposition \ref{Prop:Sn/2+delta}, we have
\[Y_{\ell}(\overline{M},g_0)=\cS_{\ell}(\overline{M})=\inf_{x\in \overline{M}}\cS(x).\] 
Combining  \eqref{eq:S+n2Bound} and \eqref{eq:SmallInitialEnergyDef} then gives us
\[\left(\int_M S_+^{\frac{n}{2}}\, d\Vol_g\right)^{\frac{2}{n}}\leq \left(\int_M (S_0)_+^{\frac{n}{2}}\, d\mu\right)^{\frac{2}{n}}<
Y_{\ell}(\overline{M},g_0) = \cS_{\ell}(\overline{M})\leq \lim_{r\to 0} \cS(B(x,r))\]
for all $x\in \overline{M}$. Hence there is some $\epsilon\in (0,1)$ such that
\[\left(\int_{B(x,r)} S_+^{\frac{n}{2}}\, d\Vol_g\right)^{\frac{2}{n}}\leq\left(\int_M S_+^{\frac{n}{2}}\, d\Vol_g\right)^{\frac{2}{n}}\leq  (1-\epsilon)\cS(B(x,r))\]
for all $r>0$ small enough and all $x\in \overline{M}$. This implies by \eqref{eq:Pointmassunbound}
that $F= \varnothing$. This places us in case i) of Theorem \ref{Prop:Dichotomy}.
\end{proof}

\section{Eigenvalue criterion for prevention of concentration}\label{Section: Eigenvalues}
Theorem \ref{Prop:Dichotomy} leaves open two questions. Can we ensure $L=0$? And does the flow itself converge, and not just subsequences? In this section, we give a partial answer to the second question and an additional answer to the first question.
In the smooth setting, Matthiesen \cite[Theorem 1.2]{Matthiesen} has come up with a criterion to avoid bubbling, and this involves imposing a bound on the first eigenvalue of $\Delta$, the time-varying Laplace operator. We show here that such a bound would work in our non-smooth setting as well, but we lack a criterion on the initial data to ensure this bound is satisfied.\medskip

 We write $\sigma_{\infty}=\lim_{t\to \infty} \sigma$ as in Section \ref{Section:Convergence}. We start by a criterion for ruling out the existence of several convergent subsequences with different limits. Assume $t_k\to \infty$ is some subsequence for which $u(t_k)\to u_{\infty}$.
\begin{Prop}
\label{Prop:Eigenvalue1}
Assume $\frac{\sigma_{\infty}}{n-1}$ is not an eigenvalue of the Laplace operator of $g_{\infty}\coloneqq u_{\infty}^{\frac{4}{n-2}}g_0$. Then $u(t)$ converges weakly to $u_{\infty}$.
\end{Prop}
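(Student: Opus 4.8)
The statement is a "no gap in the $\omega$-limit set" assertion: we already know from Theorem \ref{Prop:Dichotomy} (plus Lemma \ref{Lem:WeakConv}) that along the fixed sequence $t_k\to\infty$ we have $u(t_k)\rightharpoonup u_\infty$ weakly in $H^1(M)$, solving the limiting Yamabe equation with constant $\sigma_\infty$. To upgrade this to $u(t)\rightharpoonup u_\infty$ along the \emph{full} flow, the standard Leon Simon / {\L}ojasiewicz-type strategy is to argue by contradiction: if $u(t)$ does not converge weakly to $u_\infty$, then (since the $H^1$-norms are bounded uniformly in time by the computation in Lemma \ref{Lem:WeakConv}) there is another sequence $s_j\to\infty$ and a test function $\varphi$ with $\int_M \varphi\, d\mu(u(s_j)-u_\infty)$ bounded away from $0$; passing to a further subsequence, $u(s_j)\rightharpoonup \widetilde u_\infty\neq u_\infty$, again a solution of the $\sigma_\infty$-Yamabe equation. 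So the crux is a \emph{rigidity/isolation} statement: under the hypothesis that $\sigma_\infty/(n-1)$ is not an eigenvalue of $\Delta_{g_\infty}$, the solution $u_\infty$ is isolated (in a suitable weak topology, or at least: no other $\omega$-limit point of the flow coincides in $L^2$-pairing with test functions) among $\sigma_\infty$-solutions that arise as flow limits.

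\textbf{Key steps.} First I would set up the linearization. Write $g_\infty = u_\infty^{4/(n-2)}g_0$ and recall from \eqref{eq:EvolvingScalar} that a positive $u$ with $g=u^{4/(n-2)}g_0$ has scalar curvature $\sigma_\infty$ iff $-c_n\Delta_0 u + S_0 u = \sigma_\infty u^{(n+2)/(n-2)}$; by Remark \ref{rem:regularity}, $u_\infty\in H^{2,p}(M)$ with $C^{-1}\le u_\infty\le C$ (note $u_\infty\ne 0$ here, else $\sigma_\infty$ would not appear as a nondegenerate eigenvalue-type quantity — this should be checked: if $u_\infty=0$ the statement is to be interpreted appropriately, or the eigenvalue hypothesis forces $u_\infty\ne 0$). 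Linearizing the Yamabe operator at $u_\infty$ in the background metric $g_\infty$ — i.e. conjugating $-c_n\Delta_0 + S_0 - \sigma_\infty\frac{n+2}{n-2}u_\infty^{4/(n-2)}$ by the conformal change — one gets precisely $c_n\Delta_{g_\infty} + \sigma_\infty\frac{n+2}{n-2} - \sigma_\infty = c_n(\Delta_{g_\infty} + \frac{2\sigma_\infty}{c_n}\cdot\frac{?}{})$; the upshot is that the kernel of the linearized operator on $L^2(M,g_\infty)$ is trivial precisely when $\frac{\sigma_\infty}{n-1}$ is not an eigenvalue of $-\Delta_{g_\infty}$ (this is the only point where the exact constant $n-1$ enters and must be matched carefully against $c_n=4\frac{n-1}{n-2}$ and the factor $\frac{n+2}{n-2}$). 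Second, I would invoke a Simon–{\L}ojasiewicz inequality for the Yamabe/Hilbert functional near $u_\infty$: nondegeneracy of the linearization (integrability of Jacobi fields being automatic when the kernel is $\{0\}$) gives a {\L}ojasiewicz inequality with exponent $\theta=1/2$, i.e. $|\mathcal{F}(u)-\mathcal{F}(u_\infty)|^{1/2}\le C\|\nabla\mathcal{F}(u)\|$ for $u$ in an $H^{2,p}$-neighborhood of $u_\infty$, where $\mathcal F$ is the relevant energy (a multiple of $\sigma$, cf. \eqref{eq:rho}). Third, combine this with the dissipation identity \eqref{eq:rhoEvol}, which says $\sigma$ is the Lyapunov functional and $-\partial_t\sigma$ controls $\|S-\sigma\|_{L^2}^2$, which in turn (via the linearized equation and elliptic estimates, as in Lemma \ref{Lem:LocaluBound}) controls $\|\nabla\mathcal F(u(t))\|$. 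The standard Simon argument then yields that once the flow enters the {\L}ojasiewicz neighborhood of $u_\infty$ it stays there and $\int^\infty \|\partial_t u\|\,dt<\infty$, hence $u(t)$ converges — at least in the weak $H^1$ sense claimed — to $u_\infty$. Since we already know $t_k$ is a convergent subsequence with limit $u_\infty$, the flow does enter every neighborhood of $u_\infty$, so this applies.

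\textbf{Main obstacle.} The genuinely hard part is establishing the {\L}ojasiewicz–Simon inequality in this \emph{singular} setting: the classical proof uses that the Yamabe functional is analytic and that the linearized operator is Fredholm on appropriate (Hölder or Sobolev) spaces, with a Lyapunov–Schmidt reduction to a finite-dimensional analytic problem. Here the relevant functional-analytic framework is the scale $H^{2,p}(M)$ / $C^\gamma(M)$ from Theorem \ref{Thm:SobolevHolder} and the Harnack/Schauder machinery of Proposition \ref{regularH} and \cite{ACM2}; one must check that $c_n\Delta_{g_\infty}+ (\text{potential})$ is Fredholm of index zero on this scale and that its cokernel is identified with its kernel (self-adjointness on $L^2(M,g_\infty)$, which holds since $g_\infty$ is again admissible with bounded conformal factor). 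The nonvanishing-of-kernel hypothesis is exactly what makes the Lyapunov–Schmidt reduction trivial, so that no analyticity is even needed — one gets the $\theta=1/2$ {\L}ojasiewicz inequality directly from invertibility, by a soft implicit-function-theorem argument. I would therefore structure the proof to \emph{avoid} analyticity entirely: (i) linearized operator $\mathcal L$ at $u_\infty$ is an isomorphism $H^{2,p}\to L^p$ under the hypothesis; (ii) hence $\|\nabla\mathcal F(u)\|\gtrsim \|u-u_\infty\|$ and $|\mathcal F(u)-\mathcal F(u_\infty)|\lesssim\|u-u_\infty\|^2\lesssim\|\nabla\mathcal F(u)\|^2$ on a neighborhood; (iii) feed this into the dissipation \eqref{eq:rhoEvol} to get finite length and convergence. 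The delicate bookkeeping — matching $\frac{\sigma_\infty}{n-1}$ with the spectral condition through all the conformal factors, and ensuring the elliptic isomorphism statement is legitimate on the singular space — is where I expect to spend most of the effort.
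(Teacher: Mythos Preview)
Your proposal takes a much more elaborate route than the paper, and carries a genuine gap. The paper's argument is entirely soft and avoids \L ojasiewicz--Simon: the weak-$H^1$ $\omega$-limit set $\mathcal L=\bigcap_{T>0}\overline{\{u(t):t\ge T\}}^{\,\text{weak }H^1}$ is connected (nested intersection of compact connected sets, using the uniform $H^1$-bound and Banach--Alaoglu), every $u\in\mathcal L$ is a weak solution of the limiting Yamabe equation (this was established in Section~\ref{Section:Dichotomy}), and after writing $w=u/u_\infty$ the linearization at $w=1$ is exactly $-\Delta_{g_\infty}v=\tfrac{\sigma_\infty}{n-1}v$. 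The eigenvalue hypothesis makes this invertible, so by the inverse function theorem $u_\infty$ is isolated in $H^{2,p}$ among Yamabe solutions; the regularity estimate of Remark~\ref{rem:regularity} then upgrades this to isolation in weak~$H^1$ \emph{within $\mathcal L$} (since all elements of $\mathcal L$ are Yamabe solutions and hence uniformly regular). Connectedness forces $\mathcal L=\{u_\infty\}$. No gradient inequality, no dissipation-length estimate.

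The gap in your plan is the sentence ``the flow does enter every neighborhood of $u_\infty$, so this applies.'' Your \L ojasiewicz inequality lives on an $H^{2,p}$- (or $C^\alpha$-) neighborhood of $u_\infty$, and the Simon trapping argument requires the trajectory itself to enter such a neighborhood. But along the subsequence $t_k$ you only have weak $H^1$ convergence (Lemma~\ref{Lem:WeakConv}); in case~(ii) of Theorem~\ref{Prop:Dichotomy} the $u(t_k)$ concentrate, are unbounded in $L^\infty$, and do not approach $u_\infty$ in any strong norm. The proposition does not assume case~(i), and the remark immediately following it in the paper makes clear that Proposition~\ref{Prop:Eigenvalue1} is not meant to preclude concentration. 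The paper evades this difficulty precisely because it reasons only about the $\omega$-limit \emph{points}---which are Yamabe solutions and therefore automatically regular---rather than about the full trajectory. Your linearization computation and the inverse-function-theorem isolation step are exactly right; what you should drop is the \L ojasiewicz machinery and replace it with the connectedness-of-$\mathcal L$ argument.
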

\begin{proof}
We start by recalling a classical fact in dynamical systems, namely that the set of possible limits $u_\infty$ of subsequences $u_{t_k}$ is a closed and connected set. The reason being that the set of possible limits can be described as
\[\mathcal{L}:=\bigcap_{T>0} \overline{\{ u_t\, ,t\geq T\}}^{\ \text{weak } H^1},\]
i.e. an intersection of compact connected sets. It therefore suffices to show that $\{u_\infty\}$ is a closed and open set in $\mathcal{L}$ .
We know that if $u\in \mathcal{L}$ then for any $p>n/2$, $u\in H^{2,p}(M)$ and $w=u/u_\infty$
 solves the Yamabe equation
\[-4\frac{n-1}{n-2} \Delta_\infty w+\sigma_\infty w=\sigma_\infty w^{\frac{n+2}{n-2}} .\]
The  linearisation of the Yamabe equation at $u_\infty$ is  
\[-\Delta_\infty v=\frac{\sigma_{\infty}}{n-1} v.\]
By assumption, this linear equation has no non-trivial solution, so by the inverse function theorem, $u_\infty$ is an isolated point\footnote{A priori it is isolated for the topology in which we can apply the inverse function theorem, that is $H^{2,p}(M)$. But the regularity estimate for solution of the Yamabe equation, implies that there is some $\epsilon>0$ such that 
$B^{ H^1}(u_\infty,\epsilon)\cap \mathcal{L}=\{u_\infty\}$, where $B^{ H^1}(u_\infty,\epsilon)$ denotes the ball of radius $\epsilon$ in $H^1$-norm centred on $u_\infty$.} in the set of solutions of the Yamabe equation hence $\{u_\infty\}$ is a closed and open set in $\mathcal{L}$ and $\mathcal{L}$ reduces to  $\{u_\infty\}$.
\end{proof}
The above result combines with Theorem \ref{Prop:Dichotomy} to say that if there is no concentration and Proposition \ref{Prop:Eigenvalue1} holds, then the entire flow converges strongly in $H^{2,p}(M)$ (without passing to subsequences). \medskip

We now come to the eigenvalue criterion following Matthiesen \cite{Matthiesen}.

\begin{Prop}[\cite{Matthiesen}]
\label{Prop:Eigenvalue2}
 Assume that there is a constant $\lambda>0$ such that for any $t\ge 0$ the first non-zero eigenvalue of $\Delta_{g_t}$ is bounded from below by $\lambda$.  Let $u_k=u(t_k)\to u_{\infty}$ be a convergent subsequence. Then either
\begin{itemize}
\item $u_{\infty}$ is bounded from above and below (away from $0$) and there is no concentration.
\item $u_\infty=0$ and there is only one concentration point.

\end{itemize}
\end{Prop}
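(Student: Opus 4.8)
The plan is to combine the spectral bound on $\Delta_{g_t}$ with the concentration--compactness dichotomy of Theorem \ref{Prop:Dichotomy} and the mass decomposition from the proof of Proposition \ref{Prop:rho0NoConc}. First I would invoke Theorem \ref{Prop:Dichotomy}: after passing to a subsequence we are either in case i) --- whence $u_\infty$ is bounded above and below and there is no concentration, giving the first alternative immediately --- or in case ii), where $u_k \rightharpoonup u_\infty$ weakly in $H^1$, there is a finite concentration set $F = \{x_1,\dots,x_L\}$ with $L \geq 1$, and $d\mu_\infty = u_\infty^{\frac{2n}{n-2}}\,d\mu + \sum_{j=1}^L m_j \delta_{x_j}$ with each $m_j \geq (Y_\ell/\sigma_\infty)^{n/2} > 0$ and $\int_M u_\infty^{\frac{2n}{n-2}}\,d\mu + \sum_j m_j = 1$. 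So the entire content of the proposition is: under the eigenvalue lower bound, case ii) forces $u_\infty \equiv 0$ \emph{and} $L = 1$.

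The heart of the argument is to extract a test function from each concentration point and feed it into the Rayleigh quotient for $\Delta_{g_k}$. Fix a concentration point $x_j$; choose a small radius $\rho$ and a Lipschitz cutoff $\chi_j$ supported near $x_j$, equal to $1$ on $B(x_j,\rho/2)$, with disjoint supports for distinct $j$ (possible since $F$ is finite and $\overline{M}$ is a metric space). I would test the characterization of the first nonzero eigenvalue $\lambda_1(\Delta_{g_k}) \geq \lambda$ with functions of the form $v_k = a_k \chi_i - b_k \chi_j$ (for two distinct concentration points $x_i, x_j$, if $L \geq 2$) with the constants $a_k, b_k$ chosen so that $\int_M v_k\, d\Vol_{g_k} = 0$; the key point is that $\int_M \chi_j^2\, d\Vol_{g_k} \to m_j > 0$ while $\int_M |\nabla \chi_j|^2_{g_k}\, d\Vol_{g_k} = \int_M |\nabla \chi_j|^2_{g_0}\, u_k^{\frac{2(n-2)}{n-2}}\cdots$ --- more precisely, because $\nabla\chi_j$ is supported in the annulus $B(x_j,\rho)\setminus B(x_j,\rho/2)$ and the mass there tends to $0$ as $k\to\infty$ then $\rho\to 0$ (this is exactly the "$\nu_\infty(\{x\})<\cS(x)\implies \nu_\infty(\{x\})=0$" type estimate, via the local bounds $u_k \leq C$ away from $F$ established in Step 2 of the proof of Theorem \ref{Prop:Dichotomy} and Ahlfors regularity), the Dirichlet energy of $v_k$ in the $g_k$-metric tends to $0$ while its $L^2(g_k)$-norm stays bounded below. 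This makes the Rayleigh quotient $\to 0$, contradicting $\lambda_1(\Delta_{g_k}) \geq \lambda > 0$; hence $L = 1$.

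For the claim $u_\infty \equiv 0$ when $L = 1$ (say $F = \{x_1\}$), I would run the same Rayleigh-quotient argument but now pairing the single concentration bump $\chi_1$ against the non-concentrated part: take $v_k = \chi_1 - c_k$ with $c_k$ chosen (a suitable average) so that $\int_M v_k\, d\Vol_{g_k} = 0$. If $u_\infty \not\equiv 0$ then $\int_M u_\infty^{\frac{2n}{n-2}}\,d\mu =: \theta \in (0,1)$ and $m_1 = 1 - \theta \in (0,1)$, so $v_k$ is bounded in $L^2(g_k)$ away from zero \emph{and} its $g_k$-Dirichlet energy tends to zero by the same annular-mass-vanishing estimate, again contradicting the spectral gap. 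Therefore $u_\infty \equiv 0$, i.e. $m_1 = 1$, which is the second alternative. The main obstacle I anticipate is making the energy-vanishing estimate $\int_M |\nabla\chi_j|^2_{g_k}\, d\Vol_{g_k} \to 0$ fully rigorous: one must first pass $k\to\infty$ using the uniform bound $u_k \leq C(K)$ on the compact annulus $K = \overline{B(x_j,\rho)}\setminus B(x_j,\rho/2)$ (valid since $K \cap F = \varnothing$, by Step 2 of Theorem \ref{Prop:Dichotomy}), bounding the energy by $C\,\mu(K) \lesssim \rho^n$ via Ahlfors regularity, and then let $\rho \to 0$ with a diagonal subsequence; care is needed because $c_k$ and the normalizing constants themselves depend on $\rho$, so the order of limits and the choice of subsequence must be tracked carefully, exactly as in Matthiesen's argument in \cite{Matthiesen}.
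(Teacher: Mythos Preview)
Your proposal is essentially correct, but it takes a different route from the paper, and there is a small imprecision in your energy estimate worth flagging.

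\textbf{How the paper argues.} The paper does not split into the two sub-cases ($L\ge 2$; $L=1$ with $u_\infty\not\equiv 0$). It picks \emph{one} concentration point $x$ and a single logarithmic cutoff $\varphi$, equal to $1$ on $B(x,r^2)$, vanishing outside $B(x,r)$, with $|\nabla\varphi|_{g_0}(z)=\bigl(d(x,z)\log(1/r)\bigr)^{-1}$ on the annulus. Feeding $\varphi$ into the Poincar\'e inequality for $g_k$ and using $\bar\varphi_k\to\int\varphi\,d\mu_\infty\ge m_x\ge (Y_\ell/\sigma_\infty)^{n/2}$ gives
\[
\Vol_{g_k}\bigl(\overline{M}\setminus B(x,r)\bigr)\ \le\ C\int |\nabla\varphi|^2_{g_k}\,d\Vol_{g_k}\ \le\ C\Bigl(\int |\nabla\varphi|^n_{g_k}\,d\Vol_{g_k}\Bigr)^{2/n}
= C\Bigl(\int |\nabla\varphi|^n_{g_0}\,d\mu\Bigr)^{2/n},
\]
the equality being the conformal invariance of $\int|\nabla\varphi|^n$. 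The logarithmic cutoff is chosen precisely so that the last integral is $\lesssim \log(1/r)^{1-n}\to 0$; hence $\mu_\infty(\overline{M}\setminus\{x\})=0$, which gives $u_\infty\equiv 0$ \emph{and} $L=1$ in one stroke. No bound on $u_k$ away from $F$ is ever used.

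\textbf{Comparison.} Your approach also works, but it leans on the local $C^\alpha$ convergence $u_k\to u_\infty$ away from $F$ (Theorem~\ref{Prop:Dichotomy}, case ii)) to control the Dirichlet energy of a standard Lipschitz bump; the paper bypasses this entirely via the conformal invariance of the $n$-energy and the log cutoff. Note also that your sentence ``bounding the energy by $C\,\mu(K)\lesssim\rho^n$'' is not quite right: since $\int|\nabla\chi_j|^2_{g_k}\,d\Vol_{g_k}=\int_K|\nabla\chi_j|^2_{g_0}\,u_k^2\,d\mu$ and $|\nabla\chi_j|_{g_0}\sim\rho^{-1}$, the bound you get from $u_k\le C(K)$ is $C(K)^2\rho^{n-2}$, not $\rho^n$; more importantly $C(K)$ depends on $\rho$, so to make the limit $\rho\to 0$ work you must either replace $C(K)$ by $\|u_\infty\|_{L^\infty}$ (valid after $k\to\infty$ by the local $C^\alpha$ convergence, since $u_\infty\in H^{2,p}\subset L^\infty$), or apply H\"older to get the factor $\Vol_{g_k}(K)^{(n-2)/n}\to\mu_\infty(K)^{(n-2)/n}\to 0$. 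Either fix is easy, but it is exactly the ``order of limits'' issue you anticipate at the end.
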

\begin{Rem}
The proposition does not rule out that for different subsequence, different scenario occur.
\end{Rem}
\begin{Rem}
The conclusion is exactly the same as in Proposition \ref{Prop:rho0NoConc}. In that proposition, we impose an initial average scalar curvature bound, whereas Proposition \ref{Prop:Eigenvalue2} requires a lower bound on the first eigenvalue along the flow.
\end{Rem}

\proof Let assume that there is at least one subsequence with a concentration point $x$. Let $0<r<1$. We first show that 
\[\lim_{k\to +\infty} \int_{\overline{M}\setminus B(x,r)} u_k^{\frac{2n}{n-2}}d\mu=0.\]
Let $\varphi\ge 0$ be the cut-off function such that
\[\varphi(z)=\begin{cases}1 & d(x,z)<r^2\\ \frac{\log\left(\frac{d(x,z)}{r}\right)}{\log(r)} & r^2\leq d(x,z)\leq r \\ 0 & d(x,z)>r.\end{cases}\]
This is a Lipschitz function, since it is the composition of the two Lipschitz functions $\varphi(z)=\psi\left(\frac{d(x,z}{r}\right),$ where
\[\psi(t)=\begin{cases}1 & t<r\\ \frac{\log(t)}{\log(r)} & r\leq t\leq 1 \\ 0 & t>1.\end{cases}\]
Then with $\overline{\varphi}_k\coloneqq \fint \varphi d\Vol_{g_k}$, the Poincar\'{e} inequality (i.e. the min-max principle for finding $\lambda$) reads 
\begin{equation}
\lambda \int_{M} \left(\varphi-\overline{\varphi}_k\right)^2d\Vol_{g_k} \leq \int_M\vert \nabla \varphi\vert^2_{g_k}\, d\Vol_{g_k}=  \int_{B(x,r)} \vert \nabla \varphi\vert^2_{g_k}\, d\Vol_{g_k}.
\label{eq:Poincare}
\end{equation}
We break the left integral into 3 regions and estimate by dropping the first two integrals:
\begin{align*}
& \int_{M} \left(\varphi-\overline{\varphi}_k\right)^2d\Vol_{g_k}\\ &= \left(1-\overline{\varphi}_k\right)^2\int_{B(x,r^2)} d\Vol_{g_k}+ \int_{B(x,r)\setminus B(x,r^2)} \left(\varphi-\overline{\varphi}_k\right)^2d\Vol_{g_k}+ \overline{\varphi}_k^2\int_{M\setminus B(x,r)} d\Vol_{g_k}\\
 &\geq \overline{\varphi}_k^2\Vol_{g_k}(\overline{M}\setminus B(x,r)).
\end{align*}
As in the proof of Theorem \ref{Prop:Dichotomy}, we have 
\[\lim_{k\to \infty} \overline{\varphi}_k=\int_M \varphi d\mu_\infty=\int_M \varphi u_{\infty}^{\frac{2n}{n-2}} \, d\mu +\sum_{y\in F} m_y \varphi(y),\]
and by \eqref{eq:MassBound}, we get a lower bound
\[\lim_{k\to \infty} \overline{\varphi}_k\geq m_x \varphi(x)=m_x\geq \left(\frac{Y_\ell}{\sigma_\infty}\right)^{\frac{n}{2}}.\]
For all $k$ large enough, we therefore have
\[ \overline{\varphi}_k\geq \frac{1}{2}\left(\frac{Y_\ell}{\sigma_\infty}\right)^{\frac{n}{2}}.\]
Inserting this into \eqref{eq:Poincare}, we deduce
\begin{equation}
\Vol_{g_k}(\overline{M}\setminus B(x,r)) \leq C  \int_{B(x,r)} \vert \nabla \varphi\vert^2_{g_k}\, d\Vol_{g_k},
\label{eq:MassOutsidex}
\end{equation}
for all $k$ large enough, where 
\[C\coloneqq \frac{1}{4\lambda} \left(\frac{\sigma_\infty}{Y_\ell}\right)^n.\]
We note how $C$ is independent of $k$ and $r$. We will show that the right hand side tends to $0$ as $r\to 0$, meaning no mass concentrates outside of $\{x\}$.

We first estimate the right hand side of \eqref{eq:MassOutsidex} by the H\"{o}lder inequality and that $\varphi$ is constant outside of $B(x,r)\setminus B(x,r^2)$. 
\[\int_{B(x,r)} \vert \nabla \varphi\vert^2_{g_k}\, d\Vol_{g_k}\leq \left( \int_{B(x,r)\setminus B(x,r^2)}\  \vert \nabla \varphi\vert_{g_k}^{n}d\Vol_{g_k}\right)^{\frac{2}{n}}.\]
By conformal invariance\footnote{Recall $\vert \nabla \varphi\vert_{g_k}^{2}=u_k^{-\frac{4}{n-2}} \vert \nabla \varphi\vert_{g_0}^{2}$, and $d\Vol_{g_k}=u_k^{\frac{2n}{n-2}}d\mu$. So $\vert \nabla \varphi\vert_{g_k}^{n}d\Vol_{g_k}=\vert \nabla \varphi\vert_{g_0}^{n} d\mu$.}
\[\int_{B(x,r)\setminus B(x,r^2)}\  \vert \nabla \varphi\vert_{g_k}^{n}d\Vol_{g_k}= \int_{B(x,r)\setminus B(x,r^2)} \vert \nabla \varphi\vert_{g_0}^{n}  \, d\mu, \] 
and we compute
\[ \vert \nabla \varphi\vert_{g_0}^{ n } = \frac{1}{d(x,z)^n \log(1/r)^{n}}.\]
Hence we need to compute
\[\frac{1}{\log(1/r)^n}\int_{B(x,r)\setminus B(x,r^2)} \frac{d\mu(z)}{d(x,z)^n}.\]
We estimate this integral using the Cavalieri principle and Ahlfors regularity. Write $\rho(z)=d(x,z)$. For any measurable $A\subset X$, and $f\in C^0((0,\infty))$, we have
\[\int_A f\circ \rho(z)\, d\mu(z)=\int_0^\infty f(t)dV(t),\]
where $dV(t)$ is the Stieltjes measure associated to the function $V(t)= \mu(A\cap B(x,t))$. Since in our case $f(t)=t^{-n} \in C^1((0,\infty))$ and tends to $0$ as $t\to \infty$, we may integrate by parts and write
\[\int_0^\infty f(t)dV(t)=-\int_0^\infty  V(t)f'(t)\, dt +\lim_{b\to \infty} f(b)V(b)- \lim_{a\to 0} f(a)V(a).\]
The two boundary terms drop out since $V(a)=0$ for all $a<r^2$ and $f(b)V(b)=V(r) f(b)\xrightarrow{b\to \infty} 0$ for $b>r$, hence
\begin{equation}
\int_A f\circ \rho(z)\, d\mu(z)=\int_0^\infty f(t)dV(t)=-\int_0^\infty  V(t)f'(t)\, dt
\label{eq:Cavalieri}
\end{equation}  
In our case, $f(t)=t^{-n}$ and $A=B(x,r)\setminus B(x,r^2)$, so
\[\frac{1}{\log(1/r)^n}\int_{B(x,r)\setminus B(x,r^2)} \frac{d\mu(z)}{d(x,z)^n}=\frac{1}{\log(1/r)^n}\int_{r^2}^r \frac{n \mu(B(x,t)}{t^{n+1}} )\, dt.\] 
 The volume $\mu(B(x,t))$ is bounded by $Ct^n$ (by the Ahlfors regularity \eqref{eq:Ahlfors-Regular}) for some uniform constant $C>0$, so we may estimate the integral \eqref{eq:Cavalieri} as
\[ \int_{B(x,r)\setminus B(x,r^2)} \vert \nabla \varphi\vert_{g_0}^{n}  \, d\mu \leq \frac{C}{\log(1/r)^{n}} \int_{r^2}^r \frac{n t^n}{t^{n+1}}\, dt = \frac{nC}{\log(1/r)^{n-1}}.\]
 Inserting this into \eqref{eq:MassOutsidex}, we get
\[\Vol_{g_k}(\overline{M}\setminus B(x,r^2)) \leq  \frac{(nC)^{\frac{2}{n}}}{\log(1/r)^{\frac{2(n-1)}{n}}}.\]
Sending $k\to \infty$ and $r\to 0$, we deduce
 $\mu_\infty(\overline{M}\setminus \{x\})=0$.
\endproof
With a stronger assumptions, we can rule out concentration and ensure convergence. This is a consequence of \cite[Proof of Proposition 2.14]{SS}.
\begin{Prop}[\cite{SS}]
If for some $\Lambda> \frac{\sigma_\infty}{n-1}$, the first non-zero of  the eigenvalue of $\Delta=\Delta_{g_t}$ is bounded from below by  $\Lambda$, then there is no concentration along the flow and we get convergence to a positive function $u_{\infty}$.
\end{Prop}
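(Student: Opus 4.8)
The plan is to reduce to Proposition~\ref{Prop:Eigenvalue2} and then close the remaining gap using the \emph{strict} inequality $(n-1)\Lambda>\sigma_\infty$. Observe first that $\sigma(t)\ge Y(M,g_0)>0$ for all $t$, so $\sigma_\infty>0$ and the hypothesis is a genuine strict inequality between positive numbers. By Proposition~\ref{Prop:Eigenvalue2}, along any sequence $t_k\to+\infty$ with $u(t_k)\to u_\infty$ we are, after passing to a subsequence, in exactly one of two situations: either $C^{-1}\le u_\infty\le C$ with no concentration (case~i) of Theorem~\ref{Prop:Dichotomy}), or $u_\infty\equiv 0$ and there is a single concentration point $x\in\overline M$. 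The assertion follows once the second situation is excluded and subsequential convergence is promoted to convergence of the whole flow.

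To exclude concentration I would argue by contradiction. Suppose $u_k:=u(t_k)\to 0$ weakly in $H^1(M)$ with one concentration point $x$, so $d\Vol_{g_k}\rightharpoonup^{*}\delta_x$ (as shown in the proof of Proposition~\ref{Prop:Eigenvalue2}). Zooming in on $x$ in $g_0$-coordinates at the concentration scale, as in the bubble analysis of Appendix~\ref{App:Bubbles}, the metrics $g_k$ pulled back through these charts converge, on the regular part of the associated tangent model, to a bubble metric $g_B$ of constant scalar curvature $\sigma_\infty$ and total volume $1$ --- the single bubble absorbs all of the mass. The key structural point is that $g_B$ inherits a nontrivial conformal symmetry from the dilations of the tangent model: writing the nearby bubbles as $w_\lambda^{4/(n-2)}g_B$ with $w_1\equiv 1$ and setting $v=\partial_\lambda w_\lambda|_{\lambda=1}$, one obtains a bounded, finite-energy, not identically zero solution of the linearised constant scalar curvature equation at $g_B$, hence --- by the linearisation computed in the proof of Proposition~\ref{Prop:Eigenvalue1} --- a solution of $-\Delta_{g_B}v=\frac{\sigma_\infty}{n-1}v$. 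Thus $\frac{\sigma_\infty}{n-1}$ is an eigenvalue of $-\Delta_{g_B}$, so its first nonzero eigenvalue is $\le\frac{\sigma_\infty}{n-1}$; when $x$ is a smooth point this is immediate, since then $g_B$ is the round metric on $\S^n$ rescaled to volume $1$, for which $\lambda_1=\frac{\sigma_\infty}{n-1}$ exactly. I would then transplant $v$ to $(\overline M,g_k)$ by composing with the zooming chart and a cut-off supported in the bubble region, subtracting a constant to arrange $\int_M\varphi_k\,d\Vol_{g_k}=0$. Since the entire $g_k$-volume concentrates in the bubble region, where $g_k$ converges to $g_B$, the Rayleigh quotient of $\varphi_k$ converges to that of $v$ on $g_B$ (the cut-off transition contributes a negligible error because $v$ is bounded and $d\Vol_{g_B}$ decays fast near the missing tip), giving
\[
\lambda_1(g_k)\ \le\ \frac{\displaystyle\int_M|\nabla\varphi_k|^2_{g_k}\,d\Vol_{g_k}}{\displaystyle\int_M\varphi_k^2\,d\Vol_{g_k}}\ =\ \frac{\sigma_\infty}{n-1}+o(1),
\]
which is $<\Lambda$ for $k$ large, contradicting $\lambda_1(g_{t_k})\ge\Lambda$. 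Hence concentration cannot occur and every convergent subsequence lies in case~i) of Theorem~\ref{Prop:Dichotomy}.

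It remains to upgrade this to convergence of the full flow. For any $t_k\to+\infty$, a subsequence of $u(t_k)$ converges strongly in $H^1(M)$ and in $\cC^\alpha$ to some $u_\infty$ with $C^{-1}\le u_\infty\le C$ solving the Yamabe equation with constant $\sigma_\infty$; write $g_\infty=u_\infty^{4/(n-2)}g_0$. Since the conformal factors converge uniformly and stay bounded above and below, the Rayleigh quotients for $g_{t_k}$ converge to those for $g_\infty$, so $\lambda_1(g_\infty)\ge\Lambda>\frac{\sigma_\infty}{n-1}>0$; in particular $\frac{\sigma_\infty}{n-1}$ lies strictly between $0$ and the first nonzero eigenvalue of $\Delta_{g_\infty}$ and is not an eigenvalue of $\Delta_{g_\infty}$. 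Proposition~\ref{Prop:Eigenvalue1} then yields weak convergence $u(t)\rightharpoonup u_\infty$ along the whole flow, which together with the strong subsequential convergence of case~i) forces $u(t)\to u_\infty$ (strongly, e.g. in $H^{2,p}(M)$ and $\cC^\alpha$). As $u_\infty\ge C^{-1}>0$ the limit is positive, so $g_\infty$ is a constant scalar curvature metric conformal to $g_0$.

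The main obstacle is the concentration step, and within it the bubble analysis at a possibly singular point $x$: one must identify the zoomed-in limit $g_B$ as an honest constant scalar curvature bubble carrying the dilation symmetry, produce the eigenfunction $v$ with enough integrability to make $\frac{\sigma_\infty}{n-1}$ a genuine eigenvalue of $-\Delta_{g_B}$, and control the transplantation error uniformly. At a smooth $x$ this is classical --- the bubble is the round sphere and the argument is precisely that of \cite[Proof of Proposition~2.14]{SS} --- while the general case relies on the bubble description established in Appendix~\ref{App:Bubbles}.
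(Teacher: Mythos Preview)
Your approach is genuinely different from the paper's, and considerably more indirect. The paper never invokes Proposition~\ref{Prop:Eigenvalue2} or any bubble analysis. Instead it works with the scalar curvature directly: setting $F_2(t)=\|S(t)-\sigma(t)\|_{L^2(M,g_t)}^2$ and $G_2(t)=\|\nabla S(t)\|_{L^2(M,g_t)}^2$, it quotes the differential inequality from \cite[Lemma~4.4]{SS},
\[
\frac{d}{dt}F_2(t)\ \le\ -2c_nG_2(t)+\frac{8}{n-2}\bigl(\sigma(t)+\epsilon(t)\bigr)F_2(t),\qquad \epsilon(t)\to 0.
\]
The eigenvalue hypothesis enters as the Poincar\'e inequality $G_2(t)\ge \Lambda F_2(t)$ (since $S-\sigma$ has zero mean for $g_t$). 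Writing $\Lambda=\frac{\sigma_\infty+\delta}{n-1}$ with $\delta>0$ and $\sigma(t)=\sigma_\infty+\tilde\epsilon(t)$, the two terms combine to $\frac{8}{n-2}(-\delta+o(1))F_2(t)$, giving $F_2'(t)\le -\mu F_2(t)$ for large $t$ and hence exponential decay of $F_2$. The paper then cites \cite[Proof of Theorem~1.2]{SS} to conclude from this exponential decay that no concentration occurs, and applies Proposition~\ref{Prop:Eigenvalue1} for full convergence.

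Your route --- rule out the single-bubble alternative of Proposition~\ref{Prop:Eigenvalue2} by transplanting the dilation eigenfunction of the bubble --- is the geometric mechanism that \emph{explains} why the threshold is $\sigma_\infty/(n-1)$, and at a smooth concentration point it is essentially the classical picture. But as a proof in the present setting it carries real debt that the paper's argument avoids entirely: the bubble identification you rely on (Appendix~\ref{App:Bubbles}, Theorem~\ref{main}) is stated under bounded Ricci curvature and cone angles $\le 2\pi$, neither of which is assumed in the Proposition; you must check that $(u(t_k))$ is Palais--Smale (it is, via Proposition~\ref{Prop:Sn/2+delta}, but this is not said); and the transplantation step, which you yourself flag as the main obstacle, has to be made quantitative on a possibly singular model. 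The paper's analytic proof needs none of this --- it uses the eigenvalue bound only through Poincar\'e for $S-\sigma$, and never looks at what might be forming at a concentration point.
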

\begin{proof}
Let 
\[F_2(t)\coloneqq \int_M |S(t)-\sigma(t)|^2 d\Vol_{g(t)}=\norm{S(t)-\sigma(t)}^2_{L^2(M,g(t))}\]
and
 \[G_2(t)\coloneqq \int_M \vert \nabla S(t)\vert^2_{g(t)} \, d\Vol_{g(t)}=\norm{\nabla S(t)}^2_{L^2(M,g(t))}.\]
Then by \cite[Lemma 4.4]{SS}, there is a function $\epsilon(t)$ with $\lim\limits_{t\to\infty}  \epsilon(t)=0$  such that
\[\frac{d}{dt} F_2(t) \le -2c_n G_2(t)+\frac{8}{n-2}\left(\sigma(t)+\epsilon(t)\right)F_2(t). \]
The assumed eigenvalue bound gives us
\[-G_2(t)\leq -\Lambda F_2(t) = -\left(\frac{\sigma_\infty}{n-1} +\frac{\delta}{n-1}\right)F_2(t),\]
for some fixed $\delta>0$. Writing $\sigma(t)=\sigma_\infty + \tilde{\epsilon}(t)$ where $\tilde{\epsilon}(t)\to 0$ and absorbing this $\tilde{\epsilon}$ in the definition of $\epsilon$, we get
\[\frac{d}{dt} F_2(t)\le \frac{8}{n-2}\left(-\delta +\epsilon(t)\right)F_2(t).\]
For all $t$ large enough, we have $\epsilon(t)<\frac{\delta}{2}$, so with $\mu\coloneqq \frac{4\delta}{n-2}$ we get
\[\frac{d}{dt} F_2(t)\le -\mu F_2(t),\] 
hence $F_2(t)$ is converging exponentially fast to $0$. Then the argument of \cite[Proof of theorem 1.2]{SS} implies that there is non concentration and by Proposition \ref{Prop:Eigenvalue1}, we get convergence.
\end{proof}

\subsection{The role of the positive mass theorem}
As announced at the start of the section, we do not have conditions on $(M,g_0)$ which ensure the eigenvalue bounds in this section are satisfied. In the smooth case, this is where the positive mass theorem enters. Indeed, \cite[Equation 57]{SS} is essentially an eigenvalue bound, and the validity of this uses the local version of the positive mass theorem \cite[Equation 61]{SS}. We cannot imitate these arguments, since we lack conformal normal coordinates at all points in $\overline{M}$ and an expansion of the Green's function akin to \cite[Equation 61]{SS}.

\section{A non-convergent example $-$ the Eguchi-Hanson space}\label{section: nonconvergent}
If $(M^4,g_0)$ is an ALE (asymptotically locally Euclidean) gravitational instanton, the conformal compactification of $(M^4,g_0)$ is a smooth orbifold $(\overline{M},g_\psi)$ with one singular point $\infty$ modelled on $\C^2/\Gamma$ where $\Gamma$ is a finite subgroup of $\mathrm{SU}(2)$.  Viaclovsky \cite{Orbifold}  has shown that there is no Lipschitz conformal deformation of  $(\overline{M},g_\psi)$ of constant scalar curvature. In this case
$Y(\overline{M},g_\psi)=Y(\S^4,g_{round})/\sqrt{\#\Gamma}.$
Hence in this case, we know by the dichotomy of Theorem \ref{Prop:Dichotomy} that any Yamabe flow $u^{\frac{4}{n-2}}(t)g_\psi$ on $(\overline{M},g_\psi)$ converges weakly to $0$ in $H^{1}(M)$ and develops  a spherical bubble at the singular point $\infty$ as $t\to \infty$. 
We will study the simplest of these in great detail, namely (a conformal compactification of) the Eguchi Hanson space, with $\Gamma=\mu_2\coloneqq \{\pm 1\}$.\medskip

We take as smooth manifold $M=T^*\C\P^1$, the cotangent bundle of $\C\P^1$, and think of this as the blow-up (in the algebraic-geometric sense) of $\C^2/\{\pm 1\}$, where the action by $-1$ is $\mathbf{z}\mapsto -\mathbf{z}$. Removing the zero section $\C\P^1$ we get a manifold biholomorphic to $\left(\C^2\setminus \{0\} \right)/\{\pm 1\}$, and we will perform most of our analysis on the double cover $\C^2\setminus \{0\}$. We equip $M$ with the Eguchi-Hanson metric, which is a Ricci-flat K\"{a}hler metric introduced in \cite[Equation 2.33a]{EH}. In complex coordinates $(z^1,z^2)\in \C^2\setminus \{0\}$, the metric (thought of as a $2\times 2$ hermitian matrix) reads\footnote{For two vectors $u,v$ the notation $u\otimes v$ denotes the matrix $A$ with entries $A_{ij}=u_i v_j$.}
\begin{equation}
g_{EH}=\frac{\sqrt{a^4+r^4}}{r^2}\left(    \mathds{1} -\frac{a^4}{a^4+r^4} \frac{\overline{z}\otimes z}{r^2}\right),
\label{eq:EHMetric}
\end{equation}
where $r$ is the Euclidean distance to the origin and $a$ is a fixed real number. The significance of $a$ is that 
\begin{equation}
(g_{EH})_{\vert \C\P^1}=a^2 g_{FS},
\label{eq:EHonCP1}
\end{equation}
 where $g_{FS}$ denotes the Fubini-Study metric on $\C\P^1$, and the way to see this is as follow. Introducing the notation $\overline{z}\cdot dz\coloneqq \overline{z}^1 dz^1+\overline{z}^2dz^2$ and $\vert dz\vert^2\coloneqq dz^1d\overline{z}^1+ dz^2d\overline{z}^2$,  the Eguchi-Hanson line element reads
\[ds^2_{EH}=\sqrt{a^4+r^4}\left(\frac{\vert dz\vert^2}{r^2}-\frac{\vert \overline{z}\cdot dz\vert^2}{r^4}\right)+\frac{\vert \overline{z}\cdot dz\vert^2}{\sqrt{a^4+r^4}}. \]
The term in the brackets we recognize as the Fubini-Study metric on $\C\P^1$ written in homogeneous coordinates. The last term goes to $0$ as $r\to 0$ , and this establishes \eqref{eq:EHonCP1}.  One readily checks that $\det(g_{EH})=1$, which implies Ricci-flatness.\medskip

As singular manifold $\overline{M}$, we take the one-point compactification of $T^*\C\P^1$. We remark that this compactified space can be identified as the weighted projective space $\C\P^2_{1,1,2}$ where we use the notation of \cite{Weighted}, but we will not need this explicit identification. We now conformally change the Eguchi-Hanson metric by a conformal factor $\psi^2$ where $\psi$ has the properties that $\psi_{\vert \C\P^1}=1$ and $\psi\in \mathcal{O}(r^{-2})$ as $r\to \infty$. This is a conformal compactification $(\overline{M},\psi^2 g)$ with a singular (an orbifold singularity) point at infinity. 
\medskip

We can arrange for $g_{\psi}\coloneqq \psi^2 g$ to satisfy 
\begin{itemize} 
\item $(\overline{M},g_{\psi})$ is compact.
\item scalar curvature $S_\psi$ of $g_{\psi}$ satisfies $S_\psi \in L^{\infty}(M)$, $S_\psi > 0$ on $M$.

\end{itemize}
and we will demonstrate this below by a particular choice of $\psi$.
\medskip

Observe that the metric $g_{\psi}$ is $U(2)$-symmetric, so the family of metrics evolving according to Yamabe-flow will (by uniqueness of solutions) be $U(2)$-symmetric as well. This forces the conformal factor $u$ to be rotationally symmetric at all time, and we may write $u(t,x)=v(t,r)$ for some $v\colon [0,\infty)\times (0,\infty)\to (0,\infty)$ for the conformal factor restricted to $T^*\C\P^1\setminus \C\P^1$.

For a K\"{a}hler metric $h$, the Laplacian is given by (see \cite[equation 7.27]{Kahler}) $\Delta=4h^{\overline{\nu}\mu} \partial_{\mu}\partial_{\overline{\nu}}=4\tr(h^{-1} \nabla^2)$, where we think of $h$ as a hermitian matrix.\footnote{The extra factor of 4 ensures that this agrees with the real Laplacian.}
The inverse metric of the Eguchi-Hanson metric \eqref{eq:EHMetric} can be written\footnote{A computational trick for checking this is that $\frac{\overline{z}\otimes z}{r^2}\cdot \frac{\overline{z}\otimes z}{r^2}=\frac{\overline{z}\otimes z}{r^2}$.}
\begin{equation}
g^{-1}_{EH}=\frac{r^2}{\sqrt{r^4+a^4}}\left(\mathds{1}+\frac{a^4}{r^4}\frac{\overline{z}\otimes z}{r^2}\right),
\label{eq:EH-Invers}
\end{equation}
and the Laplacian of $g$ acting on radially symmetric function $u(t,x)=v(t,\vert x\vert)$ reads\footnote{Note that the derivatives are with respect to $r^2$, as this turns out to be a better coordinate than $r$ when computing with the Eguchi-Hanson metric.}
\begin{equation}
\Delta_{EH} u=\frac{4r^2}{\sqrt{r^4+a^4}}\left( \left(2+\frac{a^4}{r^4}\right)\partial_{r ^2} v + \left(1+\frac{a^4}{r^4}\right) r^2 \partial^2_{r^2} v\right).
\label{eq:Delta0EH}
\end{equation}
The scalar curvature of $g_{\psi}$ reads (using the expressions given in \eqref{eq:YamabeFlow} for the conformal Laplacian). 
\begin{equation}
S_{\psi}=-6\psi^{-3} \Delta_{EH} \psi,
\label{eq:Spsi}
\end{equation}
so the condition $S_{\psi}>0$ is equivalent to $-\Delta_{EH} \psi>0$. \medskip

 Notice that the volume element of $g_{\psi}$ reads $d\Vol_{g_{\psi}}=\psi^4 d\mu$ where $d\mu=d\Vol_{Euc}$ since $\det(g_{EH})=1$.  We also record that \cite{Orbifold} has computed the Yamabe constant in this case, and the result is $Y(\overline{M},[g_{EH}])=Y(\S^4)/\sqrt{2}=8\sqrt{3}\pi$, and this equals the local Yamabe constant, $Y_{\ell}(\overline{M},g_{EH})= Y(\overline{M},[g_{EH}])$. We stress that this is independent of the choice of conformal compactification $\psi$. \medskip
 
We now make a particular choice, namely 
\[\psi(r)^2=\frac{a^4}{a^4+r^4}.\]
This choice satisfies $\lim\limits_{r\to \infty} \psi r^2=a^2<\infty$, so this is an allowed conformal factor.
Using this, one easily checks\footnote{A computational remark: We get a factor of $\Vol(\R\P^3)=\frac{1}{2}\Vol(\S^3)=\pi^2$ since we are looking at $\C^2/\{\pm 1\}$ and not $\C^2$.} 
\[\Vol(M,g_{\psi})=\int_{0}^\infty \int_{\R\P^3} \psi(r)^4 r^3 d\Omega_{\R\P^3}\, dr= \frac{\pi^2 a^4}{4}.\]
Furthermore, the radial lines $\gamma(t)=t z$ for $z\in \S^3\subset \C^2$ are geodesics connecting $0$ and $\infty$, so we can compute the distance as
\[d(0,\infty)=\int_0^\infty \sqrt{g_{\psi}(\dot{\gamma},\dot{\gamma})}\, dt=\frac{\vert a\vert }{2}\int_0^\infty \frac{dt}{(1+t^2)^{\frac{3}{4}}}<\infty,\]
so the resulting space is compact. Furthermore, using \eqref{eq:Delta0EH} 
we see
\[\Delta_{EH} \psi=\frac{4r^2}{\sqrt{r^4+a^4}}\left(-\frac{a^2(2r^4 +a^4)}{r^2(r^4+a^4)^{\frac{3}{2}}} + \frac{a^2(2r^4-a^4)}{r^2(a^4+r^4)^{\frac{3}{2}}}\right)=-\frac{8a^6}{(a^4+r^4)^2},\]
and with \eqref{eq:Spsi}, we arrive at
\begin{equation}
S_{\psi}=\frac{48}{\sqrt{a^4+r^4}}.
\label{eq:SpecificSpsi}
\end{equation}
This shows $S_{\psi}>0$ everywhere on $M$ and $S_{\psi}\in L^{\infty}(M)$. One can also compute
\[\norm{S_{\psi}}_{L^2(M,g_{\psi})}^2 =288\pi^2,\]
and we note that $\norm{S_{\psi}}_{L^2(M,g_{\psi})}=12\sqrt{2}\pi>8\sqrt{3}\pi =Y_{\ell}(\overline{M},[g_{EH}])$, so the small-energy condition of Proposition \ref{Prop:u+delta} is violated, and we do not get uniform bounds on the solution $u$ (as already predicted by Viaclovsky's result). The condition of Proposition \ref{Prop:rho0NoConc} is violated as well, since
$\sigma(0)^2=\pi^{10}> 2\cdot (8\sqrt{3}\pi)^2=Y(\overline{M},g_{EH})^2+Y_{\ell}(\overline{M},g_{EH})^2$.
\medskip

By the dichotomy of Section \ref{Section:Dichotomy}, we have the formation of bubbles. The energy  $\norm{S_{\psi}}_{L^2(M,g_{\psi})}^2 =288\pi^2$ is smaller than the corresponding energy for $\S^4$, $\norm{S_{round}}^2_{L^2(\S^4,g_{round})}=384\pi^2$, so there can be no bubbles forming in smooth points due to Remark \ref{Rem:SchoenYau}, that
\[\lim_{R\to 0} Y(B_R(p))=Y(\S^n,[g_{round}])\]
holds for any smooth point $p$. This means the bubble has to form at the point at infinity. We will have a closer look at this next.

We can write down the Yamabe flow explicitly in this case.  Let us first compute $\Delta_{\psi}$, the Laplacian associated to $\psi^2 g$. This is no longer a K\"{a}hler metric, so the above formula no longer applies. We therefore use the more general formula
\[\Delta_{\psi} f=\frac{1}{\sqrt{\det(g_{\psi})}} \partial_k \left( g_{\psi}^{kl} \sqrt{\det(g_{\psi})} \partial_l f\right)=\psi^{-2} \Delta_{EH} f +2\psi^{-3} \ip{\nabla \psi}{\nabla f}_{g_{EH}},\]
where we have inserted $\det(g_{\psi})=\psi^8$. When $f=f(r^2)$, one easily uses \eqref{eq:EH-Invers} to check
\[\ip{\nabla \psi}{\nabla f}_{g_{EH}}=4\sqrt{r^4+a^4} (\partial_{r^2} f)(\partial_{r^2}\psi).\]
and thus
\[\Delta_{\psi} f=4\left(\frac{\sqrt{r^4+a^4}}{r^2} \partial_{r^2} f +\frac{(r^4+a^4)^{\frac{3}{2}}}{a^4} \partial_{r^2}^2 f\right).\]
Introducing the scale-less coordinate $x\coloneqq \psi =\frac{a^2}{\sqrt{a^4+r^4}}$ and  changing $t\mapsto \frac{a^2}{36} t$ to get rid of $a$, we can write the Yamabe flow \eqref{eq:YamabeFlow} as the following PDE:

\begin{align}
 \left( \partial_t v^3 - \tilde{\sigma}(t) v^3\right)&=\partial_x\Big( \partial_x \Big(x(1-x^2) v\Big)+ 2x^2v\Big) \notag \\ 
 &=x(1-x^2)\partial_x^2 v +2(1-2x^2)\partial_x v -2xv \notag  \\
 &=\partial_x \left(\left(1-x^2\right)\partial_x \left(xv\right)\right). 
\label{eq:NormalizedSpecialYamabe2}
\end{align}
where $\tilde{\sigma}(t)=\frac{1}{12\pi}\sigma(t)$, meaning in particular $\tilde{\sigma}(0)=\frac{\pi^4}{12}$ and a lower bound $\tilde{\sigma}(t)\geq \frac{a^2}{24}\frac{Y_{\ell}}{\sqrt{\Vol(M,g_{\psi})}}=\frac{2}{\sqrt{3}}$.
Note that $x=0$ corresponds to $r=\infty$ in these coordinates, and the interval $0\leq r\leq \infty$ has been mapped to $0\leq x\leq 1$.
\medskip

We will derive some uniform bound on $v(t,x)$ for $x>0$. 
We observe that the right hand side of \eqref{eq:NormalizedSpecialYamabe2} is $-S_\psi \cdot v^3$. Since the initial scalar curvature \eqref{eq:SpecificSpsi} is non-negative, Proposition \ref{Prop:SEvol} tells us that $S_\psi \geq 0$ for all time. Hence
\[ \partial_x \left(\left(1-x^2\right)\partial_x \left(xv\right)\right)\leq 0.\]
Integrating this from $x$ to $1$ yields
\[-(1-x^2)\partial_x(xv)\leq 0,\]
or $\partial_x(xv)\geq 0$ and thus
\[v(t,1)\geq xv(t,x)\]
holds for all time. The left hand side we bound as follows. From Proposition \ref{Prop:SEvol}, we have
\[\norm{S_\psi}_{L^2(M,g)}^2 =\Vol(\R\P^3)\int_0^1 S_\psi(t,x)^2 v(t,x)^4 x\, dx\leq \Vol(\R\P^3)\Lambda \coloneqq \norm{S_0}_{L^2(M,g_0)}^2.\]
Introduce the Green kernel 
\[G(x)\coloneqq \frac{1}{x} \log\left(\frac{1+x}{1-x}\right).\]
Multiplying \eqref{eq:NormalizedSpecialYamabe2} by $G$ and integrating with the measure $x dx$ and integrating yields
\[2v(t,1)=\int_0^1 G(x)S_\psi(t,x)v(t,x)^3 x\, dx.\]
By the H\"{o}lder inequality, we get
\[\int_0^1G(x)S_\psi(t,x)v(t,x)^3 x\, dx\leq \left(\int_0^1 G(x)^4 x\, dx\right)^{1/4} \sqrt{\Lambda} \left(\int_0^1 v(t,x)^4 \, x dx\right)^{1/4}=C<\infty,\]
where we have used the volume normalization
\[\int_0^1 v(t,x)^4 \, x dx=2\]
and the fact that
\[\int_0^1 G(x)^4 x\, dx<\infty.\]
This gives a uniform bound
\[v(t,x)\leq \frac{C}{x}\]
for all time. This shows that the solution can only blow up at $x=0$. 
\medskip

One can of course try to numerically solve \eqref{eq:NormalizedSpecialYamabe2} directly. Figure \ref{fig:YamabePlot} shows the short-time evolution, and is gotten by solving the equation with an explicit time scheme. One sees the mass starting to accumulate near $x=0$.
\begin{figure}
\includegraphics[scale=0.6]{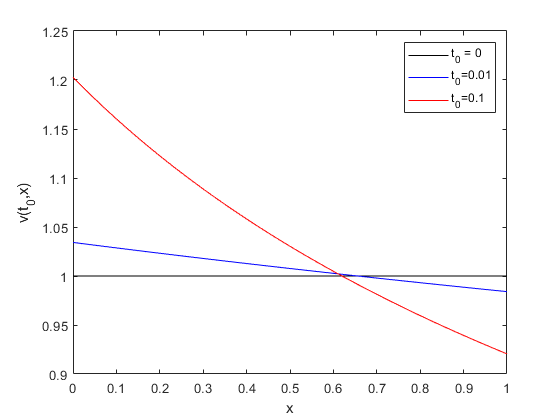}
\caption{The short-time evolution of the Yamabe flow on compactified Eguchi-Hanson space with the coordinate $x=\psi(r)=\frac{a^2}{\sqrt{a^4+r^4}}$. Numerical solution.}
\label{fig:YamabePlot}
\end{figure}

\begin{Rem}
The global version of the positive mass theorem, saying that the mass is positive for all asymptotically Euclidean spaces other than $\R^n$, famously fails if one relaxes the assumptions to allow locally asymptotically Euclidean spaces. The Eguchi-Hanson space  is the easiest counter example. In light of the role  the positive mass theorem  plays in the smooth Yamabe problem, it  is curious that the compactification of Eguchi-Hanson space  is an example of a singular space where the Yamabe problem does not have a solution. We suspect there is a connection here, but we leave the precise formulation of it as future work.
\end{Rem}

\begin{Rem}
In \cite{ACM}, the Yamabe problem was shown to always have a solution under the assumption $Y(M,g_0)<Y_{\ell}(\overline{M},g_0)$. This condition fails for the above example, where instead $Y(M,g_{EH})=Y_{\ell}(\overline{M},g_{EH})$. We do not know if this assumption alone is enough to guarantee that the Yamabe flow converges. If not, it would mean that we have spaces where the Yamabe problem has solutions which are not found by the Yamabe flow.
\end{Rem}

\appendix
\section{Description of the bubbles}
\label{App:Bubbles}

We describe here the bubbles decomposition of Palais-Smale sequences\footnote{for equations of Yamabe type} on smoothly stratified Riemannian pseudomanifolds. We will generalize the description of Struwe \cite{Struwebubble}. In our situation, it will give more informations on the blow-up behaviour along the Yamabe flow on this space. We believe that this decomposition could be useful for other types of non-linear equation on   smoothly stratified Riemannian pseudomanifold.

In this appendix, the singular stratum $\overline{M}\setminus M$ plays a bigger role, and we need to describe the metric $g_0$ in more detail. To conform with standard notation, we therefore change notation. The link to the main text is $\overline{M}=X$, and $M=X_{reg}=X\setminus X_{n-2}$. 

\subsection*{Some words on the geometry of such a space}

For more detailed information see \cite[Subsection 2.1]{ACM} or \cite[Section 3]{ALMP}. 
Let $X$ be a compact stratified space of dimension $n$ with empty boundary, which means  $X$ admits a stratification 
\[X_0\subset X_1\dots\subset X_{n-2}\subset X,\]
and for each $k$, $X_k\setminus X_{k-1}$ is a smooth manifold of dimension $k$. 
We endow $X$ with an iterated edge metric. That is $X_{reg}=X\setminus X_{n-2}$ is endowed with a smooth Riemannian metric $g_0$ that has the following behaviour nearby the singular strata. Let $p\in X_{k}\setminus X_{k-1}$, there is $Z$  is a compact stratified space of dimension $n-k-1$ and a homeomorphism
\begin{equation}
h\colon \bB^k\times C_{[0,\eta)}(Z)\rightarrow U\subset X
\label{eq:Homeom}
\end{equation} 
where\footnote{We write $\bB^k\coloneqq \{x\in \R^k \, \colon\, \vert x\vert <1\}$ and for $\Lambda>0$ we write $\bB^k(\Lambda)\coloneqq \{x\in \R^k \, \colon\, \vert x\vert <\Lambda\}$.}
writing $o$ for the tip of the cone over $Z$
\begin{enumerate}[label=\roman*)]
\item $C_{[0,\eta)}(Z)=\left([0,\eta)\times Z\right)/\left(\{0\}\times Z\right)$ is the cone over $Z$, 
\item $h\left(\bB^k\times \left( C_{[0,\eta)}(Z_{reg})\backslash \{o\} \right)\right)=U\cap X_{reg}$,
\item $h(0,o)=p$,where $o$ is the tip of the cone over $Z$,
\item  If $z_1,\dots,z_{n-k-1}$ are local coordinates on $Z_{reg}$
\begin{align*}
h^*g_0 &= dr^2 + \sum_{i, j = 1}^k h_{i, j} (y) dy^i dy^j   \\
&+ r^2 \sum_{i=1}^\ell\sum_{\alpha=1}^{n-k-1} b_{i,\alpha}(r,y,z) dy^i dz^\alpha \\ 
&+ r^2 \sum_{\alpha,\beta = 1}^{n-k-1} k_{\alpha,\beta}(r,y,z) dz^\alpha dz^\beta,
\end{align*}
where for each $(y,r)\in \bB^k\times [0,\eta)$, the bilinear form $k_{\alpha,\beta}(r,y,z) dz^\alpha dz^\beta$ extended to an  iterated edge metric on $Z$ with smooth dependence on $(y,r)$.
\end{enumerate}

We endow $X$ with a distance $d$ such that $(X,d)$ is the metric completion of $(X_{reg}, dist_{g_0})$ and a Radon measure $\mu$ induced by the Riemannian volume element $d\mu\coloneqq d\Vol_{g_0}$ for which $X_{reg}$ has full measure.\\

\subsection*{Tangent spaces and fake tangent spaces }
In order to analyse the blow-up behaviour of approximate solution of Yamabe type equations, we need to understand blow-up limits of our space
\begin{Def} A pointed metric space $(\underline{X},\underline{d},\underline{x})$ is a fake tangent space at $x\in X$ if it is the pointed Gromov-Hausdorff limit of a sequence $(X,\epsilon_i^{-1}d,x_i)$ where $\lim_i \epsilon_i=0$ and  $\lim_i x_i=x$.
\end{Def}
It means that we can find a sequence $\delta_i\to 0$ and maps
$h_i\colon B(\underline{x},\delta_i^{-1})\subset \underline{X}\rightarrow B(x_i,\epsilon_i\delta_i^{-1})$ such that
 \begin{itemize}
 \item $h_i(\underline{x})=x_i$,
\item for each $p\in B(x_i,\epsilon_i\delta_i^{-1})$ there is some $q\in  B(\underline{x},\delta_i^{-1})$ such that 
$$d(h_i(q),p)\le \delta_i\epsilon_i$$
\item $\forall p_0, p_1\in B(\underline{x},\delta_i^{-1})\colon \left|\underline{d}(p_0,p_1)-\epsilon_i^{-1}d(h_i(p_0),h_i(p_1)\,)\right|< \delta_i$.
\end{itemize}
We can then assume that there is also a  map $f_i\colon  B(x_i,\epsilon_i\delta_i^{-1})\rightarrow B(\underline{x},\delta_i^{-1})$ that satisfies
$\forall p\in  B(x_i,\epsilon_i\delta_i^{-1})\colon d(h_i(f_i(p)),p)\le \delta_i\epsilon_i$ and $\forall q\in  B(\underline{x},\delta_i^{-1})\colon \underline{d}(f_i(h_i(q)),q)\le \delta_i$.
Up to extraction of subsequence, one of the following cases occurs
\subsubsection*{First case}
If $x\in X_{reg}$ then the only fake tangent space at $x$ is the Euclidean space. 
Note that the same conclusion holds if $x\in X_{sing}:=X\setminus X_{reg}$ and $\lim_{i} \frac{d(x_i,X_{sing})}{\epsilon_i}=+\infty$.

\subsubsection*{Second case } $x\in X_k\setminus X_{k-1}$ and $\lim_{i} \frac{d(x,x_i)}{\epsilon_i}=\rho<\infty$.
Using the homeomorphism $h$, \eqref{eq:Homeom}),  and for any $\Lambda>0$, and for $i$ large enough, we  define
$\tilde h_i(\xi,s,z)=h(\epsilon_i \xi ,\epsilon_i s,z)$ on  $\bB^k(\Lambda) \times C_{[0,\Lambda)}(Z)$. Up to extraction of subsequence, we can assume that $\lim_i \tilde h_i^{-1}(x_i)=(0,\rho,z)$, so the fake tangent space is the tangent space at $x$, meaning 
it is $\R^k\times C_{[0,+\infty)}(Z)$ endowed with the metric $g^{Euc}_{\R^k}+dr^2+r^2k(0,0)$ but pointed at $(0,r,z)$.
\subsubsection*{Third case } $x\in X_k\setminus X_{k-1}$ and $\lim_{i} \frac{d(x,x_i)}{\epsilon_i}=\infty$. Let $r_i=d(x,x_i)$ and assume that $h(y_i,r_i,z_i)=x_i$ and we consider the map
\[\hat h_i(\xi,s,z)=h(y_i+\epsilon_i \xi ,r_i+\epsilon_i s,z)\]
then up the lower order terms, we have
\begin{align*}\epsilon_i^{-2} \hat h_i^*g\simeq_{\epsilon_i\to 0}  ds^2 &+ \sum_{\ell, j = 1}^k h_{\ell, j} (y_i+\epsilon_i \xi,y_i+\epsilon_i \xi) d\xi^\ell d\xi^j \\
&+  \frac{r_i^2}{\epsilon_i} \sum_{j=1}^\ell\sum_{\alpha=1}^{n-k-1} b_{j,\alpha}(r_i+\epsilon_i s,y_i+\epsilon_i \xi ,z) d\xi^j dz^\alpha \\
 &+  \frac{r_i^2}{\epsilon_i^2} \sum_{\alpha,\beta = 1}^{n-k-1} k_{\alpha,\beta}(r_i+\epsilon_i s,y_i+\epsilon_i \xi, z) dz^\alpha dz^\beta.
 \end{align*}
If one performs a further  rescaling by $\frac{r_i}{\epsilon_i}$ of $z_i\in Z$ one sees that the fake tangent space is a product
$\R^{k+1}\times\underline{Z}$ where $\underline{Z}$ is a fake tangent space of $(Z, k(0,0))$ at $z$ where $z=\lim_i z_i.$ \medskip

\noindent
We summarize our findings as follows.
\begin{Prop}
\label{Prop:TangentCones}
 Assume $X$ is a compact stratified space equipped with an iterated edge metric. Then fake tangent spaces at $x\in X$ are iterated tangent spaces at $x$. More precisely, a tangent space at $x\in X_k\setminus X_{k-1}$ is of the form
\[\underline{X}=\R^{k+\ell}\times C_{[0,\infty)}(\Sigma),\]
  where $\Sigma$ is a compact stratified space of dimension $n-k-\ell-1$ endowed with an   iterated edge metric and the base point can be any point of $\underline{X}$.\end{Prop}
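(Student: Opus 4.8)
The proof is essentially contained in the case analysis carried out just above the statement; the plan is to organize it as an induction on the dimension $n$ of $X$ and to supply the one structural ingredient not spelled out, namely the existence of the pointed Gromov--Hausdorff limit. So first I would observe that, by the Ahlfors $n$-regularity \eqref{eq:Ahlfors-Regular}, each rescaled space $(X,\epsilon_i^{-1}d,x_i)$ is uniformly doubling on balls of a fixed radius, with constant independent of $i$. Gromov's precompactness theorem for pointed metric spaces then yields, after passing to a subsequence, convergence to some $(\underline{X},\underline{d},\underline{x})$ together with approximating maps $h_i,f_i$ as in the definition of fake tangent space. Hence the set of fake tangent spaces at $x$ is non-empty, and it suffices to identify each such limit.

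Next I would run the trichotomy. Up to a further subsequence exactly one of the three cases above occurs, according to the behaviour of $d(x,x_i)/\epsilon_i$ and, when that ratio diverges, of $d(x_i,X_{sing})/\epsilon_i$. In the first case the limit is $\R^n$, which is of the asserted form with $\ell=n-k$ and empty cone factor. In the second case, pulling back through the chart $h$ of \eqref{eq:Homeom} and rescaling, the cross terms $r^2b_{i,\alpha}\,dy^idz^\alpha$, the $(r,y)$-dependence of $k_{\alpha,\beta}$, and the non-constancy of $h_{i,j}$ all disappear as $\epsilon_i\to0$, so the limit is $\R^k\times C_{[0,\infty)}(Z)$ with the model metric $g^{\mathrm{Euc}}_{\R^k}+dr^2+r^2k(0,0)$ based at $(0,\rho,z)$ --- this is the asserted form with $\ell=0$ and $\Sigma=Z$ equipped with the iterated edge metric $k(0,0)$. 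In the third case, after the two successive rescalings described above one identifies the limit as $\R^{k+1}\times\underline{Z}$, where $\underline{Z}$ is a fake tangent space of the lower-dimensional stratified space $(Z,k(0,0))$ at $z=\lim_i z_i$.

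The induction then closes the argument: since $\dim Z=n-k-1<n$, the inductive hypothesis applied to $(Z,k(0,0))$ gives $\underline{Z}=\R^{m}\times C_{[0,\infty)}(\Sigma)$ for some $m\ge0$ and some compact stratified space $\Sigma$ carrying an iterated edge metric, with $\dim\Sigma=n-k-m-2$. Substituting back yields $\underline{X}=\R^{k+1+m}\times C_{[0,\infty)}(\Sigma)$, i.e.\ the claimed form with $\ell=m+1$ and $\dim\Sigma=n-(k+\ell)-1$; the base case is when $X$ is smooth near $x$, which falls under the first case. Finally, the freedom in the choice of $\rho\in[0,\infty)$, of $z\in Z$, and of translations in the Euclidean factors shows that every point of $\underline{X}$ arises as a base point, giving the last clause.

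The genuinely delicate point is the identification of the limiting metric in the second and third cases: one must verify carefully that the lower-order contributions to $h^*g_0$ in item iv) of the local model --- in particular the edge cross terms and the $(r,y)$-dependence of $k_{\alpha,\beta}$ and $h_{i,j}$ --- do not survive the rescaling, so that the Gromov--Hausdorff limit is \emph{exactly} the frozen model cone rather than some perturbation of it; in the third case this also requires tracking how the factor $r_i^2/\epsilon_i^2$ rescales the link and interacts cleanly with the inductive step. Everything else is bookkeeping on top of the trichotomy already set up in the text.
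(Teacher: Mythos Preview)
Your proposal is correct and follows essentially the same approach as the paper: the proposition is explicitly presented there as a summary of the three-case analysis immediately preceding it, and you have simply organized that recursion as an explicit induction on $\dim X$, together with a Gromov-precompactness remark to justify existence of the limit. The only minor addition you make beyond the paper's text is spelling out the inductive closure of the third case and the base-point freedom, both of which the paper leaves implicit.
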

\subsection*{Convergence of functions} We also need some notions of convergence of function along a sequence of metric spaces that converges in the Gromov-Hausdorff sense. A standard reference is the paper of Kuwae and Shioya \cite{KS}.

We consider a fake tangent space at $x$, $(\underline{X},\underline{d}, \underline{x})$. We know that $\underline{X}=\R^{k+\ell}\times C_{[0,\infty)}(\Sigma)=C_{[0,\infty)}(\underline{S})$ is a smoothly stratified Riemannian pseudomanifold that is the cone over the stratified space whose regular part is 
$(0,\pi/2)\times \S^{k+\ell-1}\times Z$ endowed with the Riemannian metric
\[d\theta^2+\cos^2(\theta)g_{\S^{k+\ell-1}}+\sin^2(\theta) k_Z.\]
It is the pointed Gromov-Hausdorff limit of the rescaled spaces $(X,\epsilon_i^{-1}d ,x_i)$ and 
for each $\Lambda>0$ and for sufficient large $i$, we have  a map
$H_i\colon B(x_i,\Lambda \epsilon_i)\rightarrow C_{[0,\Lambda)}(\underline{S})$ which satisfies 
 \begin{itemize}
 \item $H_i(x_i)=\underline{x}$,
\item for each $p\in C_{[0,\Lambda)}(\underline{S})$ there is some $q\in  B(x_i,\Lambda \epsilon_i)$ such that 
\[\underline{d}(H_i(q),p)\le \delta_i\]
\item $\forall p_0, p_1\in B(x_i,\Lambda \epsilon_i)\colon \left|\underline{d}(H_i(p_0),H_i(p_1))-\epsilon_i^{-1}d(p_0,p_1)\,)\right|< \delta_i$.
\end{itemize}
\subsubsection*{Convergence of points}We say that a sequence $y_i\in X$ converges to $\underline{y}\in \underline{X}$ if 
$H_i(y_i)$ converges to $\underline{y}$.
\subsubsection*{Uniform convergence }
A sequence of function $f_i\colon X\rightarrow \R$ is said to converge uniformly on compact set to $\underline{f}\colon \underline{X}\rightarrow \R$ if
for each $\Lambda$ 
\[\lim_i \left\| f_i-\underline{f}\circ H_i\right\|_{L^\infty(B(x_i,\Lambda\epsilon_i))}=0\]
\subsubsection*{Convergence of the measure} The geometry of $(\underline{X},\underline{d})$ comes from an iterated edge metric $\underline{g}$ and has an associated measure $\underline{\mu}$. We have in fact the pointed measure Gromov-Hausdorff convergence of $(X,\epsilon_i^{-1}d ,\epsilon_i^{-n}\mu, x_i)$ to 
$(\underline{X},\underline{d},\underline{\mu})$. 
\subsubsection*{Convergence in $L^p$}
A sequence $f_i\in L^p(X)$ is said to converge weakly to $f\in L^p(\underline{X})$ if 
 \begin{itemize}
 \item For any $\varphi_i\in \cC^0(X)$ with bounded support (meaning there is $\Lambda>0$ such that for all $i$ we have $\text{supp}(\phi_i)\subset B(x_i,\Lambda \epsilon_i)$) that converge uniformly to $\varphi\in \cC^0(\underline{X})$ (hence $\text{supp}(\phi)\subset B(\underline{x},\Lambda)$) we have
 \[\lim_i \int_Xf_i\varphi_i \frac{d\mu}{\epsilon_i^{n}}=\int_{\underline{X}}f \varphi d\underline{\mu}.\]
 \item $\sup_i \int_X \vert f_i\vert^p \frac{d\mu}{\epsilon_i^{n}}<\infty$.
 \end{itemize}
 If moreover $\lim_i \int_X \vert f_i\vert^p \frac{d\mu}{\epsilon_i^{n}}=\int_{\underline{X}}\vert f\vert^p  d\underline{\mu}$ we say that $f_i\in L^p(X)$ converges strongly to $f\in L^p(\underline{X})$.
 It is known that if $p\in (1,\infty)$ then a bounded sequence (i.e. satisfying $\sup_i \int_X \vert f_i\vert^p \frac{d\mu}{\epsilon_i^{n}}<\infty$ always has a weakly convergent subsequence. Moreover if $q=p/(p-1)$ is the conjugate exponent then if $f_i\in L^p(X)$ converges weakly to $f\in L^p(\underline{X})$ and $v_i\in L^q(X)$ converge strongly to $v\in L^q(\underline{X})$ then
 \[\lim_i \int_X f_i v_i \frac{d\mu}{\epsilon_i^{n}}=\int_{\underline{X}}f v  d\underline{\mu}.\]
 
 We can also define the notion of weak convergence in $L^p_{loc}$ requiring that for any $\Lambda>0$:
 \[\sup_i \int_{B(x_i,\Lambda \epsilon_i)}  |f_i|^p \frac{d\mu}{\epsilon_i^{n}}<\infty,\] 
 and of strong convergence in $L^p_{loc}$ requiring that for any $\Lambda>0$: $\lim_i \int_{B(x_i,\Lambda \epsilon_i)} \vert f_i\vert^p \frac{d\mu}{\epsilon_i^{n}}=\int_{B(\underline{x},\Lambda) }\vert f\vert^p  d\underline{\mu}$.
\subsubsection*{Convergence in $H^1$}
A sequence $f_i\in H^1(X)$ is said to converge weakly to $f\in H^1(\underline{X})$ if 
 \begin{itemize}
 \item  $f_i$ converges weakly to $f\in L^{\frac{2n}{n-2}}(X)$,
 \item $\sup_i \int_X \vert \nabla f_i\vert_{g_0}^2 \epsilon_i^{2-n} d\mu<\infty$.
 \end{itemize}
 Notice that our definition is slightly different the the usual one in order to take into account that the $L^2$ norm of $f$ and of $\nabla f$ do not rescaled in the same way, but the $L^{\frac{2n}{n-2}}$ norm of $f$ and the $L^2$-norm of $\nabla f$ do.
 Such a sequence is said to converge strongly in $H^1$ if it converges strongly in $L^{\frac{2n}{n-2}}$ and if
\[\lim_i  \int_X \vert \nabla f_i\vert_{g_0}^2 \epsilon_i^{2-n} d\mu=\int_{\underline{X}}\vert \nabla  f\vert_{\underline{g}}^2d\underline{\mu}.\]
For any $u\in H^1(\underline{X})$, there is a sequence $u_i\in H^1(X)$ that converges strongly in $H^1$ to $u$.

 We can similarly  define the weak and strong convergence in $H^1_{loc}(X)$.
  Moreover a bounded sequence in $H^1$ (i.e. satisfying $\sup_i \int_X |f_i|^{\frac{2n}{n-2}} \frac{d\mu}{\epsilon_i^{n}}<\infty$ and $\sup_i \int_X|\nabla f_i|^2 \epsilon_i^{2-n} d\mu<\infty$)   always has a weakly convergent subsequence $H^1$.
  Moreover, weak limit in $H^1$  converge strongly in $L^2_{loc}$.
\subsubsection*{Mosco convergence} The sequence of quadratic forms
$\cE_i(u)=\int_X |\nabla u|_{g_0}^2 \epsilon_i^{2-n} d\mu$ converges in the Mosco sense to $\underline{\cE}(u)=\int_{\underline{X}}|\nabla u|_{g_0}^2d\underline{\mu}.$ We will not give the definition here, but it implies for instance the convergence of the corresponding heat kernel.
\subsection*{The results}
Assume that $R\in L^\infty( X)$ and $\sigma>0$. Let $c_n=4(n-1)/(n-2)$ , and for $u\in H^1(X)$ we set:
\begin{equation}
I(u)=\frac12 \int_X \left[ c_n|\nabla u|_{g_0}^2+Ru^2\right]d\mu-\frac{n-2}{2n}\sigma\int_X |u|^{\frac{2n}{n-2}}d\mu
\label{eq:YamabeEnergy}
\end{equation}
\begin{Rem}
This is the form the Yamabe energy takes, with $R=S_0$. The additional term then serves as a Lagrange multiplier, ensuring the volume is normalized.
\end{Rem}
\begin{Def} A bounded sequence $(u_\alpha)$  in $H^1$ is said to be a Palais-Smale sequence for $I$ if the sequence 
$(I(u_\alpha))_\alpha$ converges and if $$DI(u_\alpha)\to 0\text{ in } \left(H^1\right)^*.$$
\end{Def}
The definition in particular says that there is a sequence of real numbers $(\delta_\alpha)$  converging to zero such that
\[\forall \upvarphi\in H^1(X)\colon \left| \int_X \left[ c_n\langle \nabla u_{\alpha},\nabla \upvarphi\rangle_{g_0}+Ru_{\alpha}\upvarphi\right]d\mu-\sigma\int_X |u_{\alpha}|^{\frac{4}{n-2}}u_{\alpha}\upvarphi d\mu\right|\le \delta_\alpha \|\upvarphi\|_{H^1}.\]

The blow up profile will be governed by bubbles:
\begin{Def}  A bubble is a sequence $\left(B_\alpha\right)$ defined by
\[B_\alpha(x)=\left(\frac{c\lambda\epsilon_\alpha}{(\lambda\epsilon_\alpha)^2+d(x,x_\alpha)^2}\right)^{\frac{n-2}{2}}\]
 associated to a sequence of points $(x_\alpha)$ and a sequence of positive real numbers$(\epsilon_\alpha)$ such that 
\begin{itemize}

\item $\lim\limits_{\alpha\to\infty} \epsilon_\alpha=0$
\item There is $x\in X$ such that $\lim\limits_{\alpha\to\infty} x_\alpha=x$.
\item the sequence of rescaled spaces $(X,\epsilon_\alpha^{-1}d,x_\alpha)$ converges in the Gromov-Hausdorff topology to  $(\underline{X},\underline{d},\underline{x})$ a fake tangent space  at $x$
\item $(\underline{X},\underline{d})$ is conical at $\underline{x}$.
\end{itemize}
We called $x_\alpha$ the center of the bubble and $\epsilon_\alpha$ will be called the scale of the bubble. 
\end{Def}

We will choose the following norm on $H^1(X,g_0)$:
\[ \|\upvarphi\|_{H^1(X,g_0)}=\sqrt{ \int_X |\nabla \upvarphi|_{g_0}^2d\mu+\left(\int_X|\upvarphi|^{\frac{2n}{n-2}}d\mu\right)^{1-\frac 2n}\, }.\]
By the Sobolev inequality \eqref{eq:Sobolev} this norm is equivalent to the traditional Hilbert norm, it has the advantage that it behaves nicely under rescaling
\[ \|\upvarphi\|_{H^1(X,g_0)}=\|\varepsilon^{\frac{n-2}{2}}\upvarphi\|_{H^1(X,\epsilon^{-2}g_0)}.\]
This will be very convenient when we try to understand the blow-up behaviour of Palais-Smale sequences.

We have the following observation:
\begin{Prop}\label{bubblecv} Assume that $\left(B_\alpha\right)$ is a bubble with center $x_\alpha $ and scale $\epsilon_\alpha$ such that $(X,\epsilon_\alpha^{-1}d,x_\alpha)$ converges to $(\underline{X},\underline{d},\underline{x})$. Then
for $v=\left(\frac{c\lambda}{\lambda^2+\underline{d}(x,\underline{x})^2}\right)^{\frac{n-2}{2}}$, we have that 
 $\left(B_\alpha\right)$ converges strongly to $v$ in $H^1$-norm.
\end{Prop}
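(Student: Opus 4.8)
The plan is to exploit the scale invariance $\|\upvarphi\|_{H^1(X,g_0)}=\|\epsilon^{\frac{n-2}{2}}\upvarphi\|_{H^1(X,\epsilon^{-2}g_0)}$ recorded just above, which turns the assertion into an essentially explicit identity on the pointed measured Gromov--Hausdorff limit; the only real work is a uniform bookkeeping of tails via the Cavalieri principle \eqref{eq:Cavalieri} and the Ahlfors regularity \eqref{eq:Ahlfors-Regular}.

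\textbf{Step 1: reduction to the rescaled space.} Put $\phi(s):=(c\lambda)^{\frac{n-2}{2}}(\lambda^2+s^2)^{-\frac{n-2}{2}}$ and $\tilde B_\alpha:=\epsilon_\alpha^{\frac{n-2}{2}}B_\alpha$. A direct computation shows that on the rescaled space $X_\alpha:=(X,\epsilon_\alpha^{-2}g_0,\epsilon_\alpha^{-n}\mu)$ one has $\tilde B_\alpha=\phi\circ\tilde\rho_\alpha$, where $\tilde\rho_\alpha:=\epsilon_\alpha^{-1}d(\cdot,x_\alpha)$ is the distance to $x_\alpha$ in $X_\alpha$, and likewise $v=\phi\circ\underline\rho$ with $\underline\rho:=\underline d(\cdot,\underline x)$. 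By the scale invariance of the norm, strong $H^1$-convergence $B_\alpha\to v$ on $X$ is equivalent to strong $H^1$-convergence $\tilde B_\alpha\to v$ along $X_\alpha\to(\underline X,\underline d,\underline\mu)$. Since a bubble has conical limit, Proposition~\ref{Prop:TangentCones} identifies $\underline X$ with a metric cone with vertex $\underline x$, so $\underline W(t):=\underline\mu(B(\underline x,t))=c_0t^n$ for some $c_0>0$, and a direct radial integral (using $n>2$) gives $v\in H^1(\underline X)\cap L^{\frac{2n}{n-2}}(\underline X)$.

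\textbf{Step 2: convergence of the two pieces of the norm.} As $\tilde\rho_\alpha$ is a distance function on a length space, $|\nabla\tilde\rho_\alpha|=1$ a.e.\ on the (full-measure) regular part, hence $|\nabla\tilde B_\alpha|^2=|\phi'(\tilde\rho_\alpha)|^2$ a.e., and similarly $|\nabla v|^2=|\phi'(\underline\rho)|^2$ a.e. Writing $W_\alpha(t):=\epsilon_\alpha^{-n}\mu(B(x_\alpha,\epsilon_\alpha t))$ and applying \eqref{eq:Cavalieri} (the boundary terms vanish because $|\phi'(t)|^2\lesssim t^{2-2n}$, $|\phi(t)|^{\frac{2n}{n-2}}\lesssim t^{-2n}$, while $W_\alpha(t)\lesssim t^n$), one obtains
\[\int_{X_\alpha}|\nabla\tilde B_\alpha|^2\,d\mu_\alpha=-\int_0^\infty W_\alpha(t)\,\frac{d}{dt}|\phi'(t)|^2\,dt,\qquad \int_{X_\alpha}|\tilde B_\alpha|^{\frac{2n}{n-2}}\,d\mu_\alpha=-\int_0^\infty W_\alpha(t)\,\frac{d}{dt}|\phi(t)|^{\frac{2n}{n-2}}\,dt,\]
and the same identities with $W_\alpha$ replaced by $\underline W$ for the limit integrals. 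The Ahlfors regularity \eqref{eq:Ahlfors-Regular}, being scale invariant and valid all the way up to $\diam(X_\alpha)=\epsilon_\alpha^{-1}\diam(X)\to\infty$, gives $W_\alpha(t)\le Ct^n$ for all $t\ge 0$ with $C$ independent of $\alpha$; since $t^n\big|\frac{d}{dt}|\phi'(t)|^2\big|\lesssim t^{1-n}$ and $t^n\big|\frac{d}{dt}|\phi(t)|^{\frac{2n}{n-2}}\big|\lesssim t^{-1-n}$ for $t\ge 1$ (both being bounded near $t=0$), the integrands are dominated uniformly in $\alpha$ by fixed $L^1((0,\infty))$ functions. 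Pointed measured Gromov--Hausdorff convergence gives $W_\alpha(t)\to\underline W(t)$ for every $t>0$ (using continuity of $\underline W$), so dominated convergence yields $\int_{X_\alpha}|\nabla\tilde B_\alpha|^2\,d\mu_\alpha\to\int_{\underline X}|\nabla v|^2\,d\underline\mu$ and $\int_{X_\alpha}|\tilde B_\alpha|^{\frac{2n}{n-2}}\,d\mu_\alpha\to\int_{\underline X}|v|^{\frac{2n}{n-2}}\,d\underline\mu$.

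\textbf{Step 3: weak convergence and conclusion.} The Gromov--Hausdorff maps distort $\tilde\rho_\alpha$ from $\underline\rho$ by at most $\delta_\alpha\to 0$ on bounded sets, so by uniform continuity of $\phi$ the $\tilde B_\alpha$ converge uniformly on compact sets to $v$ in the sense defined above; together with the measured convergence $\mu_\alpha\to\underline\mu$ and the $L^{\frac{2n}{n-2}}$-bound from Step 2 this gives $\tilde B_\alpha\rightharpoonup v$ weakly in $L^{\frac{2n}{n-2}}$, and with the bounded gradients also $\tilde B_\alpha\rightharpoonup v$ weakly in $H^1$. Combined with the norm convergences of Step 2, this is exactly strong $H^1$-convergence, so by Step 1 $(B_\alpha)$ converges strongly to $v$ in $H^1$, as claimed. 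I expect the main obstacle to be Step 2: making the Cavalieri estimates uniform in $\alpha$, which is precisely where scale invariance of the Ahlfors regularity \emph{up to the diverging diameter} of $X_\alpha$ is essential, rather than merely the local regularity near $x$.
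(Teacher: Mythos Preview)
Your proof is correct and follows essentially the same route as the paper: both reduce to the rescaled space, express the $L^{\frac{2n}{n-2}}$ and gradient $L^2$ norms as one-dimensional integrals against the ball-volume function $W_\alpha(t)=\epsilon_\alpha^{-n}\mu(B(x_\alpha,\epsilon_\alpha t))$, dominate uniformly via Ahlfors regularity, and pass to the limit by measured Gromov--Hausdorff convergence plus dominated convergence. Your presentation is slightly more systematic in that you explicitly separate out the weak-convergence step (via uniform convergence of $\tilde B_\alpha$ on compacts) before combining it with the norm convergence, whereas the paper mentions the uniform convergence at the outset and then focuses on the norm computations.
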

\proof By definition,  
\[\left(\epsilon_\alpha^{\frac{n-2}{2}}B_\alpha\right)= \left(\frac{c\lambda}{\lambda^2+\frac{d(x,x_\alpha)^2}{\epsilon_{\alpha}^2}}\right)^{\frac{n-2}{2}}\]
 converges uniformly on compact sets to $v$, and following classical computations (see for instance \cite[section 2]{CAFST} ) we have
\[  \|\epsilon_\alpha^{\frac{n-2}{2}}B_\alpha\|_{L^{\frac{2n}{n-2}}(X,\epsilon_\alpha^{-2}g_0)}=\|B_\alpha\|_{L^{\frac{2n}{n-2}}(X,g_0)}=\left(\int_0^{+\infty} \frac{n2 r(c\lambda\epsilon_\alpha)^{n}}{\left((\lambda\epsilon_\alpha)^2+r^2\right)^{n+1}}\mu(B(x_\alpha,r))   dr\right)^{\frac{n-2}{2n}}.\]
Using the change of variable $r=\epsilon_\alpha\rho$, one gets:
\[ \|B_\alpha\|_{L^{\frac{2n}{n-2}}(X,g_0)}=\left(\int_0^{+\infty} \frac{2n \rho(c\lambda)^{n}}{\left(\lambda^2+\rho^2\right)^{n+1}}\frac{\mu(B(x_\alpha,\epsilon_\alpha\rho))}{\epsilon_\alpha^n}   d\rho\right)^{\frac{n-2}{2n}}.\]
Note that by Ahlfors regularity \eqref{eq:Ahlfors-Regular}, there is a constant $C$ such that for any $x\in X$ and $r>0$:
$\mu(B(x,r))\le C r^n$, so
\[ \|B_\alpha\|_{L^{\frac{2n}{n-2}}(X,g_0)}\leq 2n Cc^n \lambda^n \int_0^\infty \frac{\rho^{n+1}}{(\lambda^2 +\rho^2)^{n+1}}\, d\rho \]
is bounded independently of $\alpha$. 
 By the measure Gromov-Hausdorff convergence, we have for any $\rho>0$ that 
\[\lim_\alpha \frac{\mu(B(x_\alpha,\epsilon_\alpha\rho))}{\epsilon_\alpha^n}=\underline{\mu}(B(\underline{x},\rho).\]
Hence, by the dominated convergence theorem, we get
\[\lim_\alpha \|B_\alpha\|_{L^{\frac{2n}{n-2}}(X,g_0)}= \|v\|_{L^{\frac{2n}{n-2}}(X,g_0)}.\]
We similarly have
\begin{multline*}
\int_X \vert \nabla B_\alpha \vert_{g_0}^2 d\mu=\\
4\left(\frac{n-2}{2}\right)^2(c\epsilon_\alpha\lambda)^{n-2}\int_0^{+\infty} 2r\left( \frac{nr^2}{\left((\epsilon_\alpha\lambda)^2+r^2\right)^{n+1}}-\frac{1}{\left((\epsilon_\alpha\lambda)^2+r^2\right)^n}\right)\mu(B(x_\alpha,r))dr,\end{multline*}
and and by the same combination of arguments as before, we get 
\[\lim_\alpha \|\nabla B_\alpha\|_{L^2(X,g_0)}= \|\nabla v\|_{L^2(X,g_0)}.\]
Hence $(B_\alpha)$ converges strongly  to $v$  in $H^1$-norm.
\endproof
Our main result in this appendix is the following.
\begin{Thm}\label{main} Assume that the Ricci curvature of $g$ is bounded, meaning there is a $\Lambda$ such that
\[\forall x\in X_{reg}\colon \|\mathrm{Ricci}\|_{g_0}\le \Lambda\]
 and that the cone angles of the tangent spaces along $X_{n-2}\setminus X_{n-3}$ are always less than $2\pi$.
Let $(u_\alpha)$ be a  Palais-Smale sequence for $I$  of non-negative functions.  Then, up to extracting a subsequence, there is a non-negative function 
$u_\infty\in H^1(X)$ solving the equation
\[-c_n\Delta u_\infty+Ru_\infty=\sigma u_\infty^{\frac{n+2}{n-2}}\]
 and  a finite number of bubbles $B_\alpha^1,\dots,B_\alpha^L$ such that 
\begin{equation}\label{bubbles}\lim_{\alpha\to \infty} \left\| u_\alpha-u_\infty-\sum_{j=1}^L B_\alpha^j\right\|_{H^1}=0.\end{equation}
Moreover, if the bubble $(B_\alpha^j)$ has center $x_\alpha(j) $ and scale $\epsilon_\alpha(j)$, then for any $i\not=j$ we have
\begin{equation}\label{sepa}\lim_{\alpha\to+\infty} \frac{\epsilon_\alpha(j)}{\epsilon_\alpha(i)}+\frac{\epsilon_\alpha(i)}
{\epsilon_\alpha(j)}+\frac{d^2(x_\alpha(j),x_\alpha(i))}{\epsilon_\alpha(i)\epsilon_\alpha(j)}=+\infty.\end{equation}
\end{Thm}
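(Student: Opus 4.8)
The plan is to run the iterative bubble-extraction scheme of Struwe \cite{Struwebubble}, but carried out inside the Gromov-Hausdorff / Mosco-convergence framework set up above (cf. \cite{KS}), so that each blow-up is taken along a rescaled sequence $(X,\epsilon_\alpha^{-1}d,x_\alpha)$ converging to a fake tangent space. \textbf{Step 1 (the weak limit).} From the $H^1$-boundedness of $(u_\alpha)$ extract a subsequence with $u_\alpha\rightharpoonup u_\infty\geq 0$ in $H^1(X)$; since weak $H^1$-limits converge strongly in $L^2_{loc}$ and $DI(u_\alpha)\to 0$ in $(H^1)^*$, one passes to the limit in the weak formulation of the equation and finds that $u_\infty$ solves $-c_n\Delta u_\infty+Ru_\infty=\sigma u_\infty^{(n+2)/(n-2)}$ weakly; by Remark \ref{rem:regularity} (which rests on Proposition \ref{regularH}) $u_\infty\in H^{2,p}(X)$. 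A Brezis-Lieb argument then shows that $v_\alpha^{1}:=u_\alpha-u_\infty$ is again a Palais-Smale sequence for $I$, with $I(v_\alpha^{1})\to \lim_\alpha I(u_\alpha)-I(u_\infty)$ and $v_\alpha^{1}\rightharpoonup 0$.

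\textbf{Step 2 (one bubble).} If $v_\alpha^{1}\to 0$ strongly in $H^1$, take $L=0$ and stop. Otherwise, the uniform Sobolev inequality \eqref{eq:Sobolev} together with $\langle DI(v_\alpha^{1}),v_\alpha^{1}\rangle=o(1)$ forces $\liminf_\alpha\|v_\alpha^{1}\|_{L^{2n/(n-2)}}>0$, and a Lions concentration-function argument (applied to the measures $|\nabla v_\alpha^{1}|^2_{g_0}\,d\mu$) produces centers $x_\alpha(1)\to x\in X$ and scales $\epsilon_\alpha(1)\to 0$ such that, after rescaling with the normalization of Proposition \ref{bubblecv} (under which $\|\varphi\|_{H^1(X,g_0)}=\|\varepsilon^{(n-2)/2}\varphi\|_{H^1(X,\epsilon^{-2}g_0)}$), the rescaled residuals converge weakly in $H^1_{loc}$, along $(X,\epsilon_\alpha(1)^{-1}d,x_\alpha(1))$, to a nonzero nonnegative $H^1$-solution $w^1$ of the Yamabe equation on a fake tangent space $\underline{X}$. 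By Proposition \ref{Prop:TangentCones}, $\underline{X}=\R^{k+\ell}\times C_{[0,\infty)}(\Sigma)$. The hypotheses that $\mathrm{Ricci}_{g_0}$ is bounded and that the cone angles along $X_{n-2}\setminus X_{n-3}$ are $<2\pi$ are used precisely here: they guarantee that any such $\underline{X}$ arising as a blow-up limit is conical at the base point $\underline{x}$ and, via a Liouville-type classification, that $w^1$ is the explicit radial profile $w^1=\bigl(\tfrac{c\lambda}{\lambda^2+\underline{d}(\cdot,\underline{x})^2}\bigr)^{(n-2)/2}$. This $w^1$ determines a bubble $(B_\alpha^{1})$ in the sense of the definition, and Proposition \ref{bubblecv} gives that $B_\alpha^{1}$ converges strongly (in the rescaled $H^1$ sense) to it.

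\textbf{Step 3 (iteration and termination).} Put $v_\alpha^{2}:=v_\alpha^{1}-B_\alpha^{1}$. Using the strong (rescaled) convergence of $B_\alpha^{1}$ and the decay of the bubble away from its center, a further Brezis-Lieb / energy-expansion computation shows $v_\alpha^{2}$ is still Palais-Smale for $I$ with $I(v_\alpha^{2})=I(v_\alpha^{1})-E(w^1)+o(1)$, where the minimal bubble energy $E(w^1)\geq E_0>0$ is bounded below by a universal constant — the Yamabe energy of the model bubble, controlled from below by the Sobolev constant in \eqref{eq:Sobolev}. Iterating the construction yields residuals $v_\alpha^{j}$ whose energies decrease by at least $E_0$ at each stage while remaining bounded below, so the process stops after finitely many steps $L$, with $v_\alpha^{L+1}\to 0$ strongly; this is exactly \eqref{bubbles}. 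Finally, the separation statement \eqref{sepa} is the usual bubble-interaction estimate: if for some $i\neq j$ the left-hand side of \eqref{sepa} stayed bounded, the bubbles $B_\alpha^{i}$ and $B_\alpha^{j}$ would have comparable scales and centers at bounded rescaled distance, hence after the common rescaling both would converge to nonzero profiles on the same fake tangent space, contradicting the fact that the residual at that stage converges weakly to zero after rescaling; making this quantitative is a short computation with the interaction integrals $\int_X B_\alpha^{i}\,(B_\alpha^{j})^{(n+2)/(n-2)}\,d\mu$.

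\textbf{Main obstacle.} The delicate point is the tangent-space classification in Step 2: showing that every nonnegative finite-energy solution of $-c_n\Delta w=\sigma w^{(n+2)/(n-2)}$ on $\R^{k+\ell}\times C_{[0,\infty)}(\Sigma)$ is the explicit radial bubble. On $\R^n$ this is the Caffarelli-Gidas-Spruck Liouville theorem; on the cones occurring here one must use the Ricci bound together with the cone-angle restriction $<2\pi$ to exclude non-radial or non-conical limits — this is the analogue, in our setting, of the role the positive mass theorem plays in the smooth Yamabe problem — and combining this classification with the Kuwae-Shioya convergence machinery is the technical core of the proof.
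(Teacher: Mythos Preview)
Your outline is essentially the paper's own approach: Struwe's iterative extraction scheme transported to the Gromov--Hausdorff/Mosco framework, with the Liouville-type classification on fake tangent spaces (the paper's Theorem~\ref{rigid}, proved via Mondello's singular Obata theorem \cite{Mondello}) as the key input --- and you correctly identify this classification as the crux.

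Two places where the paper's execution differs from your sketch are worth noting. First, after subtracting a bubble the residual $u_\alpha-B_\alpha$ is no longer non-negative, and the iteration in Lemma~\ref{onebubble} requires non-negative Palais--Smale sequences (so that Theorem~\ref{rigid} applies again at the next stage); the paper handles this by setting $v_\alpha:=(u_\alpha-B_\alpha)_+$ and proving separately that $r_\alpha:=(u_\alpha-B_\alpha)_-\to 0$ strongly in $H^1$, using the decay of $B_\alpha$ outside $B(x_\alpha,\Lambda\epsilon_\alpha)$ together with the strong $H^1_{loc}$ convergence on such balls. Second, for the separation condition \eqref{sepa} the paper does not use interaction integrals or the ``residual converges weakly to zero'' argument you sketch; instead it observes that if several bubbles shared comparable scales and nearby centres, then after a common rescaling the weak limit $u$ on the fake tangent space would equal a sum $\sum_{j\le i} u(j)$ of at least two positive profiles, each of which solves $-c_n\Delta u(j)=\sigma\,u(j)^{(n+2)/(n-2)}$, while $u$ itself also solves this equation --- forcing $\bigl(\sum_j u(j)\bigr)^{(n+2)/(n-2)}=\sum_j u(j)^{(n+2)/(n-2)}$, an algebraic impossibility for positive functions since $(n+2)/(n-2)>1$.
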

We prove the theorem below. The assumptions on the Ricci curvature  are made in order to apply the rigidity result of Mondello \cite{Mondello} which yield the following.
\begin{Thm}\label{rigid} Under this assumptions of Theorem \ref{main}, if $\underline{X}$ is a fake tangent space of $X$ at $x$ and if $v\in H^1(\underline{X})$ is a non-negative function solving the  equation
\[-c_n\Delta v=\sigma v^{\frac{n+2}{n-2}},\] 
then we can find $\underline{x}\in \underline{X}$ such that $ \underline{X}$ is conical at $\underline{x}$ and a $\lambda>0$ such that
\[v(x)=\left(\frac{c\lambda}{\lambda^2+\underline{d}(x,\underline{x})^2}\right)^{\frac{n-2}{2}}\]
 with
\[c=\sqrt{\frac{n(n-1)}{\sigma}}.\]
\end{Thm}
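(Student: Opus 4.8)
The plan is to deduce Theorem \ref{rigid} from the rigidity theorem of Mondello \cite{Mondello}; the substance of the argument is (i) checking that the fake tangent space $\underline X$ satisfies Mondello's hypotheses, which is precisely where the assumptions of Theorem \ref{main} on the Ricci curvature and on the cone angles along $X_{n-2}\setminus X_{n-3}$ are used, and (ii) computing the constant $c$. First I would record the structure of $\underline X$: by Proposition \ref{Prop:TangentCones}, $\underline X=\R^{k+\ell}\times C_{[0,\infty)}(\Sigma)$ for a compact stratified space $\Sigma$ of dimension $n-k-\ell-1$ carrying an iterated edge metric, so $\underline X=C_{[0,\infty)}(\underline S)$ is a metric cone, and it is conical at every point of the apex locus $\R^{k+\ell}\times\{o\}$ because translations in the Euclidean factor are isometries. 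Hence the assertion ``$\underline X$ is conical at some $\underline x$'' is automatic, and the real content is that $v$ is a radial Aubin--Talenti profile centred at such an $\underline x$.

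Next I would verify Mondello's standing hypotheses on $\underline X$. The rescaled spaces $(X,\epsilon_i^{-2}g_0)$ have $\|\mathrm{Ricci}\|_{g_0}\le\epsilon_i^2\Lambda\to0$ on the regular part, so in the measured Gromov--Hausdorff limit the regular part of $\underline X$ is Ricci-flat; as a cone this forces the cross-section $\underline S$ to be Einstein with $\mathrm{Ricci}_{\underline S}=(n-2)g_{\underline S}$ on $\underline S_{reg}$, in particular $\underline X$ is scalar-flat on its regular part. The cone-angle condition $<2\pi$ along $X_{n-2}\setminus X_{n-3}$ is scale invariant and passes to the codimension-two stratum of $\underline X$. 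The transfer of these comparison/volume statements to the limit is made rigorous using the measured Gromov--Hausdorff convergence of $(X,\epsilon_i^{-1}d,\epsilon_i^{-n}\mu,x_i)$ to $(\underline X,\underline d,\underline\mu)$ and the Mosco convergence of the associated Dirichlet forms recorded in the preceding subsections. With these in hand, the sharp Euclidean Sobolev inequality with rigidity holds on $\underline X$.

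Then I would feed $v$ into the rigidity statement. Since $v\in H^1(\underline X)$ is a non-negative weak solution of $-c_n\Delta v=\sigma v^{\frac{n+2}{n-2}}$ with $\sigma>0$, elliptic regularity on $\underline X_{reg}$ together with a local Harnack inequality (Proposition \ref{regularH}, applied on small balls) show that either $v\equiv0$, which we discard since it does not arise as a bubble limit, or $v>0$ everywhere, smooth on the regular part, with finite energy $c_n\int_{\underline X}|\nabla v|^2\,d\underline\mu=\sigma\int_{\underline X}v^{\frac{2n}{n-2}}\,d\underline\mu<\infty$. The conformal metric $v^{\frac{4}{n-2}}\underline g$ then has constant scalar curvature $\sigma>0$ and finite volume, so $v$ realises equality in the sharp Sobolev inequality on $\underline X$, and Mondello's rigidity produces $\underline x\in\underline X$ at which $\underline X$ is conical together with $\lambda>0$ such that $v=\bigl(c\lambda/(\lambda^2+\underline d(\cdot,\underline x)^2)\bigr)^{\frac{n-2}{2}}$. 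Finally, on the metric cone $\underline X$ a radial function $f=f(\rho)$ satisfies $\Delta f=f''+\tfrac{n-1}{\rho}f'$, exactly as on $\R^n$, so substituting this explicit $v$ into the equation reduces to the classical Aubin--Talenti ODE computation and fixes $c=\sqrt{n(n-1)/\sigma}$.

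The main obstacle is step (i): rigorously showing that the Einstein lower bound on the cross-section and, above all, the $\le2\pi$ cone-angle condition survive the Gromov--Hausdorff blow-up in exactly the form needed as input to Mondello's theorem. This amounts to combining Proposition \ref{Prop:TangentCones} with the measured Gromov--Hausdorff and Mosco convergence developed in the appendix and with the stability of curvature-dimension-type conditions under such limits; once the hypotheses are confirmed, the classification of $v$ is quoted from \cite{Mondello} rather than reproved, and only the normalisation of $c$ remains an elementary computation.
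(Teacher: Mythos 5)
Your overall strategy --- reduce to Mondello's rigidity theorem, using the Ricci bound and the cone-angle condition from Theorem \ref{main} to verify its hypotheses --- is the right one, and your identification of where those hypotheses enter matches the paper. But the mechanism by which you invoke Mondello is not the one that works, and as written it has a genuine gap. You claim that because $v^{\frac{4}{n-2}}\underline{g}$ has constant scalar curvature and finite volume, ``$v$ realises equality in the sharp Sobolev inequality on $\underline X$,'' and you then quote rigidity for Sobolev extremizers. A non-negative finite-energy solution of the critical equation $-c_n\Delta v=\sigma v^{\frac{n+2}{n-2}}$ is a critical point of the Sobolev quotient, not a priori a minimizer; proving that every such solution is an extremizer is essentially equivalent to the classification you are trying to establish, so this step is circular (indeed, in the paper the implication runs the other way: only \emph{after} the classification does one conclude, via \cite[Proposition 4.2]{Mondello}, that $v$ realizes the Yamabe invariant of $\underline X$).

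The paper's actual route, which your proposal omits, is an Obata-type argument: since $\underline X=C_{[0,\infty)}(\underline S)$ is Ricci-flat on its regular part, the cross-section $\underline S$ is Einstein with scalar curvature $(n-1)(n-2)$, and one conformally compactifies the cone to the spherical suspension $\widehat{\underline X}$ with the Einstein metric $\hat g=d\theta^2+\sin^2(\theta)g_{\underline S}$. The metric $v^{\frac{4}{n-2}}\underline g=f^{\frac{4}{n-2}}\hat g$ is then a constant-scalar-curvature metric conformal to an Einstein metric on a \emph{compact} stratified space with no cone angles above $2\pi$, which is exactly the setting of Mondello's Obata singular theorem; it yields that $v^{\frac{4}{n-2}}\underline g$ is the round spherical suspension over some Einstein cross-section $\Sigma$. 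To extract the explicit formula one must then undo the conformal changes: the flat cone metric $dr^2+r^2g_\Sigma$ is conformal to $\underline g$ via a non-negative harmonic factor $h$, and the elliptic Harnack inequality (Proposition \ref{regularH}) forces $h$ to be constant, which both locates $\underline x$ as the tip of $C(\Sigma)$ (hence the conical point --- this is a conclusion, not an automatic fact about the apex locus of the original product structure) and produces $\lambda$ and the value $c=\sqrt{n(n-1)/\sigma}$. Without the compactification step Mondello's theorem does not apply to the non-compact cone, and without the Liouville step you cannot identify the distance function appearing in the profile; your closing ODE computation presupposes both.
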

\proof The hypotheses imply that $(\underline{X},\underline{g})$ is Ricci flat on its regular part and that it has no cone angles greater than $2\pi$. Moreover we know that $\underline{X}=C_{[0,+\infty)}(\underline{S})$ is a metric cone over some smoothly stratified Riemannian pseudomanifold $(\underline{S}, g_{\underline{S}})$ which is Einstein on its regular part with scalar curvature equal to $(n-1)(n-2)$. Hence we can conformally compactify $(\underline{X},\underline{g})$ and obtain
$\widehat{\underline{X}}$ whose regular part is $(0,\pi)\times \underline{S}_{reg}$ endowed with the metric $\hat g=d\theta^2+\sin^2(\theta)g_{\underline{S}}$ which is Einstein with scalar curvature $n(n-1)$ and has also no cone angles greater than $2\pi$.  So that the metric
\[v^{\frac{4}{n-2}}\underline{g}=f^{\frac{4}{n-2}} \hat g\] has constant scalar curvature. I. Mondello  has proven (see \cite[Theorem 4.1]{Mondello}) that there is another smoothly stratified Riemannian pseudomanifold $(\Sigma, g_{\Sigma})$ which is Einstein on its regular part with scalar curvature equal to $(n-1)(n-2)$ so that $\left(\widehat{\underline{X}},v^{\frac{4}{n-2}}\underline{g}\right)$ is isometric to the spherical suspension over $\Sigma$,
\[v^{\frac{4}{n-2}}\underline{g}=\frac{n(n-1)}{\sigma} \left(d\theta^2+\sin^2(\theta)g_\Sigma\right).\]
So
\begin{equation}
\label{eq:corespond}
v^{\frac{4}{n-2}}\underline{g}=\frac{n(n-1)}{\sigma} \frac{1}{(1+r^2)^2}\left(dr^2+r^2g_\Sigma\right)
\end{equation}
The metric $dr^2+r^2g_\Sigma$ is then conformal to $\underline{g}$ and has zero scalar curvature, that is to say
\[dr^2+r^2g_\Sigma=h^{\frac{4}{n-2}} \underline{g}\]
 where $h$ is harmonic and non negative. Furthermore, $(\underline{X},g_{\underline{X}})$ satisfies the elliptic Harnack inequalities (as in Proposition \ref{regularH}), hence any non negative harmonic function is constant. So there is some $\lambda>0$ so that 
$h=\lambda^{-\frac{n-2}{2}}$. If $\underline{x}$ is the tip of the cone $C_{[0,+\infty)}(\Sigma)$ then
\[r(x)=d_{dr^2+r^2g_\Sigma}(x,\underline{x})=\lambda^{-1} d_{\underline{g}}(x,\underline{x}).\]
Using (\ref{eq:corespond}) and $dr^2+r^2g_\Sigma= \lambda^{-2} \underline{g}$, we get  
\[v^{\frac{4}{n-2}}=\frac{n(n-1)}{\sigma}  \lambda^{-2} \frac{1}{(1+r^2)^2}=\frac{n(n-1)}{\sigma}  \frac{\lambda^2}{(\lambda^2+d^2_{\underline{g}}(x,\underline{x}))^2}.\] 
\endproof
With this result in hand we can adapt the classical proof. We will follow the nice exposition given by E. Hebey in \cite[section 3]{Hebey} which we find very suitable for generalization to a singular setting. There are several steps whose proof are identical to the one in the case of smooth manifolds, and we will refer to the corresponding statements in that monograph.

\subsection*{Proof of Theorem \ref{main}}
\proof
The proof is long, and will be split into several parts with lemmas.
\subsubsection*{Step 1 --  From weak to strong $H^1$ convergence}\medskip

\noindent 
\begin{Lem}[{\cite[Lemma 3.3]{Hebey}}] Assume that $(u_\alpha)$ is a Palais-Smale sequence for $I$ which converges weakly to $u$ in $H^1(X)$, then $v_\alpha\coloneqq u_\alpha-u$ defines a Palais-Smale sequence for 
\[I_0(\varphi)=\frac12 \int_Xc_n |\nabla \varphi|_{g_0}^2 d\mu-\frac{n-2}{2n}\sigma\int_X |\varphi|^{\frac{2n}{n-2}}d\mu.\]
Moreover the sequence of measures $|u_\alpha|^{\frac{2n}{n-2}}d\mu-|v_\alpha|^{\frac{2n}{n-2}}d\mu$ converges weakly to $|u|^{\frac{2n}{n-2}}d\mu$.
\end{Lem}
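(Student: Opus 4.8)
The plan is to run the classical Brezis--Lieb/Struwe argument and simply check that every ingredient is either purely measure-theoretic or is supplied by the admissibility assumptions. Write $2^\ast=\frac{2n}{n-2}$ and $p=\frac{n+2}{n-2}=2^\ast-1$. Since $(u_\alpha)$ is bounded in $H^1(X)$, $X$ is compact and the Sobolev inequality \eqref{eq:Sobolev} of Definition \ref{admissible} holds, the embedding $H^1(X)\hookrightarrow L^2(X)$ is compact; hence, after passing to a subsequence, $u_\alpha\to u$ strongly in $L^2(X)$ and $\mu$-a.e., and $v_\alpha:=u_\alpha-u\rightharpoonup 0$ in $H^1(X)$ with $\|v_\alpha\|_{L^2(X)}\to 0$ and $v_\alpha\to 0$ $\mu$-a.e.

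First I would record the two elementary Brezis--Lieb lemmas, both of which are pointwise algebraic inequalities followed by Vitali/dominated convergence and therefore carry over verbatim on $(X,\mu)$: (i) for every $\varphi\in\cC^0(X)$, $\int_X\varphi\bigl(|u_\alpha|^{2^\ast}-|v_\alpha|^{2^\ast}\bigr)d\mu\to\int_X\varphi|u|^{2^\ast}d\mu$, which is exactly the second assertion of the lemma; and (ii) $|u_\alpha|^{p-1}u_\alpha-|v_\alpha|^{p-1}v_\alpha\to|u|^{p-1}u$ strongly in $L^{2^\ast/p}(X)=L^{\frac{2n}{n+2}}(X)=\bigl(L^{2^\ast}(X)\bigr)^\ast$ (use $\bigl||a+b|^{p-1}(a+b)-|a|^{p-1}a\bigr|\le\epsilon|a|^p+C_\epsilon|b|^p$, $\mu$-a.e. convergence, the uniform $L^{2^\ast}$-bound on $v_\alpha$, and let $\epsilon\to0$).

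Next I would show that $u$ solves $-c_n\Delta u+Ru=\sigma u^{p}$ weakly. Fix $\upvarphi\in H^1(X)$. The gradient part of $DI(u_\alpha)\upvarphi$ converges by weak $H^1$-convergence; $\int_X Ru_\alpha\upvarphi\to\int_X Ru\,\upvarphi$ since $R\in L^\infty$ and $u_\alpha\to u$ in $L^2$; and $\int_X|u_\alpha|^{p-1}u_\alpha\upvarphi\to\int_X|u|^{p-1}u\,\upvarphi$ because $(|u_\alpha|^{p-1}u_\alpha)$ is bounded in $L^{\frac{2n}{n+2}}$ and converges $\mu$-a.e., while $\upvarphi\in L^{2^\ast}$. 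As $DI(u_\alpha)\to0$ in $(H^1)^\ast$, this gives $DI(u)=0$. Then I would split the energy: $\int_X|\nabla u_\alpha|_{g_0}^2=\int_X|\nabla v_\alpha|_{g_0}^2+\int_X|\nabla u|_{g_0}^2+o(1)$ (the cross term vanishes as $v_\alpha\rightharpoonup0$), $\int_X Ru_\alpha^2\to\int_X Ru^2$ (as $\|v_\alpha\|_{L^2}\to0$), and $\int_X|u_\alpha|^{2^\ast}=\int_X|v_\alpha|^{2^\ast}+\int_X|u|^{2^\ast}+o(1)$ by (i); hence $I_0(v_\alpha)=I(u_\alpha)-I(u)+o(1)$, which converges. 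Likewise, testing $DI_0(v_\alpha)$ against $\upvarphi$ with $\|\upvarphi\|_{H^1}\le1$: the gradient term of $DI(u_\alpha)\upvarphi$ splits exactly, $\int_X Rv_\alpha\upvarphi=o(1)$ uniformly (since $\|v_\alpha\|_{L^2}\to0$), $DI(u)\upvarphi=0$, and $-\sigma\int_X\bigl(|u_\alpha|^{p-1}u_\alpha-|v_\alpha|^{p-1}v_\alpha-|u|^{p-1}u\bigr)\upvarphi=o(1)$ uniformly by (ii); therefore $DI_0(v_\alpha)\upvarphi=DI(u_\alpha)\upvarphi+o(1)=o(1)$, i.e. $DI_0(v_\alpha)\to0$ in $(H^1)^\ast$. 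Since $(v_\alpha)$ is bounded in $H^1$, it is a Palais--Smale sequence for $I_0$.

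The argument is the smooth one almost word for word; the only places that genuinely use anything about $X$ are the compactness of $H^1(X)\hookrightarrow L^2(X)$ (compactness of $(X,d)$ plus the Sobolev inequality of Definition \ref{admissible}) and the remark that the Brezis--Lieb lemmas are insensitive to the underlying geometry. I expect step (ii), the ``second'' Brezis--Lieb lemma giving strong convergence of the nonlinear terms in $L^{\frac{2n}{n+2}}$, to be the only point requiring a little care, since it is what forces the nonlinear part of $DI$ to split cleanly.
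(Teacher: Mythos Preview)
Your proposal is correct and is precisely the classical Brezis--Lieb/Struwe argument that the paper has in mind: the paper does not write out its own proof of this lemma but simply cites \cite[Lemma 3.3]{Hebey}, remarking that the smooth-manifold proof carries over verbatim. Your identification of the only genuinely ``geometric'' inputs (the compact embedding $H^1(X)\hookrightarrow L^2(X)$, available here from the Sobolev inequality together with compactness and finite volume, and the measure-theoretic nature of the Brezis--Lieb lemmas) is exactly the point.
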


The next result says that in the setting of the previous lemma, the defect of strong convergence in $H^1(X)$ is measured by the concentration of the measure $|v_\alpha|^{\frac{2n}{n-2}}d\mu$. By \cite[Proposition 1.4a)]{ACM} we get the Sobolev inequality 
\begin{equation}
\label{eq:AppSobolev}
\forall \varphi\in H^1(X)\colon \frac 34 \cS_\ell(X)\left(\int_X |\varphi|^{\frac{2n}{n-2}}d\mu\right)^{1-\frac 2n}\le \int_X\left[ c_n  |\nabla \varphi|_{g_0}^2+B\varphi^2\right]d\mu,
\end{equation}
where $\cS_{\ell}(X)$ is the local Sobolev constant \eqref{eq:SobolevConst} and $B>0$ is some constant.
\begin{Lem} Assume that $(u_\alpha)$ is a Palais-Smale sequence for $I$ that converge weakly to $u$ in $H^1(X)$, and set  $v_\alpha=u_\alpha-u$.
Assume that for some $x\in X$ and $\delta>0$ we have
\[\sigma \left(\int_{B(x,\delta)} |v_\alpha|^{\frac{2n}{n-2}}d\mu\right)^{\frac 2n}\le \frac 12  \cS_\ell(X).\]
Then $\lim\limits_{\alpha\to \infty} \| v_\alpha\|_{H^1(B(x,\delta/2)}=0.$
\end{Lem}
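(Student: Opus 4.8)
The plan is to run the standard \emph{no-concentration $\Rightarrow$ local strong convergence} argument, carried out with a cut-off and the Sobolev inequality \eqref{eq:AppSobolev}. By the previous lemma, $v_\alpha = u_\alpha - u$ is a Palais--Smale sequence for $I_0$, and since $u_\alpha \rightharpoonup u$ in $H^1(X)$ we have $v_\alpha \rightharpoonup 0$; in particular $(v_\alpha)$ is bounded in $H^1(X)$ and, by the Rellich--Kondrachov compactness available here (compactness of $X$ together with the Sobolev inequality), $v_\alpha \to 0$ strongly in $L^2_{loc}(X)$.

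Fix a Lipschitz cut-off $\eta$ with $\mathrm{supp}\,\eta \subset B(x,\delta)$, $\eta \equiv 1$ on $B(x,\delta/2)$ and $0 \le \eta \le 1$. I would test the Palais--Smale relation $DI_0(v_\alpha) \to 0$ in $(H^1(X))^*$ against $\varphi = \eta^2 v_\alpha \in H^1(X)$, whose $H^1$-norm is uniformly bounded, so the Palais--Smale remainder is $o(1)$. Expanding $\nabla(\eta^2 v_\alpha) = \eta^2\nabla v_\alpha + 2\eta v_\alpha\nabla\eta$ and completing the square via $|\nabla(\eta v_\alpha)|^2 = \eta^2|\nabla v_\alpha|^2 + 2\eta v_\alpha\langle\nabla v_\alpha,\nabla\eta\rangle + v_\alpha^2|\nabla\eta|^2$, this yields
\[
c_n\int_X|\nabla(\eta v_\alpha)|_{g_0}^2\,d\mu = \sigma\int_X \eta^2|v_\alpha|^{\frac{2n}{n-2}}\,d\mu + c_n\int_X v_\alpha^2|\nabla\eta|_{g_0}^2\,d\mu + o(1),
\]
and the middle term on the right tends to $0$ by the strong $L^2_{loc}$ convergence $v_\alpha \to 0$. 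Applying \eqref{eq:AppSobolev} to $\eta v_\alpha$ (the term $B\int(\eta v_\alpha)^2\,d\mu$ again vanishing in the limit) gives
\[
\tfrac34\,\cS_\ell(X)\Big(\int_X|\eta v_\alpha|^{\frac{2n}{n-2}}\,d\mu\Big)^{1-\frac2n} \le \sigma\int_X\eta^2|v_\alpha|^{\frac{2n}{n-2}}\,d\mu + o(1).
\]

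To absorb the nonlinear term I would write $\eta^2|v_\alpha|^{\frac{2n}{n-2}} = |v_\alpha|^{\frac{4}{n-2}}(\eta v_\alpha)^2$ and apply Hölder with exponents $\tfrac n2$ and $\tfrac{n}{n-2}$, then invoke the hypothesis:
\[
\sigma\int_X\eta^2|v_\alpha|^{\frac{2n}{n-2}}\,d\mu \le \sigma\Big(\int_{B(x,\delta)}|v_\alpha|^{\frac{2n}{n-2}}\,d\mu\Big)^{\frac2n}\Big(\int_X|\eta v_\alpha|^{\frac{2n}{n-2}}\,d\mu\Big)^{1-\frac2n} \le \tfrac12\,\cS_\ell(X)\Big(\int_X|\eta v_\alpha|^{\frac{2n}{n-2}}\,d\mu\Big)^{1-\frac2n}.
\]
Combining the last two displays yields $\tfrac14\cS_\ell(X)\big(\int_X|\eta v_\alpha|^{\frac{2n}{n-2}}\,d\mu\big)^{1-\frac2n} \le o(1)$; since $\cS_\ell(X) > 0$ (by \cite[Prop. 1.4b)]{ACM} and the discussion around \eqref{eq:SobolevConst}), this forces $\|\eta v_\alpha\|_{L^{\frac{2n}{n-2}}(X)} \to 0$, hence $\|v_\alpha\|_{L^{\frac{2n}{n-2}}(B(x,\delta/2))} \to 0$. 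Feeding this back into the completed-square identity gives $\|\nabla(\eta v_\alpha)\|_{L^2(X)} \to 0$, so $\|\nabla v_\alpha\|_{L^2(B(x,\delta/2))} \to 0$, and therefore $\|v_\alpha\|_{H^1(B(x,\delta/2))} \to 0$.

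The point needing the most care is the Rellich-type compactness giving $v_\alpha \to 0$ strongly in $L^2_{loc}(X)$ on the (possibly singular) compact space $X$, together with the verification that $\eta^2 v_\alpha$ is an admissible test function of uniformly bounded $H^1$-norm so the Palais--Smale error is genuinely $o(1)$; the numerical constants $\tfrac34$ in \eqref{eq:AppSobolev} and $\tfrac12$ in the hypothesis are precisely chosen so that the absorption step closes with room to spare, so no sharpness of constants is needed.
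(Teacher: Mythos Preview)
Your argument is correct and is precisely the standard cut-off/absorption scheme the paper invokes by reference to \cite[first step in the proof of Theorem 3.2]{Hebey}; indeed, the paper carries out the same steps explicitly in the proof of the more general Lemma~\ref{Lem:LocalConv} (test with $\chi_\alpha^2 v_\alpha$, use strong $L^2_{loc}$ convergence of $v_\alpha$ to $0$, apply \eqref{eq:AppSobolev}, and absorb via H\"older and the mass hypothesis). The only point you rightly flag---strong $L^2_{loc}$ compactness of weakly $H^1$-convergent sequences on the stratified space $X$---is exactly what the paper uses (see \eqref{estL2} and the surrounding discussion), so no gap remains.
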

The proof is identical to the first step in the proof of \cite[Theorem 3.2]{Hebey}. The following is a generalization of this result under a rescaling.
\begin{Lem}
\label{Lem:LocalConv}
 Assume that $(u_\alpha)$ is a Palais-Smale sequence for $I$. Assume that  $(\underline{X},\underline{d},\underline{x})$ is a fake tangent space  at $x$ i.e. for some sequence  of points $(x_\alpha)$ and some sequence of positive real numbers$(\epsilon_\alpha)$, the rescaled spaces $(X,\epsilon_\alpha^{-1}d,x_\alpha)$ converges for the Gromov-Hausdorff topology to $(\underline{X},\underline{d},\underline{x})$.
Assume that along this sequence $\epsilon_\alpha^{\frac{n-2}{2}}u_\alpha$ converges weakly to $u\in H^1(\underline{X})$ and that for any $\Lambda>0$, we can find $\alpha_0$ such that for any $\alpha\geq \alpha_0$ and any
$x\in B(x_{\alpha},\Lambda\epsilon_{\alpha})$, we have 
 \begin{equation}
 \sigma \left(\int_{B(x,\epsilon_\alpha)} |u_\alpha|^{\frac{2n}{n-2}}d\mu\right)^{\frac 2n}\le \frac 12  S_\ell(X).
 \label{eq:SobolevMassBound}
 \end{equation}
 Then $\epsilon_\alpha^{\frac{n-2}{2}}u_\alpha$ converges strongly to $u\in H_{loc}^1(\underline{X})$ , meaning
 for any $\Lambda>0$ we have
 \[\lim_{\alpha\to\infty} \int_{B(x_\alpha,\Lambda \epsilon_\alpha)} |u_\alpha|^{\frac{2n}{n-2}}d\mu=\int_{B(\underline{x},\Lambda )} |u|^{\frac{2n}{n-2}}d\underline{\mu}\]
  and 
  \[\lim_{\alpha\to\infty} \int_{B(x_\alpha,\Lambda \epsilon_\alpha)} |\nabla u_\alpha|_{g_0}^{2}d\mu=\int_{B(\underline{x},\Lambda )}  |\nabla u|_{\underline{g}}^{2}d\underline{\mu}.\]
\end{Lem}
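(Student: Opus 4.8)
The plan is to transpose the classical local compactness argument of Struwe---in the form presented by Hebey \cite[proof of Theorem 3.2]{Hebey}---to the sequence of pointed metric measure spaces $(X,\epsilon_\alpha^{-1}d,\epsilon_\alpha^{-n}\mu,x_\alpha)$ converging in the measured Gromov--Hausdorff sense to the fake tangent space $(\underline X,\underline d,\underline\mu,\underline x)$. Writing $w_\alpha:=\epsilon_\alpha^{\frac{n-2}2}u_\alpha$, the scaling property of the $H^1$-norm recorded before Proposition \ref{bubblecv} turns the Palais--Smale condition for $u_\alpha$ into the statement that $w_\alpha$ is, in the rescaled picture, a Palais--Smale sequence for
\[I_\alpha(\varphi)=\tfrac12\int\big(c_n|\nabla\varphi|^2+\epsilon_\alpha^2 R_\alpha\varphi^2\big)\,d\mu_\alpha-\tfrac{n-2}{2n}\sigma\int|\varphi|^{\frac{2n}{n-2}}\,d\mu_\alpha,\]
where $R_\alpha$ is the pullback of $R\in L^\infty(X)$ and $\mu_\alpha=\epsilon_\alpha^{-n}\mu$; note that the potential carries the vanishing factor $\epsilon_\alpha^2$. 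First I would record, using the Mosco convergence of the associated Dirichlet forms, that the weak limit $u\in H^1(\underline X)$ solves the limiting equation $-c_n\Delta u=\sigma u^{\frac{n+2}{n-2}}$ on $\underline X$, and that (as in the first lemma of this subsection, \cite[Lemma 3.3]{Hebey}) the function $v_\alpha:=w_\alpha-u$ is a Palais--Smale sequence for the free functional $I_0$ on $\underline X$, with $|w_\alpha|^{\frac{2n}{n-2}}\,d\mu_\alpha-|v_\alpha|^{\frac{2n}{n-2}}\,d\mu_\alpha\rightharpoonup|u|^{\frac{2n}{n-2}}\,d\underline\mu$. Since $\underline X$ is Ahlfors regular and satisfies the Poincaré inequality, bounded sequences in $H^1_{loc}$ are precompact in $L^2_{loc}$, so $v_\alpha\rightharpoonup0$ weakly in $H^1_{loc}(\underline X)$ upgrades to $v_\alpha\to0$ strongly in $L^2_{loc}(\underline X)$; in particular the smallness \eqref{eq:SobolevMassBound}, which in the rescaled picture reads $\sigma\big(\int_{B(\underline y,1)}|w_\alpha|^{\frac{2n}{n-2}}\,d\mu_\alpha\big)^{2/n}\le\tfrac12\cS_\ell(X)$ for every $\underline y\in B(\underline x,\Lambda)$, passes to $v_\alpha$ up to an $o(1)$.

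Next comes the absorption. Fix $\Lambda>0$, cover $\overline{B(\underline x,\Lambda)}$ by finitely many balls $B(\underline y_j,1/2)$, and for each $j$ take a Lipschitz cut-off $\chi_j$ with $\chi_j\equiv1$ on $B(\underline y_j,1/2)$ and support in $B(\underline y_j,1)$. Testing the rescaled free equation for $v_\alpha$ against $\chi_j^2 v_\alpha$ and discarding, via $v_\alpha\to0$ in $L^2_{loc}$ together with the $L^2_{loc}$-boundedness of $\nabla v_\alpha$, both the cross terms containing $\nabla\chi_j$ and the vanishing potential contribution, one obtains
\[c_n\int\chi_j^2|\nabla v_\alpha|^2\,d\mu_\alpha=\sigma\int\chi_j^2|v_\alpha|^{\frac{2n}{n-2}}\,d\mu_\alpha+o(1).\]
Writing $\chi_j^2|v_\alpha|^{\frac{2n}{n-2}}=(\chi_j v_\alpha)^2|v_\alpha|^{\frac4{n-2}}$, applying Hölder with exponents $\tfrac{n}{n-2},\tfrac n2$ on $B(\underline y_j,1)$, then the Sobolev inequality \eqref{eq:AppSobolev} to $\chi_j v_\alpha$ (its $B$-term being $o(1)$ by $L^2_{loc}$-convergence, and $\int|\nabla(\chi_j v_\alpha)|^2=\int\chi_j^2|\nabla v_\alpha|^2+o(1)$), and finally the local mass bound $\sigma\big(\int_{B(\underline y_j,1)}|v_\alpha|^{\frac{2n}{n-2}}\,d\mu_\alpha\big)^{2/n}\le\tfrac12\cS_\ell(X)+o(1)$, the two occurrences of $\cS_\ell(X)$ cancel and one is left with
\[\sigma\int\chi_j^2|v_\alpha|^{\frac{2n}{n-2}}\,d\mu_\alpha\le\Big(\tfrac23+o(1)\Big)c_n\int\chi_j^2|\nabla v_\alpha|^2\,d\mu_\alpha+o(1).\]
Comparing with the previous display forces $\int\chi_j^2|\nabla v_\alpha|^2\,d\mu_\alpha\to0$, hence $\int_{B(\underline y_j,1/2)}|\nabla v_\alpha|^2\,d\mu_\alpha\to0$; summing over $j$ and combining with the $L^2_{loc}$-convergence yields $v_\alpha\to0$, i.e. $w_\alpha\to u$, strongly in $H^1_{loc}(\underline X)$. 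The Sobolev embedding promotes this to strong $L^{\frac{2n}{n-2}}_{loc}$-convergence, and unwinding the Kuwae--Shioya definitions \cite{KS} of convergence along the measured Gromov--Hausdorff limit turns it into the two claimed limits for $\int_{B(x_\alpha,\Lambda\epsilon_\alpha)}|u_\alpha|^{\frac{2n}{n-2}}\,d\mu$ and $\int_{B(x_\alpha,\Lambda\epsilon_\alpha)}|\nabla u_\alpha|_{g_0}^2\,d\mu$.

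I expect the main obstacle to be not any single estimate but the careful bookkeeping of analysis along the varying spaces: one must check that the Kuwae--Shioya notions of weak/strong $L^p$- and $H^1$-convergence are compatible with multiplication by the cut-offs $\chi_j$ and with testing the equation, that the Rellich--Kondrachov compactness and the Sobolev inequality \eqref{eq:AppSobolev} hold on $\underline X$ with constants controlled only by the Ahlfors and Poincaré constants (which are preserved under the limit), and that the Mosco convergence of the Dirichlet forms genuinely identifies the limiting equation satisfied by $u$ and lets the Brezis--Lieb splitting of the critical term go through. A secondary point is the transcription of \eqref{eq:SobolevMassBound}: one needs the covering of $B(\underline x,\Lambda)$ by unit balls to be finite with controlled multiplicity, which is again Ahlfors regularity. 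Everything else is a routine adaptation of \cite[Section 3]{Hebey}.
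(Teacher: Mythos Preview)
Your outline follows the same strategy as the paper's proof---identify $u$ as a weak solution of $-c_n\Delta u=\sigma|u|^{\frac4{n-2}}u$ on $\underline X$, subtract it from the rescaled sequence, and run the Struwe--Hebey absorption argument using the local mass bound \eqref{eq:SobolevMassBound} together with the Sobolev inequality \eqref{eq:AppSobolev}. The key estimates and the absorption (your two displayed relations) match the paper exactly.

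The one substantive difference is in how the subtraction is set up. You write $v_\alpha=w_\alpha-u$ with $w_\alpha=\epsilon_\alpha^{\frac{n-2}{2}}u_\alpha$ living (after rescaling) on $X$ and $u$ on $\underline X$; this forces you to do the cut-off argument on $\underline X$ and then translate back via the Kuwae--Shioya dictionary, which is exactly the bookkeeping you flag as the main obstacle. The paper sidesteps this: it chooses a \emph{strong recovery sequence} $w_\alpha\in H^1(X)$ with $\epsilon_\alpha^{\frac{n-2}{2}}w_\alpha\to u$ strongly in $H^1$, sets $v_\alpha=u_\alpha-w_\alpha\in H^1(X)$, and shows (via the pointwise inequality $||x+y|^{\frac4{n-2}}(x+y)-|x|^{\frac4{n-2}}x-|y|^{\frac4{n-2}}y|\le C(|x|^{\frac4{n-2}}|y|+|y|^{\frac4{n-2}}|x|)$) that the splitting error $\Psi_\alpha\to 0$ in $L^{\frac{2n}{n+2}}(X)$, so $(v_\alpha)$ is again Palais--Smale for $I$ on $X$. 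The cut-offs $\chi_\alpha$ are then taken directly on balls $B(y_\alpha,\epsilon_\alpha)\subset X$ rather than on $\underline X$. This keeps every object on a single space and reduces the Gromov--Hausdorff input to two facts: existence of strong recovery sequences, and strong $L^2_{loc}$ convergence of weak $H^1$ limits. Your route is viable but requires verifying that multiplication by cut-offs and the Br\'ezis--Lieb splitting commute with the Kuwae--Shioya convergence; the paper's route avoids this at the cost of the extra recovery-sequence step.
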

\begin{proof}[Proof of Lemma \ref{Lem:LocalConv}] 
Let $\varphi\in H^1(\underline{X})$ be arbitrary. By assumption, we can find sequences $(\varphi_\alpha)$ and $(w_\alpha)$ in $H^1(X)$ such that when
\begin{equation}\label{CV1}
(X,\epsilon_\alpha^{-1}d,x_\alpha)\to (\underline{X},\underline{d},\underline{x}),
\end{equation}
 then  $(\epsilon_{\alpha}^{\frac{n-2}{2}}w_{\alpha})$ converges strongly in $H^1$ to $u$ (in the sense defined above) and $(\epsilon_{\alpha}^{\frac{n-2}{2}}\varphi_{\alpha})$ converges strongly in $H^1$ to $\varphi$.
We use $\varphi_{\alpha}$ as a test function in the definition of $DI(u_{\alpha})\to 0$ in $(H^1)^*$, and find 
\[\left\vert \int_X\left( c_n \ip{\nabla u_{\alpha}}{\nabla \varphi_{\alpha}}_{g_0} -\sigma \vert u_{\alpha}\vert^{\frac{4}{n-2}} u_{\alpha}\varphi_{\alpha}\, \right)\,d\mu \right\vert\leq  \delta_{\alpha} \norm{\varphi_{\alpha}}_{H^1} + \norm{R}_{L^{\infty}}\int_X u_{\alpha}\varphi_{\alpha} \, d\mu.\]
By the definition of $\epsilon_{\alpha}^{\frac{n-2}{2}} \varphi_{\alpha}$ converging to $\varphi$, the norm
\begin{align*}
\norm{\varphi_{\alpha}}^2_{H^1}&= \left(\int_X \vert \varphi_{\alpha}\vert^{\frac{2n}{n-2}} \,d\mu\right)^{\frac{n-2}{n}} + \int_X \vert \nabla \varphi_{\alpha}\vert^2_{g_0}\, d\mu \\ 
&=\left(\int_X \vert \epsilon_{\alpha}^{\frac{n-2}{2}}\varphi_{\alpha}\vert^{\frac{2n}{n-2}} \,\frac{d\mu}{\epsilon_{\alpha}^n}\right)^{\frac{n-2}{n}} + \int_X \vert \nabla \epsilon_{\alpha}^{\frac{n-2}{2}}\varphi_{\alpha}\vert^2_{g_0}\, \frac{d\mu}{\epsilon_{\alpha}^{n-2}} 
\end{align*}
  is bounded uniformly in $\alpha$. Furthermore, since $\epsilon_{\alpha}^{\frac{n-2}{2}}u_{\alpha}$ converges weakly to $u$ in $H^1$ and $\epsilon_{\alpha}^{\frac{n-2}{2}}\varphi_{\alpha}$ converges strongly to $\varphi$ in $H^1$, we have 
 \[\lim_{\alpha\to \infty} \int_X (\epsilon_{\alpha}^{\frac{n-2}{2}}u_{\alpha}) (\epsilon_{\alpha}^{\frac{n-2}{2}}\varphi_{\alpha}) \frac{d\mu}{\epsilon_{\alpha}^n}=\int_{\underline{X}} u\varphi\, d\underline{\mu}.\]
 This means 
  \[\int_X u_{\alpha}\varphi_{\alpha} \, d\mu=\epsilon_{\alpha}^2\int_X (\epsilon_{\alpha}^{\frac{n-2}{2}}u_{\alpha}) (\epsilon_{\alpha}^{\frac{n-2}{2}}\varphi_{\alpha}) \frac{d\mu}{\epsilon_{\alpha}^n} \xrightarrow{\alpha \to 0} 0,\]
hence
\[\lim_{\alpha\to 0}  \left\vert \int_X\left( c_n \ip{\nabla u_{\alpha}}{\nabla \varphi_{\alpha}}_{g_0} -\sigma \vert u_{\alpha}\vert^{\frac{4}{n-2}} u_{\alpha}\varphi_{\alpha}\, \right)\,d\mu \right\vert=0.\]
The weak convergence of $\epsilon_{\alpha}^{\frac{n-2}{2}}u_{\alpha}$ and strong convergence of $\epsilon_{\alpha}^{\frac{n-2}{2}}\varphi_{\alpha}$ combine to give
\[ \int_{\underline{X}} c_n \ip{\nabla \phi}{\nabla u}_{\underline{g}} \, d\underline{\mu} - \sigma \int_{\underline{X}} \vert u\vert^{\frac{4}{n-2}} u \phi \, d\underline{\mu}=0.\]
 Since $\phi$ was arbitrary, this means $u$ is a solution of the equation 
\begin{equation}\label{equau}
-c_n\Delta u=\sigma |u|^{\frac{4}{n-2}} u
\end{equation} 
in the weak sense on $\underline{X}$.

Furthermore, we claim $(w_\alpha)$ is also a Palais-Smale sequence for $I_0$.  If it were not the case, we could, up to a subsequence extraction, find $\eta>0$, $\varphi_\alpha\in H^1(X)$ with $\|\varphi_\alpha\|_{H^1(X)}=1$ such that 
\begin{equation}\label{inequau}\left|\int_Xc_n \langle \nabla w_\alpha,\nabla \varphi_\alpha\rangle_{g_0} d\mu-\sigma\int_X |w_\alpha|^{\frac{4}{n-2}}w_\alpha\varphi_\alpha d\mu\right|\ge \eta>0.\end{equation}
Again up to extraction of subsequence, we can assume that when $(X,\epsilon_\alpha^{-1}d,x_\alpha)\to (\underline{X},\underline{d},\underline{x})$ then
$\epsilon_\alpha^{\frac{n-2}{2}}\varphi_\alpha$ converges weakly to some  $\varphi\in H^1(\underline{X})$.
By scaling we can pass to the limit in the inequality (\ref{inequau}) and get 
\[\left|\int_{\underline{X}}c_n \langle \nabla u,\nabla \varphi\rangle_{\underline{g}} d\underline{\mu}-\sigma\int_{\underline{X}} |u|^{\frac{4}{n-2}}u\varphi d\underline{\mu}\right|\ge \eta,\]
 in contradiction with (\ref{equau}).

Then, we set $v_\alpha= u_\alpha-w_\alpha$. We show that $(v_\alpha)$ is a  Palais-Smale sequence for $I$. The difficulty is the non linear term and if we show that 
\[\Psi_\alpha:= |u_\alpha|^{\frac{4}{n-2}}u_\alpha-|w_\alpha|^{\frac{4}{n-2}}w_\alpha-|v_\alpha|^{\frac{4}{n-2}}v_\alpha\] tends to $0$ in $L^{\frac{2n}{n+2}}$ then it is easy to check that  $(v_\alpha)$ is a  Palais-Smale sequence for $I$.
Notice that 
\begin{align*}
\Psi_\alpha&=w_{\alpha}\left( \vert v_{\alpha}+w_{\alpha}\vert^{\frac{4}{n-2}} -\vert w_{\alpha}\vert^{\frac{4}{n-2}}\right)+ v_{\alpha}\left( \vert v_{\alpha}+w_{\alpha}\vert^{\frac{4}{n-2}} -\vert v_{\alpha}\vert^{\frac{4}{n-2}}\right)
\end{align*}
Observe that there is a constant $C_n$ such that for any real numbers $x,y$:
\[\left|  \vert x+y\vert^{\frac{4}{n-2}}(x+y)-\vert x\vert^{\frac{4}{n-2}}x -\vert y\vert^{\frac{4}{n-2}}y\right|\le  C_n\left(|x|^{\frac{4}{n-2}}|y|+|y|^{\frac{4}{n-2}}|x|\right).\]
Hence here are constants $C_n$ depending only on $n$ such that
\[|\Psi_\alpha|\le C_n\left(|w_\alpha|^{\frac{4}{n-2}}|v_\alpha|+|v_\alpha|^{\frac{4}{n-2}}|w_\alpha|\right).\]
Hence to control $\Psi_{\alpha}$, it is enough to show that 
\[\lim_{\alpha\to \infty}\left\| |w_\alpha|^{\frac{4}{n-2}}|v_\alpha|\right\|_{L^{\frac{2n}{n+2}}(X)}=0\text{ and } \lim_{\alpha\to \infty}\left\| |v_\alpha|^{\frac{4}{n-2}}|w_\alpha|\right\|_{L^{\frac{2n}{n+2}}(X)}=0.\]
We define $\tilde w_\alpha=\epsilon_\alpha^{\frac{n-2}{2}}w_\alpha$ and $\tilde v_\alpha=\epsilon_\alpha^{\frac{n-2}{2}}v_\alpha$. 
For any $r\ge \frac{n-2}{2n}$,
$|\tilde w_\alpha|^{\frac 1 r}$ converges strongly in $L^{\frac{2rn}{n-2}}$ to $\vert u\vert^{\frac 1 r}$ along the convergence (\ref{CV1}).
For any $p\ge \frac{n-2}{2n}$,
$|\tilde v_\alpha|^{\frac 1 p}$ converges weakly  $L^{\frac{2pn}{n-2}}$ to $0$ along the convergence (\ref{CV1}). Indeed this is a bounded sequence with a unique sublimit because $(\tilde v_\alpha)$ converges weakly to $0$ in $H^1$. Hence if $\frac1r+\frac 1p=\frac{2n}{n-2}$ then
\[\lim_{\alpha\to \infty} \int_X|\tilde w_\alpha|^{\frac 1 r}|\tilde v_\alpha|^{\frac 1 p}\frac{d\mu}{\epsilon_\alpha^n}=0\]
Choosing $1/p=\frac{4}{n-2}\frac{2n}{n+2}$ and $1/r=\frac{2n}{n+2}$, we deduce
\begin{align*}
\norm{|v_\alpha|^{\frac{4}{n-2}}w_\alpha}_{L^\frac{2n}{n+2}(X)}^{\frac{n+2}{2}}& =\int_X \vert w_{\alpha}\vert^{\frac{1}{r}}\vert v_{\alpha}\vert^{\frac{1}{p}} d\mu \\ 
&= \int_X \vert \tilde{w}_{\alpha}\vert^{\frac{1}{r}}\vert \tilde{v}_{\alpha}\vert^{\frac{1}{p}} \frac{d\mu}{\epsilon^n}\\
& \xrightarrow{\alpha \to \infty} 0.
\end{align*} 
Swapping the roles of $p$ and $r$ we get the other result. This establishes that $v_\alpha$ is a Palais-Smale sequence for $I$.\medskip 

\noindent
We will use the argumentation in the proof of \cite[Lemma 3.3]{Hebey} and we are going to prove that if $y_\alpha\to \underline{y}\in \underline{X}$ then
\[\lim_{\alpha\to \infty}\int_{B(y_\alpha,\epsilon_\alpha/2)} |\nabla v_\alpha|_{g_0}^2d\mu=0\text{ and } \lim_{\alpha\to \infty}\int_{B(y_\alpha,\epsilon_\alpha/2)} |v_\alpha|^{\frac{2n}{n-2}}d\mu=0.\]
With a simple covering argument this will  imply that for any $\Lambda>0$
\[\lim_{\alpha\to \infty}\int_{B(x_\alpha,\Lambda\epsilon_\alpha)} |\nabla v_\alpha|_{g_0}^2d\mu=0\text{ and } \lim_{\alpha\to \infty}\int_{B(x_\alpha,\Lambda\epsilon_\alpha)} |v_\alpha|^{\frac{2n}{n-2}}d\mu=0.\]
That is to say that along the convergence $(B(x_\alpha,\Lambda\epsilon_\alpha),\epsilon_\alpha^{-1}d,x_\alpha)\to (B(\underline{x},\Lambda),\underline{d},\underline{x})$, 
the sequence $(\epsilon_{\alpha}^{\frac{n-2}{2}}u_\alpha)$ converges to $u$ strongly in $H^1$.
We use the cut-off function
\[\chi_\alpha(x)=\begin{cases}1&\text{ on } B(y_\alpha,\epsilon_\alpha/2)\\
2\left(1-d(x,y_\alpha)/\epsilon_\alpha \right)&\text{ on } B(y_\alpha,\epsilon_\alpha)\setminus B(y_\alpha,\epsilon_\alpha/2)\\
0 &\text{ outside } B(y_\alpha,\epsilon_\alpha).
\end{cases}\]
Then we get $|\nabla \chi_\alpha|_{g_0}\le 2/\epsilon_\alpha$ and with the Ahlfors regularity of the measure \eqref{eq:Ahlfors-Regular}, we find a constant such that for any $\alpha$
\begin{equation}\label{estLn} 
\|\nabla\chi_\alpha\|_{L^n(X)}^n=\int_{B(x_{\alpha},\epsilon_{\alpha})} \vert \nabla \chi_\alpha\vert_{g_0}^n d\mu \leq    2^nC.
\end{equation}
The fact that $v_{\alpha}$ is a Palais-Smale sequence yields 
\[\left|\int_Xc_n \langle \nabla v_\alpha,\nabla (\chi_\alpha^2v_\alpha)\rangle_{g_0} d\mu+\int_X R\chi_\alpha^2v_\alpha^2d\mu-\sigma\int_X |v_\alpha|^{\frac{4}{n-2}}\chi_\alpha^2v_\alpha ^2d\mu\right| \le o(1)\| \chi_\alpha^2 v_\alpha\|_{H^1(X)}.\]
We have 
\begin{align*}
\int_X |\nabla (\chi_\alpha^2 v_\alpha)|_{g_0}^2d\mu&\le \int_X \chi_{\alpha}^2  \vert \nabla v_\alpha\vert_{g_0} ^2d\mu+4\int_X  \vert \nabla  \chi_\alpha\vert_{g_0}^2v_\alpha^2d\mu+4\int  \vert v_{\alpha} \vert \nabla v_{\alpha}\vert_{g_0} \vert \chi_{\alpha} \vert \nabla \chi_{\alpha}\vert_{g_0}\, d\mu\\
&\leq  3 \int_X   \vert \nabla v_\alpha\vert_{g_0} ^2d\mu+6\int_X  \vert \nabla  \chi_\alpha\vert_{g_0}^2v_\alpha^2d\mu\\
& \le 3\int_X |\nabla v_\alpha|_{g_0}^2d\mu+6 \|\nabla \chi_\alpha\|_{L^n(X)}^2\left(\int_X  v_\alpha^{\frac{2n}{n-2}}d\mu\right)^{1-\frac 2n}\\
&\le C \|v_\alpha\|^2_{H^1(X)},
\end{align*}
where we use Young's inequality, the H\"older inequality and then the estimate (\ref{estLn}).
Notice that the sequence $(v_\alpha)$ is bounded in $H^1$, 
 and that along the convergence (\ref{CV1}), $(\tilde v_\alpha)$ converges weakly to $0$ in $H^1$ hence it converges strongly in $L_{loc}^2$ to $0$ and 
then
\begin{equation}\label{estL2}\int_{B(y_\alpha,\epsilon_\alpha)} v^2_\alpha d\mu=o(1)\epsilon_\alpha^2.\end{equation}
We also have
\[\int_X\langle \nabla v_\alpha,\nabla (\chi_\alpha^2v_\alpha)\rangle_{g_0} d\mu=\int_X |\nabla  (\chi_\alpha v_\alpha)|_{g_0}^2d\mu-\int_X |\nabla ( \chi_\alpha)|_{g_0}^2 v_\alpha^2d\mu=\int_X |\nabla  \chi_\alpha v_\alpha|_{g_0}^2d\mu+o(1).\]
Then we get
\begin{equation}\label{PS} 
\left|\int_Xc_n |\nabla (\chi_\alpha v_\alpha)|_{g_0}^2 d\mu-\sigma\int_X |v_\alpha|^{\frac{4}{n-2}}\chi_\alpha^2v_\alpha^2 d\mu\right| \le o(1).
\end{equation}
Then we use the above estimates on the function $\Psi_{\alpha}$. Notice that
\[u_{\alpha}\Psi_{\alpha}= \vert u_\alpha\vert^{\frac{2n}{n-2}}- \vert w_\alpha\vert^{\frac{2n}{n-2}}-  \vert v_\alpha\vert^{\frac{2n}{n-2}} - v_{\alpha}w_{\alpha} \vert w_{\alpha}\vert^{\frac{4}{n-2}} -v_{\alpha} w_{\alpha}\vert  v_{\alpha}\vert^{\frac{4}{n-2}}.\]
We computed above that
\[\norm{v_{\alpha} \vert w_{\alpha}\vert^{\frac{n+2}{n-2}}}_{L^1(X)}\leq \norm{v_{\alpha} \vert w_{\alpha}\vert^{\frac{n+2}{n-2}}}_{L^{\frac{2n}{n+2}}(X)}\to 0\]
and similarly for $w\leftrightarrow v$. By the Hölder inequality, we have
\[\norm{ u_{\alpha} \Psi_{\alpha}}_{L^1(B(y_\alpha,\epsilon_\alpha))}\leq \norm{u_{\alpha}}_{L^{\frac{2n}{n-2}}(B(y_\alpha,\epsilon_\alpha))} \norm{\Psi_{\alpha}}_{L^{\frac{2n}{n+2}}(X)},\]
where the first factors is bounded by \eqref{eq:SobolevMassBound} and the second factor goes to zero by our computations above. All in all, we conclude
\begin{align*}
\lim_{\alpha\to \infty } \left(\int_{B(y_\alpha,\epsilon_\alpha)} |u_\alpha|^{\frac{2n}{n-2}}d\mu- \int_{B(y_\alpha,\epsilon_\alpha)} |w_\alpha|^{\frac{2n}{n-2}}d\mu- \int_{B(y_\alpha,\epsilon_\alpha)} |v_\alpha|^{\frac{2n}{n-2}}d\mu\right)=0.
\end{align*}
For all $\alpha$ large enough, \eqref{eq:SobolevMassBound} gives us (since $y_{\alpha}\to x$) that
\[\sigma\left(\int_{B(y_\alpha,\epsilon_\alpha)} |v_\alpha|^{\frac{2n}{n-2}}d\mu\right)^{\frac 2n}\le \frac{5}{8} S_\ell(X).\]
Hence
\[\sigma\int_X |v_\alpha|^{\frac{4}{n-2}}\chi_\alpha^2v_\alpha^2 d\mu\le  \frac{5}{8} S_\ell\left(\int_X |\chi_\alpha v_\alpha|^{\frac{2n}{n-2}}d\mu\right)^{1-\frac 2n}.\]
and using the Sobolev inequality \eqref{eq:AppSobolev} and the estimate (\ref{estL2}) we get
\begin{align*}
 &\frac 34 S_\ell(X)\left(\int_X |\chi_\alpha v_\alpha|^{\frac{2n}{n-2}}d\mu\right)^{1-\frac 2n} \\ &\le \int_X\left[ c_n|\nabla(\chi_\alpha v_\alpha)|_{g_0}^2+B|\chi_\alpha v_\alpha|^2\right]d\mu\\ &=\int_Xc_n|\nabla (\chi_\alpha v_\alpha)|_{g_0}^2d\mu+o(1),
\end{align*} 
and with (\ref{PS}), we get 
\[ \frac 34 S_\ell(X)\left(\int_X |\chi_\alpha v_\alpha|^{\frac{2n}{n-2}}d\mu\right)^{1-\frac 2n}\le o(1)+\frac{5}{8} S_\ell\left(\int_X |\chi_\alpha v_\alpha|^{\frac{2n}{n-2}}d\mu\right)^{1-\frac 2n}.\]
Hence
\[\lim_{\alpha\to \infty }\int_X |\chi_\alpha v_\alpha|^{\frac{2n}{n-2}}d\mu=0.\]
Then (\ref{PS}) implies that 
\[\lim_{\alpha\to \infty }\int_X |\nabla (\chi_\alpha v_\alpha)|_{g_0}^{2}d\mu=0.\]
\end{proof}

 \subsubsection*{Step 3 --  Extraction of one bubble}
 \begin{Lem} \label{onebubble} Let $(u_\alpha)$ be a Palais-Smale sequence for $I$ of non-negative functions. Then, up to extraction of subsequence, either $(u_\alpha)$ converges strongly in $H^1$ or there is some bubble $B_\alpha$ and  $(v_\alpha)$ another   Palais-Smale sequence  for $I$ of non-negative functions such that 
 \[\lim_{\alpha\to\infty} \left\|u_\alpha-v_\alpha-B_\alpha\right\|_{H^1}=0.\]
 Moreover
 \[\lim_{\alpha\to\infty} \int_{X} u_\alpha^{\frac{2n}{n-2}}d\mu-\int_{X} v_\alpha^{\frac{2n}{n-2}}d\mu-\int_{X} B_\alpha^{\frac{2n}{n-2}}d\mu=0.\]
 \end{Lem}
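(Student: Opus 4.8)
\textbf{Setup and dichotomy.} Let $(u_\alpha)$ be a Palais--Smale sequence of non-negative functions. By boundedness in $H^1(X)$ we may pass to a subsequence so that $u_\alpha \rightharpoonup u$ weakly in $H^1(X)$. By the first lemma of Step~1, $v_\alpha := u_\alpha - u$ is a Palais--Smale sequence for $I_0$ and $|u_\alpha|^{\frac{2n}{n-2}}d\mu - |v_\alpha|^{\frac{2n}{n-2}}d\mu \rightharpoonup |u|^{\frac{2n}{n-2}}d\mu$. Now look at the concentration function of the measure $|v_\alpha|^{\frac{2n}{n-2}}d\mu$. Either
\[
\sup_{x\in X}\ \sigma\left(\int_{B(x,\delta)}|v_\alpha|^{\frac{2n}{n-2}}d\mu\right)^{\frac2n} < \tfrac12\,\cS_\ell(X)
\]
for all sufficiently small $\delta$ and large $\alpha$ (possibly after a subsequence), in which case the second lemma of Step~1, together with a finite covering of the compact $X$ by balls $B(x,\delta/2)$, gives $\|v_\alpha\|_{H^1(X)}\to 0$, i.e. $u_\alpha\to u$ strongly; \emph{or} there is $\eta>0$, points $x_\alpha\to x$, and radii $\delta_\alpha\to 0$ such that
\[
\sigma\left(\int_{B(x_\alpha,\delta_\alpha)}|v_\alpha|^{\frac{2n}{n-2}}d\mu\right)^{\frac2n} = \tfrac12\,\cS_\ell(X),
\]
obtained by choosing $\delta_\alpha$ as the radius realizing a fixed fraction of the concentrated mass. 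In the first case we are done, so assume from now on the second alternative; we then must produce a bubble.

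\textbf{Rescaling and the blow-up limit.} Set $\epsilon_\alpha := \delta_\alpha \to 0$ and pass to a subsequence so that $(X,\epsilon_\alpha^{-1}d,x_\alpha)$ converges in the pointed Gromov--Hausdorff sense to a fake tangent space $(\underline X,\underline d,\underline x)$, which by Proposition \ref{Prop:TangentCones} is an iterated tangent space of the form $\R^{k+\ell}\times C_{[0,\infty)}(\Sigma)$ with $(\underline X,\underline g)$ Ricci-flat on its regular part. The rescaled functions $\tilde v_\alpha := \epsilon_\alpha^{\frac{n-2}{2}} v_\alpha$ are bounded in $H^1$ under the rescaled metrics (recall $\|v_\alpha\|_{H^1(X,g_0)} = \|\tilde v_\alpha\|_{H^1(X,\epsilon_\alpha^{-2}g_0)}$), so after a further subsequence $\tilde v_\alpha \rightharpoonup v$ weakly in $H^1(\underline X)$. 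By the choice of $\epsilon_\alpha$ one checks, using Ahlfors regularity and the covering argument, that the local mass hypothesis \eqref{eq:SobolevMassBound} holds for $v_\alpha$ on all balls $B(x,\epsilon_\alpha)$ with $x\in B(x_\alpha,\Lambda\epsilon_\alpha)$; hence Lemma \ref{Lem:LocalConv} applies (with $R=0$, i.e. to the $I_0$-Palais--Smale sequence $v_\alpha$) and gives $\tilde v_\alpha \to v$ strongly in $H^1_{loc}(\underline X)$ as well as the limiting equation
\[
-c_n\Delta v = \sigma\,v^{\frac{n+2}{n-2}}\quad\text{weakly on }\underline X,
\]
with $v\geq 0$. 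The normalization $\sigma\big(\int_{B(\underline x,1)}v^{\frac{2n}{n-2}}d\underline\mu\big)^{2/n} = \tfrac12\cS_\ell(X)$ shows $v\not\equiv 0$. By the rigidity Theorem \ref{rigid}, $\underline X$ is conical at some $\underline x_0\in\underline X$ and $v(y) = \big(\tfrac{c\lambda}{\lambda^2+\underline d(y,\underline x_0)^2}\big)^{\frac{n-2}{2}}$ with $c=\sqrt{n(n-1)/\sigma}$. We then \emph{define} the bubble $B_\alpha$ to be the sequence attached to the points $x_\alpha$ (adjusted, if necessary, by moving the center within $B(x_\alpha,\Lambda\epsilon_\alpha)$ so that it converges to $\underline x_0$) and scale $\lambda\epsilon_\alpha$; Proposition \ref{bubblecv} guarantees $B_\alpha \to v$ strongly in $H^1$.

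\textbf{Subtracting the bubble.} Set $v_\alpha' := u_\alpha - u - B_\alpha$. To show $\|v_\alpha'\|_{H^1}\to 0$ would be too strong; instead, following the standard scheme (e.g. \cite[Lemma 3.4]{Hebey}), one shows $(v_\alpha')$ is again a non-negative Palais--Smale sequence for $I$ (up to a term going to $0$ in $H^1$ — non-negativity is arranged by the usual truncation argument, using that $u\geq0$ and $B_\alpha\geq0$ and that the overlap is negligible), and that it satisfies the energy-splitting identity
\[
\lim_{\alpha\to\infty}\left(\int_X u_\alpha^{\frac{2n}{n-2}}d\mu - \int_X (v_\alpha')^{\frac{2n}{n-2}}d\mu - \int_X B_\alpha^{\frac{2n}{n-2}}d\mu\right) = 0.
\]
The ingredients are: the Brezis--Lieb type decomposition for the nonlinear term (the elementary inequality $\big| |x+y|^{\frac{4}{n-2}}(x+y) - |x|^{\frac4{n-2}}x - |y|^{\frac4{n-2}}y\big| \leq C_n(|x|^{\frac4{n-2}}|y| + |y|^{\frac4{n-2}}|x|)$ already used in Step~1) applied with $x = v_\alpha - $ (local profile) and $y = B_\alpha$; the strong $H^1_{loc}$ convergence after rescaling from Lemma \ref{Lem:LocalConv}, which kills the cross terms at the bubble scale; and the fact that away from the concentration point $B_\alpha\to0$ in $H^1$, so the decomposition is trivial there. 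Combining the weak limit $u$ (at scale $1$) and the bubble $B_\alpha$ (at scale $\epsilon_\alpha\to0$) with the interaction-decay coming from $\epsilon_\alpha\to0$ yields the additivity of both the $H^1$-energies and the $L^{\frac{2n}{n-2}}$-masses.

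\textbf{Main obstacle.} The delicate point is \emph{not} the abstract dichotomy but making the rescaling around $x_\alpha$ interact cleanly with the bubble subtraction: one must verify that after replacing $v_\alpha$ by $v_\alpha - B_\alpha$ the local concentration at scale $\epsilon_\alpha$ is genuinely removed (so that the new sequence, when rescaled at the \emph{same} scale, converges weakly to $0$), while simultaneously ensuring that the Palais--Smale property survives the subtraction and that no new concentration is artificially created. This is exactly where strong $H^1_{loc}$-convergence of the rescaled sequence (Lemma \ref{Lem:LocalConv}) and the Brezis--Lieb splitting must be used together, and where the rigidity theorem is essential — it is what tells us $\underline X$ is conical at the concentration point, so that Proposition \ref{bubblecv} actually applies and the bubble profile matches the blow-up limit. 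In the singular setting the extra care is in tracking which balls the hypothesis \eqref{eq:SobolevMassBound} of Lemma \ref{Lem:LocalConv} holds on; this is handled by Ahlfors regularity and the choice of $\epsilon_\alpha$ as the concentration radius, exactly as in the smooth case but with $\cS_\ell(X)$ in place of the round-sphere Sobolev constant.
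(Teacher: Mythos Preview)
Your outline is essentially correct and follows the same concentration--compactness/rescale/rigidity scheme as the paper, but two points deserve comment.

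First, your organization differs from the paper's: you subtract the weak $H^1$-limit $u$ at the outset and work with $v_\alpha=u_\alpha-u$ (Palais--Smale for $I_0$), then extract a bubble from $v_\alpha$ and set $v_\alpha'=u_\alpha-u-B_\alpha$. The paper instead works directly with $u_\alpha$: it defines the concentration scale in terms of $u_\alpha$, rescales $u_\alpha$ itself, and after identifying the bubble sets $v_\alpha=(u_\alpha-B_\alpha)_+$ without ever separating off the weak limit at this stage. Both routes yield the same final decomposition; the paper's is marginally cleaner because the weak limit is simply carried along and emerges only when the iteration terminates.

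Second, and more substantively, your choice of $\epsilon_\alpha=\delta_\alpha$ as ``the radius realizing a fixed fraction of the concentrated mass'' at the single point $x_\alpha$ does \emph{not} by itself give the hypothesis \eqref{eq:SobolevMassBound} of Lemma~\ref{Lem:LocalConv}, which requires the mass bound on \emph{every} ball $B(x,\epsilon_\alpha)$ with $x\in B(x_\alpha,\Lambda\epsilon_\alpha)$; Ahlfors regularity of $\mu$ is not the relevant tool here. The paper handles this by defining
\[
\epsilon_\alpha:=\min_{x\in X}\sup\Big\{s>0:\ \forall r\le s,\ \sigma\Big(\int_{B(x,r)}u_\alpha^{\frac{2n}{n-2}}d\mu\Big)^{\frac2n}\le \tfrac12 S_\ell(X)\Big\},
\]
so that by construction the bound holds at \emph{all} centres and radii $\le\epsilon_\alpha$. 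Finally, for non-negativity the paper is explicit: with $r_\alpha:=(u_\alpha-B_\alpha)_-$ one has $u_\alpha=v_\alpha+B_\alpha-r_\alpha$, and $r_\alpha\to0$ in $H^1$ is shown by splitting the integral into $B(x_\alpha,\Lambda\epsilon_\alpha)$ (where strong $H^1_{loc}$ convergence from Lemma~\ref{Lem:LocalConv} applies) and its complement (where $B_\alpha$ decays like $\Lambda^{-n}$), then letting $\Lambda\to\infty$. Your ``usual truncation argument'' is this computation.
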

 \begin{proof}[Proof of Lemma \ref{onebubble}]
  Let
 \[\epsilon_\alpha:=\min_{x\in X}\sup\left\{s>0\text{ such that } \forall r\le s\colon \sigma\left(\int_{B(x,r)} u_\alpha^{\frac{2n}{n-2}}d\mu\right)^{\frac 2n}\le S_\ell(X)/2\right\}.\]
 Then one of two scenarios hold. Alternative 1: $\inf \epsilon_\alpha>0$ and by the proof of  Lemma \ref{Lem:LocalConv} implies that $H^1$-weak sublimits of the sequence $(u_\alpha)$ are actually $H^1$- strong sublimits. 
\noindent
 Alternative 2. Up to extraction of a subsequence, we have
 \begin{itemize}
 \item $\lim\limits_{\alpha\to \infty} \epsilon_\alpha=0$
 \item There are $y_\alpha\in X$ such that $\sigma\left(\int_{B(y_\alpha,\epsilon_\alpha)} u_\alpha^{\frac{2n}{n-2}}d\mu\right)^{1-\frac 2n}=S_\ell/2$,
 \item $\lim\limits_{\alpha\to \infty} y_\alpha=x$
 \item $ (\underline{X},\underline{d},\underline{x})$ is a fake tangent space at $x\in X$, that is is the pointed Gromov-Hausdorff limit of the sequence $(X,\epsilon_\alpha^{-1}d,y_\alpha)$,
 \item along this convergence of spaces,
 $\epsilon^{\frac{n-2}{2}}u_\alpha$ converges weakly in $H^1$ to some $u\in H^1(\underline{X})$.
 \end{itemize}
 Note that $u\ge 0$.
 By construction we can apply Lemma \ref{Lem:LocalConv} and get that  $\epsilon_\alpha^{\frac{n-2}{2}}u_\alpha$ converges strongly in $H^1_{loc}$. In particular
 \[\sigma \left(\int_{B(\underline{x},1)} u^{\frac{2n}{n-2}}d\underline{\mu}\right)^{\frac 2n}=S_\ell(X)/2.\]
 Hence $u$ is not zero and we are in position to apply the Theorem \ref{rigid} and get that for some $\underline{x}\in \underline{X}$:
 \[u(x)=\left(\frac{c \lambda}{\lambda^2+\underline{d}(x,\underline{x})^2}\right)^{\frac{n-2}{2}}\]
  with $c=\sqrt{n(n-1)/\sigma}$.
 Hence we can find a bubble $(B_\alpha)$ with center $x_\alpha$ and scale $\epsilon_\alpha$ 
such that
$x_\alpha \to \underline{x}$, and along the convergence toward the fake tangent space
$(\epsilon_\alpha^{\frac{n-2}{2}}B_\alpha)$ converges strongly to $u$. The proof Lemma \ref{Lem:LocalConv} shows that $(u_\alpha-B_\alpha)$ is a  Palais-Smale sequence for $I$ . We let\footnote{where we used the notation $x_{\pm}=\frac{|x|\pm x}{2}=\max\{\pm x,0\}$.}
$v_\alpha=(u_\alpha-B_\alpha)_+$ and we get
\[u_\alpha=v_\alpha+B_\alpha-r_\alpha\]
 where
$r_\alpha=(u_\alpha-B_\alpha)_-$ .

For each $\Lambda>0$ we have
\[\int_X |r_\alpha|^{\frac{2n}{n-2}}d\mu\le \int_{X\setminus B(x_\alpha,\Lambda\epsilon_\alpha)} B_\alpha^{\frac{2n}{n-2}}d\mu+ \int_{B(x_\alpha,\Lambda\epsilon_\alpha)} \left|B_\alpha-u_\alpha\right|^{\frac{2n}{n-2}}d\mu.\]
The first integral can easily be estimate (using the same method as the one in Lemma (\ref{bubblecv}) , and we get
\[ \int_{X\setminus B(x_\alpha,\Lambda\epsilon_\alpha)} B_\alpha^{\frac{2n}{n-2}}d\mu\le \frac{C}{\Lambda^n}.\]
The strong convergence in $H^1_{loc}$ implies that for fixed $\Lambda>0$, $\lim\limits_{\alpha\to \infty} \int_{B(x_\alpha,\Lambda\epsilon_\alpha)} \left|B_\alpha-u_\alpha\right|^{\frac{2n}{n-2}}d\mu=0$. Hence
\[\limsup_{\alpha\to \infty} \int_X |r_\alpha|^{\frac{2n}{n-2}}d\mu\le \frac{C}{\Lambda^n}.\]
The same computation leads to the fact that $(r_\alpha)$ converges strongly to $0$ in $H^1$.
Hence 
\[\lim\limits_{\alpha\to\infty} \left\|u_\alpha-v_\alpha-B_\alpha\right\|_{H^1}=0.\]
The last assertion also follows from the same computation.
 
 \end{proof}
\subsubsection*{Step 4 -- Profile decomposition}We discuss now of the mass of a bubble that is to say of
\[\lim_{\alpha\to \infty} \int_{X} B_\alpha^{\frac{2n}{n-2}}d\mu.\]
We have already said that this is equal to the corresponding quantity of the associated fake tangent space, that is if $(x_\alpha)$ are the centres of the bubble and $\epsilon_\alpha$ are the scaled then $\left((X,\epsilon_\alpha^{-1}d,x_\alpha)\right)_\alpha$ converges in the pointed Gromov-Hausdorff topology to $(\underline{X},\underline{d}, \underline{x})$ and along this convergences of spaces $\epsilon_\alpha^{\frac{n-2}{2}}B_\alpha$ converges strongly in $H^1$ to $u$, which is a solution of the equation
\[-c_n\Delta u=\sigma u^{\frac{n+2}{n-2}}.\]
Hence 
\[\lim_\alpha \int_{X} B_\alpha^{\frac{2n}{n-2}}d\mu= \int_{\underline{X}}u^{\frac{2n}{n-2}}d\underline{\mu}=\left(\frac{n(n-1)}{\sigma} \right)^{\frac n2}\,\omega_n\Theta_{\underline{X}}(\underline{x}).\]
Where $\omega_n$ is the volume of the unit Euclidean ball and $\Theta_{\underline{X}}(\underline{x})=\frac{\underline{\mu} \left(B(\underline{x},r)\right)}{\omega_n r^n}$ (recall that $(\underline{X}, \underline{x})$ is conical at $\underline{x}$). But according to  \cite[Proposition 4.2]{Mondello}, $u$ realizes the Yamabe invariant of $\underline{X}$, hence
\[\sigma \left(\int_{\underline{X}}u^{\frac{2n}{n-2}}d\underline{\mu}\right)^{\frac 2 n}=Y(\underline{X})\ge Y_\ell(X)=S_\ell(X).\]
So that the mass of a bubble is always larger or equal to $\left(\frac{n(n-1)}{\sigma}\right)^{\frac n2}Y_\ell(X).$ Hence if one applies Lemma (\ref{onebubble}) several times, we can not extract more that $L$ bubbles where\footnote{Compare with \eqref{eq:L-Bound}.}
\[L=\left(\frac{\sigma}{Y_\ell(X)}\right)^{\frac n2}\limsup_\alpha \int_{X} u_\alpha^{\frac{2n}{n-2}}d\mu.\]
So that we get the existence of a finite number of bubbles and $u_\infty$ so that we get the strong $H^1$-convergence (\ref{bubbles}).
\subsubsection*{Step 5 -- On the localisation of the bubbles} 
We need now to explain why the bubble are separated that is we explain why (\ref{sepa}) is true. If it not true then up to extraction of a subsequence and change of the labelling of the bubbles one can assume that for each $j>i$:
$\lim\limits_{\alpha\to \infty} \frac{\epsilon_\alpha(j)}{\epsilon_\alpha(i)}+\frac{\epsilon_\alpha(i)}
{\epsilon_\alpha(j)}+\frac{d(x_\alpha(j),x_\alpha(i))^2}{\epsilon_\alpha(i)\epsilon_\alpha(j)}=\infty$ and that  $\left((X,\epsilon_\alpha(i)^{-1}d,x_\alpha(i))\right)_\alpha$ converges in the pointed Gromov-Hausdorff topology to $(\underline{X},\underline{d}, \underline{x}(i))$ and such that along this convergence
for $j\le i$ we have $\lim\limits_{\alpha\to \infty} x_\alpha(j)=\underline{x}(j)$ and for each $j\le i$
$\lim\limits_{\alpha\to \infty}\frac{\epsilon_\alpha(j)}{\epsilon_\alpha(i)}=c_i$, 
$\epsilon_\alpha(i)^{\frac{n-2}{2}}B_\alpha^{j}$ converges strongly in $H^1$ to $u(j)$ and
$\epsilon_\alpha(i)^{\frac{n-2}{2}}u_\alpha$ converges weakly to $u$. We will have
$u=\sum_{j=1}^i u(j)$ and $u$ moreover each of the $u(j)$ will solve the equation
\[-\Delta f=\sigma f^{\frac{n+2}{n-2}}.\]
But then
\[\sigma\left(\sum_{j=1}^i u(j)\right)^{\frac{n+2}{n-2}} =-\Delta u=-\sum_{j=1}^i \Delta u(j)=\sigma \sum_{j=1}^i u(j)^{\frac{n+2}{n-2}}.\]
This is impossible unless $i=1$, since $u(j)>0$ and $\frac{n+2}{n-2}>1$ .

\endproof

\end{document}